\newtheorem{thm}{Theorem}[section]
\newtheorem{prop}[thm]{Proposition}
\newtheorem{construction}[thm]{Construction}
\newtheorem{notation}[thm]{Notation}
\newtheorem{ques}[thm]{Question}
\newtheorem{convention}[thm]{Convention}
\newtheorem{lem}[thm]{Lemma}
\newtheorem{conj}[thm]{Conjecture}
\newtheorem{lem-def}[thm]{Lemma-Definition}
\newtheorem{cor}[thm]{Corollary}
\theoremstyle{remark}
\newtheorem{exam}[thm]{Example}
\newtheorem{rmk}{Remark}[section]
\theoremstyle{definition}
\newtheorem{dfn}[thm]{Definition}
\numberwithin{equation}{section}
\newcommand{\frakm}{{\mathfrak m}}
\newcommand{\frakU}{{\mathfrak U}}
\newcommand{\frakX}{{\mathfrak X}}
\newcommand{\bC}{{\mathbb C}}
\newcommand{\bL}{{\mathbb L}}
\newcommand{\bM}{{\mathbb M}}
\newcommand{\bN}{{\mathbb N}}
\newcommand{\bQ}{{\mathbb Q}}
\newcommand{\bZ}{{\mathbb Z}}
\newcommand{\calA}{{\mathcal A}}
\newcommand{\calB}{{\mathcal B}}
\newcommand{\calD}{{\mathcal D}}
\newcommand{\calE}{{\mathcal E}}
\newcommand{\calF}{{\mathcal F}}
\newcommand{\calH}{{\mathcal H}}
\newcommand{\calI}{{\mathcal I}}
\newcommand{\calL}{{\mathcal L}}
\newcommand{\calM}{{\mathcal M}}
\newcommand{\calO}{{\mathcal O}}
\newcommand{\calP}{{\mathcal P}}
\newcommand{\rA}{{\mathrm A}}
\newcommand{\rd}{{\mathrm d}}
\newcommand{\rH}{{\mathrm H}}
\newcommand{\rI}{{\mathrm I}}
\newcommand{\rK}{{\mathrm K}}
\newcommand{\rL}{{\mathrm L}}
\newcommand{\rM}{{\mathrm M}}
\newcommand{\rR}{{\mathrm R}}
\newcommand{\rW}{{\mathrm W}}
\newcommand{\Zp}{{\bZ_p}}
\newcommand{\Qp}{{\bQ_p}}
\newcommand{\Ainf}{{\mathrm{A_{inf}}}}
\newcommand{\OX}{{\widehat \calO_X}}    
\newcommand{\OXp}{{\widehat \calO^+_X}}
\newcommand{\Gal}{{\mathrm{Gal}}}           
\newcommand{\Spa}{{\mathrm{Spa}}}           
\newcommand{\Spf}{{\mathrm{Spf}}}           
\newcommand{\Spec}{{\mathrm{Spec}}}         
\newcommand{\HIG}{{\mathrm{HIG}}}
\newcommand{\Ker}{{\mathrm{Ker}}}           
\newcommand{\Ima}{{\mathrm{Im}}}            
\newcommand{\Hom}{{\mathrm{Hom}}}           
\newcommand{\Fil}{{\mathrm{Fil}}}           
\newcommand{\Sym}{{\mathrm{Sym}}}           
\newcommand{\Rep}{{\mathrm{Rep}}}
\newcommand{\Vect}{{\mathrm{Vect}}}
\newcommand{\GL}{{\mathrm{GL}}}             
\newcommand{\cts}{{\mathrm{cts}}}
\newcommand{\et}{{\mathrm{\acute{e}t}}}    
\newcommand{\nil}{{\mathrm{nil}}}
\newcommand{\pd}{{\mathrm{pd}}}
\newcommand{\proet}{{\mathrm{pro\acute{e}t}}}
\DeclareSymbolFontAlphabet{\mathbb}{AMSb} 
\DeclareSymbolFontAlphabet{\mathbbl}{bbold}
\newcommand{\Prism}{{\mathlarger{\mathbbl{\Delta}}}}
\newcommand{\za}{\langle}
\newcommand{\ya}{\rangle}
\newcommand{\id}{{\mathrm{id}}}             
\newcommand{\dlog}{{\mathrm{dlog}}}
\newcommand{\can}{{\mathrm{can}}}
\begin{document}

\title{Integral $p$-adic non-abelian Hodge theory for small representations}

\author{Yu Min\footnote{Y. M.: Department of Mathematics, Imperial College London, London SW7 2RH, UK. {\bf Email:} y.min@imperial.ac.uk} and Yupeng Wang\footnote{Y. W.: Beijing International Center of Mathematical Research, Peking University, Yiheyuan Road 5, Beijing, 100190, China. {\bf Email:} 1501110016@pku.edu.cn}}

\date{}

\maketitle
	
\begin{abstract}
    Let $\frakX$ be a smooth $p$-adic formal scheme over $\calO_C$ with rigid generic fiber $X$.
    In this paper, we construct a new integral period sheaf $\calO\widehat \bC_{\pd}^+$ on $X_{\proet}$ and use it to establish an integral $p$-adic Simpson correspondence for small $\OXp$-representations on $X_{\proet}$ and small Higgs bundles on $\frakX_{\et}$, which recovers rational $p$-adic Simpson correspondence for small coefficients after inverting $p$ (at least in the good reduction case). Moreover, for a small $\OXp$-representation $\calL$ with induced Higgs bundle $(\calH,\theta_{\calH})$, we provide a natural morphism $\HIG(\calH,\theta_{\calH})\to\rR\nu_*\calL$ with a bounded $p^{\infty}$-torsion cofiber. Finally, we shall use this natural map to study an analogue of Deligne--Illusie decomposition with coefficients in small $\OXp$-representations. 
\end{abstract}

\noindent{\textbf{Keywords:} Integral $p$-adic Simpson correspondence, Deligne--Illusie decomposition.}

\noindent{\textbf{MSC2020:} Primary 14G22. Secondary 14F30, 14G45.}

\tableofcontents

\section{Introduction} 

   Let $C$ be a complete algebraically closed extension of $\Qp$ with the ring of integers $\calO_C$. The $p$-adic Simpson correspondence aims to establish a $p$-adic analogue of the classical Simpson correspondence \cite{Sim} in complex geometry. Namely, for a smooth variety $X$ over $C$, it hopes to clarify the relationship between the category of representations of the \'etale fundamental group of $X$ and the category of Higgs bundles on $X_{\et}$.

   In \cite{Fal05}, assuming $X$ admits a good integral model $\frakX$ over $\calO_C$, Faltings described an equivalence between the category of small generalised representations and that of small Higgs bundles over $X$. His idea was realised and generalised systematically by Abbes--Gros--Tsuji \cite{AGT}, Tsuji \cite{Tsu18} and Abbes--Gros \cite{AG22}
   . Their constructions turn out to be coincident with the previous work \cite{DW05} of Deninger--Werner, proved by Xu \cite{Xu17}. One of the key ingredients in the work of Abbes--Gros in \cite{AGT} is a period sheaf in the spirit of Fontaine's $p$-adic Hodge theory. On the other hand, in \cite{AGT}, Tsuji introduced Higgs isocrystals on his Higgs site as an interpretation of Higgs bundles, which is similar to interpreting $D$-modules as crystals on the infinitesimal/crystalline site. The Higgs site of Tsuji also naturally produces a period ring, which is closely related to the period ring of Abbes--Gros. Based on \cite{LZ17}, the second author  constructed an ``overconvergent pro-\'etale period sheaf" $\calO\bC^{\dagger}$ to establish a $p$-adic Simpson correspondence for rigid varieties with good reductions in \cite{Wang}, which is compatible with \cite{Fal05} and \cite{AGT}.

 Note that all the works mentioned above are on the rational level. One might wonder what will happen without inverting $p$. The goal of this paper is to try to provide some results in this direction, at least dealing with small generalised representations and small Higgs bundles. 
 By adapting the construction in \cite{Wang}, we will define a new integral pro-\'etale period sheaf equipped with a Higgs field $(\calO\widehat\bC_{\pd}^+,\Theta)$ and use it to establish an integral $p$-adic Simpson correspondence for small coefficients. It is worth mentioning that our construction of $\calO\widehat\bC_{\pd}^+$ is largely inspired by the work on Hodge--Tate crystals in \cite{Tia23} and \cite{MW22} (see Remark \ref{motivation-construction} for more details).

Once the integral $p$-adic Simpson correspondence is available, one could try to connect the $p$-adic theory and the mod $p$ non-abelian Hodge theory in \cite{OV}. This is indeed one of our main motivations. It turns out that our integral correspondence shares a lot of similarities with the mod $p$ theory in \cite{OV}. In particular, we will also investigate an analogous Deligne--Illusie decomposition in our setting. To further clarify the mysterious relation between the $p$-adic theory and the mod $p$ theory, we believe one has to investigate the prismatic theory simultaneously.

Considering small objects is still the very first step of Faltings' vision of $p$-adic Simpson correspondence. Very recently, for a liftable proper smooth rigid variety, Heuer has given a construction of an equivalence between the category of generalised representations and that of Higgs bundles in \cite{Heu23} (with rational coefficients on both sides), generalising the result of Faltings in the curve case. However, it is still an open question to describe the Higgs bundles on $X$ which are induced by $C$-local systems on $X_{\et}$. Some cases are known, for example, when $X$ is abeloid \cite{HMW} and when the local system has dimension one \cite{Heu22a}. When $X$ is a curve, Xu made a conjecture in \cite{Xu22} on the essential image of $C$-local systems in the category of Higgs bundles via his (twisted) Higgs functor (described in \cite{Fal05} by Faltings at first). Note that as long as one gets integral Higgs bundles on $\frakX$, one can consider their restrictions to the special fiber of $\frakX$ and then try to apply results in \cite{DW05} to get $C$-local systems. So our integral $p$-adic Simpson correspondence might shed some light on the above open problem and we will focus on this in future work.

\subsection{Main results}
 \subsubsection{Integral $p$-adic Simpson correspondence}

  We first introduce some notations (see also \S \ref{SSec-Notation}). Let $K$ be a complete discrete valuation field over $\Qp$ with ring of integers $\calO_K$ and perfect residue field $\kappa$. Let $C$ be the $p$-adic completion of a fixed algebraic closure $\overline K$ of $K$ and $\calO_C$ be the ring of integers. Then we can define  the ramified Witt ring $\rA_{\inf,K} = \rA_{\inf}(C,\calO_C)\otimes_{\rW(\kappa)}\calO_K$. Let $\rI_K$ be the kernel of the natural surjection $\theta:\rA_{\inf,K}\to\calO_C$ and $\rA_2 = \rA_{\inf,K}/\rI_K^2$. Then there exists an element $\rho_K\in\calO_C$ with valuation $\nu_p(\rho_K) = \nu_p(\calD_K)+\frac{1}{p-1}$ such that $I_K\rA_2\cong \frac{1}{\rho_K}\calO_C(1)$, where $\calD_K$ is the ideal of relative differential of $\calO_K$ over $W(\kappa)$. When $\calO_K = \rW(\kappa)$, one can choose $\rho_K = \zeta_p-1$. When contexts are clear, we also denote $\rho_K$ by $\rho$.
  
  Our main result is the following, which upgrades Faltings' $p$-adic Simpson correspondence (cf. \cite{Fal05,AGT,Tsu18,Wang}, etc.) to the integral level.
 \begin{thm}[Theorem \ref{Thm-IntegralSimpson}]\label{Intro-Main result} 
      Assume $a\geq \frac{1}{p-1}$. Let $\frakX$ be a smooth $p$-adic formal scheme of dimension $d$ over $\calO_C$, which is liftable to $\Spf(A_2)$. Let $\nu:X_{\proet}\to\frakX_{\et}$ be the natural projection of sites. Fix an $\rA_2$-lifting of $\frakX$. There exists a pro-\'etale period sheaf together with a Higgs field $(\calO\widehat \bC_{\pd}^+,\Theta)$ (which is induced from the fixed lifting and thus depends on $K$ as so does $\rA_2$), such that the following assertions are true:
      \begin{enumerate}
          \item[(1)] For any $a$-small $\OXp$-representation of rank $r$ (cf. Definition \ref{Dfn-SmallRep}) on $X_{\proet}$, we have that $\rR\nu_*(\calL\otimes_{\OXp}\calO\widehat \bC_{\pd}^+)$ is concentrated in degree $[0,d]$ and that the complex $\rL\eta_{\rho(\zeta_p-1)}\rR\nu_*(\calL\otimes_{\OXp}\calO\widehat \bC_{\pd}^+)$ is concentrated in degree $0$ and coincides with $\nu_*(\calL\otimes_{\OXp}\calO\widehat \bC_{\pd}^+)[0]$. In particular, $(\nu_*(\calL\otimes_{\OXp}\calO\widehat \bC_{\pd}^+),\nu_*(\Theta_{\calL}))$ is an $a$-small Higgs bundle (cf. Definition {\ref{Dfn-SmallHiggs}}) of rank $r$ on $\frakX_{\et}$.

          \item[(2)] For any $a$-small Higgs bundle $(\calH,\theta_{\calH})$ of rank $r$ on $\frakX_{\et}$, we define $\calL=(\calH\otimes_{\calO_{\frakX}}\calO\widehat \bC_{\pd}^+)^{\Theta_{\calH} = 0}$, where $\Theta_{\calH}:=\theta_{\calH}\otimes id+ id\otimes \Theta$. Then $\calL$ is an $a$-small $\OXp$-representation of rank $r$ on $X_{\proet}$.

          \item[(3)] The functors in Items (1) and (2) are quasi-inverses to each other and hence define an equivalence between categories
          \[\Rep^{\geq a}(\OXp)\simeq \HIG^{\geq a}(\frakX)\]
          of a-small $\OXp$-representations on $X_{\proet}$ and $a$-small Higgs bundles on $\frakX_{\et}$,
          which preserves tensor products and dualities. Moreover, if $\calL$ corresponds to $(\calH,\theta_{\calH})$, then there exists an isomorphism of Higgs fields
          \[(\calL\otimes_{\OXp}\calO\widehat \bC_{\pd}^+,\Theta_{\calL})\cong (\calH\otimes_{\calO_{\frakX}}\calO\widehat \bC_{\pd}^+,\Theta_{\calH}).\]
      \end{enumerate} 
  \end{thm}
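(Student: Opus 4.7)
The plan is to localize everywhere. Cover $\frakX$ by small affine opens $\Spf(R)$ admitting toric charts $\calO_C\langle T_1^{\pm 1},\ldots,T_d^{\pm 1}\rangle\to R$, and on each chart base the calculation on the pro-\'etale Galois cover $\Xinf\to X$ obtained by extracting all $p$-power roots of the $T_i$, with Galois group $\Gamma \cong \bZ_p(1)^d$. Using the chosen $\rA_2$-lifting, the period sheaf $\calO\widehat\bC_{\pd}^+$ is expected to restrict on such a chart to a $p$-adically completed divided-power polynomial algebra $\OXp\langle Y_1,\ldots,Y_d\rangle_{\pd}^{\wedge}$, where the $Y_i$ are divided-power lifts of $\dlog T_i$. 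The Higgs field is then $\Theta = \sum_i \partial_{Y_i} \otimes \dlog T_i$, and each topological generator $\gamma_i$ of $\Gamma$ acts by a translation $Y_i \mapsto Y_i + \rho(\gamma_i - 1)$ proportional to $\rho(\zeta_p - 1)$. This explicit local model is what reduces the theorem to Koszul-type calculations over the cover.

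For part (1), write $V := \calL \otimes_{\OXp} \calO\widehat\bC_{\pd}^+$. On a chart, $\rR\nu_* V$ is computed by the continuous Koszul complex for $\Gamma$ acting on $V(\Xinf)$, which immediately gives concentration in cohomological degrees $[0,d]$. The key point — that $\rL\eta_{\rho(\zeta_p-1)} \rR\nu_* V$ is concentrated in degree $0$ and recovers $\nu_* V$ — amounts after the d\'ecalage twist to showing that, on each graded piece, the Koszul differentials $\gamma_i - 1$ become the unit $\rho(\zeta_p-1)$ times an integrally invertible operator. The smallness hypothesis $a\geq\frac{1}{p-1}$ is exactly what makes the relevant $\exp$ and $\log$ series converge integrally, so that the twisted $\Gamma$-action can be conjugated to one in which the Koszul differentials are explicit multiplications by $\rho(\gamma_i - 1)$. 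The statement that $(\nu_*V, \nu_*\Theta_{\calL})$ is an $a$-small Higgs bundle of rank $r$ then falls out of this explicit identification.

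For parts (2) and (3), the main ingredient is a local trivialization of $\calH \otimes_{\calO_{\frakX}} \calO\widehat\bC_{\pd}^+$ on $\Xinf$ via a convergent exponential of $\theta_{\calH}$ against the divided-power variables $Y_i$, which pulls $\Theta_{\calH}$ back to the tautological Higgs field on a free $\calO\widehat\bC_{\pd}^+$-module of rank $r$. Integrality of this trivialization is guaranteed precisely by the smallness condition $a\geq\frac{1}{p-1}$. From this local picture, $(\calH \otimes \calO\widehat\bC_{\pd}^+)^{\Theta_{\calH}=0}$ is visibly an $a$-small $\OXp$-representation of rank $r$, and running the same trivialization on the representation side produces the Higgs-field isomorphism $(\calL\otimes\calO\widehat\bC_{\pd}^+, \Theta_{\calL}) \cong (\calH\otimes\calO\widehat\bC_{\pd}^+, \Theta_{\calH})$. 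The identification of the two functors as mutual quasi-inverses is read off directly from this isomorphism, and compatibility with tensor products and duality is then formal, since both operations commute with $-\otimes\calO\widehat\bC_{\pd}^+$ and with $\Theta=0$-invariants under smallness.

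The main obstacle is the integral concentration statement in (1): the period sheaf $\calO\widehat\bC_{\pd}^+$ must be constructed with just the right divided-power structure for $\rL\eta_{\rho(\zeta_p-1)}$ to collapse the Koszul complex to $H^0$ without inverting $p$, which is where the input from the Hodge--Tate crystal perspective of \cite{Tia23,MW22} is decisive. Once this is in place, \'etale descent from toric charts, compatibility with tensor and duality, and recovery of the rational correspondence of \cite{Wang} after inverting $p$ — by identifying $\calO\widehat\bC_{\pd}^+[1/p]$ (after completion) with the overconvergent pro-\'etale period sheaf $\calO\bC^{\dagger}$ — should all follow without essential difficulty.
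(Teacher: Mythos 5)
Your outline reproduces the paper's strategy — localize to small affine toric charts, use the PD-polynomial local model of $\calO\widehat\bC_{\pd}^+$ over $U_\infty$ with translation action of $\Gamma$, trivialize via an integral exponential of the Higgs field, and show that $\rL\eta_{\rho(\zeta_p-1)}$ collapses the Koszul complex. However, there are two genuine gaps in the argument that the paper spends real effort on and that you cannot wave away, plus two smaller inaccuracies.

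First, passing from $\Gamma$-cohomology of the evaluation at $U_\infty$ to pro-\'etale cohomology is only an \emph{almost} quasi-isomorphism. To upgrade to an honest quasi-isomorphism after applying $\rL\eta_{\rho(\zeta_p-1)}$ one needs to know (and prove) that $\rH^n(\Gamma, V\otimes\widehat S_{\pd}^+)$ has no $\frakm_C$-torsion, so that \cite[Lem.\ 8.11(2)]{BMS18} applies. This torsion-freeness is extracted from the K\"unneth decomposition and the explicit form of $\rH^i(\Gamma, A[\rho Y]^{\wedge}_{\pd})$, and it is what Corollary \ref{Cor-Kunnech} and Corollary \ref{Cor-Key} are for; without it, you only get an almost version of (1). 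Second, you never decomplete. The calculation with matrices of $\Theta_i$ and the exponential trivialization requires working with a finite free module over $R^+$, not over $\widehat R_\infty^+$. The paper first uses the decompletion equivalence $\Rep^{\geq a}(R^+)\simeq\Rep^{\geq a}(\widehat R_\infty^+)$ (Theorem \ref{Thm-Decompletion}, which also controls the torsion of the error term in cohomology) to replace $V$ by $V_0$, and only then produces the Higgs data as $(\exp(\sum\Theta_iY_i)V_0,\ \sum\Theta_i\otimes\frac{\dlog T_i}{t})$. Skipping this step leaves the local correspondence undefined.

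Two further corrections. Your claim that the Koszul differentials $\gamma_i-1$ become ``$\rho(\zeta_p-1)$ times an integrally invertible operator'' is not what happens: on the $\alpha=0$ block the map $\gamma-1$ has nonzero kernel (the constants) and its \emph{image} equals $\rho(\zeta_p-1)$ times the whole module (Proposition \ref{Prop-Gamma-cohomology of pd ring}(2)); for $\alpha\neq 0$ the cohomology is instead killed by $\zeta^{\alpha}-1$. The d\'ecalage then works because of this divisibility of the image, not because the differential is invertible up to a unit. Also, your final sentence asserting that $\calO\widehat\bC_{\pd}^+[1/p]$ is identified with $\calO\bC^{\dagger}$ is false: the paper's Proposition \ref{Prop-ComparePeriodSheaf} gives only a strict inclusion $\calO\bC^{\dagger}\subset\calO\widehat\bC_{\pd}$, and the compatibility with \cite{Wang} in Corollary \ref{Cor-CompareWithWang} is obtained by comparing the two constructions through this inclusion, not by identifying the sheaves. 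Finally, be careful with the normalization of the PD variables: the paper's local model is $\OXp[\rho Y_1,\ldots,\rho Y_d]^{\wedge}_{\pd}$ (PD powers of $\rho Y_i$, not of $Y_i$), with $\gamma_i(Y_j)=Y_j+\delta_{ij}(\zeta_p-1)$; the $\rho$-factor is exactly what makes the $\rho(\zeta_p-1)$ in the d\'ecalage come out right.
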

  \begin{rmk}\label{Rmk-OptimalDecalage}
       The functor $\rL\eta_{\rho(\zeta_p-1)}$ is optimal in the sense that there do exist local sections of $\rR^{\geq 1}\nu_*(\calO\widehat \bC^+_{\pd})$ which are exactly killed by $\rho(\zeta_p-1)$ (cf. Lemma \ref{Lem-Key}). This is different from the trivial coefficient case in \cite{BMS18}.
   \end{rmk}

   \begin{rmk}
       The local version of the equivalence part of Theorem \ref{Intro-Main result} was obtained in \cite{Fal05}, \cite{AGT} and \cite{Tsu18} in a very general setting and was generalised to any rigid group $G$ in \cite[\S 6]{Heu22b}. We thank Ben Heuer for informing us of the latter. Also it should be possible to globalize the local equivalence using the period sheaf $\widehat {\mathscr{C}}^{(r)}$ (with Higgs fields) introduced in \cite{AGT} as pointed out to us by the referee. That being said, it seems hard to compare cohomology on the integral level by using $\widehat {\mathscr{C}}^{(r)}$ (or the overconvergent period sheaf $(\calO\bC^{\dagger,+},\widetilde \Theta)$ appearing in \cite[I.4.7]{AGT} and \cite[\S 2.3]{Wang}) as its induced Higgs complex is not exact. Instead, using our period sheaf $\calO\widehat \bC_{\pd}^+$, one could obtain some results on cohomology comparison. See \S\ref{Intro-CompareCohomology} below.
   \end{rmk}


   The correspondence in Theorem \ref{Intro-Main result} is compatible with \cite[Theorem 1.1]{Wang} (and \cite{AGT}) after inverting $p$. 
       More precisely, let $\calO\widehat \bC_{\pd} = \calO\widehat \bC_{\pd}^+[\frac{1}{p}]$ with induced Higgs field $\Theta$. One can regard the overconvergent period sheaf $\calO\bC^{\dagger}$ in \emph{loc.cit.} as a sub-$\OX$-algebra of $\calO\widehat \bC_{\pd}$ by identifying $\widetilde \Theta$ with $(\zeta_p-1)\Theta$ (cf. Proposition \ref{Prop-ComparePeriodSheaf}) and use this to obtain the desired compatibility (cf. Corollary \ref{Cor-CompareWithWang}).
  \begin{rmk}
      The integral period sheaf $(\calO\widehat \bC_{\pd}^+,\Theta)$ has been constructed independently in \cite[Section 8.1]{AHLB} by  Ansch\"utz--Heuer--Le Bras. The key ingredient of their approach is a twisted version, provided by a lift of the formal scheme, of the Hodge--Tate structure map due to Bhatt and Lurie \cite{BL22b}. It is possible to get a derived correspondence among small Higgs bundles, small generalised representations (i.e. $v$-bundles) and small Hodge--Tate crystals by using a derived version of Theorem \ref{Intro-Main result}, similar to the rational case \cite[Theorem 1.6]{AHLB}.
  \end{rmk}

  Note that $\rL\eta_{\rho(\zeta_p-1)}$ is {\bf NOT} an exact functor. This causes a lot of trouble in comparing $\HIG(\calH,\theta_{\calH})$ with $\rL\eta_{\rho(\zeta_p-1)}\rR\nu_*\calL$ for corresponding $\calL$ and $(\calH,\theta_{\calH})$ in Theorem \ref{Intro-Main result}. We will briefly discuss this problem in \S \ref{Intro-CompareCohomology}. For the moment, let us satisfy ourselves with the following important corollary of Theorem \ref{Intro-Main result}.
   \begin{cor}[Corollary \ref{Cor-IntegralSimpson}]\label{Intro-Cor-BMS}
       Keep notations as in Theorem \ref{Intro-Main result}. For any $a$-small $\OXp$-representation $\calL$ on $X_{\proet}$ with induced Higgs bundle $(\calH,\theta_{\calH})$ via the equivalence in Theorem \ref{Intro-Main result} (3), we have a natural morphism
       \begin{equation}\label{Equ-Intro-CanonicalMap}
       \HIG(\calH,\theta_{\calH})\to \rR\nu_*\calL
       \end{equation}
       with cofiber killed by $(\rho(\zeta_p-1))^{\max\{d+1,2(d-1)\}}$, which is functorial in $\calL$.
   \end{cor}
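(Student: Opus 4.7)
The plan is to construct the natural map as the inclusion of a ``bottom row'' in a bicomplex computing $\rR\nu_*\calL$, and then bound the resulting cofiber using the $(\rho(\zeta_p-1))$-torsion estimates provided by Theorem \ref{Intro-Main result}(1).

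The main input, announced in the paper as a key feature of the new period sheaf, is that $\HIG(\calO\widehat\bC_{\pd}^+,\Theta)$ is a resolution of $\OXp$. Since the $a$-small $\calL$ is locally free over $\OXp$, tensoring yields a quasi-isomorphism $\calL \simeq \HIG(\calL\otimes_{\OXp}\calO\widehat\bC_{\pd}^+,\Theta_\calL)$. Applying $\rR\nu_*$ termwise and invoking the projection formula (valid because each $\Omega^i_\frakX$ is pulled back along $\nu$) gives
\[
\rR\nu_*\calL \;\simeq\; \Tot\!\Bigl(\rR\nu_*\bigl(\calL\otimes_{\OXp}\calO\widehat\bC_{\pd}^+\bigr)\otimes_{\calO_\frakX}\Omega^\bullet_\frakX\Bigr).
\]
By Theorem \ref{Intro-Main result}(1), the row in cohomological degree $0$ of this bicomplex is precisely $\HIG(\calH,\theta_\calH)$; the inclusion of this row into the total complex is the desired natural map, and its functoriality in $\calL$ is immediate from the construction.

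The cofiber $C$ is then the total complex of the sub-bicomplex $K^{i,j}:=\rR^j\nu_*(\calL\otimes_{\OXp}\calO\widehat\bC_{\pd}^+)\otimes_{\calO_\frakX}\Omega^i_\frakX$ with $j\geq 1$ and $0\leq i\leq d$. From Theorem \ref{Intro-Main result}(1), combined with the local computation referenced in Remark \ref{Rmk-OptimalDecalage}, each $\rR^j\nu_*$ with $j\geq 1$ is killed by $f:=\rho(\zeta_p-1)$, hence so is every $K^{i,j}$.

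The remaining step is to upgrade this termwise vanishing to a uniform bound $f^{\max\{d+1,2(d-1)\}}$ on the cohomology of $C$. Running the stupid filtration in the Higgs direction, which has $d+1$ graded pieces each killed by $f$, gives a bound $f^{d+1}$ via iterated extensions. For $d\geq 3$, this is improved to $f^{2(d-1)}$ by combining the Higgs and cohomological filtrations and exploiting that the truncated bicomplex omits the entire $j=0$ row. The main obstacle will be carrying out this combined bookkeeping carefully, since extensions a priori multiply torsion exponents additively; the delicate point is verifying that the longest nonempty diagonal of the sub-bicomplex contributes only the announced factor rather than a larger one.
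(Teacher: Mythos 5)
Your construction of the map is the same in substance as the paper's: you use the resolution property of $(\calO\widehat\bC_{\pd}^+,\Theta)$ to write $\rR\nu_*\calL\simeq\rR\nu_*\HIG(\calL\otimes_{\OXp}\calO\widehat\bC_{\pd}^+,\Theta_\calL)$, identify $\nu_*(\calL\otimes\calO\widehat\bC_{\pd}^+)=\calH$ via Theorem \ref{Intro-Main result}(1), and take the canonical map $\nu_*\to\rR\nu_*$. But your phrasing ``the inclusion of this row into the total complex'' is not literally correct: the $j=0$ row of a first-quadrant double complex is never a subcomplex of the total complex (the vertical differential leaves it), and in fact the relevant row here is not even a row of an honest bicomplex but the $\rH^0$'s of the columns, i.e.\ the $E_1^{\bullet,0}$ line of a hypercohomology spectral sequence. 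For the same reason, the cofiber is \emph{not} ``the total complex of the sub-bicomplex $K^{i,j}$ with $j\geq 1$''; after choosing a filtered resolution, $\nu_*\HIG(\calL\otimes\calO\widehat\bC_{\pd}^+)$ sits inside the $j=0$ row as a kernel, so the quotient bicomplex is not what you describe. What you want to say is that the map $\nu_*\HIG(K)\to\rR\nu_*\HIG(K)$ respects the stupid (Higgs) filtration, hence its cofiber $C$ is naturally filtered with $\gr^p C\simeq(\tau^{\geq 1}\rR\nu_*(\calL\otimes\calO\widehat\bC^+_{\pd}))\otimes_{\calO_\frakX}\rho^p\widehat\Omega^p_\frakX(-p)[-p]$.

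The bound argument has a genuine logical error. You claim the filtration gives $f^{d+1}$ and then say that ``for $d\geq 3$, this is improved to $f^{2(d-1)}$.'' But for $d\geq 3$ one has $2(d-1)\geq d+1$, so $f^{2(d-1)}$-torsion is the \emph{weaker} statement: if you have already established $f^{d+1}$-torsion you are done (indeed with a stronger result), and no ``improvement'' is possible or needed. The phrase ``extensions a priori multiply torsion exponents additively'' also indicates a confusion: the graded pieces $\gr^pC$ are \emph{not} killed by $f$ as objects of $D(\frakX)$ (only their cohomology sheaves are, and $\gr^pC$ spans $d$ cohomological degrees); what the spectral sequence of the filtered object actually gives is that each $\rH^n(C)$ has a finite filtration whose graded pieces are subquotients of $E_1^{p,n-p}=\rR^{n-p}\nu_*(\calL\otimes\calO\widehat\bC^+_{\pd})\otimes\rho^p\widehat\Omega^p(-p)$, each annihilated by $f$. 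Counting the nonzero terms on the diagonal $p+q=n$ (the constraints $1\leq q\leq d$, $0\leq p\leq d$ leave at most $d$ of them) shows $\rH^n(C)$ is killed by $f^{d}$ --- so the careful version of your argument is fine, and in fact yields a sharper constant than the corollary's $\max\{d+1,2(d-1)\}$. The paper's own proof proceeds differently: it uses the spectral sequence $'E_1^{i,j}=\rR^j\nu_*(\calL\otimes\calO\widehat\bC^+_{\pd})\otimes\rho^i\widehat\Omega^i(-i)\Rightarrow\rR^{i+j}\nu_*\HIG(\dots)$, bounds the kernel of $\rH^i(\HIG(\calH))\to\rH^i(\rR\nu_*\calL)$ by tracking $'E_r^{i,0}\to{}'E_{r+1}^{i,0}$ (giving $f^{i-1}$) and the cokernel by the abutment filtration (giving $f^i$ or $f^{d+1}$), and assembles these in the long exact sequence of the cofiber. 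In short: your filtration idea is sound and even potentially yields a better constant, but the last paragraph of your proposal is confused and does not constitute a proof; you should drop the backwards ``improvement'' step and instead carefully count the nonzero terms on each anti-diagonal of the $E_1$-page of the stupid filtration of $C$.
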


\subsubsection{Compare cohomologies}\label{Intro-CompareCohomology}
  Keep assumptions as in Theorem \ref{Intro-Main result}. The following question is natural: 
  \begin{ques}\label{Intro-Ques-CompareCohomology}
      Can we compute $\rL\eta_{\rho(\zeta_p-1)}\rR\nu_*\calL$ via the Higgs complex $\HIG(\calH,\theta_{\calH})$? More precisely, do we have a quasi-isomorphism
      \[\HIG(\calH,\theta_{\calH})\simeq \rL\eta_{\rho(\zeta_p-1)}\rR\nu_*\calL ?\]
  \end{ques} 
  The answer can not be positive if $\rL\eta_{\rho(\zeta_p-1)}\rR\nu_*\calL$ is not a perfect complex in $D(\frakX)$. However, the next theorem shows that this phenomenon never happens.
  \begin{thm}[Theorem \ref{Thm-PerfectComplex}]\label{Intro-PerfectComplex}
    Let $\frakX$ be a smooth $p$-adic formal scheme over $\calO_C$ and not necessarily liftable. 
      Assume $a\geq \frac{1}{p-1}$ and $\lambda\in\calO_C$ with $\nu_p(\lambda)\leq\nu_p(\rho)$. Then for any $a$-small $\OXp$-representation $\calL$, the $\rL\eta_{(\zeta_p-1)\lambda}\rR\nu_*\calL$ is a perfect complex in $D(\frakX)$ concentrated in degree $[0,d]$ with $p$-torsion free $\rH^0$ and there is a natural map $\rL\eta_{(\zeta_p-1)\lambda}\rR\nu_*\calL\to\rR\nu_*\calL$.
  \end{thm}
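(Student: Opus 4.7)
The natural map $\rL\eta_{(\zeta_p-1)\lambda}\rR\nu_*\calL\to\rR\nu_*\calL$ is the canonical transformation $\rL\eta_f\to\id$ attached to any non-zero-divisor $f$ in the construction of the d\'ecalage functor; it is part of the definition. Moreover, since $\rR\nu_*\calL$ is concentrated in non-negative degrees, a direct inspection of the defining formula $(\eta_fC)^i=\{x\in f^iC^i:dx\in f^{i+1}C^{i+1}\}$ (applied to any $f$-torsion-free representative) shows that the natural map induces an isomorphism $\rH^0(\rL\eta_f\rR\nu_*\calL)\cong\rH^0(\rR\nu_*\calL)=\nu_*\calL$. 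Since $\calL$ is $p$-torsion free as a small $\OXp$-representation, so is $\nu_*\calL$, and the $p$-torsion freeness of $\rH^0$ follows.

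It remains to prove perfection and concentration in $[0,d]$. These properties are \'etale-local on $\frakX$, so we may replace $\frakX$ by an affine open $\frakU=\Spf R$ \'etale over $\Spf\calO_C\langle T_1^{\pm 1},\ldots,T_d^{\pm 1}\rangle$; any such $\frakU$ lifts to $\rA_2$, so Theorem~\ref{Intro-Main result} and Corollary~\ref{Intro-Cor-BMS} apply on $\frakU$. They furnish an $a$-small Higgs bundle $(\calH,\theta_\calH)$ attached to $\calL|_\frakU$ together with a natural morphism
\[
\alpha\colon \HIG(\calH,\theta_\calH)\longrightarrow\rR\nu_*\calL|_\frakU
\]
whose cofiber is killed by $(\rho(\zeta_p-1))^N$ with $N=\max\{d+1,2(d-1)\}$. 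Setting $f=(\zeta_p-1)\lambda$, the hypothesis $\nu_p(\lambda)\leq\nu_p(\rho)$ gives $f\mid(\zeta_p-1)\rho$, so the cofiber of $\alpha$ is in particular killed by $f^N$.

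The main obstacle is to deduce perfection and amplitude $[0,d]$ for $\rL\eta_f\rR\nu_*\calL|_\frakU$ from the corresponding (and straightforward) facts for $\rL\eta_f\HIG(\calH,\theta_\calH)$, since $\rL\eta_f$ is neither exact nor additive on cofiber sequences whose cone is $f$-power torsion. My plan is a direct local computation: $\rR\nu_*\calL|_\frakU$ is quasi-isomorphic to the Galois-cohomology Koszul complex $\mathrm{Kos}_R(\gamma_1-1,\ldots,\gamma_d-1;M)$ attached to the action of $\Gamma=\bZ_p(1)^d$ on a free $R$-module $M$ underlying $\calL|_\frakU$. The $a$-smallness hypothesis, combined with $a\geq\frac{1}{p-1}$, forces each $\gamma_i-1$ to be divisible by $(\zeta_p-1)$, with quotient congruent modulo $(\zeta_p-1)$ to the $i$-th component of $\theta_\calH$. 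One can then compute $\rL\eta_f$ of this Koszul complex termwise via the formula above, obtaining another Koszul-type complex in degrees $[0,d]$ with finite free terms (up to rescaling by powers of $\lambda$). This simultaneously yields perfection and the required amplitude, completing the proof.
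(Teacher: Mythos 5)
Your local plan — reduce to a small affine $\frakU=\Spf(R^+)$, replace $\rR\nu_*\calL$ by a Koszul complex for $\Gamma\cong\bZ_p^d$ acting on a finite free $R^+$-module, use the log-nilpotent shape $\gamma_i-1=(\zeta_p-1)\Theta_iF(\Theta_i)$ to identify $\rL\eta_{(\zeta_p-1)\lambda}$ of that Koszul complex with another Koszul complex of finite free $R^+$-modules — is the same route the paper takes (this is essentially its Lemma \ref{Lem-Local L-eta}, via [BMS18, Lem. 7.9]). But your sketch has a genuine gap at the very first step of the local computation. The identification $\rR\nu_*\calL|_\frakU\simeq\mathrm{Kos}_R(\gamma_1-1,\ldots,\gamma_d-1;M)$ is \emph{not} a quasi-isomorphism; the passage from $\rR\Gamma(U_{\proet}/\frakU,\calL)$ to $\Gamma$-group cohomology of $V=\calL(U_\infty)$ is only an \emph{almost} quasi-isomorphism (it is exactly [Wan23, Lem. 5.11]). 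Since $\rL\eta_f$ does not in general send almost quasi-isomorphisms to quasi-isomorphisms, one must separately show that
$\rL\eta_{(\zeta_p-1)\lambda}\rR\Gamma(\Gamma,V)\to\rL\eta_{(\zeta_p-1)\lambda}\rR\Gamma(U_{\proet}/\frakU,\calL)$
is a genuine quasi-isomorphism. The paper does this by invoking [BMS18, Lem. 8.11(2)], whose hypothesis is that $\rH^n(\Gamma,V)$ and $\rH^n(\Gamma,V)/(\zeta_p-1)\lambda$ contain no nonzero almost-zero elements; verifying this requires the decompletion theorem (so that these groups decompose as cohomologies of perfect Koszul complexes over $R^+$), the coherence of $R^+$, and the paper's Lemma \ref{Lem-coherent} (coherent $R^+$-modules have no nonzero almost-zero elements). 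None of this appears in your sketch, and without it the local reduction is incomplete. Relatedly, you take for granted that $\calL|_\frakU$ is represented by a finite free $R^+$-module $M$; what is available is a finite free $\widehat R_\infty^+$-module $V$, and passing to a finite free $R^+$-module $V_0$ is the decompletion theorem (Theorem \ref{Thm-Decompletion}), not a triviality.

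Two smaller remarks. First, your abandoned Higgs-bundle approach has a sign error in the divisibility: $f\mid(\zeta_p-1)\rho$ gives $((\zeta_p-1)\rho)^N\in f^N\calO_C$, so being killed by $((\zeta_p-1)\rho)^N$ is \emph{weaker} than being killed by $f^N$, not stronger. Second, the map $\rL\eta_f K\to K$ is not "part of the definition" of $\rL\eta_f$ in general; it exists only once one knows $K$ lives in degrees $\geq 0$ with $f$-torsion-free $\rH^0(K)$ (this is exactly [BMS18, Lem. 6.10], which the paper cites for this purpose). Your subsequent identification $\rH^0(\rL\eta_fK)\cong\rH^0(K)$ likewise requires $f$-torsion-freeness of $\rH^0(K)$ as input, since in general $\rH^0(\rL\eta_fK)=\rH^0(K)/\rH^0(K)[f]$; you do eventually observe that $\nu_*\calL$ is $p$-torsion free, so the needed fact is available, but the order of the logic should be tightened.
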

  \begin{rmk}
      When $\lambda = 1$ and $\calL = \OXp$, the result was obtained in \cite[\S 8]{BMS18}.
  \end{rmk}
  
  As a consequence, we know that $\rL\eta_{(\zeta_p-1)\rho}\rR\nu_*\calL$ is a perfect complex concentrated in degree $[0,d]$ as expected. So it is reasonable to compare $\HIG(\calH,\theta_{\calH})$ and $\rL\eta_{(\zeta_p-1)\rho}\rR\nu_*\calL$. The following theorem asserts that the first truncations of the two are quasi-isomorphic to each other.
  \begin{thm}[Theorem \ref{Thm-TruncationOne}]\label{Intro-Truncation}
      Let $\frakX$ be a liftable smooth $p$-adic formal scheme over $\calO_C$ with a fixed lifting to $\rA_2$ as in Theorem \ref{Intro-Main result}. Assume $a\geq\frac{1}{p-1}$. Then for any $a$-small $\OXp$-representation $\calL$ with induced Higgs bundle $(\calH,\theta_{\calH})$, the composite
    $\tau^{\leq 1}\HIG(\calH,\theta_{\calH})\to\HIG(\calH,\theta_{\calH})\xrightarrow{{\rm Cor.} \ref{Intro-Cor-BMS}}\rR\nu_*\calL$ uniquely factors through $\rL\eta_{\rho(\zeta_p-1)}\rR\nu_*\calL\xrightarrow{{\rm Thm.} \ref{Intro-PerfectComplex}}\rR\nu_*\calL$ and induces a quasi-isomorphism 
    \[\tau^{\leq 1}\HIG(\calH,\theta_{\calH})\xrightarrow{\simeq} \tau^{\leq 1}\rL\eta_{\rho(\zeta_p-1)}\rR\nu_*\calL.\]
    In particular, if $\frakX$ is a smooth curve over $\calO_C$, then we have
    \[
   \HIG(\calH,\theta_{\calH})\xrightarrow{\simeq} \rL\eta_{\rho(\zeta_p-1)}\rR\nu_*\calL.
    \]
  \end{thm}
It is not clear to us how to extend this comparison to higher degrees. But inspired by the mod $p$ non-abelian Hodge theory in \cite{OV}, we have the following conjecture.

 \begin{conj}[Conjecture \ref{Conj-CompareCohomology}]\label{Intro-Conj-CompareCohomology}
      Keep assumptions in Theorem \ref{Intro-Main result}. Then for any $a$-small $\OXp$-representation $\calL$ with induced Higgs bundle $(\calH,\theta_{\calH})$, denote by $r$ the nilpotency length of $(\zeta_p-1)\theta_{\calH}$ modulo $p$, and then the natural morphism in Corollary \ref{Cor-IntegralSimpson} induces a quasi-isomorphism
     \[\tau^{\leq p-r}\HIG(\calH,\theta_{\calH})\simeq \tau^{\leq p-r}\rL\eta_{\rho(\zeta_p-1)}\rR\nu_*\calL.\]
  \end{conj}
  This should be considered as an analogue of the Deligne--Illusie decomposition with coefficients in mixed characteristic. Although it is difficult to study the general case, the trivial coefficient case is quite clear.
  In \cite{Min21}, using a quasi-isomorphism in \cite[Prop. 15]{BMS18} together with a standard argument of Deligne--Illusie \cite[Thm. 2.1]{DI}, the first author proved the following decomposition of Deligne--Illusie type:
  \begin{thm}[\emph{\cite[Thm. 4.1]{Min21}}]\label{Intro-Min}
      Let $\frakX$ be the base change of a smooth $p$-adic formal scheme over $\calO_K=W(\kappa)$ along the inclusion $W(\kappa)\to \calO_C$. Then the natural lifting of $\frakX$ induces a quasi-isomorphism $\gamma:\tau^{\leq p-1}\HIG(\calO_{\frakX},0)\to\tau^{\leq p-1}\rL\eta_{\zeta_p-1}\rR\nu_*\OXp$.
  \end{thm}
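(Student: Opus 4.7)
The plan is to imitate the classical Deligne--Illusie argument, using as main input the local computation of $\rL\eta_{\zeta_p-1}\rR\nu_*\OXp$ from \cite[Prop.~15]{BMS18}, which identifies $\rH^i(\rL\eta_{\zeta_p-1}\rR\nu_*\OXp)$ with $\Omega^i_\frakX\{-i\}$. The hypothesis that $\frakX$ descends to the unramified base $W(\kappa)$ is essential: it fixes a canonical cyclotomic structure and trivializes the Breuil--Kisin twist $\{-i\}$, so that the cohomology sheaves of the two sides agree termwise with the terms $\Omega^i_\frakX[-i]$ of $\HIG(\calO_\frakX,0)$.

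The main step is then to promote this termwise identification to an honest derived-category morphism $\gamma$. First I would construct $\gamma_1:\Omega^1_\frakX[-1]\to\rL\eta_{\zeta_p-1}\rR\nu_*\OXp$ from the $\dlog$-map, which factors through $\rL\eta_{\zeta_p-1}\rR\nu_*\OXp$ by the BMS construction. For $2\leq i\leq p-1$, I would then define $\gamma_i$ via the antisymmetrized $i$-fold cup product of $\gamma_1$ divided by $i!$; this is legitimate because $i<p$ forces $i!\in\bZ_p^\times$. Summing yields $\gamma:=\bigoplus_{i\leq p-1}\gamma_i$ on $\tau^{\leq p-1}\HIG(\calO_\frakX,0)$.

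Finally I would verify that $\gamma$ is a quasi-isomorphism on $\tau^{\leq p-1}$ by a local check. By the previous paragraph both sides have cohomology $\Omega^i_\frakX$ in degree $i\leq p-1$, and $\gamma_i$ induces the BMS identification up to the unit $i!$, hence an isomorphism on every $\rH^i$ with $i\leq p-1$. Independence of the local choice of coordinates used to define $\gamma_1$ follows because the obstruction cocycle measuring the difference of two such choices lives in degree $\geq p$ and is therefore killed by $\tau^{\leq p-1}$; this is the heart of the Deligne--Illusie averaging.

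The main obstacle I anticipate is showing that the cup product $\gamma_1^{\cup i}/i!$ actually defines a morphism landing in $\rL\eta_{\zeta_p-1}\rR\nu_*\OXp$ rather than merely in $\rR\nu_*\OXp$. Since $\rL\eta_{\zeta_p-1}$ is not exact, one must track how the divided powers $(\zeta_p-1)^i/i!$ interact with the decalage functor, and verify a lax symmetric monoidal structure on the relevant complexes. This is precisely what forces the truncation at $p-1$: beyond that degree, divisibility by $i!$ fails and the Postnikov tower of $\rL\eta_{\zeta_p-1}\rR\nu_*\OXp$ no longer splits.
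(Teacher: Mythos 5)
Your overall strategy --- construct $\gamma_1$ on $\tau^{\leq 1}$ and then antisymmetrize in the manner of Deligne--Illusie, using $i!\in\bZ_p^\times$ for $i\leq p-1$ --- matches the paper's sketch and is essentially the argument of \cite[Thm.~4.1]{Min21}. Your closing remark that the truncation at $p-1$ is forced by the failure of $i!$ to be a $p$-adic unit beyond that range is also correct. However, two points need repair.

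First, the sentence ``the obstruction cocycle measuring the difference of two such choices lives in degree $\geq p$ and is therefore killed by $\tau^{\leq p-1}$'' is wrong and conflates two unrelated phenomena. In a Deligne--Illusie argument the ambiguity in the local definition of $\gamma_1$ is governed by a class in degree $2$ (an $\Ext^2$-type obstruction), not in degree $\geq p$; it is the \emph{global lifting} of $\frakX$ over $W(\kappa)$ (equivalently $\rA_2$) that kills this obstruction, not the truncation. There is no averaging of local choices that works only ``below degree $p$''; the lifting produces a single global $\gamma_1$ from the start, and the $p-1$ cutoff is due solely to the factorials, exactly as your last sentence already says.

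Second, ``the $\dlog$-map'' only identifies the $\rH^1$'s, whereas you need $\gamma_1$ as a morphism in $D(\frakX)$. The actual construction goes through the cotangent complex: \cite[Prop.~8.15]{BMS18} provides a canonical quasi-isomorphism $\widehat\rL_{\frakX/\calO_K}\{-1\}[-1]\xrightarrow{\simeq}\tau^{\leq 1}\rL\eta_{\zeta_p-1}\rR\nu_*\OXp$, and the canonical lifting of $\frakX$ then splits $\widehat\rL_{\frakX/\calO_K}\simeq\calO_\frakX\{1\}[1]\oplus\widehat\Omega^1_\frakX$. Composing gives $\gamma_1$ globally and canonically; this is the precise role of the lifting. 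Finally, your concern about the cup products staying inside $\rL\eta_{\zeta_p-1}\rR\nu_*\OXp$ is resolved by the lax symmetric monoidal structure of $\rL\eta$ established in \cite[\S 6]{BMS18}, so once flagged that step is sound.
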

  In particular, there exists a quasi-isomorphism $\gamma_1:\tau^{\leq 1}\HIG(\calO_{\frakX},0)\to\tau^{\leq 1}\rL\eta_{\zeta_p-1}\rR\nu_*\OXp$.  So it is natural to compare $\gamma_1$ with the quasi-isomorphism in Theorem \ref{Intro-Truncation} for $\calL = \OXp$. Using the same argument in \cite[Thm. 4.1]{Min21} together with some calculations, we have
  \begin{thm}[Theorem \ref{Thm-DI} and Theorem \ref{Thm-compare DI with Min}]\label{Intro-DI}
     Let $\frakX$ be a liftable smooth $p$-adic formal scheme over $\calO_C$  and fix a lifting. 
      
      \begin{enumerate}
          \item[(1)] There exists a quasi-isomorphism $\gamma':\tau^{\leq p-1}\HIG(\calO_{\frakX},0)\to\tau^{\leq p-1}\rL\eta_{(\zeta_p-1)\rho}\rR\nu_*\OXp$. 

          \item[(2)] If $\frakX$ is the base change of a smooth $p$-adic formal scheme over $\calO_K = \rW(\kappa)$ (and thus $\rho = \zeta_p-1$) and the fixed lifting is the natural one induced by $W(\kappa)\to A_2 = \Ainf/(\xi^2)$, then the above $\gamma'$ is compatible with $\gamma$ in Theorem \ref{Intro-Min} in the following sense: There is a commutative diagram
          \begin{equation*}
              \xymatrix@C=0.5cm{
              \bigoplus_{i=0}^{p-1}(\zeta_p-1)^i\widehat \Omega_{\frakX}^i(-i)[-i]\ar[d]^{\iota_{\zeta_p-1}}\ar[rr]^{\gamma'}&&\tau^{\leq p-1}\rL\eta_{(\zeta_p-1)^2}\rR\nu_*\OXp\ar[r]\ar[d]&\rR\nu_*\OXp\ar@{=}[d]\\
              \bigoplus_{i=0}^{p-1}(\zeta_p-1)^i\widehat \Omega_{\frakX}^i(-i)[-i]\ar[rr]^{\gamma}&&\tau^{\leq p-1}\rL\eta_{\zeta_p-1}\rR\nu_*\OXp\ar[r]&\rR\nu_*\OXp,
              }
          \end{equation*}
          where $\iota_{\zeta_p-1}$ is induced by multiplication $(\zeta_p-1)^i$ at each degree $0\leq i\leq p-1$.
      \end{enumerate}
  \end{thm}
 For the trivial coefficient, the nilpotency degree $r$ is $1$. So Theorem \ref{Intro-DI} shows that Conjecture \ref{Intro-Conj-CompareCohomology} holds in this case.
 We also remark the quasi-isomorphism $\gamma$ in Theorem \ref{Intro-Min} can not extend to the whole complex. This is a consequence of a recent work \cite{Pet23} of Petrov and one can see Remark \ref{Rmk-Min} for a rough explanation. Note that by Theorem \ref{Intro-Truncation}, we know that Conjecture \ref{Intro-Conj-CompareCohomology} also holds true for curves as well as when $p=2$. But the general case still remains mysterious to us.

\subsection{Organizations}
  The paper is organized as follows: In \S\ref{Sec-PeriodSheaf}, we construct the desired period sheaf $\calO\widehat \bC_{\pd}^+$ together with Higgs field $\Theta$. In \S\ref{Sec-GammaCohomology}, we consider $\Gamma$-cohomology of certain representations in a more general setting. This will help to study the local version of Theorem \ref{Intro-Main result} in \S\ref{SSec-Local Simpson}. In \S\ref{Sec-MainResult}, we first prove the local integral Simpson correspondence and then reduce Theorem \ref{Intro-Main result} to this case. The last section \S \ref{Sec-PerfectComplex} is devoted to proving Theorems \ref{Intro-PerfectComplex} and \ref{Intro-Truncation}, and discussing the analogue of Deligne--Illusie decomposition for small $\OXp$-representations.

\subsection{Notations}\label{SSec-Notation}
  Let $K$ be a complete discretely valued $p$-adic field of mixed characteristic $(0,p)$ with perfect residue field $\kappa$. Let $C$ be the $p$-adic completion of a fixed algebraic closure $\overline K$ of $K$. Denote by $\calD_K$ the ideal of relative differential of $\calO_K$ over $\rW(\kappa)$. We fix a choice of compatible system $\{\zeta_{p^n}\}_{n\geq 1}$ of primitive $p$-roots of unit and for any $\alpha = \frac{q}{p^r}\in \bN[\frac{1}{p}]$ with $\gcd(q,p) = 1$ and $r\geq 0$, define $\zeta^{\alpha} = \zeta_{p^r}^q$. Let $t$ be Fontaine's $p$-adic analogue of ``$2\pi i$'', and then it stands for the basis of the Tate twist $\Zp(1) = \Zp\cdot t$ such that $g(t) = \chi(g)t$ for any $g\in G_K:=\Gal(\overline K/K)$, where $\chi:G_K\to\bZ_p^{\times}$ is the cyclotomic character corresponding to the given $\{\zeta_{p^n}\}_{n\geq 0}$; that is, we have $g(\zeta_{p^n}) = \zeta_{p^n}^{\chi(g)}$ for all $n\geq 0$ and $g\in G_K$. For any $\Zp$-module $M$, we set $M(1):=M\otimes_{\Zp}\Zp(1) = M\cdot t$.
  
  Let $\rA_{\inf,K} = \rA_{\inf}(C,\calO_C)\otimes_{\rW(\kappa)}\calO_K$ be the ramified Witt ring over $\calO_C^{\flat}$, $\rI_K$ the kernel of the natural surjection $\theta:\rA_{\inf,K}\to\calO_C$ and $\rA_2 = \rA_{\inf,K}/\rI_K^2$. Then there exists an element $\rho_K\in\calO_C$ with valuation $\nu_p(\rho_K) = \nu_p(\calD_K)+\frac{1}{p-1}$ such that $I_K\rA_2\cong \frac{1}{\rho_K}\calO_C(1)$. When $\calO_K = \rW(\kappa)$, one can choose $\rho_K = \zeta_p-1$. When contexts are clear, we also denote $\rho_K$ by $\rho$.

  By a {\bf smooth formal scheme} $\frakX$ over $\calO_C$, we mean a separated $p$-adic formally smooth formal scheme locally of topologically finite type over $\calO_C$. Such an $\frakX$ is called {\bf liftable}, if there is a smooth formal scheme $\widetilde \frakX$ over $\Spf(\rA_2)$ (viewed as a $p$-adic formal scheme) which is a lifting of $\frakX$ along $\theta:\rA_2\to\calO_C$. In what follows, by a lifting of $\frakX$, we always mean a smooth lifting as above. Let $\nu:X_{\proet}\to\frakX_{\et}$ be the natural projection from the pro-\'etale site of $X$ to the \'etale site of $\frakX$. By abuse of notations, for any sheaf $\calF$ on $\frakX_{\et}$, we also denote its pull-back $\nu^{-1}\calF$ on $X_{\proet}$ by $\calF$.

  For any ring $A$ with an element $a$ admitting pd-powers, we denote by $a^{[n]}$ the $n$-th pd-power of $a$ (e.g. for $\bZ$-flat $A$, $a^{[n]} = \frac{a^n}{n!}$ as an element in $A\otimes_{\bZ}\bQ$.)
  For any $\underline n = (n_1,\dots,n_d)\in\bN^d$, we define $|\underline n| = \sum_{i=1}^dn_i$. Assume $a_1,\dots,a_d\in A$ which admits pd-powers. For any $\underline n = (n_1,\dots,n_d)\in\bN^d$, we often denote $\prod_{i=1}^da_i^{[n_i]}$ by $\underline a^{[\underline n]}$ for simplicity. For any $1\leq i\leq d$, let $\underline 1_i=(0,\dots,0,1,0\dots,0)\in\bN^d$ be the generator of $i$-th component of $\bN^d$.

\subsection{Acknowledgement}
  The authors want to thank Johannes  Ansch\"utz, Ben Heuer and Arthur-C\'esar Le Bras for their interest and  valuable comments on a very early draft of this paper. We also thank Ruochuan Liu for his interest. The work was done when the second author was a postdoc at Morningside Center of Mathematics, and he would like to thank the institute for the great research condition there. The authors would also like to thank
 the referees for their careful reading and valuable comments.
  
  Y.M. has received funding from the European Research Council (ERC) under the European Union's Horizon 2020 research and innovation programme (grant agreement No. 884596). Y.W. is partially supported by CAS Project for Young Scientists in Basic Research, Grant No. YSBR-032.

\section{The period sheaf $\calO\widehat \bC_{\pd}^+$}\label{Sec-PeriodSheaf}
  In this section, let $\frakX$ be a liftable smooth formal scheme of dimension $d$ over $\calO_C$ with a fixed lifting $\widetilde \frakX$ over $\rA_2$. Let $X$ be the rigid analytic generic fiber of $\frakX$. We are going to construct the desired period sheaf $(\calO \widehat \bC_{\pd}^+,\Theta)$.
\subsection{The construction of $\calO\widehat \bC_{\pd}^+$}
  The key ingredient of constructing $\calO\widehat \bC_{\pd}^+$ is the following well-known result, of which (the idea for) the proof is due to Quillen and Illusie:
  \begin{lem}\label{Lem-SZ}
      Let $A$ be a commutative ring. For any short exact sequence of flat $A$-modules
      \[0\to E\xrightarrow{u} F\xrightarrow{v} G\to 0\]
      and any $n\geq 0$, there exists an exact sequence of $A$-modules:
      \begin{equation}\label{ES-SZ-I}
          0\to\Gamma^n(E)\to\Gamma^n(F)\xrightarrow{\partial}\Gamma^{n-1}(F)\otimes G\xrightarrow{\partial}\cdots\xrightarrow{\partial}\Gamma^{n-i}(F)\otimes\wedge^iG\xrightarrow{\partial}\cdots \xrightarrow{\partial}\wedge^nG\to 0,
      \end{equation}
      where the differentials $\partial$ are induced by sending each 
      \[f_1^{[m_1]}\cdots f_r^{[m_r]}\otimes \omega\in\Gamma^m(F)\otimes\wedge^lG\]
      with $f_i\in F$, $m_i\geq 1$ satisfying $m_1+\cdots+m_r = m$ and $\omega\in\wedge^lG$ to 
      \[\sum_{i=1}^rf_1^{[m_1]}\cdots f_i^{[m_i-1]}\cdots f_r^{[m_r]}\otimes v(f_i)\wedge\omega\in\Gamma^{m-1}(F)\otimes\wedge^{l+1}G.
      \]
      Moreover, there exists an exact sequence
      \begin{equation}\label{ES-SZ-II}
          0\to\Gamma(E)\to\Gamma(F)\xrightarrow{\partial}\Gamma(F)\otimes G\xrightarrow{\partial}\Gamma(F)\otimes\wedge^2G\xrightarrow{\partial}\cdots,
      \end{equation}
      where the differentials $\partial$ are all $\Gamma(E)$-linear.
  \end{lem}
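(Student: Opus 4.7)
The plan is to prove the lemma by reducing the input short exact sequence to a split short exact sequence of finite free modules, and then analysing the resulting complex as a direct sum of divided-power Koszul complexes. First I would use the fact that $\Gamma^n$, $\wedge^n$, and $\otimes$ all commute with filtered colimits, together with Lazard's theorem, to reduce to the case where $E$, $F$, $G$ are all finite free $A$-modules. Since $G$ is then projective, the short exact sequence splits, so one may fix an isomorphism $F \cong E \oplus G$.

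Next I would exploit the canonical decomposition
\[
\Gamma^n(E \oplus G) = \bigoplus_{i+j=n} \Gamma^i(E) \otimes \Gamma^j(G),
\]
which bigrades each term $\Gamma^{n-k}(F) \otimes \wedge^k G$ of (\ref{ES-SZ-I}). Because the differential $\partial$ is defined via $v$, which annihilates $u(E)$ and restricts to the identity on the chosen copy of $G$, only the $G$-direction contributes. Consequently (\ref{ES-SZ-I}) decomposes as a direct sum
\[
\bigoplus_{i=0}^{n} \Gamma^i(E) \otimes K^{\bullet}(G, n-i),
\]
where $K^{\bullet}(G,m)$ denotes the divided-power Koszul complex
\[
0 \to \Gamma^m(G) \to \Gamma^{m-1}(G) \otimes G \to \cdots \to \wedge^m G \to 0.
\]
The summand $i = n$ contributes exactly $\Gamma^n(E)$ in the leftmost position (since $K^{\bullet}(G,0)$ is just $A$ concentrated in degree zero), so the lemma reduces to the exactness of $K^{\bullet}(G,m)$ for $G$ finite free and $m \geq 1$.

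This exactness is the technical heart of the argument, and it is where I expect the main obstacle to lie. I would argue by induction on $\rk G$: the rank-one case is a direct length-two isomorphism $\Gamma^m(G) \xrightarrow{\sim} \Gamma^{m-1}(G) \otimes G$, and for the inductive step, writing $G = G' \oplus Ag$ and decomposing $\Gamma^{\bullet}$ and $\wedge^{\bullet}$ accordingly exhibits $K^{\bullet}(G,m)$ as an iterated extension of shifts of $K^{\bullet}(G',m')$ for $m' \leq m$, each exact by the induction hypothesis. Alternatively one can write down an explicit contracting homotopy, roughly of the form
\[
g_1^{[b_1]}\cdots g_d^{[b_d]} \otimes g_i \wedge \omega \;\mapsto\; \pm\, g_1^{[b_1]}\cdots g_i^{[b_i+1]}\cdots g_d^{[b_d]} \otimes \omega,
\]
and verify directly that its anticommutator with $\partial$ is (up to Euler-type scalars) the identity. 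This is classical (due essentially to Quillen and Illusie), but it is the only step of real substance.

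Finally, (\ref{ES-SZ-II}) is obtained from (\ref{ES-SZ-I}) by summing over $n \geq 0$: one has $\bigoplus_n \Gamma^{n-k}(F) \otimes \wedge^k G = \Gamma(F) \otimes \wedge^k G$ and $\bigoplus_n \Gamma^n(E) = \Gamma(E)$. The $\Gamma(E)$-linearity of the differentials in (\ref{ES-SZ-II}) follows directly from the explicit formula for $\partial$: any divided-power factor lying in $u(E) \subset F$ contributes $v(u(e)) = 0$ when $\partial$ extracts it, so such factors pass through $\partial$ unchanged.
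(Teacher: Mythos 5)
The paper's own ``proof'' is essentially a citation: it invokes Lemma A.28 of Szamuely--Z\'abr\'adi for the exactness of~(\ref{ES-SZ-I}), and then notes that~(\ref{ES-SZ-II}) follows by summing over $n$ with $\Gamma(E)$-linearity clear from the formula for $\partial$. Your proposal reconstructs from scratch what that citation delegates, and the skeleton is correct and is indeed the standard Quillen--Illusie argument: split the sequence, use the exponential decomposition $\Gamma^n(E\oplus G)\cong\bigoplus_{i+j=n}\Gamma^i(E)\otimes\Gamma^j(G)$ to break~(\ref{ES-SZ-I}) into a direct sum of shifted divided-power Koszul complexes $\Gamma^i(E)\otimes K^\bullet(G,n-i)$, and prove exactness of $K^\bullet(G,m)$. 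Your final two steps (summing over $n$; $\Gamma(E)$-linearity from the explicit formula, because $v\circ u = 0$) match the paper exactly.

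One step is stated too optimistically: it is not automatic from Lazard's theorem and commutation with filtered colimits that a short exact sequence of flat modules is a filtered colimit of short exact sequences in which \emph{all three} terms are finite free. What you actually need, and can easily obtain, is weaker. Write $G=\varinjlim_\alpha G_\alpha$ with $G_\alpha$ finite free and pull back to get $0\to E\to F_\alpha\to G_\alpha\to 0$ with $F_\alpha = F\times_G G_\alpha$; since $G_\alpha$ is projective this splits, $F_\alpha\cong E\oplus G_\alpha$, and $\varinjlim_\alpha F_\alpha = F$. Now the exponential decomposition gives $\bigoplus_i\Gamma^i(E)\otimes K^\bullet(G_\alpha,n-i)$ with $E$ still an arbitrary flat (not finite free) module; but since $K^\bullet(G_\alpha,m)$ is a bounded exact complex of finite free modules it is \emph{split} exact, so tensoring with $\Gamma^i(E)$ preserves exactness with no hypothesis on $E$ at all. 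With this adjustment your reduction is clean, and the remaining ``technical heart'' --- exactness of the divided-power Koszul complex for a finite free module --- is, as you say, classical and provable either by rank induction or by the explicit contracting homotopy; one must check (as you flag) that the anticommutator gives an invertible Euler-type scalar, which in the finite free case is handled by combining the homotopies attached to the basis elements.
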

  \begin{proof}
      The exactness of (\ref{ES-SZ-I}) follows from \cite[Lem. A.28]{SZ}. By taking the direct sum of all $n\geq 0$, we know (\ref{ES-SZ-II}) is exact and the $\Gamma(E)$-linearity follows from the definition of $\partial$'s.
  \end{proof}
  We want to apply the above lemma to the integral Faltings' extension
  \begin{equation}\label{ES-FaltingsExt}
      0\to\OXp\to\calE_{\rho}^+\to\rho\OXp\otimes_{\calO_{\frakX}}\widehat \Omega_{\frakX}^1(-1)\to 0,
  \end{equation}
  which was introduced in \cite{Wang}. Let us give a quick review of the definition of $\calE_{\rho}^+$ as follows.
  
  Let $\widehat \rL_{\OXp/\calO_{\widetilde \frakX}}$ be the $p$-complete cotangent complex induced by the morphism of sheaves of rings $ \calO_{\widetilde\frakX}\to\OXp$ on $X_{\proet}$. By \cite[Thm. 2.9]{Wang}, there exists a short exact sequence of $\OXp$-modules
  \[0\to\frac{1}{\rho_K}\OXp(1)\to\rH^{-1}(\widehat \rL_{\OXp/\calO_{\widetilde \frakX}})\to\OXp\otimes_{\calO_{\frakX}}\widehat \Omega_{\frakX}^1\to 0.\]
  Then we define 
  \[\calE_{\rho}^+ = \rho_K\rH^{-1}(\widehat \rL_{\OXp/\calO_{\widetilde \frakX}})(1),\]
  and it fits into the exact sequence \ref{ES-FaltingsExt}. By \cite[Prop. 2.6]{Wang}, we know that $\calE_{\rho}^+$ is a locally finite free $\OXp$-module such that locally on $X_{\proet}$, $\calE_{\rho}^+ \cong \OXp\oplus\OXp\otimes_{\calO_{\frakX}}\rho\widehat \Omega_{\frakX}^1(-1)$.

  Now apply Lemma \ref{Lem-SZ} to the short exact sequence (\ref{ES-FaltingsExt}) of locally finite free $\OXp$-modules. We get an exact sequence
  \begin{equation}\label{ES-Preparation}
      0\to\Gamma(\OXp)\to\Gamma(\calE_{\rho}^+)\xrightarrow{\partial}\Gamma(\calE_{\rho}^+)\otimes_{\calO_{\frakX}}\rho\widehat \Omega^1_{\frakX}(-1)\xrightarrow{\partial}\cdots\xrightarrow{\partial}\Gamma(\calE_{\rho}^+)\otimes_{\calO_{\frakX}}\rho^d\widehat \Omega^d_{\frakX}(-d)\to 0.
  \end{equation}
  
  Let $e$ be the basis $1$ of $\OXp$ as a finite free $\OXp$-module. Then $\Gamma(\OXp) = \OXp[e]_{\pd}$ is the free pd-algebra over $\OXp$ generated by $e$. Noting that for any $n\geq 1$, $(n-1)\nu_p(\zeta_p-1)\geq \nu_p(n!)$, we know that $e-(\zeta_p-1)$ admits $n$-th divided powers in $\Gamma(\OXp)$ for any $n\geq 0$. Denote by $\calI_{\pd}$ the pd-ideal of $\Gamma(\OXp)$ principally generated by $e-(\zeta_p-1)$ and then we have $\OXp\cong \Gamma(\OXp)/\calI_{\pd}$.

  \begin{dfn}\label{Dfn-PeriodSheaf}
      \begin{enumerate}
          \item[(1)] Define $\calO\bC_{\pd}^+:=\Gamma(\calE_{\rho}^+)\otimes_{\Gamma(\OXp)}\OXp$, where we regard $\OXp$ as a $\Gamma(\OXp)$-algebra via the isomorphism $\Gamma(\OXp)/\calI_{\pd}\cong\OXp$.

          \item[(2)] Define $\calO\widehat \bC_{\pd}^{+} = \varprojlim_n\calO\bC_{\pd}^+/p^n$ as the $p$-adic completion of $\calO\bC_{\pd}^+$ and define $\calO\widehat \bC_{\pd} = \calO\widehat \bC_{\pd}^{+}[\frac{1}{p}]$. 

          \item[(3)] For any $\calA\in\{\calO\bC_{\pd}^{+},\calO\widehat \bC_{\pd}^{+}\}$, denote by $\Theta:\calA\to\calA\otimes_{\calO_{\frakX}}\rho\widehat \Omega^1_{\frakX}(-1)$ the $\OXp$-linear morphism induced by $\partial$ in (\ref{ES-Preparation}).
      \end{enumerate}    
  \end{dfn}

  \begin{rmk}\label{motivation-construction}
  The $p$-complete PD-polynomial rings naturally appear in the prismatic theory, which inspires our construction of $\calO\widehat \bC_{\pd}^{+}$. Let us briefly explain the motivation behind this construction. 
     In fact, the close relationship between prismatic theory and $p$-adic Simpson correspondence was discovered in \cite{MT} and \cite{Tia23}, which show that crystals on the prismatic site can be locally linked to Higgs bundles, similar to the crystals on Tsuji's Higgs site. In \cite{MW22}, we established a global correspondence of rational coefficients on smooth $p$-adic formal schemes over the ring of integers of a $p$-adic field. The key ingredient of \cite{MW22} is the global pro-\'etale period sheaf $\calO\bC$. Similar to the Higgs envelope considered by Tsuji in \cite{AGT}, the prismatic envelope can also produce ``local period rings" (to justify this, see \cite[Proposition 3.17]{MT}), which however are hard to be glued together. In some sense, the global period sheaf $\calO\bC$ forces these ``local period rings" to be glued together in a unique way. But this only works in the arithmetic case (i.e. over $p$-adic fields), where the Higgs fields involved are all nilpotent. In the geometric case (i.e. over $C$), there seems no possible way to glue prismatic ``local period rings" due to the convergence radius of $p$-adic exponential function. Despite this difficulty, the prismatic theory still has the advantage of yielding ``integral" period rings. Based on the $p$-complete PD-polynomial ring structure of the (Hodge--Tate) prismatic envelope showed in \cite{Tia23}, \cite{MW22}, one might try to modify the construction in \cite{Wang} by taking into account the convergence radius. This leads us to the above definition of $\calO\widehat\bC_{\pd}^+$, which turns out to be good enough to deal with small coefficients.
   
  \end{rmk}

  \begin{prop}\label{Prop-PeriodSheaf}
      For any $\calA\in\{\calO\bC_{\pd}^{+},\calO\widehat \bC_{\pd}^{+}\}$, the following sequence is exact:
      \begin{equation}\label{ES-PeriodSheaf}
          0\to\OXp\to\calA\xrightarrow{\Theta}\calA\otimes_{\calO_{\frakX}}\rho\widehat \Omega^1_{\frakX}(-1)\xrightarrow{\Theta}\cdots\xrightarrow{\Theta}\calA\otimes_{\calO_{\frakX}}\rho^d\widehat \Omega^d_{\frakX}(-d)\to 0.
      \end{equation}
      In particular, $\Theta$ defines a Higgs field on $\calA$.
  \end{prop}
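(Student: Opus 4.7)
The plan is to deduce the exactness of (\ref{ES-PeriodSheaf}) from the already-established exactness of (\ref{ES-Preparation}) in two stages: first for $\calA=\calO\bC_{\pd}^+$ by a flatness-and-tensor argument, and then for $\calA=\calO\widehat\bC_{\pd}^+$ by passing to the $p$-adic completion. The Higgs-field assertion $\Theta\wedge\Theta=0$ will be automatic once exactness of (\ref{ES-PeriodSheaf}) is known, since it amounts to saying that two consecutive maps in (\ref{ES-PeriodSheaf}) compose to zero.

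For the first stage, Lemma \ref{Lem-SZ} tells us that (\ref{ES-Preparation}) is an exact complex of $\Gamma(\OXp)$-modules with $\Gamma(\OXp)$-linear differentials. Tensoring over $\Gamma(\OXp)$ with $\OXp=\Gamma(\OXp)/\calI_{\pd}$ produces, by Definition \ref{Dfn-PeriodSheaf}, a sequence that is termwise precisely (\ref{ES-PeriodSheaf}) for $\calA=\calO\bC_{\pd}^+$ (the $\calO_{\frakX}$-tensors with $\rho^i\widehat\Omega^i_{\frakX}(-i)$ commute with the $\Gamma(\OXp)$-tensor since $\rho^i\widehat\Omega^i_{\frakX}(-i)$ is merely an $\calO_{\frakX}$-module). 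To see this tensoring preserves exactness, I would verify that each term of (\ref{ES-Preparation}) is flat over $\Gamma(\OXp)$. Exactness is a local question; the Faltings extension (\ref{ES-FaltingsExt}) splits locally on $X_{\proet}$ by \cite[Prop. 2.6]{Wang}, so choosing such a splitting and a local basis $dT_1,\ldots,dT_d$ of $\widehat\Omega^1_{\frakX}$, the compatibility of divided power algebras with direct sums identifies $\Gamma(\calE_{\rho}^+) \cong \Gamma(\OXp)[y_1,\ldots,y_d]_{\pd}$ as $\Gamma(\OXp)$-algebras (with $y_i$ lifting $\rho dT_i$); this is free over $\Gamma(\OXp)$, and tensoring further with the locally free $\calO_{\frakX}$-module $\rho^i\widehat\Omega^i_{\frakX}(-i)$ preserves flatness. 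A bounded acyclic complex of flat $\Gamma(\OXp)$-modules remains acyclic after tensoring with any module (its derived tensor product agrees with the ordinary one and computes zero), yielding exactness of (\ref{ES-PeriodSheaf}) for $\calA=\calO\bC_{\pd}^+$.

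For the second stage, the same local description shows $\calO\bC_{\pd}^+\cong \OXp[y_1,\ldots,y_d]_{\pd}$ locally, so every term of (\ref{ES-PeriodSheaf}) for $\calA=\calO\bC_{\pd}^+$ is $p$-torsion free. Splitting the now-exact sequence into short exact sequences of $p$-torsion free modules and using $\Tor_1^{\bZ}(-,\bZ/p^n)=0$, the sequence remains exact modulo $p^n$ for every $n$; since the transition maps of the inverse system $\{(\cdot)/p^n\}_n$ are surjective, Mittag--Leffler implies $\varprojlim_n$ preserves exactness. Because $p$-adic completion commutes with tensoring against the finite locally free $\calO_{\frakX}$-modules $\rho^i\widehat\Omega^i_{\frakX}(-i)$, the resulting limit complex is exactly (\ref{ES-PeriodSheaf}) for $\calA=\calO\widehat\bC_{\pd}^+$, which is therefore also exact. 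The main technical input of the whole argument is the identification $\Gamma(\calE_\rho^+)\cong \Gamma(\OXp)[y_1,\ldots,y_d]_{\pd}$ via a local splitting of (\ref{ES-FaltingsExt}), as this underpins the required flatness over $\Gamma(\OXp)$; once this structural observation is in place, both the derived-tensor step and the $p$-adic completion step are formal.
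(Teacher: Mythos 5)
Your proof is correct and follows essentially the same route as the paper's: the key input in both is the pro-étale local identification $\Gamma(\calE_\rho^+)\cong\Gamma(\OXp)[y_1,\ldots,y_d]_{\pd}$ coming from a local splitting of the Faltings extension, which yields flatness over $\Gamma(\OXp)$ and hence exactness after the base change to $\OXp=\Gamma(\OXp)/\calI_{\pd}$; the paper phrases the tensor step via $\otimes^{\rL}_{\Gamma(\OXp)}$ rather than "flat plus ordinary tensor," and the completion step via $\otimes^{\rL}_{\Zp}\Zp/p^n$ followed by $\rR\varprojlim$ rather than short exact sequences plus Mittag--Leffler, but these are the same arguments in different notation.
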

  \begin{proof}
      As a $\Gamma(\OXp)$-algebra, $\Gamma(\calE_{\rho}^+)$ is (pro-\'etale) locally isomorphic to the free pd-algebra over $\Gamma(\OXp)$ on the free $\Gamma(\OXp)$-module $\Gamma(\OXp)\otimes_{\calO_{\frakX}}\rho\widehat \Omega^1_{\frakX}(-1)$ since locally on $X_{\proet}$, we have $\calE_{\rho}^+ \cong \OXp\oplus\rho\OXp\otimes_{\calO_{\frakX}}\widehat \Omega_{\frakX}^1(-1)$. As a consequence, for any $\Gamma(\OXp)$-module $\calM$, we get an exact sequence of $\Gamma(\OXp)$-modules
  \begin{equation}\label{Equ-PeriodSheaf}
      0\to\calM\to\Gamma(\calE_{\rho}^+)\otimes_{\Gamma(\OXp)}\calM\xrightarrow{\id_{\calM}\otimes\partial}\cdots\xrightarrow{\id_{\calM}\otimes\partial}\Gamma(\calE_{\rho}^+)\otimes_{\Gamma(\OXp)}\calM\otimes_{\calO_{\frakX}}\rho^d\widehat \Omega^d_{\frakX}(-d)\to 0
  \end{equation}  
  by applying $(-)\otimes^{\rL}_{\Gamma(\OXp)}\calM$ to the exact sequence \ref{ES-Preparation}.
      By letting $\calM = \OXp$ above, we conclude the result for $\calA = \calO\bC_{\pd}^{+}$. Moreover, we get that $\calO\bC_{\pd}^+$ is locally isomorphic to the free pd-algebra over $\OXp$ on the free $\OXp$-module $\OXp\otimes_{\calO_{\frakX}}\rho\widehat \Omega^1_{\frakX}(-1)$. In particular, it is $p$-torsion free. 

      By applying $(-)\otimes^{\rL}_{\Zp}\Zp/p^n$ to (\ref{Equ-PeriodSheaf}) for $\calA = \calO\bC_{\pd}^+$ and then taking $\rR\varprojlim_n$, we know the result holds true for $\calA = \calO\widehat \bC_{\pd}^{+}$.
  \end{proof}

\subsection{Local description of $\calO\widehat \bC_{\pd}^+$}
  \begin{convention}\label{Convention-small}
      An affine formal scheme $\frakU = \Spf(R^+)$ over $\calO_C$ of dimension $d$ is called {\bf small} if there is an \'etale morphism $
      \Box:\calO_C\za T_1^{\pm 1},\dots,T_d^{\pm 1}\ya\to R^+$. Such an \'etale morphism $\Box$ is called a chart on $\frakU$. In this case, we can deduce from the smoothness of $R^+$ that, up to isomorphisms, there is a unique $\rA_2$-lifting $\widetilde \frakU= \Spf(\widetilde R^+)$ of $\frakU$. By the \'etaleness of $\Box$, there exists a unique $\rA_2$-morphism $\rA_2\za T_1^{\pm 1},\dots,T_d^{\pm 1}\ya\to\widetilde R^+$ lifting $\Box$. 
      
      Let $U = \Spa(R,R^+)$ be the rigid analytic generic fiber of $\frakU$ and $U_{\infty} = \Spa(\widehat R_{\infty},\widehat R_{\infty}^+)$ be the base-change of $U$ along the morphism
      \[\Spa(C\za T_1^{\pm 1/p^{\infty}},\dots,T_d^{\pm 1/p^{\infty}}\ya,\calO_C\za T_1^{\pm 1/p^{\infty}},\dots,T_d^{\pm 1/p^{\infty}}\ya)\to \Spa(C\za T_1^{\pm 1},\dots,T_d^{\pm 1}\ya,\calO_C\za T_1^{\pm 1},\dots,T_d^{\pm 1}\ya).\]
      Then $U_{\infty}$ is a perfectoid space in $U_{\proet}$ such that $U_{\infty}\to U$ is a Galois cover with Galois group 
      \[\Gamma\cong \Zp\gamma_1\oplus\cdots\oplus\Zp\gamma_d,\]
      where for any $1\leq i,j\leq d$ and any $n\geq 1$, $\gamma_i$ is determined by sending $T_j^{1/p^n}$ to $\zeta_{p^n}^{\delta_{ij}}T_j^{1/p^n}$ and $\delta_{ij}$ denotes Kronecker's $\delta$-function. Moreover, $\widehat R_{\infty}^+$ admits a $\Gamma$-equivariant decomposition
      \begin{equation}\label{R-decomp}
      \widehat R_{\infty}^+\cong \widehat{\bigoplus_{\alpha_1,\dots,\alpha_d\in \bZ[1/p]\cap[0,1)}}R^+\cdot T_1^{\alpha_1}\cdots T_d^{\alpha_d}
      \end{equation}
      where ``$\widehat \oplus$'' denotes the $p$-adic topological direct sum.
  \end{convention}
  \begin{rmk}\label{Rmk-Small}
      It is clear that any smooth formal scheme is \'etale locally small affine. However, it can be even proved that any smooth formal scheme over $\calO_C$ is Zariski locally small affine (cf. \cite[Lem. 4.9]{Bha}).
  \end{rmk}

  In this subsection, we always assume $\frakX = \frakU = \Spf(R^+)$ is affine small as above. Let $E_{\rho}^+$ be the evaluation of $\calE_{\rho}^+$ at $U_{\infty}$. Then $E_{\rho}^+$ is endowed with a $\Gamma$-action and by \cite[Prop. 2.6]{Wang} fitting into the following short exact sequence of $\widehat R_{\infty}^+$-modules
  \[0\to\widehat R_{\infty}^+\to E_{\rho}^+\to \widehat R_{\infty}^+\otimes_{R^+}\rho\widehat \Omega^1_{R^+}(-1)\to 0.\]
  Comparing (\ref{ES-FaltingsExt}) with \cite[(2-3) and (2-6)]{Wang}, we see that $E_{\rho}^+ = \rho E^+(-1) = \frac{\rho}{t}E^+$ with $E^+$ appearing in \cite[Prop. 2.8]{Wang} (where we identify the Tate twist $\Zp(-1)$ with $\Zp\cdot t^{-1}$, cf. \S\ref{SSec-Notation}). By \cite[Prop. 2.6]{Wang}, $E^+$ admits an action of $\Gamma$ and fits into a $\Gamma$-equivariant short exact sequence
  \[0\to \rho^{-1}\widehat R_{\infty}^+(1)\to E^+\to\widehat R_{\infty}^+\otimes_{R^+}\widehat \Omega^1_{R^+}\to 0.\]
  Let $\frac{te}{\rho}\in E^+$ be the image of the morphism of $\widehat R_{\infty}^+$-modules
  $\xymatrix@C=0.5cm{\widehat R_{\infty}^+\ar[rrr]^{1\mapsto t\rho^{-1}}_{\cong}&&&}\rho^{-1}\widehat R_{\infty}^+(1)\hookrightarrow E^+$, and then $e \in E^+\cdot \rho t^{-1} = E_{\rho}^+$.
  \begin{lem}\label{Lem-LocalFaltings}
      There exists $\rho y_1,\dots,\rho y_d$ in $E_{\rho}^+$ lifting $\frac{\rho}{t}\rd\log T_1,\dots\frac{\rho}{t}\rd\log T_d$ via the projection
      \[E_{\rho}^+\to \widehat R_{\infty}^+\otimes_{R^+}\rho\widehat \Omega^1_{R^+}(-1)\]
      such that as an $\widehat R_{\infty}^+$-module, $E_{\rho}^+\cong \widehat R_{\infty}^+e\oplus(\bigoplus_{i=1}^d\widehat R_{\infty}^+\rho y_i)$ and for any $1\leq i,j\leq d$, $\gamma_i(\rho y_j) = \rho y_j+\rho \delta_{ij}e$.
  \end{lem}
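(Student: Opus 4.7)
The plan is to construct the elements $\rho y_i$ explicitly using compatible $\rA_2$-lifts of the toric coordinates and their $p$-power roots, and then to verify both the module structure and the Galois action via direct computation in the cotangent complex. The key inputs are the chart $\Box$ (which makes everything explicit on the torus side) and the description of $\rH^{-1}(\widehat \rL_{\OXp/\calO_{\widetilde \frakX}})$ recalled from \cite{Wang}.

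First, by Convention \ref{Convention-small} together with the perfectoidness of $U_\infty$, there is an essentially unique $\rA_2$-lift $\widetilde R_\infty^+$ of $\widehat R_\infty^+$ equipped with compatible lifts $\widetilde T_i^{1/p^n}\in \widetilde R_\infty^+$ of $T_i^{1/p^n}$, and the $\Gamma$-action lifts uniquely once one also fixes compatible $\rA_2$-lifts $[\zeta_{p^n}]$ of $\zeta_{p^n}$. Using the presentation of $\rH^{-1}(\widehat \rL_{\OXp/\calO_{\widetilde \frakX}})$ in \cite[Thm.~2.9]{Wang}, I would define $y_i$ (more precisely, $\rho y_i$, viewed as an element of $\calE_\rho^+ = \rho\,\rH^{-1}(\widehat \rL_{\OXp/\calO_{\widetilde \frakX}})(1)$) as the class represented by the system $\{p^n\,\rd\log \widetilde T_i^{1/p^n}\}_n$, whose independence of $n$ follows from $p\cdot \rd\log \widetilde T_i^{1/p^n} = \rd\log \widetilde T_i^{1/p^{n-1}}$ in the $p$-completed cotangent complex.

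Once these elements are fixed, the module-theoretic claim is nearly automatic. Evaluating (\ref{ES-FaltingsExt}) at $U_\infty$ presents $E_\rho^+$ as an extension of the free $\widehat R_\infty^+$-module $\widehat R_\infty^+\otimes_{R^+}\rho\widehat \Omega^1_{R^+}(-1)\cong \oplus_{i=1}^d \widehat R_\infty^+\cdot \rho\,\rd\log T_i(-1)$ by $\widehat R_\infty^+\cdot e$, and by construction $\rho y_i$ maps to $\rho\,\rd\log T_i(-1)$. Thus $\{e,\rho y_1,\dots,\rho y_d\}$ splits the extension and forms a free $\widehat R_\infty^+$-basis of $E_\rho^+$. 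For the Galois action, the computation is formal: since $\gamma_i(\widetilde T_j^{1/p^n})=[\zeta_{p^n}]^{\delta_{ij}}\widetilde T_j^{1/p^n}$, applying $\gamma_i$ to $p^n\,\rd\log \widetilde T_j^{1/p^n}$ introduces the extra term $\delta_{ij}\cdot p^n\,\rd\log[\zeta_{p^n}]$. Under the identification $\rI_K\rA_2\cong \frac{1}{\rho}\calO_C(1)$ (so that $\rH^{-1}$ acquires its canonical generator), the limit $\lim_n p^n\,\rd\log[\zeta_{p^n}]$ is precisely the element whose image in $\calE_\rho^+$ under multiplication by $\rho$ and the Tate twist is $\rho \cdot e$. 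This yields $\gamma_i(\rho y_j)-\rho y_j=\rho\delta_{ij}e$, as claimed.

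The main obstacle will be bookkeeping rather than any genuine difficulty: one must pin down the identifications (the splitting $\calE_\rho^+ = \rho\,\rH^{-1}(\widehat \rL_{\OXp/\calO_{\widetilde \frakX}})(1)$, the isomorphism $\rI_K\rA_2\cong \frac{1}{\rho}\calO_C(1)$, and the chosen generator of $\calO_C(1)$ corresponding to $\{\zeta_{p^n}\}$) with the normalizations in \cite{Wang}, so that the Galois-variation computation produces exactly the coefficient $\rho\delta_{ij}$ rather than a unit multiple. Once the conventions are aligned consistently with the construction of $\calE_\rho^+$ and the definition of $\rho$, the verifications are direct.
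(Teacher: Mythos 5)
Your proof is correct and takes essentially the same approach as the paper's, which is a one-line citation of \cite[Prop.\ 2.8]{Wang} (with $y_i = x_i/t$ in that reference's notation). Your explicit construction --- defining $\rho y_i$ via $\rd\log$ of compatible $\rA_2$-lifts of the $p$-power roots of the toric coordinates, reading off the splitting from the evaluated Faltings extension, and computing the $\Gamma$-variation via $\rd\log[\zeta_{p^n}]$ --- unwinds precisely what that citation contains, and you are right that the remaining work is only aligning normalizations with \cite{Wang}.
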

  \begin{proof}
      Let $E^+$ be as in \cite[Prop. 2.8]{Wang} and $e$ be as above. Then by \cite[Prop. 2.8]{Wang}, $E^+$ admits a basis $\frac{t}{\rho}e,x_1,\dots,x_d$ such that $x_i$ lifts $\dlog T_i$ via the projection $E^+\to\widehat R_{\infty}^+\otimes_{R^+}\widehat \Omega^1_{R^+}$ such that for any $1\leq i,j\leq d$, we have
      \[\gamma_i(x_j) = x_j+\delta_{ij}te.\]
      As $E_{\rho}^+ = E^+\cdot \rho t^{-1}$, one can check $\rho y_i = \frac{\rho}{t}x_i$ satisfies the desired conditions.
  \end{proof}
  \begin{cor}\label{Cor-LocalPeriodSheaf}
      There is an isomorphism of $\OXp$-algebras
      \[\iota:\OXp[\rho Y_1,\dots,\rho Y_d]_{\pd}\big|_{U_{\infty}}\to \calO\bC_{\pd}^+\big|_{U_{\infty}}\]
      by identifying $\rho Y_i$'s with the images of $\rho y_i$'s via the composition $\calE_{\rho}^+\to\Gamma(\calE_{\rho}^+)\to\calO\bC_{\pd}^+$, which induces isomorphisms $\OXp[\rho Y_1,\dots,\rho Y_d]^{\wedge}_{\pd}\big|_{U_{\infty}}\to \calO\widehat \bC_{\pd}^+\big|_{U_{\infty}}$ and $\OXp[\rho Y_1,\dots,\rho Y_d]^{\wedge}_{\pd}[\frac{1}{p}]\big|_{U_{\infty}}\to \calO\widehat \bC_{\pd}\big|_{U_{\infty}}$. Via these isomorphisms, the Higgs field $\Theta$ on $\calP\in\{\calO\bC_{\pd}^+\big|_{U_{\infty}},\calO\widehat \bC_{\pd}^+\big|_{U_{\infty}},\calO\widehat \bC_{\pd}\big|_{U_{\infty}}\}$ is given by 
      \[\Theta = \sum_{i=1}^d\frac{\partial }{\partial Y_i}\otimes\frac{\rd\log T_i}{t}:\calP\to\calP\otimes\rho
      \widehat \Omega^1_{R^+}(-1) = \bigoplus_{i=1}^d\calP\cdot \rho\frac{\dlog T_i}{t}\]
      by identifying $\widehat\Omega^1_{R^+}(-1)$ with $\bigoplus_{i=1}^dR^+\cdot\frac{\dlog T_i}{t}$.
  \end{cor}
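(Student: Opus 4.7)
The plan is to reduce everything to a computation of divided power algebras of free modules, combined with the splitting supplied by Lemma \ref{Lem-LocalFaltings}. The key observation is that the divided power algebra functor $\Gamma$ is exponential: for a direct sum of flat modules $M_1\oplus M_2$ over a ring $A$, one has $\Gamma_A(M_1\oplus M_2)\cong \Gamma_A(M_1)\otimes_A\Gamma_A(M_2)$ as PD-algebras. Applying this to the decomposition $E_{\rho}^+\cong \widehat R_{\infty}^+\cdot e\,\oplus\,\bigl(\oplus_{i=1}^d\widehat R_{\infty}^+\cdot\rho y_i\bigr)$ from Lemma \ref{Lem-LocalFaltings}, I obtain
\[
\Gamma(\calE_{\rho}^+)\big|_{U_{\infty}}\;\cong\;\widehat R_{\infty}^+[e]_{\pd}\otimes_{\widehat R_{\infty}^+}\widehat R_{\infty}^+[\rho y_1,\dots,\rho y_d]_{\pd},
\]
in which the first factor is precisely the restriction of $\Gamma(\OXp)$ to $U_{\infty}$.

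Next I would unwind the definition $\calO\bC_{\pd}^+=\Gamma(\calE_{\rho}^+)\otimes_{\Gamma(\OXp)}\OXp$. The quotient map $\Gamma(\OXp)\twoheadrightarrow\OXp\cong\Gamma(\OXp)/\calI_{\pd}$ is, by construction, the substitution $e\mapsto\zeta_p-1$; this is a genuine PD-morphism because $e-(\zeta_p-1)$ admits divided powers, as already noted right before Definition \ref{Dfn-PeriodSheaf}. Tensoring the above decomposition along this substitution kills the first factor and leaves exactly $\widehat R_{\infty}^+[\rho y_1,\dots,\rho y_d]_{\pd}$. Matching $\rho Y_i$ with $\rho y_i$ then produces the required algebra isomorphism $\iota$. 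The versions for $\calO\widehat\bC_{\pd}^+$ and $\calO\widehat\bC_{\pd}$ follow immediately by $p$-adic completion and by inverting $p$, since all modules involved are $p$-torsion free (as established in the proof of Proposition \ref{Prop-PeriodSheaf}).

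Finally, to identify $\Theta$ with $\sum_i\tfrac{\partial}{\partial Y_i}\otimes\tfrac{\rd\log T_i}{t}$, I would trace the definition of $\partial$ in Lemma \ref{Lem-SZ} through the isomorphism $\iota$. The quotient map $v:E_{\rho}^+\to\widehat R_{\infty}^+\otimes_{R^+}\rho\widehat\Omega^1_{R^+}(-1)$ sends $\rho y_i$ to $\rho\cdot\rd\log T_i\otimes t^{-1}$ by the choice in Lemma \ref{Lem-LocalFaltings}. Hence, on a typical PD-monomial, $\partial\bigl((\rho y_i)^{[n]}\bigr)=\rho\cdot(\rho y_i)^{[n-1]}\otimes\tfrac{\rd\log T_i}{t}$, and by the Leibniz rule (which both sides obey) this matches the action of $\tfrac{\partial}{\partial Y_i}\otimes\tfrac{\rd\log T_i}{t}$ on $\OXp[\rho Y_1,\dots,\rho Y_d]_{\pd}$ under the convention that $\tfrac{\partial}{\partial Y_i}(\rho Y_j)=\rho\,\delta_{ij}$.

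The only point requiring genuine care is bookkeeping the Tate twist and the factor of $\rho$ in the surjection of the Faltings extension, so that the derivation is written as $\partial/\partial Y_i$ rather than $\partial/\partial(\rho Y_i)$; everything else is formal from the exponential nature of $\Gamma$ and the flatness ensuring that the tensor decomposition and the $p$-completion both behave well.
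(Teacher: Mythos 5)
Your argument is correct and follows essentially the same route as the paper: you identify the restricted sheaf $\Gamma(\calE_\rho^+)|_{U_\infty}$ via the splitting of Lemma \ref{Lem-LocalFaltings} and the exponential property of divided power algebras, then kill the first factor by passing along $e\mapsto\zeta_p-1$, and finally unwind $\partial$ on PD-monomials to read off the Higgs field. The only presentational difference is that the paper first invokes \cite[Lem.~3.14]{BMS18} to establish $\calE_\rho^+(V)=E_\rho^+\otimes_{\widehat R_\infty^+}S^+$ for any affinoid perfectoid $V\to U_\infty$, thereby reducing the sheaf isomorphism to a computation of values at $U_\infty$; you instead work directly with the restricted sheaf, which is permissible because the local freeness of $\calE_\rho^+$ (already recorded in the paper via \cite[Prop.~2.6]{Wang}) makes the two equivalent, but you should say so explicitly rather than passing from the decomposition of the $\widehat R_\infty^+$-module $E_\rho^+$ to a statement about $\Gamma(\calE_\rho^+)|_{U_\infty}$ without comment.
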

  \begin{proof}
      It is enough to prove the result for $\calO\bC_{\pd}^+$ while the rests follow from the constructions of $\calO\widehat \bC_{\pd}^+$ and $\calO\widehat \bC_{\pd}$. By \cite[Lem. 3.14]{BMS18}, for any affinoid perfectoid $V = \Spa(S,S^+)\in U_{\proet}/U_{\infty}$, we have 
      \[\widehat \rL_{S^+/\widetilde R^+}\cong \widehat \rL_{\widehat R_{\infty}^+/\widetilde R^+}\otimes^{\bL}_{\widehat R_{\infty}^+}S^+.\]
      By the definition of $\calE_{\rho}^+$, we get an isomorphism of $S^+$-modules
      \[\calE_{\rho}^+(V) = E_{\rho}^+\otimes_{\widehat R_{\infty}^+}S^+.\]
      By the construction of $\calO\bC_{\pd}^+$, we know that $\calO\bC_{\pd}^+(V) \cong \calO\bC_{\pd}^+(U_{\infty})\otimes_{\widehat R_{\infty}^+}S^+$. So we are reduced to the case to show that 
      \[\widehat R_{\infty}^+[\rho Y_1,\dots,\rho Y_d]_{\pd}\cong \calO\bC_{\pd}^+(U_{\infty}).\]
      By Lemma \ref{Lem-LocalFaltings}, $E_{\rho}^+$ is a free $\widehat R_{\infty}^+$-module with a basis $e,\rho y_1,\dots,\rho y_d$ fitting into the short exact sequence
      \[0\to\widehat R_{\infty}^+e\to E_{\rho}^+\to \widehat R_{\infty}^+\otimes_{R^+}\rho\widehat \Omega^1_{R^+}(-1)\to 0.\]
      Using (\ref{ES-SZ-II}), this yields a canonical exact sequence
      \[0\to\widehat R_{\infty}^+[e]_{\pd}\to\widehat R_{\infty}^+[e,\rho y_1,\dots,\rho y_d]_{\pd}\xrightarrow{\partial}\widehat R_{\infty}^+[e,\rho y_1,\dots,\rho y_d]_{\pd}\otimes_{R^+} \rho\widehat \Omega^1_{R^+}(-1)\to\cdots\]
      with the $\widehat R_{\infty}^+[e]_{\pd}$-linear $\partial$ given by 
      \[\partial(\prod_{i=1}^d(\rho y_i)^{[n_i]}) = \sum_{j=1}^d(\rho y_1)^{[n_1]}\cdots(\rho y_{j-1})^{[n_{j-1}]} (\rho y_j)^{[n_j-1]}(\rho y_{j+1})^{[n_{j+1}]}\cdots(\rho y_d)^{[n_d]}\otimes\frac{\rho}{t}\dlog T_j.\]
      Here, we use $\rho y_i$ is the lifting of $\frac{\rho}{t}\dlog T_i$, by Lemma \ref{Lem-LocalFaltings}) again. Modulo the pd-ideal generated by $(e-(\zeta_p-1))$ and denote by $\rho Y_i$ the image of $\rho y_i$, we then obtain the desired isomorphism
      \[\widehat R_{\infty}^+[\rho Y_1,\dots,\rho Y_d]_{\pd}\cong \calO\bC_{\pd}^+(U_{\infty})\]
      such that the $\partial$ is induced by 
      \[\sum_{i=1}^d\frac{\partial}{\partial (\rho Y_i)}\otimes\frac{\rho}{t}\dlog T_i = \frac{\partial}{\partial Y_i}\otimes\frac{\dlog T_i}{t}\]
      as desired. This completes the proof.
  \end{proof}
  \begin{notation}\label{Notation-LocalPeriodSheaf}
      Let $S_{\pd}^+$ (resp. $\widehat S_{\pd}^+$, $\widehat S_{\pd}$) be the evaluation of $\calO\bC_{\pd}^{+}$ (resp. $\calO\widehat \bC_{\pd}^{+}$, $\calO\widehat \bC_{\pd}$) at $U_{\infty}$. Then we have a $\Gamma$-equivariant isomorphism
      \[\widehat R_{\infty}^+[\rho Y_1,\dots,\rho Y_d]_{\pd}\cong S_{\pd}^+~({\rm resp.}~\widehat R_{\infty}^+[\rho Y_1,\dots,\rho Y_d]_{\pd}^{\wedge}\cong \widehat S_{\pd}^+,~\widehat R_{\infty}^+[\rho Y_1,\dots,\rho Y_d]_{\pd}^{\wedge}[\frac{1}{p}]\cong \widehat S_{\pd}).\]
      By Lemma \ref{Lem-LocalFaltings}, we know that the $\Gamma$-action on $S_{\pd}^+$ (resp. $\widehat S_{\pd}^+$, $\widehat S_{\pd}^+$) is determined by $\gamma_i(Y_j) = Y_j+\delta_{ij}(\zeta_p-1)$ for any $1\leq i,j\leq d$. By Corollary \ref{Cor-LocalPeriodSheaf}, the Higgs field on $S_{\pd}^+$ (resp. $\widehat S_{\pd}^+$, $\widehat S_{\pd}^+$) is given by $\Theta = \sum_{i=1}^d\frac{\partial}{\partial Y_i}\otimes\frac{\dlog T_i}{t}$.
  \end{notation}
\subsection{Comparison with $\calO\bC^{\dagger}$}

  We want to compare our period sheaves $\calO\widehat \bC_{\pd}^+$ and $\calO\widehat \bC_{\pd}$ with period sheaves $\calO\widehat \bC_{\rho}^+$ and $\calO\bC^{\dagger}$ introduced in \cite[\S 2.3]{Wang}. Let us first review some constructions in \textit{loc.cit.}.

  For any $\rho^{\prime}\in \calO_C$ with $\nu_p(\rho^{\prime})\geq \nu_p(\rho)$, let $\calE_{\rho^{\prime}}^+$ be the pull-back of $\calE_{\rho}^+$ along the natural inclusion $\OXp\otimes_{\calO_{\frakX}}\rho^{\prime}\widehat \Omega^1_{\frakX}(-1)\hookrightarrow\OXp\otimes_{\calO_{\frakX}}\rho\widehat \Omega^1_{\frakX}(-1)$. Then $\calE_{\rho^{\prime}}^+$ fits into an analogue of the short exact sequence (\ref{ES-FaltingsExt}) by replacing $\rho$ by $\rho^{\prime}$ there. For any such a $\rho^{\prime}$, we define $\calO\bC_{\rho^{\prime}}^+ = \varinjlim_n\Sym^n\calE_{\rho^{\prime}}^+$ and $\calO\widehat \bC_{\rho^{\prime}}^+ = \varprojlim_n\calO\bC_{\rho^{\prime}}^+/p^n$. By construction, for any $\rho_1,\rho_2\in\rho\calO_C$ with $\nu_p(\rho_1)\geq \nu_p(\rho_2)$, $\calO\bC_{\rho_1}^+$ (resp, $\calO\widehat \bC_{\rho_1}^+$) is a sub-sheaf of  $\calO\bC_{\rho_2}^+$ (resp, $\calO\widehat \bC_{\rho_2}^+$). Define $\calO\bC^{\dagger,+} = \varinjlim_{\rho^{\prime}\in\rho\frakm_C}\calO\widehat \bC_{\rho^{\prime}}^+$ and $\calO\bC^{\dagger} = \calO\bC^{\dagger,+}[\frac{1}{p}]$. Moreover, there are Higgs fields $\widetilde \Theta$ on $\calO\bC_{\rho^{\prime}}^+$, $\calO\widehat \bC_{\rho^{\prime}}^+$ and $\calO\bC^{\dagger,+}$ which are compatible with each other. 

  Assume $\frakU = \Spf(R^+)$ is affine small and keep notations in Notation \ref{Notation-LocalPeriodSheaf}. For any $\rho^{\prime}\in\rho\calO_C$, let $E_{\rho^{\prime}}^+$ be the evaluation of $\calE_{\rho^{\prime}}^+$ at $U_{\infty}$. Let $e$ and $\rho y_i$'s be as in Lemma \ref{Lem-LocalFaltings} and then we know that $E_{\rho^{\prime}}^+ \cong \widehat R_{\infty}^+e\oplus(\bigoplus_{i=1}^d\widehat R_{\infty}^+\rho^{\prime}y_i)$. 
  As Corollary \ref{Cor-LocalPeriodSheaf}, we have a local description of these period sheaves.
  \begin{lem}\label{Lem-LocalPeriodSheafWang}
      Assume $\frakU = \Spf(R^+)$ is affine small and keep notations in Convention \ref{Convention-small}.
      For any $\rho^{\prime}\in\rho\calO_C$, there is an isomorphism of $\OXp$-algebras
      \[\iota:\OXp[\rho^{\prime}Y_1,\dots,\rho^{\prime}Y_d]\big|_{U_{\infty}}\to\calO\bC_{\rho^{\prime}}^+\big|_{U_{\infty}}\]
      by identifying $\rho^{\prime} Y_i$'s with the images of $\rho^{\prime} y_i$'s via the map $\calE_{\rho^{\prime}}^+\to \calO\bC_{\rho^{\prime}}^+$, which induces isomorphisms $\OXp[\rho^{\prime}Y_1,\dots,\rho^{\prime}Y_d]^{\wedge}\big|_{U_{\infty}}\cong\calO\widehat \bC_{\rho^{\prime}}^+\big|_{U_{\infty}}$ and $\varinjlim_{\rho^{\prime}\in\rho\frakm_C}\OXp[\rho^{\prime}Y_1,\dots,\rho^{\prime}Y_d]\big|_{U_{\infty}}\cong\calO\bC^{\dagger,+}\big|_{U_{\infty}}$.
      Via these isomorphisms, the Higgs field $\widetilde \Theta$ on $\calO\bC_{\rho^{\prime}}^+\big|_{U_{\infty}}$ (resp, $\calO\widehat \bC_{\rho^{\prime}}^+\big|_{U_{\infty}}$, $\calO\bC^{\dagger,+}\big|_{U_{\infty}}$) is given by $\widetilde \Theta = \sum_{i=1}^d\frac{\partial}{\partial Y_i}\otimes\frac{\rd\log T_i}{t}$.
  \end{lem}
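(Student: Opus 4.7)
The plan is to mirror the proof of Corollary \ref{Cor-LocalPeriodSheaf} closely, substituting the symmetric-algebra functor $\Sym$ for the divided-power functor $\Gamma$ and $\rho'$ for $\rho$. First, since $\calE_{\rho'}^+$ is by definition the pull-back of $\calE_\rho^+$ along the inclusion $\OXp\otimes_{\calO_\frakX}\rho'\widehat\Omega^1_\frakX(-1)\hookrightarrow\OXp\otimes_{\calO_\frakX}\rho\widehat\Omega^1_\frakX(-1)$, Lemma \ref{Lem-LocalFaltings} immediately supplies an $\widehat R_\infty^+$-basis $\{e,\rho'y_1,\dots,\rho'y_d\}$ of $E_{\rho'}^+$, with $\rho'y_i$ lifting $\rho'\rd\log T_i/t$ and transforming under $\Gamma$ via $\gamma_i(\rho'y_j)=\rho'y_j+\rho'\delta_{ij}e$. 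This is the $\rho'$-analogue of the input used in the proof of Corollary \ref{Cor-LocalPeriodSheaf}.

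Next, I would reduce to computing sections over $U_\infty$ by exactly the same base-change argument. For any affinoid perfectoid $V=\Spa(S,S^+)\in U_\proet/U_\infty$, the isomorphism $\widehat\rL_{S^+/\widetilde R^+}\cong \widehat\rL_{\widehat R_\infty^+/\widetilde R^+}\otimes^{\bL}_{\widehat R_\infty^+}S^+$ from \cite[Lem. 3.14]{BMS18} gives $\calE_{\rho'}^+(V)\cong E_{\rho'}^+\otimes_{\widehat R_\infty^+}S^+$; since $\Sym$ commutes with flat base change and filtered colimits commute with everything, this descends to $\calO\bC_{\rho'}^+(V)\cong \calO\bC_{\rho'}^+(U_\infty)\otimes_{\widehat R_\infty^+}S^+$. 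The problem thus reduces to showing $\calO\bC_{\rho'}^+(U_\infty)\cong \widehat R_\infty^+[\rho'Y_1,\dots,\rho'Y_d]$. Since $E_{\rho'}^+$ is free on the above basis, $\Sym(E_{\rho'}^+)$ is the polynomial ring in those generators, and the $\OXp$-algebra structure on $\calO\bC_{\rho'}^+=\varinjlim_n\Sym^n\calE_{\rho'}^+$ (inherited from the convention of \cite{Wang}) identifies the variable $e$ with the unit $1$, eliminating it and leaving exactly the claimed polynomial ring. The statements for $\calO\widehat\bC_{\rho'}^+$ and $\calO\bC^{\dagger,+}$ follow by $p$-adic completion and by taking the filtered colimit over $\rho'\in\rho\frakm_C$, respectively.

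For the Higgs field, recall that $\widetilde\Theta$ is the $\OXp$-linear derivation on $\Sym(\calE_{\rho'}^+)$ induced by the quotient map $\calE_{\rho'}^+\to\OXp\otimes_{\calO_\frakX}\rho'\widehat\Omega^1_\frakX(-1)$, which kills $e$ and sends $\rho'y_i$ to $\rho'\rd\log T_i/t$. Under the identification $\iota$, this translates to $\widetilde\Theta(Y_i)=\rd\log T_i/t$, so $\widetilde\Theta=\sum_{i=1}^d\frac{\partial}{\partial Y_i}\otimes\frac{\rd\log T_i}{t}$, as claimed.

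Honestly, there is no serious obstacle: once Lemma \ref{Lem-LocalFaltings} and the flat base-change argument from Corollary \ref{Cor-LocalPeriodSheaf} are in hand, the proof is a direct transcription with $\Sym$ in place of $\Gamma$. The only point requiring care is that the convention $\calO\bC_{\rho'}^+=\varinjlim_n\Sym^n\calE_{\rho'}^+$ encodes the $\OXp$-algebra structure via the standard identification $e\leftrightarrow 1$, exactly parallel to how $\Gamma(\OXp)/\calI_\pd\cong\OXp$ is used in Definition \ref{Dfn-PeriodSheaf}; once this bookkeeping is pinned down, the isomorphism and the Higgs-field formula are immediate.
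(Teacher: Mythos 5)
Your proposal is correct and is essentially the argument the paper invokes: the paper simply cites the results of \cite{Wang} and remarks that the claim "can be checked by the same argument used in the proof of Corollary \ref{Cor-LocalPeriodSheaf}", which is exactly what you have spelled out, replacing $\Gamma(-)$ by $\Sym(-)$ and using the basis $\{e,\rho'y_1,\dots,\rho'y_d\}$ of $E_{\rho'}^+$ from Lemma \ref{Lem-LocalFaltings}, the flat base-change identity from \cite[Lem. 3.14]{BMS18}, and the identification of $e$ with $1$ built into the colimit defining $\calO\bC_{\rho'}^+$.
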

  \begin{proof}
      This is essentially proved in \cite[Cor. 2.22, 2.23 and Thm. 2.28]{Wang} and can be checked by the same argument used in the proof of Corollary \ref{Cor-LocalPeriodSheaf}.
  \end{proof}
  In particular, for any $\rho^{\prime}\in\rho\frakm_C$, we have inclusions of period sheaves 
  \[\calO\widehat \bC_{\rho^{\prime}}^+\subset\calO\bC^{\dagger,+}\subset\calO\widehat \bC^+_{\rho}.\]
  
  Now, we are going to construct a canonical inclusion $\calO\widehat \bC_{\pd}^{+}\to \calO\widehat \bC_{\rho}^+$ which is compatible with Higgs fields such that the inclusion $\calO\bC^{\dagger}\subset\calO\widehat \bC_{\rho}^+[\frac{1}{p}]$ factors through $\calO\widehat \bC_{\pd}$.
  \begin{construction}\label{Construction-ComparePeriodSheaf}
      Consider the composition $f:\calE_{\rho}^+\xrightarrow{\times(\zeta_p-1)}(\zeta_p-1)\calE_{\rho}^+\to\calO\bC_{\rho}^+$ of $\OXp$-modules. As $(\zeta_p-1)$ admits pd-powers in $\calO\bC_{\rho}^+$, we know that $f$ induces a natural morphism of $\OXp$-algebras
      \[\Gamma(\calE_{\rho}^+)\to\calO\bC^+_{\rho}.\]
      Consider the sub-$\OXp$-algebra $\Gamma(\OXp)\cong \OXp[e]_{\pd}$. By the constructions of $\calO\bC_{\rho}^+$ and $f$, we know that $f(e) = \zeta_p-1$ and thus get a natural morphism 
      \[\calO\bC_{\pd}^+\to \calO\bC_{\rho}^+.\]
      Taking $p$-adic completions on both sides, we get a natural morphism of $\OXp$-algebras 
      \[\iota_{PS}:\calO\widehat \bC_{\pd}^+\to\calO\widehat \bC_{\rho}^+.\]
  \end{construction}
  \begin{prop}\label{Prop-ComparePeriodSheaf}
      \begin{enumerate}
          \item[(1)] The natural morphism $\iota_{PS}:\calO\widehat \bC_{\pd}^+\to\calO\widehat \bC_{\rho}^+$ constructed above is an inclusion such that the Higgs fields are compatible in the sense that $\widetilde \Theta = (\zeta_p-1)\Theta$. 

          \item[(2)] The inclusion $\calO\bC^{\dagger}\subset\calO\widehat \bC_{\rho}^+[\frac{1}{p}]$ factors through $\calO\widehat \bC_{\pd}$.

          \item[(3)] Assume $\frakU=\Spf(R^+)$ is affine small and keep notations as in Convention \ref{Convention-small}. Via isomorphisms in Corollary \ref{Cor-LocalPeriodSheaf} and Lemma \ref{Lem-LocalPeriodSheafWang}, we have the following commutative diagram
          \begin{equation}\label{Diag-ComparePeriodSheaf}
              \xymatrix@C=0.5cm{
                \calO\widehat \bC_{\pd}^+\big|_{U_{\infty}}\ar[d]^{\cong}\ar[rrrrrr]&&&&&&\calO\widehat \bC_{\rho}^+\big|_{U_{\infty}}\ar[d]^{\cong}\\
                \OXp[\rho Y_1,\dots,\rho Y_d]^{\wedge}_{\pd}\big|_{U_{\infty}}\ar[rrrrrr]^{(\rho Y_i)^{[n]}\mapsto (\zeta_p-1)^{[n]}(\rho Y_i)^n}&&&&&&\OXp[\rho Y_1,\dots,\rho Y_d]^{\wedge}\big|_{U_{\infty}}.
              }
          \end{equation}
      \end{enumerate}
  \end{prop}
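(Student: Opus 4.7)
The plan is to reduce everything to the affine small case $\frakU = \Spf(R^+)$, prove the explicit diagram in (3) first by unwinding Construction \ref{Construction-ComparePeriodSheaf}, and then deduce parts (1) and (2) from it. The global statements follow from the local ones by naturality and gluing, since both $\calO\widehat\bC_{\pd}^+$ and $\calO\widehat\bC_\rho^+$ are defined as $p$-adic completions of constructions functorial on the pro-\'etale site.

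For part (3), I would trace the construction locally using the bases $e, \rho y_1, \dots, \rho y_d$ from Lemma \ref{Lem-LocalFaltings}. The map $f(v) = (\zeta_p - 1) v$ lands in $(\zeta_p - 1)\calO\bC_\rho^+$, which is a pd-ideal (the pd-powers being $((\zeta_p - 1) x)^{[n]} = (\zeta_p - 1)^{[n]} x^n$, where $(\zeta_p - 1)^{[n]} = (\zeta_p-1)^n/n! \in \calO_C$). By the universal property of $\Gamma$ it extends uniquely to a pd-map $\Gamma(\calE_\rho^+)\to \calO\bC_\rho^+$ sending $e^{[n]}\mapsto (\zeta_p - 1)^{[n]}$ and $(\rho y_i)^{[n]}\mapsto (\zeta_p - 1)^{[n]}(\rho y_i)^n$. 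The first identity shows the restriction to $\Gamma(\OXp)$ agrees with the quotient $\Gamma(\OXp)\to\OXp$, so after the base change $\otimes_{\Gamma(\OXp)} \OXp$ we land in $\calO\bC_\rho^+$ with the second formula intact. Taking $p$-adic completions gives exactly diagram (\ref{Diag-ComparePeriodSheaf}).

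Granted (3), part (1) is obtained as follows. The local map decomposes along the $\bN^d$-grading by pd-monomials: on the $\underline n$-graded piece it is multiplication by the nonzero scalar $c_{\underline n} := \prod_i (\zeta_p - 1)^{[n_i]} \in\calO_C$. Since $\OXp$ is $p$-torsion free on affinoid perfectoids and $c_{\underline n}$ is a nonzero divisor, each graded map is injective, and this injectivity survives the passage to $p$-completed direct sums because both sides decompose grade-by-grade. For the Higgs-field compatibility, I would evaluate both sides on a monomial $(\rho Y_i)^{[n]}$ in the local model: $(\zeta_p - 1)\Theta$ gives $(\zeta_p - 1)\rho(\rho Y_i)^{[n-1]}\otimes \rd\log T_i/t$, which maps to $(\zeta_p - 1)(\zeta_p - 1)^{[n-1]}\rho (\rho Y_i)^{n-1}\otimes\rd\log T_i/t$, whereas $\widetilde\Theta$ applied to the image $(\zeta_p - 1)^{[n]}(\rho Y_i)^n$ gives $n(\zeta_p - 1)^{[n]}\rho(\rho Y_i)^{n-1}\otimes\rd\log T_i/t$; the identity $n(\zeta_p - 1)^{[n]} = (\zeta_p - 1)(\zeta_p - 1)^{[n-1]}$ closes the verification.

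For part (2), I work locally again and set $\lambda = \rho'/\rho \in \frakm_C$ for any $\rho'\in \rho\frakm_C$. Any element of $\OXp[\rho'Y_1,\dots,\rho'Y_d]$ is a finite sum $\sum a_{\underline n}(\rho'Y)^{\underline n}$, and inside $\calO\widehat\bC_\rho^+$ we may rewrite $(\rho'Y)^{\underline n} = \lambda^{|\underline n|}(\rho Y)^{\underline n} = \lambda^{|\underline n|}\underline n!\,(\rho Y)^{[\underline n]}$, which lies in the image of $\iota_{PS}$ by (3). Passing to the filtered colimit over $\rho'$ and inverting $p$ gives the factorisation through $\calO\widehat\bC_{\pd}$. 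The main obstacle throughout is just bookkeeping the $p$-completions carefully — specifically, checking that injectivity of a gradation-wise map survives $p$-adic completion — but this is handled once one notes that both sides are $p$-completed direct sums along the $\bN^d$-grading and $\OXp$ is $p$-torsion free; no deeper analytic input is needed.
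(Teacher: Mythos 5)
Your treatment of parts (1) and (3) matches the paper's in substance: prove the explicit local form of $\iota_{PS}$ first by tracing $f$ through the pd-universal property and the identification $\Gamma(\OXp)/\calI_{\pd}\cong\OXp$, then read off injectivity and $\widetilde\Theta=(\zeta_p-1)\Theta$ from the $\bN^d$-graded decomposition. (The paper checks the Higgs compatibility on a general multi-index monomial; your one-variable check together with the Leibniz rule gives the same thing.) Those parts are fine.

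Part (2) contains a genuine gap. The central step of your argument asserts that, inside $\calO\widehat \bC_{\rho}^+$,
\[
(\rho'Y)^{\underline n}\;=\;\lambda^{|\underline n|}(\rho Y)^{\underline n}\;=\;\lambda^{|\underline n|}\,\underline n!\,(\rho Y)^{[\underline n]}
\]
lies in the image of $\iota_{PS}$. But $\iota_{PS}$ sends the pd-monomial $(\rho Y)^{[\underline n]}$ to $\prod_i(\zeta_p-1)^{[n_i]}(\rho Y_i)^{n_i}$, not to $(\rho Y)^{\underline n}/\underline n!$, so the correct identity is
\[
(\rho'Y)^{\underline n}\;=\;\frac{\lambda^{|\underline n|}\,\underline n!}{(\zeta_p-1)^{|\underline n|}}\;\iota_{PS}\left((\rho Y)^{[\underline n]}\right)\;=\;\frac{\lambda^{|\underline n|}}{\prod_i(\zeta_p-1)^{[n_i]}}\;\iota_{PS}\left((\rho Y)^{[\underline n]}\right),
\]
and the cofactor has valuation $|\underline n|\nu_p(\lambda)-\sum_i\nu_p\left((\zeta_p-1)^{[n_i]}\right)$. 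This is negative for many $\underline n$: for $d=1$ and $n=p-1$ one has $\nu_p((\zeta_p-1)^{[p-1]})=1$, which exceeds $(p-1)\nu_p(\lambda)$ whenever $\nu_p(\lambda)$ is small. So the monomial does \emph{not} in general lie in the image of $\iota_{PS}$, only in $p^{-N(\underline n)}\iota_{PS}(\calO\widehat \bC_{\pd}^+)$ for some exponent $N(\underline n)$ depending on $\underline n$.

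What is actually needed, and what your proposal omits, is a \emph{uniform} bound: for each fixed $\rho'\in\rho\frakm_C$ a single integer $N$ with $p^N\calO\widehat \bC_{\rho'}^+\subset\iota_{PS}(\calO\widehat \bC_{\pd}^+)$ inside $\calO\widehat \bC_{\rho}^+$. This is indispensable because $\calO\widehat \bC_{\rho'}^+$ is a $p$-adic completion whose sections are convergent rather than finite sums, so an exponent depending on $\underline n$ cannot be absorbed merely by inverting $p$ afterwards. The paper supplies the bound from the growth estimate $\nu_p((\zeta_p-1)^{[n]})\leq\log_p(n)+1$: since $\nu_p(\rho'/\rho)>0$, one has $n\,\nu_p(\rho'/\rho)\geq\nu_p((\zeta_p-1)^{[n]})$ for all $n\geq m$ (for some $m$), and the finitely many small exponents in each coordinate cost at most a bounded power of $p$. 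That estimate is the real content of part (2) and your proposal does not establish it.
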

  \begin{proof}
      We first prove Item (3). By the definition of $f$ in Construction \ref{Construction-ComparePeriodSheaf}, for any $\rho y_i\in E_{\rho}^+$ given in Lemma \ref{Lem-LocalFaltings}, we have $f(\rho y_i) = (\zeta_p-1)\rho Y_i$. Then we can conclude by using \ref{Cor-LocalPeriodSheaf}. By Corollary \ref{Cor-LocalPeriodSheaf} and Lemma \ref{Lem-LocalPeriodSheafWang}, for any $\underline n = (n_1,\dots,n_d)\in\bZ_{\geq 0}^d$, we have 
      \[\begin{split}
          &\iota_{PS}((\zeta_p-1)\Theta(\prod_{l=1}^d(\rho Y_l)^{[n_l]}))\\
          =&\iota_{PS}((\zeta_p-1)\sum_{i=1}^d(\rho Y_1)^{[n_1]}\cdots(\rho Y_i)^{[n_i-1]}\cdots(\rho Y_d)^{[n_d]}\frac{\rho\rd\log T_i}{t})\\
          =&(\zeta_p-1)\sum_{i=1}^d(\zeta_p-1)^{[n_1]}\cdots(\zeta_p-1)^{[n_i-1]}\cdots(\zeta_p-1)^{[n_d]}(\rho Y_1)^{n_1}\cdots(\rho Y_i)^{n_i-1}\cdots(\rho Y_d)^{n_d}\frac{\rho\rd\log T_i}{t}\\
          =&\sum_{i=1}^dn_i(\zeta_p-1)^{[n_1]}\cdots(\zeta_p-1)^{[n_d]}(\rho Y_1)^{n_1}\cdots(\rho Y_i)^{n_i-1}\cdots(\rho Y_d)^{n_d}\frac{\rho\rd\log T_i}{t}\\
          =&\prod_{j=1}^d(\zeta_p-1)^{[n_j]}\widetilde \Theta(\prod_{l=1}^d(\rho Y_l)^{n_l})\\
          =&\widetilde \Theta(\iota_{PS}(\prod_{l=1}^d(\rho Y_l)^{[n_l]})).
      \end{split}\]
      In other words, we have $\widetilde \Theta = (\zeta_p-1)\Theta$ in this case. On the other hand, as $\OXp$-modules, the diagram
      \[\xymatrix@C=0.5cm{
        \OXp[\rho Y_1,\dots,\rho Y_d]^{\wedge}_{\pd}\big|_{U_{\infty}}\ar[d]^{\cong}\ar[rrrrrr]^{(\rho Y_i)^{[n]}\mapsto (\zeta_p-1)^{[n]}(\rho Y_i)^n}&&&&&&\OXp[\rho Y_1,\dots,\rho Y_d]^{\wedge}\big|_{U_{\infty}}\ar[d]^{\cong}\\
        \widehat \bigoplus_{\underline n = (n_1,\dots,n_d)\in\bN^d}\OXp\big|_{U_{\infty}} e_{\underline n}\ar[rrrrrr]^{e_{\underline n}\mapsto \prod_{i=1}^d(\zeta_p-1)^{[n_i]}e_{\underline n}}&&&&&&\widehat \bigoplus_{\underline n = (n_1,\dots,n_d)\in\bN^d}\OXp\big|_{U_{\infty}} e_{\underline n}
      }\]
      commutes. So we know that $\iota_{PS}$ is injective in this case as $\OXp$ is $p$-torsion free.

      For Item (1): Since we already have a globally defined morphism $\iota_{PS}$, all results in statement can be check locally on both $\frakX_{\et}$ and $X_{\proet}$ and thus reduces to the case in Item (3) that we have proved above.

      To prove (2), by the construction of $\calO\bC^{\dagger}$, we are reduced to showing that for any $\rho^{\prime}\in\rho\frakm_C$, there exists an $N\geq 0$ such that the inclusion $p^N\calO\widehat \bC_{\rho^{\prime}}^+\subset \calO\widehat \bC_{\rho}^+$ factors through $\calO\widehat \bC_{\pd}^+$. Since the problem is local, we may assume $\frakX = \frakU$ is affine small as above. Thanks to Item (3), we are reduced to showing that there exists an $N\geq 0$ such that
      \[p^N\OXp[\rho^{\prime}Y_1,\dots,\rho^{\prime}Y_d]^{\wedge}\subset \OXp[\rho(\zeta_p-1)Y_1,\dots,\rho(\zeta_p-1)Y_d]^{\wedge}_{\pd}.\]
      Fix an $\underline n = (n_1,\dots,n_d)\in\bN^d$. We know that
      \[(\rho^{\prime}Y_1)^{n_1}\cdots (\rho^{\prime}Y_d)^{n_d} =(\frac{\rho^{\prime}}{\rho})^{n_1+\cdots+n_d} (\rho Y_1)^{n_1}\cdots (\rho Y_d)^{n_d}.\]
      Recall that for any $n = n_0+n_1p+\cdots+n_rp^r\geq 0$ with $n_1,\dots,n_r\in\{0,1\dots,p-1\}$ $n_r\neq 0$, 
      \[\nu_p(\frac{(\zeta_p-1)^n}{n!}) = \frac{n}{p-1}-\frac{n-\sum_{i=0}^rn_r}{p-1} = \frac{\sum_{i=0}^rn_r}{p-1}\leq (r+1)\leq \log_p(n)+1.\]
      In particular, there exists an $m\geq 0$ such that for any $n\geq m$,
      \[n\nu_p(\frac{\rho^{\prime}}{\rho})\geq \nu_p((\zeta_p-1)^{[n]}).\]
      Choose $N\geq 0$ such that for any $(n_1,\dots,n_d)\in\bN^d$ with $\max_{1\leq i\leq d}\{n_i\}\leq m$, 
      \[p^N\prod_{i=1}^d(\rho^{\prime}Y_i)^{n_i}\in \OXp[\rho(\zeta_p-1)Y_1,\dots,\rho(\zeta_p-1)Y_d]^{\wedge}_{\pd}.\]
      As argued as above, for such an $N$, we have 
      \[p^N\OXp[\rho^{\prime}Y_1,\dots,\rho^{\prime}Y_d]^{\wedge}\subset \OXp[\rho(\zeta_p-1)Y_1,\dots,\rho(\zeta_p-1)Y_d]^{\wedge}_{\pd}\]
      as desired. This completes the proof.
  \end{proof}
  \begin{rmk}[$\calO\bC^{\dagger,+}$ vs $\calO\widehat \bC^+_{\pd}$ vs $\calO\widehat \bC_{\rho}^+$]\label{Rmk-ComparePeriodSheaf}
      \begin{enumerate}
          \item[(1)] Although $(\calO\widehat \bC_{\rho}^+)^{\widetilde \Theta} = \OXp$, even after inverting $p$, the induced Higgs complex ${\rm HIG}(\calO\widehat \bC_{\rho}^+[\frac{1}{p}],\widetilde \Theta)$ is not a resolution of $\OX$ (cf. \cite[Rem. 2.24 (1)]{Wang}).

          \item[(2)] Although $(\calO\bC^{\dagger,+})^{\widetilde \Theta} = \OXp$ and after inverting $p$, the induced Higgs complex ${\rm HIG}(\calO\bC^{\dagger},\widetilde \Theta)$ is a resolution of $\OX$, ${\rm HIG}(\calO\bC^{\dagger,+},\widetilde \Theta)$ is never a resolution of $\OXp$. Indeed, the $p^{\infty}$-torsion part of $\rH^{\geq 1}({\rm HIG}(\calO\bC^{\dagger,+},\widetilde \Theta))$ is unbounded (cf. \cite[Rem. 2.24, Thm. 2.28]{Wang}).

          \item[(3)] The Higgs complex ${\rm HIG}(\calO\widehat \bC^+_{\pd},\Theta)$ is a resolution of $\OXp$.
      \end{enumerate}
  \end{rmk}
  \begin{rmk}
      In \cite[Section 8]{AHLB}, the authors also compared $\calO\widehat\bC_{\pd}$ (which is denoted by $\calB_{\tilde{\frakX}}$ in \textit{loc.cit.}) with $\calO\bC^{\dagger}$ independently.
  \end{rmk}

\section{$\Gamma$-cohomology of log nilpotent representations}\label{Sec-GammaCohomology}
In this section, we focus on the local calculations of some group cohomologies. The global arguments in Section \ref{Sec-MainResult} will reduce to these local calculations. 
For convenience and later applications, we work in a general setting as follows.
  \begin{notation}\label{notation:set-up of A}
    Let $A$ be a $p$-complete $p$-torsion free $\calO_C$-algebra and $\rho\in\calO_C$ with $\nu_p(\rho)\geq\frac{1}{p-1}$. Let $A[\rho X]_{\pd}^{\wedge}$ be the $p$-complete free pd algebra over $A$ generated by $\rho X$. By abuse of notation, we temporarily put $\Gamma\cong \Zp\gamma$ (only in this section). Let $\Gamma$ act on $A[\rho X]_{\pd}^{\wedge}$ by $\gamma(\rho X) = \rho X+\rho e$ for some topologically nilpotent non-zero divisor $e\in A$ such that $A$ is $e$-complete.
  \end{notation}
  We are interested in log-nilpotent $A$-representations of $\Gamma$ defined below.
 \begin{dfn}\label{Dfn-LogNilRep}
    By an {\bf $A$-representation of $\Gamma$ of rank $r$}, we mean a finite free $A$-module $V$ of rank $r$ together with a continuous $A$-linear action of $\Gamma$. An $A$-representation $V$ is called {\bf log nilpotent}, if for a chosen $A$-basis $v_1,\dots, v_r$ of $V$, the matrix of $\gamma$ on $V$ is of the form 
    \[\exp(-(\zeta_p-1)\Theta) = \sum_{n\geq 0}(-1)^n(\zeta_p-1)^{[n]}\Theta^{n},\]
    where $\Theta\in\rM_r(A)$ is topologically nilpotent; that is, $\lim_{n\to+\infty}\Theta^{n} = 0$. Let $\Rep_A(\Gamma)$ (resp. $\Rep^{\nil}_A(\Gamma)$) be the category of $A$-representations (resp. log nilpotent $A$-representations) of $\Gamma$
  \end{dfn}

 The following example exhibits why  the log-nilpotent representations are interesting. Indeed, they appear naturally in the $p$-adic geometry.
  
  \begin{exam}
      We assume $A = \calO_C\za T^{\pm 1}\ya$ is the the ring of regular functions on the $1$-dimensional formal torus and denote by $(A)_{\Prism} = (A/\Ainf)_{\Prism}$ the prismatic site of $A$ (cf. \cite[Def. 4.1]{BS22}). In \cite[Thm. 5.12]{Tia23}, Tian proved that there exists an equivalence between the category of Hodge--Tate crystals (i.e. reduced crystal in \textit{loc.cit.}) and the category of topologically nilpotent Higgs bundles over $A$. Fix such a Higgs bundle $(H,\Theta)$ with $H$ finite free over $A$ and equip $H$ with a $\Gamma$-action by letting $\gamma = \exp(-(\zeta_p-1)\Theta)$. Then we get a log nilpotent $A$-representation $H$. Let $\bM$ be the Hodge--Tate crystal induced by $(H,\Theta)$ via \cite[Thm. 5.12]{Tia23}. Similar to \cite[Lem. 6.5]{MW22}, one can check that the evaluation of $\bM$ at $\Ainf(\calO_C\za T^{\pm \frac{1}{p^{\infty}}}\ya)$ induces a $\Gamma$-equivariant isomorphism
      $\bM(\Ainf(\calO_C\za T^{\pm \frac{1}{p^{\infty}}}\ya),\xi)\cong H\otimes_A\calO_C\za T^{\pm \frac{1}{p^{\infty}}}\ya$.
  \end{exam}

  Now, we state the main result of this section, which shall be used in next sections.
  \begin{prop}\label{Prop-CohomologySummary}
      Let $A$ be a $p$-complete $p$-torsion free $\calO_C$-algebra with the topologically nilpotent element $e = \zeta_p-1\in A$ and $\rho\in\calO_C$ satisfied $\nu_p(\rho)\geq\frac{1}{p-1}$.
      Let $V$ be a log nilpotent $A$-representation of $\Gamma$ with $\Theta\in \rM_r(A)$ as above. For any $\alpha\in\bZ[\frac{1}{p}]\cap[0,1)$, define 
      \[M_{\alpha}(V) = V\otimes_AA[\rho X]^{\wedge}_{\pd}\otimes_AAe_{\alpha},\]
      with the diagonal $\Gamma$-action, where $Ae_{\alpha}$ is the free $A$-module with the basis $e_{\alpha}$ on which $\Gamma$ acts $A$-linearly such that $\gamma(e_{\alpha}) = \zeta^{\alpha}e_{\alpha}$.
      \begin{enumerate}
          \item[(1)] Assume $\alpha\neq 0$. We have $\rH^0(\Gamma,M_{\alpha}(V)) = 0$ and $\rH^1(\Gamma,M_{\alpha}(V)) = M_{\alpha}(V)/(\zeta^{\alpha}-1)$.

          \item[(2)] Assume $\alpha = 0$. We have 
          \[\rH^0(\Gamma,M_{0}(V)) = \{\exp(\Theta X)v\mid v\in V_{\Theta/\rho}\}\cong V_{\Theta/\rho},\]
          where
          \[V_{\Theta/\rho} = \{v\in V\mid \forall~ n\geq 1, \Theta^nv\in\rho^nV~\&~\lim_{n\to+\infty}\rho^{-n}\Theta^nv = 0\}.\]
          If there exists $n\geq 1$ such that $\Theta^n = \rho^n\Theta^{\prime}$ for some topologically nilpotent $\Theta^{\prime}$, then we have
          \[\rH^1(\Gamma,M_0(V))\cong \rH^1(\Gamma,M_0(V))[\rho^{n}(\zeta_p-1)].\]
          If moreover $n=1$, then the natural inclusion $M_0(V)^{\Gamma}\subset M_0(V)$ induces a $\Gamma$-equivariant isomorphism 
          \[M_0(V)^{\Gamma}\otimes_AA[\rho X]^{\wedge}_{\pd} = M_0(V)\]
          such that 
          \[\rH^1(\Gamma,M_0(V)) \cong M_0(V)/\rho (\zeta_p-1)M_0(V).\]
      \end{enumerate}
  \end{prop}
  \begin{proof}
      This is a special case of Proposition \ref{Prop-CohomologySummary-general} we shall prove later. But we would like to give the rough idea here: The key point for the proof is to compute $\rR\Gamma(\Gamma,M_{\alpha}(V))$ via the complex
      \[M_{\alpha}(V)\xrightarrow{\gamma-1}M_{\alpha}(V).\]
      When $\alpha \neq 0$, we have $\zeta^{\alpha}-1$ divides $\zeta_p-1$. As $V$ is log-nilpotent, one can check the action of $\gamma-1$ on $M_{\alpha}(V)$ is ``controlled'' by $\zeta^{\alpha}-1$; that is, it is of the form $(\zeta^{\alpha}-1)\phi_{\alpha}$, where $\phi_{\alpha}:M_{\alpha}(V)\to M_{\alpha}(V)$ is a bijection. So the computation in this setting is quite easy. However, the $\alpha = 0$ case is much more complicated and the corresponding calculations form the most technical part in this section.
  \end{proof}

   Before we go to details, it is worth explaining why Proposition \ref{Prop-CohomologySummary} is useful (at least in a special case): 
   Note that when $R^+$ is small affine of relative dimension $1$ over $\calO_C$, the $\widehat S_{\pd}^+$ in Notation \ref{Notation-LocalPeriodSheaf} admits a $\Gamma$-equivariant ($p$-adically) topological decomposition
   \[\widehat S_{\pd}^+ = \widehat {\bigoplus_{\alpha\in \bN[1/p]\cap[0,1)}}R^+[\rho Y]^{\wedge}_{\pd}T^{\alpha}.\]
   Recall we have $\gamma(\rho Y) = \rho Y+\rho(\zeta_p-1)$ and $\gamma(T^{\alpha}) = \zeta^{\alpha}T^{\alpha}$. So for any log-nilpotent $R^+$-representation $V$ of $\Gamma$, we have (for $(A,X) = (R^+,Y)$ in Proposition \ref{Prop-CohomologySummary})
   \[V\otimes_{R^+}\widehat S_{\pd}^+\cong \widehat  {\bigoplus_{\alpha\in \bN[1/p]\cap[0,1)}} M_{\alpha}(V)\]
   Thus, Proposition \ref{Prop-CohomologySummary} essentially computes $\rR\Gamma(\Gamma,V\otimes_{R^+}\widehat S_{\pd}^+)$. See also Example \ref{exam:apply Section three} below.
  
\subsection{$\Gamma$-cohomology of pd polynomial ring}\label{SSec-Gamma-cohomology of PD-ring}
  From now on, let $A, \rho, e$ and $A[\rho X]^{\wedge}_{\pd}$ with the $\Gamma$-action be as in Notation \ref{notation:set-up of A}.
  
  \begin{notation}
     For any $\alpha\in\bZ[1/p]\cap[0,1)$, we define $M_{\alpha} = A[\rho X]^{\wedge}_{\pd}e_{\alpha}$ and let $\Gamma$ act on $M_{\alpha}$ by $\gamma(\rho X) = \rho X+\rho e$ and $\gamma(e_{\alpha}) = \zeta^{\alpha}e_{\alpha}$.
  \end{notation}

 \begin{exam}\label{exam:apply Section three}
     Keep notations as in Convention \ref{Convention-small} and Notation \ref{Notation-LocalPeriodSheaf}. Let $\Spa(A_0[\frac{1}{p}],A_0)\to U$ be the Galois cover corresponding to the subgroup $\oplus_{i=1}^{d-1}\Zp\gamma_i$ of $\Gamma$ and $A:=A_0[\rho Y_1,\cdots,\rho Y_{d-1}]^{\wedge}_{\rm pd}$. 
      Then as a $\gamma_d$-module, we have 
      \[\widehat S_{\pd}^+ = \widehat {\bigoplus_{\alpha\in\bN[\frac{1}{p}]\cap[0,1)}}A[\rho Y_d]^{\wedge}_{\pd}T_d^{\alpha}.\]
   Putting $X=Y_d$, $e_\alpha=T_d^{\alpha}$ and $e=\zeta_p-1$, this will be the main example interesting to us.
 \end{exam}

      Now as an $A$-module, 
      $M_{\alpha} = \widehat \bigoplus_{n\geq 0}A(\rho X)^{[n]}e_{\alpha}$ for any $\alpha\in \bN[\frac{1}{p}]\cap [0,1)$.
      So for any $n\geq 0$ and for any $N\in \Zp$, we have
      \begin{equation}\label{equ:gamma^N action}
          \gamma^N((\rho X)^{[n]}e_{\alpha}) = \zeta^{\alpha N}(\rho X+N\rho e)^{[n]}e_{\alpha} = \zeta^{\alpha N}\sum_{i=0}^n\rho^{[n-i]}N^{n-i}e^{n-i}(\rho X)^{[i]}e_{\alpha}.
      \end{equation}
      Note that this implies $\Gamma$ acts on $M_{\alpha}$ continuously: Indeed, for any $N\in \Zp$ with $N\alpha\in \bZ$, one can conclude from (\ref{equ:gamma^N action}) that $\gamma^N$ acts on $M_{\alpha}/NM_{\alpha}$ trivially.

      Now, by letting $N = 1$ in (\ref{equ:gamma^N action}), we get
      \[\gamma((\rho X)^{[n]}e_{\alpha}) = \zeta^{\alpha}(\rho X+\rho e)^{[n]}e_{\alpha} = \zeta^{\alpha}\sum_{i=0}^n\rho^{[n-i]}e^{n-i}(\rho X)^{[i]}e_{\alpha}.\]
  \begin{prop}\label{Prop-Gamma-cohomology of pd ring}
    Keep notations as above.
    \begin{enumerate}
        \item[(1)] When $\alpha \neq 0$, we have $\rH^0(\Gamma, M_{\alpha}) = 0$ and $\rH^1(\Gamma,M_{\alpha}) \cong M_{\alpha}/(\zeta^{\alpha}-1)M_{\alpha}$.

        \item[(2)] When $\alpha = 0$, the inclusion $A\to A[\rho X]_{\pd}^{\wedge}$ induces an isomorphism $\rH^0(\Gamma,A[\rho X]_{\pd}^{\wedge}) \cong A$ and $\rH^1(\Gamma,A[\rho X]^{\wedge}_{\pd}) = A[\rho X]^{\wedge}_{\pd}/\rho eA[\rho X]^{\wedge}_{\pd}$.
    \end{enumerate}
  \end{prop}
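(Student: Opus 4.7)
The plan is as follows. Since $\Gamma \cong \bZ_p \gamma$ is topologically cyclic, $H^0(\Gamma, M_\alpha)$ and $H^1(\Gamma, M_\alpha)$ are respectively the kernel and cokernel of $\gamma - 1$ on $M_\alpha$. The pd-binomial formula $(\rho X + \rho e)^{[n]} = \sum_{k=0}^n (\rho X)^{[k]}(\rho e)^{[n-k]}$ gives the explicit expansion
\[
\gamma(f \cdot e_\alpha) = \zeta^\alpha \sum_{k \geq 0}(\rho e)^{[k]} \partial^k f \cdot e_\alpha,
\]
where $\partial$ is the pd-derivation with $\partial(\rho^n X^{[n]}) = \rho^{n-1} X^{[n-1]}$. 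Two integrality estimates will be used throughout: for $k \geq 1$, $\nu_p(\rho^{[k]}) \geq 1/(p-1)$ (from $\nu_p(\rho) \geq 1/(p-1)$), which forces $(\rho e)^{[k]} \in (\zeta^\alpha - 1) A$ for all $\alpha \neq 0$ in $\bN[1/p] \cap [0,1)$ since $\nu_p(\zeta^\alpha - 1) \leq 1/(p-1)$; and for $k \geq 2$, $(\rho e)^{[k]}/(\rho e) = (\rho^{[k]}/\rho) e^{k-1} \in e \cdot A$, because $\rho^{[k]}/\rho \in \calO_C$.

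For case (2) ($\alpha = 0$), I compare coefficients. The equation $(\gamma - 1)(f) = 0$ for $f = \sum c_n \rho^n X^{[n]}$ gives $\sum_{k \geq 1} c_{m+k}(\rho e)^{[k]} = 0$ for each $m \geq 0$; isolating the $k = 1$ term and dividing by the non-zero divisor $\rho e$ forces $c_{m+1} \in e A$ for all $m$. Writing $c_n = e c_n^{(1)}$ for $n \geq 1$ presents $f$ as $c_0 + e f_1$ with $f_1 \in \ker(\gamma - 1)$ having no constant term; iterating, I obtain $f - c_0 \in \bigcap_k e^k A[\rho X]^\wedge_{\pd} = 0$ by $e$-adic (hence $p$-adic) separation. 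For $H^1$, the containment $\mathrm{image}(\gamma - 1) \subseteq \rho e \cdot A[\rho X]^\wedge_{\pd}$ is immediate from $(\rho e)^{[k]} \in \rho e \cdot A$; the reverse inclusion follows by induction on $n$ from
\[
(\gamma - 1)(\rho^n X^{[n]}) = \rho e \cdot \rho^{n-1} X^{[n-1]} + \sum_{k = 2}^n (\rho e)^{[k]} \rho^{n-k} X^{[n-k]},
\]
together with a $p$-adic closure argument to pass from basis elements to all of $\rho e \cdot A[\rho X]^\wedge_{\pd}$.

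For case (1) ($\alpha \neq 0$), I reduce $H^0$ to case (2). Let $p^m$ be the order of $\zeta^\alpha$; then $\gamma^{p^m}$ acts on $M_\alpha$ purely as the translation $X \mapsto X + p^m e$ (with trivial scalar on $e_\alpha$), and case (2) applied to the subgroup $\Gamma^{p^m} \cong \bZ_p$ (with the topologically nilpotent non-zero divisor $p^m e$ in place of $e$) gives $H^0(\Gamma^{p^m}, M_\alpha) = A \cdot e_\alpha$. The residual condition $\gamma(f e_\alpha) = f e_\alpha$ then collapses to $(\zeta^\alpha - 1)f = 0$, and since a positive power of $\zeta^\alpha - 1$ equals $p$ up to a unit in $\calO_C$, the $p$-torsion freeness of $A$ yields $f = 0$. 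For $H^1$, the integrality $(\rho e)^{[k]} \in (\zeta^\alpha - 1)A$ gives $\mathrm{image}(\gamma - 1) \subseteq (\zeta^\alpha - 1) M_\alpha$, and the reverse inclusion follows exactly as in case (2) by induction on $n$ applied to
\[
(\gamma - 1)(\rho^n X^{[n]} e_\alpha) = (\zeta^\alpha - 1) \rho^n X^{[n]} e_\alpha + \zeta^\alpha \sum_{k = 1}^n (\rho e)^{[k]} \rho^{n-k} X^{[n-k]} e_\alpha.
\]
The main hurdle throughout is topological bookkeeping: verifying that the $e$-adic iteration in case (2) $H^0$ descends to the $p$-adic completion, and that the image of $\gamma - 1$ is $p$-adically closed so that membership of the basis elements extends to the whole target. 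Both should be controlled by the $p$-torsion freeness of $A$ and the continuous $A$-linearity of $\gamma - 1$.
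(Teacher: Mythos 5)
Your proof is correct, and for the zero-direction part of Case (1) it takes a genuinely different (and cleaner) route than the paper. The paper shows $\rH^0(\Gamma,M_\alpha)=0$ for $\alpha\neq 0$ by a direct coefficient-by-coefficient iteration: from $(\gamma-1)x=0$ it extracts $a_n\in\bigcap_k e^kA=0$ using the estimate $\nu_p(\rho^{[m]})\geq \nu_p(\zeta^\alpha-1)$. You instead restrict to the open subgroup $\Gamma^{p^m}$ (with $p^m$ the order of $\zeta^\alpha$), on which the twist by $e_\alpha$ becomes trivial and $\gamma^{p^m}$ is pure translation by $p^m e$; Case (2) applied with $p^m e$ in place of $e$ (a legitimate substitution, since $p^m e$ is again a topologically nilpotent non-zero divisor and $A$ remains $p$-complete) collapses the $\Gamma^{p^m}$-invariants to $Ae_\alpha$, and the residual $\gamma$-condition $(\zeta^\alpha-1)f=0$ kills $f$ by $p$-torsion-freeness. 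This reduction packages the same integrality estimate into a one-line conclusion and reuses Case (2), whereas the paper re-runs essentially the same iteration separately for $\alpha\neq 0$. Everything else — the explicit expansion of $\gamma-1$ via the pd-binomial theorem, the two integrality inputs $(\rho e)^{[k]}\in(\zeta^\alpha-1)A$ and $(\rho e)^{[k]}/(\rho e)\in eA$ for $k\geq 2$, the iteration for $\rH^0$ in Case (2), and the two inclusions for $\rH^1$ — matches the paper's proof.

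One caveat on wording. For the reverse inclusion in $\rH^1$ (both cases), you first produce a preimage for each monomial $\rho e\cdot\rho^nX^{[n]}$ by a degree induction and then invoke "a $p$-adic closure argument" credited to continuity of $\gamma-1$. Continuity of $\gamma-1$ alone does not make its image $p$-adically closed, and that is not the mechanism here. What actually makes the passage to general $y=\sum b_n\rho^nX^{[n]}$ work is that the explicit preimages $z_n$ constructed by your recursion $z_{n+1}=\rho^{n+1}X^{[n+1]}-\sum_{k\geq 2}(\rho^{[k]}/\rho)e^{k-1}z_{n+2-k}$ have coefficients in $A$, i.e. are uniformly bounded, so that $\sum b_n z_{n+1}$ converges in the $p$-complete module $M_0$ and $(\gamma-1)\bigl(\sum b_n z_{n+1}\bigr)=\rho e y$ by continuity. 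This is the same convergence that underlies the paper's "Nakayama" step, which builds the preimage of a general $y$ in one global iteration with error shrinking by a factor of $e$. Either way the argument closes, but the justification should be the boundedness-plus-convergence, not closedness of the image.
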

  \begin{proof}
    Note that $\rH^i(\Gamma,M_{\alpha})$ is calculated by the Koszul complex 
    \[M_{\alpha}\xrightarrow{\gamma-1}M_{\alpha}.\]
    So we have $\rH^0(\Gamma,M_{\alpha}) \cong (M_{\alpha})^{\gamma=1}$ and $\rH^1(\Gamma,M_{\alpha})\cong M_{\alpha}/(\gamma-1)M_{\alpha}$.
      
    For any $x = \sum_{n\geq 0}a_n(\rho X)^{[n]}e_{\alpha}\in M_{\alpha}$, we have that 
    \begin{equation}\label{Equ-(gamma-1)-A}
      \begin{split}
          \gamma(x) - x &= \zeta^{\alpha}\sum_{n\geq 0}a_n\sum_{i=0}^{n}\rho^{[n-i]}e^{n-i}\rho^iX^{[i]}e_{\alpha} - \sum_{n\geq 0}a_n(\rho X)^{[n]}e_{\alpha} \\
          &= \sum_{n\geq 0}(\zeta^{\alpha}\sum_{m\geq 1}a_{n+m}\rho^{[m]}e^m+(\zeta^{\alpha}-1)a_n)(\rho X)^{[n]}e_{\alpha}.
      \end{split}
    \end{equation}

    {\bf Case~1:} $\alpha\neq 0$. 

    We first show that $(M_{\alpha})^{\gamma=1} = 0$. Suppose that $x = \sum_{n\geq 0}a_n(\rho X)^{[n]}e_{\alpha}\in M_{\alpha}$ is fixed by $\gamma$. Then (\ref{Equ-(gamma-1)-A}) tells us that for any $n\geq 0$,
    \[\zeta^{\alpha}\sum_{m\geq 1}a_{n+m}\rho^{[m]}e^m+(\zeta^{\alpha}-1)a_n = 0.\]
    We claim that $a_{n} = 0$ for any $n\geq 0$. To see this, it is enough to show that $a_{n}\in e^mA$ for any $m\geq 1$. We will prove this by induction on $m\geq 1$.
    Since $\nu_p(\rho)\geq \frac{1}{p-1}$, for any $n\geq 1$, we have 
    \[\nu_p(\rho^{[n]})\geq \nu_p(\rho)\geq \frac{1}{p-1}\geq \nu_p(\zeta^{\alpha}-1).\] 
    Since $e$ is regular in $A$, we see that for any $n\geq 0$,
    \[a_{n} = -\zeta^{\alpha}\sum_{m\geq 1}a_{n+m}\frac{\rho^{[m]}}{\rho}\frac{\rho}{\zeta^{\alpha}-1}e^m.\]
    In particular, we have $a_{n}\in eA$ for any $n\geq 0$. Now, assume $a_{n}\in e^kA$ for some $k\geq 1$ and then we see that
    \[a_{n}\in \sum_{m\geq 1}\frac{\rho^{[m]}}{\rho}\frac{\rho}{\zeta^{\alpha}-1}e^{m+k}A\subset e^{k+1}A.\]
    This proves the claim and hence $\rH^0(\Gamma,M_{\alpha}) = 0$.

    To settle down the result in this case, we are reduced to showing that $(\zeta^{\alpha}-1)M_{\alpha}\subset (\gamma-1)M_{\alpha}$. By (\ref{Equ-(gamma-1)-A}), we have $(\gamma-1)M_{\alpha} \subset (\zeta^{\alpha}-1)M_{\alpha}$. On the other hand, for any $y = \sum_{n\geq 0}b_n(\rho X)^{[n]}e_{\alpha}\in M_{\alpha}$, we have
    \[(\gamma-1)y-(\zeta^{\alpha}-1)y = \sum_{n\geq 0}(\zeta^{\alpha}\sum_{m\geq 1}a_{n+m}\rho^{[m]}e^m)(\rho X)^{[n]}e_{\alpha}\subset \rho e M_{\alpha}.\]
    In particular, we have
    \[(\zeta^{\alpha}-1)M_{\alpha}\subset (\gamma-1)M_{\alpha}+\rho eM_{\alpha}\subset (\gamma-1)M_{\alpha}+ e(\zeta^{\alpha}-1) M_{\alpha}.\]
    Since $\nu_p(\rho)\geq \nu_p(\zeta^{\alpha}-1)$ and $e$ is topologically nilpotent, by Nakayama's Lemma, we conclude that $(\zeta^{\alpha}-1)M_{\alpha} = (\gamma-1)M_{\alpha}$ as desired.

    {\bf Case 2:} $\alpha = 0$. 
    
    The argument is similar to that in the above case. We first show that $(A[\rho X]_{\pd}^{\wedge})^{\gamma=1} = A$. Suppose that $x = \sum_{n\geq 0}a_n(\rho X)^{[n]}\in M_{0}$ is fixed by $\gamma$. Then by (\ref{Equ-(gamma-1)-A}), we have that for any $n\geq 0$,
    \[\sum_{m\geq 1}a_{n+m}\rho^{[m]}e^m = 0.\]
    We claim that $a_{n+1} = 0$ for any $n\geq 0$. To see this, it is enough to show that $a_{n+1}\in e^mA$ for any $m\geq 1$. We will prove this by induction on $m\geq 1$.
    Since $\nu_p(\rho)\geq \frac{1}{p-1}$, for any $n\geq 1$, we have $\nu_p(\rho^{[n]})\geq \nu_p(\rho)$. Since $e$ is regular in $A$, we see that for any $n\geq 0$,
    \[a_{n+1} = -\sum_{m\geq 1}a_{n+1+m}\frac{\rho^{[m+1]}}{\rho}e^m.\]
    In particular, we have $a_{n+1}\in eA$ for any $n\geq 0$. Now, assume $a_{n+1}\in e^kA$ for some $k\geq 1$ and then we see that
    \[a_{n+1}\in \sum_{m\geq 1}\frac{\rho^{[m+1]}}{\rho}e^{m+k}A\subset e^{k+1}A.\]
    This proves the claim. In particular, $x = \sum_{n\geq 0}a_n(\rho X)^{[n]}\in (A[\rho X]_{\pd}^{\wedge})^{\gamma=1}$ if and only if $x = a_0\in A$, which implies that $\rH^0(\Gamma,A[\rho X]_{\pd}^{\wedge}) \cong A$.

    To conclude the result, we have to show that $\rH^1(\Gamma, A[\rho X]_{\pd}^{\wedge}) = A[\rho X]^{\wedge}_{\pd}/\rho eA[\rho X]^{\wedge}_{\pd}$. One can easily deduce from (\ref{Equ-(gamma-1)-A}) that $(\gamma-1)A[\rho X]^{\wedge}_{\pd}\subset \rho eA[\rho X]^{\wedge}_{\pd}$. 
    On the other hand, for any $y = \sum_{n\geq 0}b_n(\rho X)^{[n]}$, put $z = \zeta^{-\alpha}\sum_{n\geq 1}b_{n-1}(\rho X)^{[n]}$. Then we have
    \[(\gamma-1)z - \rho ey = \sum_{n\geq 0}(\sum_{m\geq 1}b_{n-1+m}\rho^{[m]}e^m-b_n\rho e)(\rho X)^{[n]} = \rho e^2\sum_{n\geq 0}\sum_{m\geq 1}b_{n+m}\frac{\rho^{[m+1]}}{\rho}e^{m-1}(\rho X)^{[n]}.\]
    As a consequence, we have
    \[\rho eA[\rho X]^{\wedge}_{\pd}\subset (\gamma-1)A[\rho X]^{\wedge}_{\pd}+\rho e^2A[\rho X]^{\wedge}_{\pd}.\]
    Since $e$ is topologically nilpotent, by Nakayama's Lemma, we get $(\gamma-1)A[\rho X]^{\wedge}_{\pd}=\rho eA[\rho X]^{\wedge}_{\pd}$ as desired.
  \end{proof}
\subsection{$\Gamma$-cohomology of log nilpotent representations}
\label{SSec-Gamma-cohomology of LogNilRep}
  We keep notations as in \S\ref{SSec-Gamma-cohomology of PD-ring}. Recall we have introduced the log-nilpotent $A$-representations of $\Gamma$ in Definition \ref{Dfn-LogNilRep}.

  \begin{notation}
    Let $V$ be a log nilpotent $A$-representation of $\Gamma$. For any $\alpha\in \bZ[1/p]\cap [0,1)$, we define 
    \[M_{\alpha}(V):=V\otimes_AM_{\alpha}\]
    on which $\Gamma$ acts diagonally.
  \end{notation}

  We want to compute $\rR\Gamma(\Gamma, M_{\alpha}(V))$ for any $\alpha\in \bZ[1/p]\cap[0,1)$. Let $v_1,\dots, v_r$ be a fixed $A$-basis of $V$ and let $P = \exp(-(\zeta_p-1)\Theta)$ be the matrix of $\gamma$ with respect to the chosen basis with $\Theta$ as mentioned in Definition \ref{Dfn-LogNilRep}. As an $A$-module, we see that
  \[M_{\alpha}(V) = \widehat {\bigoplus_{n\geq 0} }\bigoplus_{i=1}^rAv_i\otimes(\rho X)^{[n]}e_{\alpha}.\]
  For any $n\geq 0$, we have
  \[\gamma(\underline v\otimes(\rho X)^{[n]}e_{\alpha}) = \underline vP\otimes \zeta^{\alpha}\sum_{i=0}^n\rho^{[n-i]}e^{n-i}(\rho X)^{[i]}e_{\alpha}.\]
  Here and in what follows, we put
  \[\underline v\otimes (\rho X)^{[n]}e_{\alpha}:=(v_1,\dots,v_r)\otimes(\rho X)^{[n]}e_{\alpha}\]
  for short.
  So for any $x = \underline v\otimes \sum_{n\geq 0}a_n(\rho X)^{[n]}e_{\alpha}\in M_{\alpha}(V)$ with $a_n$'s lying in $A^r$,
  \begin{equation}\label{Equ-(gamma-1)-B}
      \begin{split}
          (\gamma-1)x & = \underline v\otimes(\sum_{n\geq 0}Pa_n \zeta^{\alpha}\sum_{i=0}^n\rho^{[n-i]}e^{n-i}\rho^iX^{[i]}e_{\alpha}-\sum_{n\geq 0}a_n(\rho X)^{[n]}e_{\alpha})\\
          & = \underline v\otimes\sum_{n\geq 0}(\sum_{m\geq 0}\zeta^{\alpha}Pa_{n+m}\rho^{[m]}e^m-a_n)(\rho X)^{[n]}e_{\alpha}\\
          & = \underline v\otimes\sum_{n\geq 0}(\sum_{m\geq 1}\zeta^{\alpha}Pa_{n+m}\rho^{[m]}e^m+(\zeta^{\alpha}P-I)a_n)(\rho X)^{[n]}e_{\alpha}.
      \end{split}
  \end{equation}

  \begin{lem}\label{Lem-Cogo-Error}
      Assume $\alpha\neq 0$. For any log nilpotent $A$-representation $V$ of $\Gamma$, we have $\rH^0(\Gamma,M_{\alpha}(V)) = 0$ and $\rH^1(\Gamma,M_{\alpha}(V)) = M_{\alpha}(V)/(\zeta^{\alpha}-1)M_{\alpha}(V)$.
  \end{lem}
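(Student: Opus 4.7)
The plan is to reduce this lemma to Proposition~\ref{Prop-Gamma-cohomology of pd ring}(1) (the case $V=A$) by first establishing a good factorization of the ``twist matrix'' $\zeta^{\alpha}P-I$ that governs the action of $\gamma$ on coefficients. Once such a factorization is in hand, essentially the same recursive/Nakayama arguments from Proposition~\ref{Prop-Gamma-cohomology of pd ring}(1) will carry over to the log-nilpotent setting.

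The key algebraic input is: there exists $Q\in\GL_r(A)$ with $\zeta^{\alpha}P-I=(\zeta^{\alpha}-1)Q$. To see this, I would use two valuation facts. First, since $\alpha\in\bZ[1/p]\cap(0,1)$ is nonzero, $\zeta^{\alpha}$ is a non-trivial $p^r$-th root of unity, hence $\nu_p(\zeta^{\alpha}-1)\leq\nu_p(\zeta_p-1)$. Second, $\nu_p((\zeta_p-1)^{[n]})=s_p(n)/(p-1)\geq\nu_p(\zeta_p-1)$ for $n\geq 1$, so every divided power $(\zeta_p-1)^{[n]}$ is divisible by $\zeta^{\alpha}-1$ in $\calO_C$. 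Using $P-I=\sum_{n\geq 1}(-1)^{n}(\zeta_p-1)^{[n]}\Theta^{n}$ this yields $\zeta^{\alpha}P-I=(\zeta^{\alpha}-1)(I-N)$ with $N\in\rM_r(A)$ topologically nilpotent (since $\Theta$ is and the coefficients are bounded in $\calO_C$); the geometric series then inverts $Q:=I-N$.

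With $Q$ in hand, the $\rH^0$-vanishing will follow by the same induction as in the $V=A$ case. Namely, from \eqref{Equ-(gamma-1)-B} a $\gamma$-fixed element $x=\underline v\otimes\sum a_n\rho^{n}X^{[n]}e_{\alpha}$ satisfies
\[
a_n=-\zeta^{\alpha}Q^{-1}P\sum_{m\geq 1}\frac{\rho^{[m]}}{\zeta^{\alpha}-1}\,e^{m}\,a_{n+m},
\]
and the coefficients $\rho^{[m]}/(\zeta^{\alpha}-1)$ lie in $\calO_C$ because $\nu_p(\rho^{[m]})\geq\nu_p(\rho)\geq\nu_p(\zeta^{\alpha}-1)$ for $m\geq 1$. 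Induction on $k$ then gives $a_n\in e^{k}A^{r}$ for all $k$, and $e$-completeness forces $a_n=0$. For $\rH^1$, the inclusion $(\gamma-1)M_{\alpha}(V)\subseteq(\zeta^{\alpha}-1)M_{\alpha}(V)$ is immediate from the factorization plus $\rho^{[m]}\in(\zeta^{\alpha}-1)\calO_C$. For the reverse inclusion, given $y=\underline v\otimes\sum b_n\rho^{n}X^{[n]}e_{\alpha}$ I would try the first-order correction $z=\underline v\otimes\sum Q^{-1}b_n\rho^{n}X^{[n]}e_{\alpha}$; a direct computation via \eqref{Equ-(gamma-1)-B} then shows $(\gamma-1)z-(\zeta^{\alpha}-1)y\in e(\zeta^{\alpha}-1)M_{\alpha}(V)$, so that $(\zeta^{\alpha}-1)M_{\alpha}(V)\subseteq(\gamma-1)M_{\alpha}(V)+e(\zeta^{\alpha}-1)M_{\alpha}(V)$, and Nakayama (as at the end of the proof of Proposition~\ref{Prop-Gamma-cohomology of pd ring}(1)) upgrades this to equality.

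The hard part is the factorization step. Once $\zeta^{\alpha}P-I=(\zeta^{\alpha}-1)Q$ with $Q\in\GL_r(A)$ is established, Steps~2 and~3 are essentially formal upgrades of the arguments already made in Proposition~\ref{Prop-Gamma-cohomology of pd ring}(1). The crucial assumptions used in the factorization are that $\alpha\neq 0$ (so that $\zeta^{\alpha}-1$ is nonzero and divides the $(\zeta_p-1)^{[n]}$) and that $\Theta$ is topologically nilpotent (which is exactly the log-nilpotence hypothesis on $V$, and is what makes $Q$ invertible).
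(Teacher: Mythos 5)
Your proposal is correct and follows essentially the same route as the paper: the paper also introduces an invertible matrix $\Theta_{\alpha}\in\GL_r(A)$ (your $Q$) with $(\zeta^{\alpha}-1)\Theta_{\alpha}=\zeta^{\alpha}P-I$, and then runs the same $e$-adic induction for $\rH^0$ and the same correction $z=\Theta_{\alpha}^{-1}y$ plus Nakayama for $\rH^1$. The only cosmetic difference is that you justify the divisibility $\zeta^{\alpha}-1\mid(\zeta_p-1)^{[n]}$ via digit sums, whereas the paper just quotes $\nu_p(\rho^{[m]})\geq\nu_p(\rho)\geq\frac{1}{p-1}\geq\nu_p(\zeta^{\alpha}-1)$ when it uses it.
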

  \begin{proof}
      The proof is similar to that of Proposition \ref{Prop-Gamma-cohomology of pd ring}. We write $P = \exp(-(\zeta_p-1)\Theta)$ with $\Theta$ topologically nilpotent and define 
      \[\Theta_{\alpha} = I+\zeta^{\alpha}\Theta\sum_{l\geq 0}(-1)^l\frac{(\zeta_p-1)^{[l+1]}}{\zeta^{\alpha}-1}\Theta^l.\]
      Then $\Theta_{\alpha}\in \GL_r(A)$ satisfying $(\zeta^{\alpha}-1)\Theta_{\alpha} = \zeta^{\alpha}P-I$.

      Assume $x = \underline v\otimes \sum_{n\geq 0}a_n(\rho X)^{[n]}e_{\alpha}\in M_{\alpha}(V)^{\gamma=1}$ with $a_n$'s lying in $A^r$. By (\ref{Equ-(gamma-1)-B}), we know that for any $n\geq 0$,
      \[a_n = -e\sum_{m\geq 0}\zeta^{\alpha}\Theta_{\alpha}^{-1}Pa_{n+1+m}\frac{\rho^{[m+1]}}{\zeta^{\alpha}-1}e^m\in eA^r.\]
      Now assume for all $n\geq 0$, $a_n\in e^kA^r$ for some $k\geq 1$. Then the above formulae implies that 
      \[a_n\in e\sum_{m\geq 0}-e\sum_{m\geq 0}\zeta^{\alpha}\frac{\rho^{[m+1]}}{\zeta^{\alpha}-1}e^m\Theta_{\alpha}^{-1}Pe^kA^r\subset e^{k+1}A^r.\]
      Since $e$ is topologically nilpotent, we have that $a_n = 0$ for all $n\geq 0$, which forces $x = 0$ as desired.

      To complete the proof, we are reduced to showing that  $(\gamma-1)M_{\alpha}(V) = (\zeta^{\alpha}-1)M_{\alpha}(V)$. Using (\ref{Equ-(gamma-1)-B}), it is easy to see that $(\gamma-1)M_{\alpha}(V) \subset (\zeta^{\alpha}-1)M_{\alpha}(V)$. On the other hand, for any $y = \underline v\otimes\sum_{n\geq 0}b_n(\rho X)^{[n]}e_{\alpha}\in M_{\alpha}(V)$, by (\ref{Equ-(gamma-1)-B}), we have
      \begin{equation*}
          \begin{split}
              (\gamma-1)\Theta_{\alpha}^{-1}y-(\zeta^{\alpha}-1)y & = \underline v\otimes\sum_{n\geq 0}(\sum_{m\geq 1}\zeta^{\alpha}P\Theta_{\alpha}^{-1}b_{n+m}\rho^{[m]}e^m+(\zeta^{\alpha}P-I)\Theta_{\alpha}^{-1}b_n-(\zeta^{\alpha}-1)b_n)(\rho X)^{[n]}e_{\alpha}\\
              & = \underline v\otimes\sum_{n\geq 0}(\sum_{m\geq 1}\zeta^{\alpha}P\Theta_{\alpha}^{-1}b_{n+m}\rho^{[m]}e^m)(\rho X)^{[n]}e_{\alpha}\quad(\because (\zeta^{\alpha}-1)\Theta_{\alpha} = \zeta^{\alpha}P-I)\\
              & \in \rho eM_{\alpha}(V)\subset e(\zeta^{\alpha}-1)M_{\alpha}(V).
          \end{split}
      \end{equation*}
      So we get $(\zeta^{\alpha}-1)M_{\alpha}(V)\subset (\gamma-1)M_{\alpha}(V)+e(\zeta^{\alpha}-1)M_{\alpha}(V)$. Since $e$ is topologically nilpotent, by Nakayama's Lemma, we conclude that $(\zeta^{\alpha}-1)M_{\alpha}(V)\subset (\gamma-1)M_{\alpha}(V)$ as desired.
  \end{proof}
  \begin{construction}\label{Construction-f_V}
    For any log nilpotent $A$-representation $V$ of $\Gamma$, we define $A$-linear operators $g_V, f_V:M_0(V)\to M_0(V)$ as follows:

    Let $v_1,\dots,v_r$ be a fixed $A$-basis of $V$ such that the corresponding matrix of $\gamma$ is $ P = \exp(-(\zeta_p-1)\Theta)$. Define $\Theta_0 = \sum_{l\geq 0}(-1)^{l+1}\frac{(\zeta_p-1)^{[l+1]}}{\zeta_p-1}\Theta^l$. Then $\Theta_0\in\GL_r(A)$ satisfying $(\zeta_p-1)\Theta\Theta_0 = P-I$.
    Assume $x = \underline v\otimes\sum_{n\geq 0}b_n(\rho X)^{[n]}$ with $b_n$'s belonging to $A^r$. Then for any $n\geq 0$, we can define
    \begin{equation}\label{Equ-f_V-A}
        \begin{split}
            a_{n+1} :=b_n+\sum_{l\geq 1}(-1)^l\sum_{m_1,\dots,m_l\geq 1}P^lb_{n+m_1+\cdots+m_l}\frac{\rho^{[m_1+1]}}{\rho}\cdots\frac{\rho^{[m_l+1]}}{\rho}e^{m_1+\cdots+m_l}\in A^r.
        \end{split}
    \end{equation}
    As $\lim_{n\to+\infty}b_n = 0$, we have $\lim_{n\to+\infty}a_n = 0$. In particular 
    \begin{equation}\label{Equ-f_V-B}
        g_V(x) =\underline v\otimes\sum_{n\geq 1}a_n(\rho X)^{[n]}
    \end{equation}
    is a well-defined element in $M_0(V)$. Define
    \begin{equation}\label{Equ-f_V-C}
        f_V(x) = \underline v\otimes(\rho eb_0+\sum_{n\geq 1}(\rho eb_n+(\zeta_p-1)\Theta_0\Theta a_n)(\rho X)^{[n]}) = \rho ex+(\zeta_p-1)\Theta_0\Theta g_V(x).
    \end{equation}
  \end{construction}
  \begin{lem}\label{Lem-f_V}
    \begin{enumerate}
        \item[(1)] For any $ y = \underline v\otimes\sum_{n\geq 0}b_n(\rho X)^{[n]}\in M_0(V)$, we have $(\gamma-1)g_V(y) = f_V(y)$.
        
        \item[(2)] For any $ x = \underline v\otimes\sum_{n\geq 0}a_n(\rho X)^{[n]}\in M_0(V)$ and any $n\geq 0$, define $b_n=\sum_{m\geq 0}Pa_{n+1+m}\frac{\rho^{[m+1]}}{\rho}e^m$. Then $ y = \underline v\otimes\sum_{n\geq 0}b_n(\rho X)^{[n]} $ is well-defined in $M_0(V)$ and independent of $a_0$ such that 
        \[x = g_V(y)+\underline v\otimes a_0.\]
        In particular, we have $(\gamma-1)x = f_V(y)+\underline v\otimes(\zeta_p-1)\Theta\Theta_0a_0$.

        \item[(3)] Write $M_0^+(V) = V\otimes_A\hat \oplus_{n\geq 1}A(\rho X)^{[n]}$. Then $g_V$ takes values in $M_0^+(V)$ and induces an isomorphism $g_V:M_0(V)\to M_0^+(V)$ of $A$-modules.
    \end{enumerate}
  \end{lem}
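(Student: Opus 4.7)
The plan is to prove (1) by a direct computation using formula (\ref{Equ-(gamma-1)-B}) specialized to $\alpha=0$, and then to deduce (2) and (3) from the inversion relation between the sequences $(a_n)_{n\ge 1}$ and $(b_n)_{n\ge 0}$ that is built into the definition of $g_V$. All three parts are purely algebraic manipulations on formal series of the form $\underline v\otimes\sum_{n\ge 0}c_n\rho^n X^{[n]}$; convergence of every series encountered is automatic because $e$ is topologically nilpotent and each coefficient $\frac{\rho^{[m+1]}}{\rho}$ lies in $A$.

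For (1), write $g_V(y)=\underline v\otimes\sum_{n\ge 0}a_n\rho^n X^{[n]}$ with $a_0:=0$. Applying (\ref{Equ-(gamma-1)-B}) with $\alpha=0$, the coefficient of $\rho^n X^{[n]}$ in $(\gamma-1)g_V(y)$ is $\sum_{m\ge 1}Pa_{n+m}\rho^{[m]}e^m+(P-I)a_n$. The identity $P-I=(\zeta_p-1)\Theta\Theta_0=(\zeta_p-1)\Theta_0\Theta$ (which is immediate from the definition of $\Theta_0$ and the fact that $\Theta_0$ is a polynomial in $\Theta$) converts the second summand into the contribution $(\zeta_p-1)\Theta_0\Theta a_n$ appearing in $f_V(y)$. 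Matching the rest of the definition (\ref{Equ-f_V-C}) of $f_V(y)$ then reduces to the telescoping identity
\[\sum_{m\ge 1}Pa_{n+m}\rho^{[m]}e^m=\rho e\,b_n\quad(n\ge 0),\]
which is exactly the inversion dual to (\ref{Equ-f_V-A}): writing $T$ for the $A$-linear operator sending $(c_n)_{n\ge 0}\mapsto\bigl(\sum_{m\ge 1}Pc_{n+m}\frac{\rho^{[m+1]}}{\rho}e^m\bigr)_{n\ge 0}$, formula (\ref{Equ-f_V-A}) is the Neumann-series inverse $\sum_{l\ge 0}(-T)^l$ applied to $(b_n)_{n\ge 0}$, so substituting it back into $b_n=\sum_{m\ge 0}Pa_{n+1+m}\frac{\rho^{[m+1]}}{\rho}e^m$ (equivalently, $\rho e b_n=\sum_{m\ge 1}Pa_{n+m}\rho^{[m]}e^m$ after shifting the index and using $\rho^{[m+1]}=\frac{\rho^{[m+1]}}{\rho}\cdot\rho$) produces the required identity.

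For (2), the formula $b_n=\sum_{m\ge 0}Pa_{n+1+m}\frac{\rho^{[m+1]}}{\rho}e^m$ only involves $a_{n+1},a_{n+2},\ldots$, so $y$ is well-defined, independent of $a_0$, and the sequence $(b_n)_{n\ge 0}$ so produced is exactly the image of $(a_n)_{n\ge 1}$ under $T^{-1}$, where by construction $g_V$ is the map sending a given $(b_n)_{n\ge 0}$ to the corresponding $(a_n)_{n\ge 1}$ via the Neumann inversion. Hence $g_V(y)=\underline v\otimes\sum_{n\ge 1}a_n\rho^n X^{[n]}$, which gives $x=g_V(y)+\underline v\otimes a_0$. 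The last assertion follows by applying $\gamma-1$: part (1) handles $g_V(y)$, while $(\gamma-1)(\underline v\otimes a_0)=\underline v\otimes(P-I)a_0=\underline v\otimes(\zeta_p-1)\Theta\Theta_0 a_0$.

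For (3), $g_V$ plainly takes values in $M_0^+(V)$ since the sum in (\ref{Equ-f_V-B}) starts at $n=1$. The inverse is explicit: given $x=\underline v\otimes\sum_{n\ge 1}a_n\rho^n X^{[n]}\in M_0^+(V)$, apply (2) with $a_0=0$ to obtain $y$ with $g_V(y)=x$; that $g_V$ and this inverse are mutually inverse $A$-linear maps is precisely the bijectivity of $T^{-1}$ already verified. The main obstacle is therefore the bookkeeping in the Neumann-series inversion that underlies the telescoping identity in (1); once this is written out carefully, the remaining parts of the lemma follow mechanically.
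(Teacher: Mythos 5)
Your route --- reading the recursion~(\ref{Equ-f_V-A}) as the Neumann series $\sum_{l\ge 0}(-T)^l$ applied to $(b_n)_{n\ge 0}$ and extracting (1)--(3) from the resulting inversion --- is exactly the paper's idea: the paper carries out the same inversion by hand via explicit telescoping and, for part (2), an induction modulo $e^{k+1}$, so your write-up is a cleaner packaging of essentially the same proof.

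However, there is a gap in part (1) that you need to resolve before quoting the ``required identity.'' Your own framework gives $(b_n)_n=(I+T)(a_{n+1})_n$, i.e.\ $b_n=a_{n+1}+\sum_{m\ge 1}Pa_{n+1+m}\tfrac{\rho^{[m+1]}}{\rho}e^m$, with $a_{n+1}$ and \emph{no} $P$ in the $m=0$ slot. But the telescoping identity you actually need, $\rho e\,b_n=\sum_{m\ge 1}Pa_{n+m}\rho^{[m]}e^m$, is after your index shift the assertion $b_n=\sum_{m\ge 0}Pa_{n+1+m}\tfrac{\rho^{[m+1]}}{\rho}e^m$, whose $m=0$ term is $Pa_{n+1}$. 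The two differ by $(P-I)a_{n+1}$, and tracing this through~(\ref{Equ-(gamma-1)-B}) leaves a residual $\rho e(P-I)a_{n+1}$ in the coefficient of $\rho^n X^{[n]}$ of $(\gamma-1)g_V(y)-f_V(y)$, which is nonzero for generic data (take $r=1$, $\Theta\ne 0$, $y=\underline v\otimes b_0$ with $b_0\ne 0$, so $a_1=b_0$). The paper's own telescoping computation ends at $\sum_{m\ge 1}Pa_{n+1+m}\tfrac{\rho^{[m+1]}}{\rho}e^m=b_n-a_{n+1}$ --- precisely the $(I+T)$ relation --- and then writes ``as desired'' for the version with $Pa_{n+1}$, so the discrepancy is inherited from the source. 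You should decide whether~(\ref{Equ-f_V-A}) carries an intended overall factor of $P^{-1}$ (which would make the $m=0$ term come out as $Pa_{n+1}$ and close all three parts) and re-derive the identity with the corrected convention rather than invoking it as printed.
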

  \begin{proof}
      We first prove Item (1).
      We claim that $b_n=\sum_{m\geq 0}Pa_{n+1+m}\frac{\rho^{[m+1]}}{\rho}e^m$. Indeed, by (\ref{Equ-f_V-A}), we have
      \[\begin{split}
        &\sum_{m\geq 1}Pa_{n+1+m}\frac{\rho^{[m+1]}}{\rho}e^m \\
        =& \sum_{m\geq 1}(Pb_{n+m}\frac{\rho^{[m+1]}}{\rho}e^m+\sum_{l\geq 1}(-1)^l\sum_{m_1,\dots,m_l\geq 1}P^{l+1}b_{n+m+m_1+\cdots+m_l}\frac{\rho^{[m+1]}}{\rho}\frac{\rho^{[m_1+1]}}{\rho}\cdots\frac{\rho^{[m_l+1]}}{\rho}e^{m+m_1+\cdots+m_l})\\
        =&\sum_{m_1\geq 1}Pb_{n+m_1}\frac{\rho^{[m_1+1]}}{\rho}e^{m_1+1}-\sum_{l\geq 1}(-1)^{l+1}\sum_{m_1,\dots,m_l,m_{l+1}\geq 1}P^{l+1}b_{n+m_1+\cdots+m_{l+1}}\frac{\rho^{[m_1+1]}}{\rho}\cdots\frac{\rho^{[m_{l+1}+1]}}{\rho}e^{m_1+\cdots+m_{l+1}}\\
        =&-\sum_{l\geq 1}(-1)^l\sum_{m_1,\dots,m_l\geq 1}P^lb_{n+m_1+\cdots+m_l}\frac{\rho^{[m_1+1]}}{\rho}\cdots\frac{\rho^{[m_l+1]}}{\rho}e^{m_1+\cdots+m_l}\\
        =& b_n-a_{n+1}
      \end{split}\]
      as desired, where the last equality follows from (\ref{Equ-f_V-A}) again.

      Combining above formulae with (\ref{Equ-(gamma-1)-B}) and (\ref{Equ-f_V-B}) and letting $a_0=0$, we see that
      \[\begin{split}
        (\gamma-1)(g_V(y)) & = \underline v\otimes\sum_{n\geq 0}(\sum_{m\geq 1}Pa_{n+m}\rho^{[m]}e^m+(P-I)a_n)(\rho X)^{[n]}\\
        & = \underline v\otimes\sum_{n\geq 0}(\rho e\sum_{m\geq 0}Pa_{n+1+m}\frac{\rho^{[m+1]}}{\rho}e^m+(P-I)a_n)(\rho X)^{[n]}\\
        & = \rho ey+(\zeta_p-1)\Theta_0\Theta g_V(y).
      \end{split}\]
      Now Item (1) follows from (\ref{Equ-f_V-C}) directly.

      We now prove Item (2). Since $\lim_{n\to+\infty}a_n = 0$, we have $\lim_{n\to+\infty}b_n = 0$ and thus $y$ is well-defined. It remains to show that $x = g_V(y)+\underline v\otimes a_0$ while the rest follows from this together with Item (1). To do so, it suffices to show that $\{a_{n}\}_{n\geq 1}$ and $\{b_n\}_{n\geq 0}$ fit into the equation (\ref{Equ-f_V-B}) for any $n\geq 0$.

      Since $b_n=\sum_{m\geq 0}Pa_{n+1+m}\frac{\rho^{[m+1]}}{\rho}e^m$ for any $n\geq 0$, we see that for any $k\geq 1$,
      \begin{equation*}
          \begin{split}
              a_{n+1} &= b_n - \sum_{m_1\geq 1}Pa_{n+1+m_1}\frac{\rho^{[m_1+1]}}{\rho}e^{m_1}\\
              =&b_n - \sum_{m_1\geq 1}P(b_{n+m_1} - \sum_{m_2\geq 1}Pa_{n+1+m_1+m_2}\frac{\rho^{[m_2+1]}}{\rho}e^{m_2})\frac{\rho^{[m_1+1]}}{\rho}e^{m_1}\\
              =&b_n - \sum_{m_1\geq 1}Pb_{n+m_1}\frac{\rho^{[m_1+1]}}{\rho}e^{m_1}+\sum_{m_1,m_2\geq 1}P^2a_{n+1+m_1+m_2}\frac{\rho^{[m_1+1]}}{\rho}\frac{\rho^{[m_2+1]}}{\rho}e^{m_1+m_2}\\
              =&\cdots\\
              =&b_n+\sum_{l=1}^k(-1)^l\sum_{m_1,\dots,m_l\geq 1}P^lb_{n+m_1+\cdots+m_l}\frac{\rho^{[m_1+1]}}{\rho}\cdots\frac{\rho^{[m_l+1]}}{\rho}e^{m_1+\cdots+m_l}\\
              &+(-1)^{k+1}\sum_{m_1,\dots,m_{k+1}\geq 1}P^{k+1}a_{n+1+m_1+\cdots+m_{k+1}}\frac{\rho^{[m_1+1]}}{\rho}\cdots\frac{\rho^{[m_{k+1}+1]}}{\rho}e^{m_1+\cdots+m_{k+1}}\\
              \equiv & b_n+\sum_{l=1}^k(-1)^l\sum_{m_1,\dots,m_l\geq 1}P^lb_{n+m_1+\cdots+m_l}\frac{\rho^{[m_1+1]}}{\rho}\cdots\frac{\rho^{[m_l+1]}}{\rho}e^{m_1+\cdots+m_l}\mod e^{k+1}.
          \end{split}
      \end{equation*}
      Since $e$ is topologically nilpotent, we see that (\ref{Equ-f_V-B}) holds true as desired. So Item (2) is true.

      Now we are going to prove Item (3). By (\ref{Equ-f_V-B}), we know that $g_V$ takes values in $M_0^+(V)$ which is indeed a surjection by Item (2). It remains to show $g_V$ is injective.

      To do so, let us fix an $x = \sum_{n\geq 0}b_n(\rho X)^{[n]}$ which is killed by $g_V$. By (\ref{Equ-f_V-A}) and (\ref{Equ-f_V-B}), for any $n\geq 0$, we have 
      \[b_n = -\sum_{l\geq 1}(-1)^l\sum_{m_1,\dots,m_l\geq 1}P^lb_{n+m_1+\cdots+m_l}\frac{\rho^{[m_1+1]}}{\rho}\cdots\frac{\rho^{[m_l+1]}}{\rho}e^{m_1+\cdots+m_l}\in eA^r.\]
      By iteration, we know for any $k\geq 1$, $b_n\in e^kA^r$ and hence $b_n = 0$, by the topological nilpotency of $e$. This implies that $x = 0$.
  \end{proof}
  \begin{lem}\label{Lem-Coho-Principle}
    The morphism $(g_V,\id_V):M_0(V)\oplus V\to M_0(V)$ is an isomorphism fitting into the following commutatative diagram
    \begin{equation}\label{Diag-f_V}
        \xymatrix@C=0.5cm{
          M_0(V)\oplus V\ar[rrrr]^{(f_V,(\zeta_p-1)\Theta\Theta_0)}\ar[drr]_{\cong}^{(g_V,\id_V)}&&&&M_0(V)\\
          &&M_0(V)\ar[urr]^{\gamma-1}.
        }
    \end{equation}
    Moreover, via the isomorphism $(g_V,\id_V)$, we have 
    \[\rH^0(\Gamma,M_0(V))\cong \{(x,v)\in M_0(V)\oplus V\mid f_V(x) = (\zeta_p-1)\Theta(v)\}\]
    and 
    \[\rH^1(\Gamma,V)\cong M_0(V)/(\Ima(f_V)+(\zeta_p-1)\Theta(V)).\]
  \end{lem}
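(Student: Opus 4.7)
The plan is to derive everything mechanically from Lemma \ref{Lem-f_V} and the fact that $\rR\Gamma(\Gamma,-)$ is computed by the two-term Koszul complex $(-)\xrightarrow{\gamma-1}(-)$ since $\Gamma\cong \bZ_p\gamma$.

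First I would establish the isomorphism $(g_V,\id_V):M_0(V)\oplus V\xrightarrow{\sim}M_0(V)$. Since $M_0(V)=V\otimes_A A[\rho X]^{\wedge}_{\pd}$ splits as an $A$-module into the constant summand, identified with $V$ via $v\mapsto \underline v\otimes 1$, and $M_0^+(V)=V\otimes_A\hat\oplus_{n\geq 1}A\rho^n X^{[n]}$, and since Lemma \ref{Lem-f_V}(3) already identifies $g_V:M_0(V)\xrightarrow{\sim} M_0^+(V)$, the map $(g_V,\id_V)$ is just the direct sum decomposition and is therefore an isomorphism.

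Next I would verify that diagram \eqref{Diag-f_V} commutes. On the $M_0(V)$-summand, $(\gamma-1)g_V=f_V$ is exactly Lemma \ref{Lem-f_V}(1). On the $V$-summand, the restriction of $\gamma-1$ to the constant elements $\underline v\otimes a_0\in M_0(V)$ is multiplication by $P-I$ (see equation \eqref{Equ-(gamma-1)-B} with $\alpha=0$ and all higher $a_n$ vanishing), and by the definition of $\Theta_0$ we have the identity $P-I=(\zeta_p-1)\Theta\Theta_0$ in Construction \ref{Construction-f_V}. So the two compositions agree.

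Finally, the isomorphism $(g_V,\id_V)$ transports the Koszul complex computing $\rR\Gamma(\Gamma,M_0(V))$ to the complex $M_0(V)\oplus V\xrightarrow{(f_V,(\zeta_p-1)\Theta\Theta_0)}M_0(V)$. Reading off kernel and cokernel gives $\rH^0$ as $\{(x,v):f_V(x)+(\zeta_p-1)\Theta\Theta_0(v)=0\}$ and $\rH^1$ as $M_0(V)/(\Ima(f_V)+(\zeta_p-1)\Theta\Theta_0(V))$. Because $\Theta_0\in\GL_r(A)$ (it is a unit perturbation of the identity by the topologically nilpotent $\Theta$), the substitution $v\leftrightarrow -\Theta_0^{-1}v$ rewrites the kernel as $\{(x,v):f_V(x)=(\zeta_p-1)\Theta(v)\}$, and the image $(\zeta_p-1)\Theta\Theta_0(V)$ coincides with $(\zeta_p-1)\Theta(V)$, yielding the claimed descriptions.

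There is no real obstacle here: once Lemma \ref{Lem-f_V} is available the argument is purely formal bookkeeping, and the only subtlety is using the invertibility of $\Theta_0$ to convert between the ``natural'' formulas $\Theta\Theta_0$ that arise from $(\gamma-1)|_V=P-I$ and the cleaner formulas $\Theta$ stated in the lemma.
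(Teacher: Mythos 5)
Your proof is correct and follows exactly the paper's route: decompose $M_0(V)=V\oplus M_0^+(V)$, invoke Lemma~\ref{Lem-f_V}(3) for the isomorphism, use Lemma~\ref{Lem-f_V}(1) together with $(\gamma-1)|_V=P-I=(\zeta_p-1)\Theta\Theta_0$ for commutativity, and then transport the Koszul complex and absorb the invertible factor $\Theta_0$. One cosmetic point: the constant term of $\Theta_0=\sum_{l\geq 0}(-1)^{l+1}\frac{(\zeta_p-1)^{[l+1]}}{\zeta_p-1}\Theta^l$ is $-I$, so $\Theta_0$ is a unit perturbation of $-I$ rather than of $I$ as you wrote, but the conclusion $\Theta_0\in\GL_r(A)$ and the rest of the argument are unaffected.
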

  \begin{proof}
      Since $M_0(V) = V\oplus M_0^+(V)$, we know $(g_V,\id_V)$ is an isomorphism by Lemma \ref{Lem-f_V} (3). Since $\gamma$ acts on $V$ via the matrix $P$, we know $\gamma-1$ acts on $V$ via $P-I = (\zeta_p-1)\Theta\Theta_0$. So the commutativity of (\ref{Diag-f_V}) follows from Lemma \ref{Lem-f_V} (1). Since $\rR\Gamma(\Gamma,M_0(V))$ is computed by the Koszul complex $M_0(V)\xrightarrow{\gamma-1}M_0(V)$, we know it is also calculated by the complex 
      \[ M_0(V)\oplus V\xrightarrow{(f_V,(\zeta_p-1)\Theta\Theta_0)}M_0(V).\]
      Then the ``moreover'' part follows by noticing that $\Theta_0$ is an automorphism of $V$.
  \end{proof}

\subsection{The case for $e = \zeta_p-1$}
  Throughout this subsection, we keep notations as above and furthermore assume that $e = \zeta_p-1$. Then for $x = \underline v\otimes\sum_{n\geq 0}b_n(\rho X)^{[n]}$, the equation (\ref{Equ-f_V-C}) can be reformulated as
  \begin{equation}\label{Equ-f_V-e=zeta-1-I}
      \begin{split}
          &f_V(\underline v\otimes\sum_{n\geq 0}b_n(\rho X)^{[n]})\\ 
          = &\underline v\otimes e\sum_{n\geq 1}(\rho b_n
          +\Theta\Theta_0(b_{n-1}+\sum_{l\geq 1,m_1,\dots,m_l\geq 1}(-1)^lP^lb_{n-1+m_1+\cdots+m_l}\frac{e^{[m_1+1]}}{e}\cdots\frac{e^{[m_l+1]}}{e}\rho^{m_1+\cdots+m_l}))(\rho X)^{[n]}\\
          &+\underline v\otimes e\rho b_0\\ 
          = &\underline v\otimes e\sum_{n\geq 1}(\rho b_n
          +\Theta\Theta_0(b_{n-1}+\sum_{1\leq l\leq m}\sum_{m_1,\dots,m_l\geq 1,m_1+\cdots+m_l =m}(-1)^lP^lb_{n-1+m}\frac{e^{[m_1+1]}\cdots e^{[m_l+1]}}{e^l}\rho^{m}))(\rho X)^{[n]}\\
          &+\underline v\otimes e\rho b_0.
      \end{split}
  \end{equation}
  \begin{notation}
      Put $Q_0=I$. For any $m\geq l\geq 1$, we define 
      \[Q_{m}^{(l)} = \sum_{m_1,\dots,m_l\geq 1,m_1+\cdots+m_l =m}\frac{e^{[m_1+1]}\cdots e^{[m_l+1]}}{e^l}(-1)^lP^l\]
      and
      \[Q_m = \sum_{l=1}^mQ_{m}^{(l)}= \sum_{l=1}^m\sum_{m_1,\dots,m_l\geq 1,m_1+\cdots+m_l =m}(-1)^lP^l\frac{e^{[m_1+1]}\cdots e^{[m_l+1]}}{e^l}.\]
  \end{notation}
  Using above notation, we can write (\ref{Equ-f_V-e=zeta-1-I}) as 
  \begin{equation}\label{Equ-f_V-e=zeta-1-II}
      f_V(\underline v\otimes\sum_{n\geq 0}b_n(\rho X)^{[n]}) = \underline v\otimes e\rho b_0+\underline v\otimes e\sum_{n\geq 1}(\rho b_n
          +\Theta\Theta_0b_{n-1}+\Theta\Theta_0\sum_{m\geq 1}\rho^mQ_mb_{n-1+m})(\rho X)^{[n]}.
  \end{equation}
  Similarly, we have
  \begin{equation}\label{Equ-g_V-e=zeta-1}
      g_V(x) = \underline v\otimes \sum_{n\geq 1}(b_{n-1}+\sum_{m\geq 1}\rho^mQ_mb_{n-1+m})(\rho X)^{[n]}
  \end{equation}

  \begin{lem}\label{Lem-Coho-Principal-e=zeta-1}
      Assume $e = \zeta_p-1$ and $V$ is a log nilpotent $A$-representation of $\Gamma$. Define 
      \[V_{\Theta/\rho} = \{v\in V\mid \forall~ n\geq 1, \Theta^nv\in\rho^nV~\&~\lim_{n\to+\infty}\rho^{-n}\Theta^nv = 0\}.\]
      Then we have $\rH^0(\Gamma,M_0(V))\cong V_{\Theta/\rho}$.
  \end{lem}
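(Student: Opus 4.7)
The plan is to realize $\rH^0(\Gamma,M_0(V))\cong V_{\Theta/\rho}$ via the constant-term map $\epsilon_0\colon \sum c_k X^{[k]}\mapsto c_0$, whose inverse should send $v\in V_{\Theta/\rho}$ to $w_v:=\sum_{k\geq 0}\Theta^k v\cdot X^{[k]}$. First I would check that $w_v=\sum_k(\rho^{-k}\Theta^k v)(\rho X)^{[k]}$ lies in $M_0(V)$ precisely when $v\in V_{\Theta/\rho}$, as the requirements $\rho^{-k}\Theta^k v\in V$ and $\rho^{-k}\Theta^k v\to 0$ are exactly the defining conditions. I would then verify $\gamma w_v=w_v$ by a formal exponential identity: the coefficient of $X^{[i]}$ in $\gamma w_v=\sum_k P\Theta^k v\cdot(X+e)^{[k]}$ is $\sum_{k\geq i}P\Theta^k v\cdot e^{[k-i]}=P\Theta^i\exp(e\Theta)v=P\Theta^i P^{-1}v=\Theta^i v$, using that $\Theta$ commutes with $P^{-1}=\exp(e\Theta)$.

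The substance of the proof is to show that every invariant $w$ has constant term in $V_{\Theta/\rho}$ and equals $w_{c_0}$. Iterating $\gamma w=w$ and using the divided-power identity $\gamma((X+me)^{[k]})=(X+(m+1)e)^{[k]}$ (which follows from $\sum_l e^{[l]}e^{[k-j-l]}=2^{k-j}e^{[k-j]}$), one obtains $\gamma^n w=\sum_k P^n c_k(X+ne)^{[k]}$, so $\sum_k c_k(X+ne)^{[k]}=P^{-n}w$ for every $n\in\bZ_{\geq 0}$. Applying $\epsilon_0$ to both sides and using $(ne)^{[k]}=n^k e^{[k]}$ together with $P^{-n}=\exp(ne\Theta)=\sum_k n^k e^{[k]}\Theta^k$ yields, with $v:=c_0$,
\[\sum_{k\geq 0}n^k e^{[k]}(c_k-\Theta^k v)=0\quad\text{in }V,\text{ for every }n\in\bZ_{\geq 0}.\]

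Setting $a_k:=e^{[k]}(c_k-\Theta^k v)\in V$, the sequence $\{a_k\}$ tends to $0$ $p$-adically (since $c_k\to 0$ by the structure of $M_0(V)$ and $\Theta^k v\to 0$ by topological nilpotence of $\Theta$), so $h(t):=\sum_k t^k a_k$ is a $V_\Qp$-valued convergent power series on the closed unit disk of $\Qp$. By density of $\bZ_{\geq 0}$ in $\Zp$ and continuity, $h$ vanishes identically on $\Zp$; expanding around $0$ on the open disk $p\Zp$ gives $h^{(k)}(0)=k!a_k=0$, forcing $a_k=0$ in $V_\Qp$ and hence in $V$ by $p$-torsion freeness. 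Since $V$ is $\calO_C$-flat and $e^{[k]}$ is a nonzero element of $\calO_C$, we conclude $c_k=\Theta^k v$ for every $k$, so $\Theta^k v=c_k\in\rho^k V$ with $\rho^{-k}\Theta^k v\to 0$, giving $v\in V_{\Theta/\rho}$ and $w=w_v$. The main technical point is the $p$-adic identity principle deducing $a_k=0$ from the single family of scalar relations $\sum_k n^k a_k=0$; this reduces to the fact that a convergent power series on the closed unit disk of $\Qp$ which vanishes on the open disk $p\Zp$ must have all coefficients zero, combined with the flatness of $V$ needed to cancel the factors $e^{[k]}$ and $k!$.
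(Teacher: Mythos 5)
Your proof is correct, and it takes a genuinely different route from the paper's.

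The paper reaches $\rH^0(\Gamma,M_0(V))\cong V_{\Theta/\rho}$ by first replacing the Koszul differential $\gamma-1$ with the pair $(f_V,(\zeta_p-1)\Theta\Theta_0)$ via the isomorphism $(g_V,\id_V)$ of Lemma \ref{Lem-Coho-Principle}, and then solving the resulting recursion $\rho b_{n+1}=-\Theta\Theta_0 b_n-\Theta\Theta_0\sum_m\rho^m Q_m b_{n+m}$ by invoking the technical Lemma \ref{Lem-Technique-I} (with the matrices $R_k$, $Q_{m,k}$ of Construction \ref{Construction-TechniqueLemma} and the explicit identity $R_0=-\Theta_0^{-1}$ of Example \ref{Exam-Special case for R}) to convert it into $\rho b_{n+1}=\Theta b_n$. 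You bypass all of this: you observe that a $\gamma$-invariant $w=\sum_k c_k X^{[k]}$ satisfies $\sum_k c_k(X+ne)^{[k]}=P^{-n}w$ for every $n\in\bZ_{\geq 0}$ (because $\gamma$ is a pd-morphism, so $\gamma^n(X^{[k]})=(X+ne)^{[k]}$), evaluate the constant term to get the family of relations $\sum_k n^k e^{[k]}(c_k-\Theta^k c_0)=0$, and then extract each coefficient by a $p$-adic identity/interpolation argument using density of $\bZ_{\geq 0}$ in $\Zp$ and $\calO_C$-torsion-freeness of $V$ to cancel $e^{[k]}$. The end result ($w=\exp(\Theta X)v$ with $v=c_0\in V_{\Theta/\rho}$) is exactly the description the paper records in Remark \ref{Rmk-Coho-Principal-e=zeta-1} and Corollary \ref{Cor-Explicit H^0}, so the two arguments agree on the answer. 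Your approach is more conceptual and considerably lighter for $\rH^0$; the paper's heavier scaffolding pays off in that the same $f_V$, $g_V$, $Q_m$ machinery is reused verbatim to control $\rH^1$ in Lemma \ref{Lem-H^1-small case}, which your interpolation trick does not give directly. One small stylistic point: the clause ``$h^{(k)}(0)=k!a_k=0$'' is best replaced by the iterated divide-by-$t$ argument ($a_0=h(0)=0$; set $h_1(t)=\sum_k a_{k+1}t^k$, then $t\,h_1(t)=h(t)=0$ on $\Zp\setminus\{0\}$, so $h_1\equiv0$ by continuity; repeat), since it avoids worrying about termwise differentiation on the boundary of the disk and works coefficientwise in any $p$-torsion-free $V$.
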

  \begin{proof}
      By Lemma \ref{Lem-Coho-Principle}, it suffices to show that if $(x,v)\in M_0(V)\oplus V$ satisfying $f_V(x) = e\Theta v$, then $v\in V_{\Theta/\rho}$ and $x$ is uniquely determined by $v$. 
      
      Fix such a pair $(x,v)$ and write $x = \underline v\otimes\sum_{n\geq 0}b_n(\rho X)^{[n]}$ as above. By (\ref{Equ-f_V-e=zeta-1-II}), we have 
      \begin{equation}\label{Equ-Coho-Principal-e=zeta-1-A}
          \Theta v = \underline v\otimes \rho b_0
      \end{equation}
      and for any $n\geq 0$,
      \begin{equation}\label{Equ-Coho-Principal-e=zeta-1-B}
          \rho b_{n+1}=
          -\Theta\Theta_0b_{n}-\Theta\Theta_0\sum_{m\geq 1}\rho^mQ_mb_{n+m}.
      \end{equation}
      Applying Lemma \ref{Lem-Technique-I} (to $U=-\Theta\Theta_0$ and $a_n = 0$ for all $n\geq 1$), there exists an explicit invertible matrix $R_0\in\GL_r(A)$ such that (\ref{Equ-Coho-Principal-e=zeta-1-B}) holds true if and only if for any $n\geq 0$,
      \[\rho b_{n+1} = -R_0\Theta_0\Theta b_n = \Theta b_n,\]
      where the last equality follows from Example \ref{Exam-Special case for R}. Using (\ref{Equ-Coho-Principal-e=zeta-1-A}), we see that for any $n\geq 0$,
      \[\underline v\otimes \rho^{n+1} b_n = \Theta^{n+1}v.\]
      In particular, for any $n\geq 1$, $b_n$ is uniquely determined by $v$. On the other hand, for any $n\geq 1$, $\Theta^n v = \underline v\otimes\rho^n b_n \in \rho^nV$ and 
      \[\lim_{n\to+\infty}\rho^{-n}\Theta^nv = \lim_{n\to+\infty}\underline v\otimes b_n = 0\]
      as $\lim_{n\to+\infty}b_n = 0$. This implies that $v\in V_{\Theta/\rho}$ as desired.
  \end{proof}
  \begin{rmk}\label{Rmk-Coho-Principal-e=zeta-1}
      Keep notations as in Lemma \ref{Lem-Coho-Principal-e=zeta-1}. Let $v = \underline v\otimes\lambda\in V_{\Theta/\rho}$ and $x = \underline v\otimes\sum_{n\geq 0}b_n(\rho X)^{[n]}\in M_0(V)$ such that $f_V(x) = e\Theta v$. The proof of Lemma \ref{Lem-Coho-Principal-e=zeta-1} tells us that for any $n\geq 0$, 
      \[b_n = (\rho^{-1}\Theta)^{n+1}\lambda.\]
      As a consequence, we have
      \[x = \underline v\otimes\sum_{n\geq 0}(\rho^{-1}\Theta)^{n+1}\lambda(\rho X)^{[n]}=\sum_{n\geq 0}(\rho^{-1}\Theta)^{n+1}(\rho X)^{[n]}v = \rho^{-1}\Theta\exp(\Theta X)v.\]
      Here, $\exp(\Theta X)v:=\sum_{n\geq 0}(\rho^{-1}\Theta)^{n}v(\rho X)^{[n]}$ is well-defined in $M_0(V)$.
  \end{rmk}
  \begin{cor}\label{Cor-Explicit H^0}
      Keep notations as in Lemma \ref{Lem-Coho-Principal-e=zeta-1}. Then as a subset of $M_0(V)$, we have
      \[\rH^0(\Gamma,M_0(V)) = \{\exp(\Theta X)v\mid v\in V_{\Theta/\rho}\}.\]
  \end{cor}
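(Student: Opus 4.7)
The plan is to exhibit $v \mapsto \exp(\Theta X) v$ as the explicit inverse of the constant-term projection, which by Lemma \ref{Lem-Coho-Principal-e=zeta-1} realises the bijection $\rH^0(\Gamma, M_0(V)) \xrightarrow{\cong} V_{\Theta/\rho}$. For any $v \in V_{\Theta/\rho}$, the very definition of $V_{\Theta/\rho}$ guarantees $(\rho^{-1}\Theta)^n v \to 0$ in $V$, so the series $\exp(\Theta X) v = \sum_{n \geq 0} (\rho^{-1}\Theta)^n v \cdot \rho^n X^{[n]}$ converges in $M_0(V)$, and the assignment $v \mapsto \exp(\Theta X) v$ is well-defined.

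Next I would verify $\Gamma$-invariance by a direct calculation. Using that $\Theta$ (as a matrix over $A$) commutes with the $\Gamma$-action on $V$, that $\gamma(v) = Pv$ for $P = \exp(-(\zeta_p-1)\Theta)$, and the divided-power identity $\gamma(X^{[n]}) = \sum_{i=0}^n (\zeta_p-1)^{[n-i]} X^{[i]}$, reindexing the resulting double sum by $m = n-i$ yields
\[
\gamma(\exp(\Theta X)v) = \sum_{i \geq 0} \rho^i X^{[i]} (\rho^{-1}\Theta)^i \Bigl(\sum_{m \geq 0}(\zeta_p-1)^{[m]}\Theta^m\Bigr) Pv.
\]
Recognising the inner sum as $\exp((\zeta_p-1)\Theta) = P^{-1}$, the two exponentials cancel and the expression collapses to $\exp(\Theta X) v$, confirming $\exp(\Theta X) v \in \rH^0(\Gamma, M_0(V))$.

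Finally, the coefficient of $X^{[0]}$ in $\exp(\Theta X) v$ is visibly $v$, so $v \mapsto \exp(\Theta X) v$ is a section of the constant-term projection $\pi\colon M_0(V) \to V$. Via the decomposition $(g_V, \id_V)\colon M_0(V) \oplus V \xrightarrow{\cong} M_0(V)$ from Lemma \ref{Lem-Coho-Principle} (note that $g_V$ lands in $M_0^+(V)$, which has no constant term), the restriction of $\pi$ to $\rH^0(\Gamma, M_0(V))$ corresponds, up to the invertible operator $\Theta_0$, to the bijection onto $V_{\Theta/\rho}$ produced by Lemma \ref{Lem-Coho-Principal-e=zeta-1}; equivalently, one may invoke Remark \ref{Rmk-Coho-Principal-e=zeta-1}, which identifies the unique $x$ attached to $v$ as $\rho^{-1}\Theta \exp(\Theta X) v$. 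Since $\pi$ is then a bijection on $\rH^0(\Gamma, M_0(V))$ and we have shown $\pi \circ \phi = \id$, the map $\phi(v) = \exp(\Theta X)v$ must be its inverse, giving both injectivity and surjectivity and hence the claimed description.

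The main subtlety is the $\Gamma$-equivariance computation, where the clean cancellation between the pd-expansion of $\gamma(X^{[n]})$ and the matrix exponential $\sum_m (\zeta_p-1)^{[m]}\Theta^m$ into $P^{-1}$ is the crux; the identification of the constant-term projection with the abstract bijection of Lemma \ref{Lem-Coho-Principal-e=zeta-1} is then a routine consequence of the explicit form of $x$ provided by Remark \ref{Rmk-Coho-Principal-e=zeta-1}.
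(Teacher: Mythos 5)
Your proof is correct, but it takes a genuinely different route from the paper's.

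The paper proves the corollary by explicitly evaluating the element of $\rH^0(\Gamma,M_0(V))$ coming from the parametrization in Lemma \ref{Lem-Coho-Principle}: it substitutes the unique $x$ from Remark \ref{Rmk-Coho-Principal-e=zeta-1} into $v + g_V(x)$, then uses the closed-form expression (\ref{Equ-g_V-e=zeta-1}) for $g_V$ and the identity $-\Theta_0^{-1} = \sum_{m\geq 0}Q_m\Theta^m$ from Example \ref{Exam-Special case for R} to simplify $v - \rho^{-1}\Theta\Theta_0 g_V(\exp(\Theta X)v)$ down to $\exp(\Theta X)v$. That is a mechanical but somewhat involved computation going through the $Q_m$ machinery. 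You instead bypass $g_V$ entirely and verify directly, by reindexing the divided-power binomial expansion of $\gamma(X^{[n]})$ and recognising the inner sum as $\exp((\zeta_p-1)\Theta)=P^{-1}$, that $\exp(\Theta X)v$ is already $\Gamma$-fixed. This is shorter, cleaner, and conceptually clearer (the exponential cancellation is precisely the ``trivialisation of the local monodromy'' one expects). You then only need the abstract statement of Lemma \ref{Lem-Coho-Principal-e=zeta-1} to turn the injection $v\mapsto\exp(\Theta X)v$ into a bijection via the constant-term projection. What the paper's approach buys is self-containedness: it produces the identity $v + g_V(x) = \exp(\Theta X)v$ directly, whereas your argument relies on the abstract bijection and the observation that the $\Theta_0$ twist in Lemma \ref{Lem-Coho-Principle} is harmless. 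On that last point, you should state explicitly that $\Theta_0$ and $\Theta_0^{-1}$ preserve $V_{\Theta/\rho}$ (this holds because both lie in $I+\Theta\rM_r(A)$, are power series in $\Theta$, and multiplication by an invertible matrix over $A$ is a continuous automorphism of $M_0(V)$ commuting with the convergence criterion defining $V_{\Theta/\rho}$); the paper's own proof silently uses the same fact, so this is a small gap shared by both arguments rather than a defect unique to yours.
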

  \begin{proof}
      By Lemma \ref{Lem-Coho-Principle}, $\rH^0(\Gamma,M_0(V)) = \{v+g_V(x)\mid (x,v)\in M_0(V)\oplus V, f_V(x)+e\Theta\Theta_0v = 0\}$. Applying Lemma \ref{Lem-Coho-Principal-e=zeta-1} together with Remark \ref{Rmk-Coho-Principal-e=zeta-1}, we have 
      \[\rH^0(\Gamma,M_0(V)) = \{v-\rho^{-1}\Theta\Theta_0g_V(\exp(\Theta X)v)\mid v\in V_{\Theta/\rho}\}.\]
      By (\ref{Equ-g_V-e=zeta-1}), we see that 
      \begin{equation*}
          \begin{split}
              g_V(\exp(\Theta X)v) = &g_V(\sum_{n\geq 0}(\rho^{-1}\Theta)^nv(\rho X)^{[n]})\\
              = &\sum_{n\geq 1}((\rho^{-1}\Theta)^{n-1}v+\sum_{m\geq 1}Q_m\rho^m(\rho^{-1}\Theta)^{n-1+m}v)(\rho X)^{[n]}\\
              = &\sum_{m\geq 0}Q_m\Theta^m(\sum_{n\geq 1}(\rho^{-1}\Theta)^{n-1}v(\rho X)^{[n]}).
          \end{split}
      \end{equation*}
      Therefore, we have
      \begin{equation*}
          \begin{split}
               &v-\rho^{-1}\Theta\Theta_0g_V(\exp(\Theta X)v)\\
              =&v-\rho^{-1}\Theta\Theta_0\sum_{m\geq 0}Q_m\Theta^m(\sum_{n\geq 1}(\rho^{-1}\Theta)^{n-1}v(\rho X)^{[n]})\\
              =&v-\Theta_0\sum_{m\geq 0}Q_m\Theta^m(\sum_{n\geq 1}(\rho^{-1}\Theta)^{n}v(\rho X)^{[n]})\\
              =&v-\Theta_0\sum_{m\geq 0}Q_m\Theta^m(\exp(\Theta X)v-v)\\
              =&v+\Theta_0\sum_{m\geq 0}Q_m\Theta^mv-\Theta_0\sum_{m\geq 0}Q_m\Theta^m\exp(\Theta X)v\\
              =&\exp(\Theta X)v.
          \end{split}
      \end{equation*}
      Here, the last equality holds true as $-\Theta_0^{-1} = \sum_{m\geq 0}Q_m\Theta^m$ by Example \ref{Exam-Special case for R}.
  \end{proof}
  
  \begin{exam}\label{Exam-V_{Theta/rho}}
  \begin{enumerate}
    
      \item[(1)] If there exists an integer $n\geq 1$ such that $\Theta^n = \rho^n\Theta^{\prime}$ for some topologically nilpotent $\Theta^{\prime}\in M_r(A)$, then $\rho^{n-1}V\subset V_{\Theta/\rho}$. 
      Indeed, for any $d\geq 0$, $0\leq l\leq n-1$ and $v\in V$, 
      \[\Theta^{dn+l}\rho^{n-1}v = \rho^{dn+l}(\Theta^{\prime})^d\rho^{n-1-l}\Theta^lv\in \rho^{dn+l}V.\]
      Moreover, the above formulae also implies that for any $v\in V$, $\lim_{m\to+\infty}\rho^{-m}\Theta^m\rho^{n-1}v = 0$.

      \item[(2)] If $\Theta=\lambda \Theta^{\prime}$ for some $\Theta^{\prime}\in\GL_r(A)$ and some $\lambda\in\calO_C$ with $\nu_p(\lambda)<\nu_p(\rho)$, then $V_{\Theta/\rho} = 0$.
  \end{enumerate}
  \end{exam}
  \begin{rmk}
      If $\Theta$ is as in Example \ref{Exam-V_{Theta/rho}} (1), then the natural inclusion $M_0(V)^{\Gamma}\to M_0(V)$ induces an isomorphism 
      \[(M_0(V)[\frac{1}{p}])^{\Gamma}\otimes_AA[\rho X]^{\wedge}_{\pd}\cong M_0(V)[\frac{1}{p}]\]
      as $\exp(\Theta X)$ is a well-defined automorphism in $\GL_r(A[\rho X]^{\wedge}_{\pd}[\frac{1}{p}])$. If moreover, $\Theta = \rho\Theta^{\prime}$ for some topologically nilpotent $\Theta^{\prime}$, we have an isomorphism 
      \[(M_0(V))^{\Gamma}\otimes_AA[\rho X]^{\wedge}_{\pd}\cong M_0(V).\]
  \end{rmk}
  
      Although $\rH^0(\Gamma,M_0(V))$ has a good form, $\rH^1(\Gamma,M_0(V))$ is too complicated to describe. However, we can show it has bounded $p^{\infty}$-torsion in a special case.
  \begin{lem}\label{Lem-H^1-small case}
      Suppose that $e = \zeta_p-1$ and $V$ is a log nilpotent $A$-representation of $\Gamma$. If there exists some $d\geq 1$ such that $\Theta^d = \rho^d\Theta^{\prime}$ for some topologically nilpotent $\Theta^{\prime}\in M_r(A)$, then 
      \[\rho^deM_0(V)\subset \Ima(f_V).\]
      As a consequence, $\rH^1(\Gamma,M_0(V))$ is killed by $\rho^de$ (cf. Lemma \ref{Lem-Coho-Principle}). If moreover $d = 1$, then the natural inclusion $M_0(V)^{\Gamma}\subset M_0(V)$ induces an isomorphism $M_0(V)^{\Gamma}\otimes_AA[\rho X]_{\pd}^{\wedge} = M_0(V)$ such that
      \[\rH^1(\Gamma,M_0(V))\cong M_0(V)/\rho eM_0(V).\]
  \end{lem}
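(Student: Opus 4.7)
The plan splits along the two claims of the lemma.

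For the first claim $\rho^d e M_0(V) \subseteq \Ima(f_V)$, I would use the explicit formula (\ref{Equ-f_V-e=zeta-1-II}) to reduce the inclusion to an existence problem for a recursive system. Given a target $\rho^d e y$ with $y = \underline v \otimes \sum_{n \geq 0} c_n \rho^n X^{[n]}$, I seek $x = \underline v \otimes \sum_{n \geq 0} b_n \rho^n X^{[n]} \in M_0(V)$ with $f_V(x) = \rho^d e y$. Matching coefficients in (\ref{Equ-f_V-e=zeta-1-II}) yields $b_0 = \rho^{d-1} c_0$ together with, for $n \geq 1$,
\[\rho b_n \;=\; \rho^d c_n \;-\; \Theta\Theta_0 b_{n-1} \;-\; \Theta\Theta_0 \sum_{m \geq 1} \rho^m Q_m b_{n-1+m}.\]
The natural Picard ansatz $b_n \sim \sum_{k \geq 0}(-1)^k \rho^{d-1-k}(\Theta\Theta_0)^k c_{n-k}$, plus corrections absorbing the $\rho^m Q_m$ contributions, is a formal solution. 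The crucial input for integrality is $\Theta^d = \rho^d \Theta'$: since $\Theta$ and $\Theta_0$ commute, writing $k = qd + r$ with $0 \leq r < d$ gives $(\Theta\Theta_0)^k = \rho^{qd} (\Theta')^q \Theta_0^{qd}(\Theta\Theta_0)^r$, whence $\rho^{d-1-k}(\Theta\Theta_0)^k = \rho^{d-1-r} (\Theta')^q \Theta_0^{qd} (\Theta\Theta_0)^r \in \rM_r(A)$. Topological nilpotency of $\Theta'$ then handles convergence in $k$, and topological nilpotency of $e$ combined with $c_n \to 0$ controls the corrections coming from the self-referential $b_{n-1+m}$ terms. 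The consequence about $\rH^1(\Gamma, M_0(V))$ being killed by $\rho^d e$ is immediate from Lemma \ref{Lem-Coho-Principle}.

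For the second claim (the case $d = 1$): here $\Theta = \rho \Theta'$ with $\Theta'$ topologically nilpotent, so $\exp(\Theta X) = \sum_{n \geq 0}(\Theta')^n (\rho X)^{[n]}$ lies integrally in $\GL_r(A[\rho X]^\wedge_{\pd})$ with inverse $\exp(-\Theta X)$. By Example \ref{Exam-V_{Theta/rho}}(1), $V_{\Theta/\rho} = V$, so Corollary \ref{Cor-Explicit H^0} identifies $M_0(V)^\Gamma$ with $\exp(\Theta X) V$. I would then introduce the $A[\rho X]^\wedge_{\pd}$-linear map $\Psi\colon V \otimes_A A[\rho X]^\wedge_{\pd} \to M_0(V)$, $v \otimes f \mapsto \exp(\Theta X) v \cdot f$. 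This is an isomorphism, and the identity $\exp(\Theta e) P = I$ (immediate from $P = \exp(-e\Theta)$) shows that $\Psi$ is $\Gamma$-equivariant when the source is given the trivial $\Gamma$-action on its $V$ factor. Consequently $M_0(V)^\Gamma \otimes_A A[\rho X]^\wedge_{\pd} \cong M_0(V)$, and the $\Gamma$-cohomology computation reduces via $\Psi$ to $r$ copies of Proposition \ref{Prop-Gamma-cohomology of pd ring}(2), giving $\rH^1(\Gamma, M_0(V)) \cong M_0(V)/\rho e M_0(V)$.

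The main obstacle will be the integrality and convergence analysis of the Picard iteration in the first part for $d > 1$: naive forward iteration places $\rho$ in the denominator at each step, and the self-referential $b_{n-1+m}$ terms prevent a pure forward recursion. One has to carry out the bookkeeping of the $\rho$-powers produced by $\Theta^d = \rho^d \Theta'$ in tandem with the joint $(\rho, e)$-adic convergence coming from topological nilpotency of $\Theta'$ and $e$, to confirm that the series defines an honest element of $M_0(V)$ rather than merely of $M_0(V)[1/p]$.
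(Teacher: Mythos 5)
Your sketch of the second claim ($d=1$) matches the paper's argument in all essentials: identifying $M_0(V)^{\Gamma}$ with $\exp(\Theta X)V$ via Corollary~\ref{Cor-Explicit H^0} and Example~\ref{Exam-V_{Theta/rho}}(1), observing that $\exp(\Theta X)\in\GL_r(A[\rho X]^{\wedge}_{\pd})$ when $\Theta=\rho\Theta'$, deducing the $\Gamma$-equivariant isomorphism $M_0(V)^{\Gamma}\otimes_AA[\rho X]^{\wedge}_{\pd}\cong M_0(V)$, and finishing by the K\"unneth reduction to Proposition~\ref{Prop-Gamma-cohomology of pd ring}(2). That part is sound.

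For the first claim, you correctly reduce to the recursion $b_0=\rho^{d-1}c_0$ and
\[
\rho b_{n+1}=\rho^dc_{n+1}-\Theta\Theta_0b_{n}-\Theta\Theta_0\sum_{m\geq 1}\rho^mQ_mb_{n+m},
\]
and you correctly isolate the key integrality mechanism from $\Theta^d=\rho^d\Theta'$ (your $k=qd+r$ computation is the same as the one used via Example~\ref{Exam-V_{Theta/rho}}(1)). But you then stop at exactly the crux, stating that ``one has to carry out the bookkeeping'' to absorb the self-referential $b_{n-1+m}$ terms. This is not a deferred detail but the actual content of the proof, and ``Picard ansatz plus corrections'' does not yield a well-defined iteration here: at each step the equation forces a division by $\rho$, while the unknowns $b_{n+m}$ ($m\geq 1$) appear on the right, so you cannot set up a contraction on any obvious complete module of sequences without first disentangling the recursion. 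The paper's resolution is Lemma~\ref{Lem-Technique-I}: it proves that the self-referential system above is \emph{equivalent} to the genuine forward recursion
\[
\rho b_{n+1}=R_0Ub_n+R_0a_{n+1}+U\sum_{m\geq 2}S_m\rho^{m-1}a_{n+m}
\qquad(U=-\Theta\Theta_0,\ R_0U=\Theta),
\]
in which the right-hand side depends only on the already-known $b_n$ and the given $a_\bullet=\rho^dc_\bullet$. One then telescopes $\rho^{n+1}b_{n+1}=\Theta^{n+1}\rho^{d-1}c_0+\sum_{i=0}^{n}\Theta^i(\cdots)$ and divides using $\Theta^i\rho^{d-1}\in\rho^iM_r(A)$, producing an honest element of $A^r$ with $b_n\to 0$. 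The equivalence in Lemma~\ref{Lem-Technique-I} is then invoked a second time to confirm that the $\{b_n\}$ so constructed actually solve the original self-referential system. Neither the equivalence nor the telescoping-and-divide step is in your proposal; without them the argument does not go through, so I would count this as a genuine gap rather than a routine omission.
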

  \begin{proof}
      We first show that $\rho^deM_0(V)\subset \Ima(f_V)$.
      Fix a $y = \underline v\otimes\sum_{n\geq 0}c_n(\rho X)^{[n]}\in M_0(V)$. We have to construct an $x = \underline v\otimes\sum_{n\geq 0}b_n(\rho X)^{[n]}\in M_0(V)$ such that $f_V(x) = \rho^dey$. By (\ref{Equ-f_V-e=zeta-1-II}), it suffices to construct $b_n$'s such that
      \[\rho eb_0 = \rho^dec_0\]
      and for any $n\geq 0$,
      \[e(\rho b_{n+1}+\Theta\Theta_0b_{n}+\Theta\Theta_0\sum_{m\geq 1}\rho^mQ_mb_{n+m}) = \rho^dec_{n+1}.\]
      Equivalently, we have $b_0 = \rho^{d-1}c_0$ and for any $n\geq 0$,
      \begin{equation}\label{Lem-H^1-small case-A}
          \rho b_{n+1}=\rho^dc_{n+1}-\Theta\Theta_0b_{n}-\Theta\Theta_0\sum_{m\geq 1}\rho^mQ_mb_{n+m}.
      \end{equation}
      Let $R_0$ be the invertible matrix obtained by applying Lemma \ref{Lem-Technique-I} to $ U = -\Theta\Theta_0$ (indeed, $R_0=-\Theta_0^{-1}$ by Example \ref{Exam-Special case for R} and hence $R_0U_0=\Theta$). Apply Lemma \ref{Lem-Technique-I} to $a_{n+1} = \rho^dc_{n+1}$ and then for any $n\geq 0$,
      \[\rho b_{n+1} = R_0U_0b_n+R_0\rho^dc_{n+1}+U\sum_{m\geq 2}\rho^{m+d-1}a_{n+m}=\Theta b_n+R_0\rho^dc_{n+1}+U\sum_{m\geq 2}\rho^{m+d-1}a_{n+m}.\]
      Equivalently, for any $n\geq 0$,
      \[\rho^{n+1}b_{n+1} = \Theta\rho^nb_n+R_0\rho^{n+d}c_{n+1}+U\sum_{m\geq 2}\rho^{n+m+d-1}c_{n+m}.\]
      By iteration, we conclude that for any $n\geq 0$,
      \begin{equation*}
          \begin{split}
              \rho^{n+1}b_{n+1} = &\Theta^{n+1}b_0+\sum_{i=0}^{n}\Theta^i(R_0\rho^{n-i+d}c_{n-i+1}+U\sum_{m\geq 2}\rho^{n+m+d-i-1}c_{n-i+m})\\
              =&\Theta^{n+1}\rho^{d-1}c_0+\sum_{i=0}^{n}\Theta^i(R_0\rho^{n-i+d}c_{n-i+1}+U\sum_{m\geq 2}\rho^{n+m+d-i-1}c_{n-i+m}).
          \end{split}
      \end{equation*}
      As proved in Example \ref{Exam-V_{Theta/rho}} (1) that for any $n\geq 0$, $\Theta^n\rho^{d-1}V\subset \rho^{n}V$. So for any $n\geq 0$,
      \begin{equation}\label{Lem-H^1-small case-B}
          b_{n+1} = \rho^{-n-1}\Theta^{n+1}\rho^{d-1}c_0+\sum_{i=0}^{n}\rho^{-i}\Theta^i(R_0\rho^{d-1}c_{n-i+1}+U\sum_{m\geq 2}\rho^{m+d-2}c_{n-i+m})
      \end{equation}
      is well-defined in $A^r$. The ``moreover'' part of Example \ref{Exam-V_{Theta/rho}} (1) tells us that 
      \[\lim_{n\to+\infty}b_n = 0.\]
      So for any $n\geq 0$, if we define $b_{n+1}$ by (\ref{Lem-H^1-small case-B}) and define $b_0 = \rho^{d-1}c_0$, then $x = \sum_{m\geq 0}b_n\rho^nX^{[m]}$ is well-defined in $M_0(V)$ such that for any $n\geq 0$,
      \begin{equation*}
          \begin{split}
              \rho b_{n+1} = &\rho^{-n}\Theta^{n+1}\rho^{d-1}c_0+\Theta\sum_{i=1}^{n}\rho^{-i+1}\Theta^{i-1}(R_0\rho^{d-1}c_{n-i+1}+U\sum_{m\geq 2}\rho^{m+d-2}c_{n-i+m})\\
              &+R_0\rho^{d-1}c_{n+1}+U\sum_{m\geq 2}\rho^{m+d-2}c_{n+m}\\
              =&\Theta(\rho^{-n}\Theta^{n}\rho^{d-1}c_0+\Theta\sum_{i=0}^{n-1}\rho^{-i}\Theta^{i}(R_0\rho^{d-1}c_{n-i}+U\sum_{m\geq 2}\rho^{m+d-2}c_{n-1-i+m}))\\
              &+R_0\rho^{d-1}c_{n+1}+U\sum_{m\geq 2}\rho^{m+d-2}c_{n+m}\\
              =&R_0U b_n+R_0\rho^{d-1}c_{n+1}+U\sum_{m\geq 2}\rho^{m-1}\rho^{d-1}c_{n+m}.
          \end{split}
      \end{equation*}
      Applying Lemma \ref{Lem-Technique-I} again (to $a_{n+1} = \rho^{n+1}$), we know that (\ref{Lem-H^1-small case-A}) is true for such a choice of $b_n$'s. In other words, we have $f_V(x) = \rho^dey$ as desired.

      Now we assume $d = 1$. By Corollary \ref{Cor-Explicit H^0}, we have
      \[\rH^0(\Gamma,M_0(V)) = \{\exp(\rho \Theta^{\prime}X)v\mid v\in V\}\cong V.\]
      Since $\exp(\rho \Theta^{\prime}X)$ is invertible in $\GL_r(A[\rho X]^{\wedge}_{\pd})$, as mentioned in Example \ref{Exam-V_{Theta/rho}}, we know that 
      \[\rH^0(\Gamma,M_0(V))\otimes_AA[\rho X]^{\wedge}_{\pd} = M_0(V).\]
      As $\rH^0(\Gamma,M_0(V))$ is a finite free $A$-module, we see that
      \[\rH^1(\Gamma,M_0(V)) = M_0(V)^{\Gamma}\otimes\rH^1(\Gamma,A[\rho X]^{\wedge}_{\pd}).\]
      Then we can conclude by using Proposition \ref{Prop-Gamma-cohomology of pd ring}.
  \end{proof}

  At the end of this subsection, we summarize our calculations as follows:
  \begin{prop}\label{Prop-CohomologySummary-general}
      Let $A$ be a $p$-complete $p$-torsion free $\calO_C$-algebra with a topologically nilpotent element $e\in A$ and $\rho\in\calO_C$ satisfied $\nu_p(\rho)\geq\frac{1}{p-1}$.
      Let $V$ be a log nilpotent $A$-representation of $\Gamma$ with $\Theta\in \rM_r(A)$ as above. Assume $\alpha\in\bZ[\frac{1}{p}]\cap[0,1)$.
      \begin{enumerate}
          \item[(1)] When $\alpha\neq 0$, we have $\rH^0(\Gamma,M_{\alpha}(V)) = 0$ and $\rH^1(\Gamma,M_{\alpha}(V)) = M_{\alpha}(V)/(\zeta^{\alpha}-1)$.

          \item[(2)] Assume $\alpha = 0$ and $e=\zeta_p-1$. Then we have 
          \[\rH^0(\Gamma,M_{0}(V)) = \{\exp(\Theta X)v\mid v\in V_{\Theta/\rho}\}\cong V_{\Theta/\rho}.\]
          If there exists $n\geq 1$ such that $\Theta^n = \rho^n\Theta^{\prime}$ for some topologically nilpotent $\Theta^{\prime}$, then we have
          \[\rH^1(\Gamma,M_0(V))\cong \rH^1(\Gamma,M_0(V))[\rho^{n}(\zeta_p-1)].\]
          If moreover $n=1$, then the natural inclusion $M_0(V)^{\Gamma}\subset M_0(V)$ induces a $\Gamma$-equivariant isomorphism 
          \[M_0(V)^{\Gamma}\otimes_AA[\rho X]^{\wedge}_{\pd} = M_0(V)\]
          such that 
          \[\rH^1(\Gamma,M_0(V)) \cong M_0(V)/\rho (\zeta_p-1)M_0(V).\]
      \end{enumerate}
  \end{prop}
  \begin{proof}
      Item (1) is exactly Lemma \ref{Lem-Cogo-Error}. Item (2) follows from Lemma \ref{Lem-Coho-Principal-e=zeta-1}, Corollary \ref{Cor-Explicit H^0} and Lemma \ref{Lem-H^1-small case}.
  \end{proof}
\subsection{Some Technical lemmas}
 We collect some technical lemmas which have been used in previous subsections. Let $A$ be complete topological ring with a topologically nilpotent $\rho\in A$. Let $U\in M_r(A)$ be a topologically nilpotent matrix; that is, $\lim_{n\to+\infty}U^n = 0$.
  \begin{construction}\label{Construction-TechniqueLemma}
      Let $\{Q_m\}_{m\geq 0}$ be matrices in $M_r(A)$ satisfying $Q_0 = I$, $[Q_l,U]$ and $[Q_m,Q_n] = 0$ for any $l,n,m\geq 0$. Then for any $k\geq 0$ and any $m\geq 1$, we define $R_k\in \GL_r(A)$ and $Q_{m,k}, S_{m,k}\in M_r(A)$ as follows:
      \begin{enumerate}
          \item[(1)] $Q_{m,0} = Q_0$, $S_{m,0} = 0$ and $R_0 = I$.

          \item[(2)] For any $k\geq 0$, $Q_{m,k+1} = \sum_{l=1}^{m+1}Q_{l,k}Q_{m+1-l}$ and $R_{k+1} = R_k+U^{k+1}Q_{1,k}$.

          \item[(3)] $S_{m,k+1} = S_{m,k}+U^k Q_{m,k}$.
      \end{enumerate}
      Then $R:=\lim_{k\to+\infty}R_k$ and $S_m=\lim_{k\to+\infty}S_{m,k}$ are well-defined in $\GL_r(A)$ and $M_r(A)$, respectively. More precisely, we have $R = I+U\sum_{k\geq 0}U^kQ_{1,k}$ and $S_m = \sum_{k\geq 0}U^kQ_{m,k}$ such that for any $k\geq 0$, $R-R_k\in U^{k+1}M_r(A)$ and $S_m-S_{m,k}\in U^kM_r(A)$. Clearly, all matrices involved commute with each others.
  \end{construction}
  \begin{rmk}\label{Rmk-TechniqueLemma}
      Let $\calA = \bZ[X_1,X_2,\dots]$ be the polynomial ring generated by the free variables $\{X_n\}_{n\geq 1}$. For any monomial $X_{n_1}^{d_1}\cdots X_{n_r}^{d_r}\in \calA$, we define its height by $d_1n_1+\cdots+d_rn_r$. A polynomial $F\in \calA$ is called homogeneous of height $h$, if it is a sum of monomials of height $h$. Clearly, such an $F$ must belong to $\bZ[X_1,\dots,X_h]$ and has degree no more than $h$. We claim that for any $m\geq 1$ and $k\geq 0$, there is an homogeneous polynomial $F_{m,k}$ of height $m+k$ such that $Q_{m,k} = F_{m,k}(Q_1,\dots,Q_{m+k})$.

      To see the claim, we do induction on $k$. When $k = 0$, we put $F_{m,k} = X_m$ and then the claim holds true in this case. Now assume the claim holds true for some $k\geq 0$. Put $F_{m,k+1} = \sum_{l=1}^mF_{l,k}\cdot X_{m+1-l}+F_{m+1,k}$. Then by inductive hypothesis, $F_{m,k+1}$ is homogeneous of height $m+k+1$ and 
      \begin{equation*}
          \begin{split}
              F_{m,k+1}(Q_1,\dots,Q_{m+k+1}) &= \sum_{l=1}^mF_{l,k}(Q_1,\dots,Q_{l+k})Q_{m+1-l}+F_{m+1,k}(Q_1,\dots,Q_{m+k+1})\\
              &= \sum_{l=1}^{m+1}Q_{l,k}Q_{m+1-l} \\
              &= Q_{m,k+1}.
          \end{split}
      \end{equation*}
      So the claim holds true for $k+1$ as desired. We win! Clearly, $F_{m,k}$'s are independent of the choices of $A,\rho,U$ and $Q_{m}$'s.
  \end{rmk}
  \begin{lem}\label{Lem-Technique-I}
      Suppose that we have commuting matrices $\{Q_m\}_{m\geq 1}$ such that $[Q_m,U] = 0$ and vectors $\{b_n\}_{n\geq 0}$ in $A^r$. Let $R\in I+UM_r(A)(\subset\GL_r(A))$ and $\{S_m\}_{m\geq 1}\subset M_r(A)$ be the matrices given in Construction \ref{Construction-TechniqueLemma}. Then for any given $\{a_n\}_{n\geq 1}$ in $A^r$, the following are equivalent:
      \begin{enumerate}
          \item[(1)] For any $n\geq 0$, $\rho b_{n+1} = RUb_n+Ra_{n+1}+U\sum_{m\geq 2}S_m\rho^{m-1}a_{n+m}$.

          \item[(2)] For any $n\geq 0$, $\rho b_{n+1} = a_{n+1}+Ub_n+U\sum_{m\geq 1}\rho^mQ_mb_{n+m}$.
      \end{enumerate}
  \end{lem}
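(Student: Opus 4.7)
The plan is to prove $(2)\Rightarrow(1)$ by iterating $(2)$ and passing to a limit, and to deduce $(1)\Rightarrow(2)$ from a uniqueness argument.

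\emph{Step 1: iteration formula.} I would prove by induction on $k\geq 0$ that, if $(2)$ holds for all indices, then for every $n\geq 0$,
\[
\rho b_{n+1}=R_kUb_n+R_ka_{n+1}+U\!\sum_{m\geq 2}S_{m,k}\rho^{m-1}a_{n+m}+U^{k+1}\!\sum_{m\geq 1}\rho^mQ_{m,k}b_{n+m}.
\]
The base case $k=0$ is $(2)$ itself, with $R_0=I$, $S_{m,0}=0$ and $Q_{m,0}=Q_m$. For the inductive step, I substitute $(2)$ in the form $\rho^m b_{n+m}=\rho^{m-1}a_{n+m}+\rho^{m-1}Ub_{n+m-1}+U\sum_{m'\geq 1}\rho^{m+m'-1}Q_{m'}b_{n+m+m'-1}$ into the remainder $U^{k+1}\sum_{m\geq 1}Q_{m,k}\rho^m b_{n+m}$. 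After reindexing, the $a$-contribution merges with $R_ka_{n+1}$ and the existing $a$-sum via $R_{k+1}=R_k+U^{k+1}Q_{1,k}$ and $S_{m,k+1}=S_{m,k}+U^kQ_{m,k}$; the single-$b$ contribution provides the missing $U^{k+2}Q_{1,k}$ piece that completes $R_{k+1}Ub_n$ and furnishes one half of the new remainder; and the double-$b$ convolution $\sum_{m+m'-1=j}Q_{m,k}Q_{m'}$ matches the recursion $Q_{j,k+1}=\sum_{l=1}^{j+1}Q_{l,k}Q_{j+1-l}$ to produce the other half. Thus the three recursions of Construction~\ref{Construction-TechniqueLemma} are exactly what is required for the induction to close.

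\emph{Step 2: limit.} Letting $k\to\infty$, the remainder $U^{k+1}\sum_{m\geq 1}\rho^m Q_{m,k}b_{n+m}$ converges to $0$ in $A^r$ because $U^{k+1}\to 0$ in $M_r(A)$, while $R_k\to R$ and $S_{m,k}\to S_m$ by construction. This yields $(1)$ and establishes $(2)\Rightarrow(1)$.

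\emph{Step 3: uniqueness.} For the reverse implication, given $(\vec b,\vec a)$ satisfying $(1)$, define $a'_{n+1}:=\rho b_{n+1}-Ub_n-U\sum_{m\geq 1}\rho^mQ_mb_{n+m}$, so that $(\vec b,\vec a')$ satisfies $(2)$ by construction. By Step 2 applied to $(\vec b,\vec a')$, it also satisfies $(1)$. Subtracting the two instances of $(1)$ yields
\[R(a_{n+1}-a'_{n+1})=-U\!\sum_{m\geq 2}S_m\rho^{m-1}(a_{n+m}-a'_{n+m})\]
for every $n\geq 0$. Since $R$ is invertible, $a_{n+1}-a'_{n+1}\in UA^r$; substituting the same identity back into the right-hand side shows $a_{n+1}-a'_{n+1}\in U^kA^r$ for every $k$, and topological nilpotency of $U$ then forces $\vec a=\vec a'$, which is $(2)$.

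The main obstacle is the combinatorial bookkeeping in Step~1: one has to track carefully how substituting $(2)$ into the level-$k$ remainder produces exactly the three recursions of Construction~\ref{Construction-TechniqueLemma}, with no extra or missing terms. Once this identification is in hand, the limiting and uniqueness arguments are routine applications of topological nilpotency of $U$.
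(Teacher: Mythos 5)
Your Steps 1–2 (the direction $(2)\Rightarrow(1)$) coincide with the paper's: the same level-$k$ iteration formula
\[
\rho b_{n+1}=R_kUb_n+R_ka_{n+1}+U\sum_{m\geq 2}S_{m,k}\rho^{m-1}a_{n+m}+U^{k+1}\sum_{m\geq 1}Q_{m,k}\rho^m b_{n+m}
\]
is proved by induction on $k$ and then one lets $k\to\infty$. (You correctly read the base case as $Q_{m,0}=Q_m$; the line $Q_{m,0}=Q_0$ in Construction \ref{Construction-TechniqueLemma} is a typo, as Remark \ref{Rmk-TechniqueLemma} with $F_{m,0}=X_m$ confirms.)

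Your Step 3 is a genuinely different, and cleaner, route for $(1)\Rightarrow(2)$. The paper runs a second induction, establishing $\rho b_{n+1}\equiv a_{n+1}+Ub_n+U\sum_{m\geq 1}Q_m\rho^m b_{n+m}\ \mathrm{mod}\ U^{d+1}A^r$ for every $d$ by a reverse iteration through the finite approximants $R_d$, $S_{m,d}$, $Q_{m,d}$, essentially unwinding the computation of Step 1. You instead define $a'$ so that $(2)$ holds tautologically, apply the already-proved implication to get $(1)$ for $a'$, subtract the two instances of $(1)$ to obtain $R(a_{n+1}-a'_{n+1})=-U\sum_{m\geq 2}S_m\rho^{m-1}(a_{n+m}-a'_{n+m})$, and bootstrap $a_{n+1}-a'_{n+1}\in U^kA^r$ for all $k$ using $R\in\GL_r(A)$, $[R,U]=0$, and topological nilpotency of $U$. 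This avoids the second bookkeeping induction entirely; the only extra ingredient is invertibility of $R$, which is given. One shared implicit hypothesis worth flagging: both your argument and the paper's pass from ``$x\in U^kA^r$ for all $k$'' (resp.\ the congruence mod $U^{d+1}$ for all $d$) to $x=0$, which requires $\bigcap_{k}U^kA^r=0$; this is not in the lemma's stated hypotheses but is assumed explicitly in Lemma \ref{Lem-Uniqueness of R} and holds in every application. Since the paper relies on it as well, this is not a defect of your proof, but it should be made explicit.
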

  \begin{proof}
      Let $R_k$'s and $Q_{m,k}$'s be the matrices introduced in Construction \ref{Construction-TechniqueLemma}. 
      
      We first exhibit how (2) implies (1).
      Assume we have $\rho b_{n+1} = a_{n+1}+U b_n+U\sum_{m\geq 1}\rho^mQ_mb_{n+m}$ for any $n\geq 0$.
      We claim that for any $k\geq 0$,
          \[\rho b_{n+1} = R_kU b_n+R_ka_{n+1}+U\sum_{m\geq 2}S_{m,k}\rho^{m-1}a_{n+m}+U^{1+k}\sum_{m\geq 1}Q_{m,k}\rho^mb_{n+m}.\]
      Granting this, Item (2) can be deduced by letting $k$ go to $+\infty$.

      Now, we are going to prove the claim by induction on $k$.
      The $k = 0$ case is trivial. Assume we have confirmed the claim for some $k\geq 0$. Then we have
      \begin{equation}\label{Equ-TechniqueLemma-I}
          \begin{split}
              \rho b_{n+1} = &R_kU b_n+R_ka_{n+1}+U\sum_{m\geq 2}S_{m,k}\rho^{m-1}a_{n+m}+U^{1+k}\sum_{m\geq 1}Q_{m,k}\rho^mb_{n+m}\\
              =& R_kU b_n+R_ka_{n+1}+U\sum_{m\geq 2}S_{m,k}\rho^{m-1}a_{n+m}\\
              &+U^{1+k}\sum_{m_1\geq 1}Q_{m_1,k}\rho^{m_1-1}(a_{n+m_1}+U b_{n+m_1-1}+U\sum_{m_2\geq 1}\rho^{m_2}Q_{m_2}b_{n+m_1-1+m_2})\\
              =& R_kU b_n+R_ka_{n+1}+U\sum_{m\geq 2}S_{m,k}\rho^{m-1}a_{n+m}+U^{1+k}\sum_{m\geq 1}Q_{m,k}\rho^{m-1}a_{n+m}\\
              &+U^{2+k}\sum_{m_1\geq 1}Q_{m_1,k}\rho^{m_1-1}b_{n+m_1-1}
              +U^{2+k}\sum_{m_1\geq 1}Q_{m_1,k}\rho^{m_1-1}\sum_{m_2\geq 1}\rho^{m_2}Q_{m_2}b_{n+m_1-1+m_2}\\
              =& (R_k+U^{1+k}Q_{1,k})U b_n+(R_k+U^{1+k}Q_{1,k})a_{n+1}+U\sum_{m\geq 2}(S_{m,k}+U^kQ_{m,k})\rho^{m-1}a_{n+m}\\
              &+U^{2+k}\sum_{m\geq 1}Q_{m+1,k}\rho^{m}b_{n+m}+U^{2+k}\sum_{m_1,m_2\geq 1}\rho^{m_1+m_2-1}Q_{m_1,k}Q_{m_2}b_{n+m_1+m_2-1}\\
              =& (R_k+U^{1+k}Q_{1,k})U b_n+(R_k+U^{1+k}Q_{1,k})a_{n+1}+U\sum_{m\geq 2}(S_{m,k}+U^kQ_{m,k})\rho^{m-1}a_{n+m}\\
              &+U^{2+k}\sum_{m\geq 1}Q_{m+1,k}\rho^{m}b_{n+m}
              +U^{2+k}\sum_{m\geq 1}\sum_{l=1}^mQ_{l,k}Q_{m+1-l}\rho^{m}b_{n+m}\\
              =& (R_k+U^{1+k}Q_{1,k})U b_n+(R_k+U^{1+k}Q_{1,k})a_{n+1}+U\sum_{m\geq 2}(S_{m,k}+U^kQ_{m,k})\rho^{m-1}a_{n+m}\\
              &+U^{2+k}\sum_{m\geq 1}(Q_{m+1,k}+\sum_{l=1}^mQ_{l,k}Q_{m+1-l})\rho^{m}b_{n+m}\\
              =& R_{k+1}U b_n+R_{k+1}a_{n+1}+U\sum_{m\geq 2}S_{m,k+1}\rho^{m-1}a_{n+m}+U^{2+k}\sum_{m\geq 1}Q_{m,k+1}\rho^mb_{n+m}.
          \end{split}
      \end{equation}
      So the claim holds true for $k+1$ as expected.

      It remains to deduce (2) from (1). We claim that for any $n\geq 0$ and any $d\geq 0$, 
      \[\rho b_{n+1}\equiv a_{n+1}+U b_n+U\sum_{m\geq 1}Q_m\rho^mb_{n+m}\mod U^{d+1}A^r.\]
      Granting this, we can conclude by letting $d$ go to $+\infty$.

      Now, we are going to prove the above claim by induction on $d$. 
      The $d=0$ case is trivial as $R\equiv I\mod U$. Now, we assume the claim holds true for some $d\geq 0$. As $R\equiv R_{d+1}\mod U^{d+2}$ and $S_{m}\equiv S_{m,d+1}\mod U^{d+1}$, we know that
      \[\rho b_{n+1} \equiv U R_{d+1}b_n+R_{d+1}a_{n+1}+U\sum_{m\geq 2}S_{m,d+1}\rho^{m-1}a_{n+m}+U^{d+2}\sum_{m\geq 1}Q_{m,d+1}\rho^{m}b_{n+m}\mod U^{d+2}.\]
      Applying (\ref{Equ-TechniqueLemma-I}) to above formula, we see that
      \begin{equation*}
          \begin{split}
              \rho b_{n+1} \equiv& U R_{d+1}b_n+R_{d+1}a_{n+1}+U\sum_{m\geq 2}S_{m,d+1}\rho^{m-1}a_{n+m}+U^{d+2}\sum_{m\geq 1}Q_{m,d+1}\rho^{m}b_{n+m}\mod U^{d+2}\\
              =& R_dU b_n+R_da_{n+1}+U\sum_{m\geq 2}S_{m,d}\rho^{m-1}a_{n+m}\\
              &+U^{1+d}\sum_{m_1\geq 1}Q_{m_1,d}\rho^{m_1-1}(a_{n+m_1}+U b_{n+m_1-1}+U\sum_{m_2\geq 1}\rho^{m_2}Q_{m_2}b_{n+m_1-1+m_2})\mod U^{d+2}.
          \end{split}
      \end{equation*}
      By inductive hypothesis, more precisely, by 
      \[\rho b_{n+m_1}\equiv a_{n+m_1}+U b_{n+m_1-1}+U\sum_{m_2\geq 1}\rho^{m_2}Q_{m_2}b_{n+m_1-1+m_2}\mod U^{d+1},\]
      we conclude that
      \[\rho b_{n+1}\equiv R_{d}U b_n+R_{d}a_{n+1}+U\sum_{m\geq 2}S_{m,d}\rho^{m-1}a_{n+m}+U^{d+1}\sum_{m\geq 1}Q_{m,d}\rho^mb_{n+m}\mod U^{d+2}.\]
      Since $R_0=I$ and $S_{m,0}=0$, by iteration, we finally conclude that
      \[\rho b_{n+1}\equiv a_{n+1}+U b_n+U\sum_{m\geq 1}Q_{m}\rho^mb_{n+m}\mod U^{d+2}\]
      as desired and therefore confirm the claim. We complete the proof.
  \end{proof}
  \begin{cor}\label{Cor-TechniqueLemma}
      Keep notations as in Lemma \ref{Lem-Technique-I}. Then $R = \sum_{m\geq 0}Q_m(RU)^m$.
  \end{cor}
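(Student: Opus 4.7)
The identity $R=\sum_{m\geq 0}Q_m(RU)^m$ involves only the commuting matrices $Q_m,U$ and the element $R$, which by Construction~\ref{Construction-TechniqueLemma} is a universal $U$-adic polynomial expression in the $Q_m$'s. My plan is therefore to reduce to a universal commutative setting. Let $\tilde A=\bZ[\tilde Q_1,\tilde Q_2,\dots][\![\tilde U]\!]$ equipped with its $\tilde U$-adic topology, and let $\tilde R\in 1+\tilde U\tilde A$ be defined by the same recursive formula (so $\tilde R=1+\tilde U\sum_{k\geq 0}\tilde U^k\tilde Q_{1,k}$, with the $\tilde Q_{m,k}$ the universal polynomials in the $\tilde Q_l$'s furnished by Remark~\ref{Rmk-TechniqueLemma}). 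The commutativity hypotheses on $Q_m,U$ yield a continuous ring homomorphism $\tilde A\to M_r(A)$ sending $\tilde Q_m\mapsto Q_m$ and $\tilde U\mapsto U$, under which $\tilde R\mapsto R$; hence it suffices to prove $\tilde R=\sum_{m\geq 0}\tilde Q_m(\tilde R\tilde U)^m$ in $\tilde A$.

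Next, I would define $\sigma\colon 1+\tilde U\tilde A\to 1+\tilde U\tilde A$ by $X\mapsto\sum_{m\geq 0}\tilde Q_m(X\tilde U)^m$, noting this series converges $\tilde U$-adically because $X\tilde U\in\tilde U\tilde A$. A direct check shows $\sigma$ is a $\tilde U$-adic contraction: if $X\equiv X'\pmod{\tilde U^{d+1}\tilde A}$, then $(X\tilde U)^m\equiv(X'\tilde U)^m\pmod{\tilde U^{d+m+1}\tilde A}$ for $m\geq 1$, so summing gives $\sigma(X)\equiv\sigma(X')\pmod{\tilde U^{d+2}\tilde A}$. Since $\tilde A$ is $\tilde U$-adically complete and \emph{separated}, $\sigma$ has at most one fixed point in $1+\tilde U\tilde A$.

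It then remains to verify that $\tilde R$ is this fixed point, i.e.\ $\sigma(\tilde R)\equiv\tilde R\pmod{\tilde U^{N+1}\tilde A}$ for all $N\geq 0$, which I would establish by induction on $N$. The base case $N=0$ is immediate. For the inductive step, the contraction property combined with $\tilde R\equiv\tilde R_N\pmod{\tilde U^{N+1}\tilde A}$ (where $\tilde R_N=1+\sum_{j=0}^{N-1}\tilde U^{j+1}\tilde Q_{1,j}$) reduces matters to checking that the coefficient of $\tilde U^{N+1}$ in $\sigma(\tilde R_N)=1+\sum_{m\geq 1}\tilde Q_m(\tilde R_N\tilde U)^m$ equals $\tilde Q_{1,N}$. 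Writing $(\tilde R_N\tilde U)^m=\tilde U^m\tilde R_N^m$ and expanding $\tilde R_N^m$ via the multinomial formula, this coefficient is a universal polynomial in $\tilde Q_1,\dots,\tilde Q_{N+1}$; it coincides with $\tilde Q_{1,N}$ precisely because of the defining recursion $\tilde Q_{m,k+1}=\sum_{l=1}^{m+1}\tilde Q_{l,k}\tilde Q_{m+1-l}$ (in particular $\tilde Q_{1,N}=\tilde Q_{1,N-1}\tilde Q_1+\tilde Q_{2,N-1}$), together with its analogue for $m\geq 2$, which encode the expansion of powers of $\tilde R_N$.

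The main obstacle is the combinatorial bookkeeping in this last step: one must check that the doubly-indexed recursion defining $\tilde Q_{m,k}$ encodes on the nose the expansion of $\tilde R_N^m$, so that $\tilde Q_{1,N}$ arises as the coefficient of $\tilde U^{N+1}$ in $\sigma(\tilde R_N)$. This is most cleanly handled by reading the $Q_{m,k}$ as the universal polynomials $F_{m,k}$ of Remark~\ref{Rmk-TechniqueLemma} and tracking heights. Once this combinatorial identity is verified, $\tilde U$-adic separatedness gives $\sigma(\tilde R)=\tilde R$ in $\tilde A$, and applying the specialization $\tilde A\to M_r(A)$ yields $R=\sum_{m\geq 0}Q_m(RU)^m$ in $M_r(A)$.
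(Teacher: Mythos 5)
There is a genuine gap. Your framework — pass to the universal ring $\tilde A=\bZ[\tilde Q_1,\tilde Q_2,\dots][[\tilde U]]$, observe that $\sigma(X)=\sum_{m\geq 0}\tilde Q_m(X\tilde U)^m$ is a $\tilde U$-adic contraction on $1+\tilde U\tilde A$, and invoke separatedness to conclude once $\sigma(\tilde R)\equiv\tilde R$ is checked at every order — is sound and is essentially the content of the paper's Lemma~\ref{Lem-Uniqueness of R}. But the whole weight of the argument rests on the step you flag as ``the main obstacle'': showing that the coefficient of $\tilde U^{N+1}$ in $\sigma(\tilde R_N)$ equals $\tilde Q_{1,N}$. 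You assert this ``coincides with $\tilde Q_{1,N}$ precisely because of the defining recursion'' and leave it as bookkeeping, but that step is the entire corollary restated degree by degree. Unwound, it is the polynomial identity
$G_{N+1}=\sum_{m=1}^{N+1}\sum_{k_1+\cdots+k_m=N+1,\,k_i\geq 1}X_m G_{k_1}\cdots G_{k_m}$ (with $G_0=1$, $G_{j+1}=Q_{1,j}$), which is a Lagrange-inversion-type statement: it says $s\mapsto s\hat G(s)$ is the compositional inverse of $u\mapsto u/(1+\hat Q(u))$. This does \emph{not} fall out of the recursion $Q_{m,k+1}=\sum_{l=1}^{m+1}Q_{l,k}Q_{m+1-l}$ by a routine expansion; some nontrivial mechanism is needed to bridge the doubly-indexed array $Q_{m,k}$ to powers of $\tilde R_N$.

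The idea you are missing is that the corollary is a consequence of Lemma~\ref{Lem-Technique-I} itself, applied to a trivial input. The paper also reduces to a universal polynomial identity (via Remark~\ref{Rmk-TechniqueLemma}), but then proves it by specializing to $A=\calA[Y][[Z]]$, $\rho=Z$, $U=YZ$, $Q_m=X_m$, and feeding the sequence $b_n=(YR)^n$, $a_n=0$, into Lemma~\ref{Lem-Technique-I}: condition (1) of that lemma holds tautologically ($Z(YR)^{n+1}=RU\,(YR)^n$), so condition (2) holds too, and for $n=0$ this reads $UR=U\sum_{m\geq 0}Q_m(UR)^m$. Since $U=YZ$ is a nonzero divisor in the domain $\calA[Y][[Z]]$, cancel $U$ to get exactly the needed identity. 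Your plan never touches Lemma~\ref{Lem-Technique-I}, which is where the content lives. Note also that the paper proves uniqueness of the fixed point (Lemma~\ref{Lem-Uniqueness of R}) \emph{after} Corollary~\ref{Cor-TechniqueLemma} and uses the corollary to supply existence; your route wants to prove the corollary \emph{via} the fixed-point argument, so you would need an independent handle on existence — which is precisely the combinatorial identity you have not established.

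A smaller point: the ``continuous ring homomorphism $\tilde A\to M_r(A)$'' needs care, since a general element $\sum_n f_n(\tilde Q)\tilde U^n$ need not specialize to a convergent series in $M_r(A)$ without a boundedness hypothesis on the $Q_m$. You only need the image of $\tilde R$, so this is repairable, but as stated the map is not obviously well-defined on all of $\tilde A$. The paper's specialization to $\calA[Y][[Z]]$ sidesteps this entirely.
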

  \begin{proof}
      Let $\calA$ and $F_{m,k}$'s be as in Remark \ref{Rmk-TechniqueLemma} and define $G_{k+1} = F_{1+k}$ for any $k\geq 0$. Put $G_0=I$. Then we have 
      \[R = \sum_{k\geq 0}U^kG_k(Q_1,\dots,Q_{1+k}).\]
      and therefore 
      \begin{equation*}
          \begin{split}
              R-\sum_{m\geq 0}Q_m(RU)^m=&\sum_{k\geq 0}U^kG_k(\underline Q)-\sum_{m\geq 0}Q_m(\sum_{k\geq 1}U^kG_{k-1}(\underline Q))^m\\
              =&\sum_{k\geq 1}U^kG_k(\underline Q)-\sum_{m\geq 1}Q_m\sum_{k_1,\dots,k_m\geq 1}(G_{k_1}\cdots G_{k_m})(\underline Q)U^{k_1+\cdots+k_m}\\
              =&\sum_{k\geq 1}U^k(G_k-\sum_{m=1}^k\sum_{k_1,\dots,k_m\geq 1,k_1+\cdots+k_m=k}X_mG_{k_1}\cdots G_{k_m})(\underline Q).
          \end{split}
      \end{equation*}
      So it is enough to show that for any $k\geq 1$,
      \[G_k=\sum_{m=1}^k\sum_{k_1,\dots,k_m\geq 1,k_1+\cdots+k_m=k}X_mG_{k_1}\cdots G_{k_m}.\]
      
      Now let $A = \calA[Y][[Z]]$ be the ring of formal power series with variable $Z$ over $\calA[Y]$, the free polynomial ring with variable $Y$ over $\calA$ and equip $A$ with $Z$-adic topology. Choose $\rho = Z$, $U = YZ$ and $Q_m = X_m$ for any $m\geq 1$. Let $R\in \calA[Y][[Z]]^{\times}$ be the element by applying Lemma \ref{Lem-Technique-I} in this situation. Since $\{b_n = (YR)^n\}_{n\geq 0}$ satisfies Lemma \ref{Lem-Technique-I} (1) (with $a_n = 0$ for any $n\geq 1$), we have
      \[UR=ZYR = \rho b_1 =Ub_0+U\sum_{m\geq 1}\rho^mQ_mb_{m} = Ub_0+U\sum_{m\geq 1}Q_m(UR)^mb_0 = U\sum_{m\geq 0}Q_m(UR)^m.\]
      Since $\calA[Y][[Z]]$ is a domain, we have 
      \[R = \sum_{m\geq 0}Q_m(UR)^m.\]
      By the same argument as above, we have
      \[0 = R-\sum_{m\geq 0}Q_m(UR)^m = \sum_{k\geq 1}U^k(G_k-\sum_{m=1}^k\sum_{k_1,\dots,k_m\geq 1,k_1+\cdots+k_m=k}X_mG_{k_1}\cdots G_{k_m}).\]
      Since for any $k\geq 1$, $G_k-\sum_{m=1}^k\sum_{k_1,\dots,k_m\geq 1,k_1+\cdots+k_m=k}X_mG_{k_1}\cdots G_{k_m}\in\calA$ and $U = YZ$, the result follows from comparing the coefficients of $U^k$'s.
  \end{proof}

  \begin{lem}\label{Lem-Uniqueness of R}
      Assume $\cap_{n\geq 0}U^nM_r(A) = 0$.
      Let $\{Q_m\}_{m\geq 0}$ and $U$ be as in Construction \ref{Construction-TechniqueLemma}. Let $\calB$ be the maximal commutative sub-$A$-algebra of $M_r(A)$ containing $U$ and $Q_m$'s. Then the equation
      $R = \sum_{m\geq 0}Q_m(UR)^m$ of $R$ has a unique solution $R = R_0$ in $I+U\calB$.
  \end{lem}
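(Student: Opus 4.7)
The plan is to treat existence and uniqueness separately. Existence is essentially already at hand: by its recursive definition in Construction \ref{Construction-TechniqueLemma}, the matrix $R_0 = I + U\sum_{k\geq 0} U^k Q_{1,k}$ lies in $I + U\calB$, since Remark \ref{Rmk-TechniqueLemma} expresses each $Q_{m,k}$ as a polynomial in the $Q_l$'s and hence places it in $\calB$. Corollary \ref{Cor-TechniqueLemma} together with $[U,R_0]=0$ (which holds inside the commutative algebra $\calB$) then gives $R_0 = \sum_{m\geq 0} Q_m (UR_0)^m$, so $R_0$ is indeed a solution in $I + U\calB$.

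For uniqueness, suppose $R' \in I + U\calB$ is another solution and set $D := R' - R_0 \in U\calB$. Because $U$, $R_0$, $D$ and the $Q_m$ all lie in the commutative ring $\calB$, the binomial theorem applies; subtracting the equation for $R_0$ from that for $R'$ and collecting terms yields
\[
D \;=\; \sum_{m\geq 1} Q_m \sum_{k=1}^{m} \binom{m}{k} (UR_0)^{m-k}(UD)^k.
\]

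The crux is an iterated $U$-adic order estimate: if $D \in U^v\calB$ for some $v \geq 1$, then for every $m \geq 1$ and every $1 \leq k \leq m$,
\[
(UR_0)^{m-k}(UD)^k \;\in\; U^{(m-k) + k(v+1)}\calB \;=\; U^{m+kv}\calB \;\subseteq\; U^{v+1}\calB,
\]
so the displayed identity forces $D \in U^{v+1}\calB$. Starting from $D \in U\calB$ and iterating, we conclude $D \in \bigcap_{n\geq 1} U^n \calB \subseteq \bigcap_{n\geq 1} U^n M_r(A) = 0$ by the hypothesis $\bigcap_{n\geq 0} U^n M_r(A) = 0$. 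Hence $R' = R_0$, as desired.

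The only genuine point is the $U$-adic order estimate above; there is no real obstacle beyond it, because the whole argument happens inside the commutative ring $\calB$ where binomial expansions and term-by-term $U$-order counting are legitimate, and because Remark \ref{Rmk-TechniqueLemma} guarantees that the construction of $R_0$ never leaves $\calB$.
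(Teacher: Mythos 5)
Your proof is correct and follows essentially the same iterative $U$-adic contraction idea as the paper's, just with a different packaging: you subtract two putative solutions and run the estimate directly, whereas the paper phrases the same estimate as a Banach-style fixed-point argument for the map $f(X)=\sum_{m\geq 1}Q_m(U(I+X))^m$ on $\calB$. One cosmetic remark: instead of the full binomial expansion you could use the simpler factorization $(UR')^m-(UR_0)^m = UD\sum_{i=0}^{m-1}(UR')^i(UR_0)^{m-1-i}$, which gives $D\in UD\calB$ in one step (this is literally the paper's Lipschitz estimate) and sidesteps any worry about rearranging the double sum; but as written your power count $m+kv\geq v+1$ is also correct, and the existence step via Corollary \ref{Cor-TechniqueLemma} and Remark \ref{Rmk-TechniqueLemma} matches the paper exactly.
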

  \begin{proof}
      We consider a function $f:\calB\to \calB$ defined by sending each $X\in M_r(A)$ to $f(X) = \sum_{m\geq 1}Q_m(U(I+X))^m$. Then for any $X_1,X_2\in \calB$, 
      \begin{equation*}
          \begin{split}
              f(X_1)-f(X_2) = & \sum_{m\geq 1}Q_m((U(I+X_1))^m-(U(I+X_2))^m)\\
              =&U(X_1-X_2)\sum_{m\geq 1}Q_mU^{m-1}(\sum_{i=0}^{m-1}(I+X_1)^i(I+X_2)^{m-1-i})\in U(X_1-X_2)\calB
          \end{split}
      \end{equation*}
      Since $U$ is topologically nilpotent, we know $f$ is continuous on $\calB$ with the induced topology. Since $\calB$ is maximal, it is closed in $M_r(A)$.
      We claim that $f$ admits a unique fixed point in $\calB$. 
      
      The uniqueness is trivial: If $f(X_i) = X_i$ for $i=1,2$, then $X_1-X_2 = f(X_1)-f(X_2)\in U(X_1-X_2)\calB$, which implies that $X_1-X_2\in U^nM_r(A)$ for all $n\geq 1$ by iteration. Since $\cap_{n\geq 0}U^nM_r(A) = 0$, we have $X_1=X_2$. To confirm the claim, we are reduced to showing the existence of a fixed point of $f$. Since $f(0) = \sum_{m\geq 1}U^mQ_m\in U\calB$, we know that 
      \[f(f(0))-f(0)\in U(f(0)-0)\calB \subset U^2\calB.\]
      In general, we can prove by induction that for any $n\geq 2$, 
      \[f^{(n)}(0)-f^{(n-1)}(0)\in U^n\calB,\]
      where $f^{(n)}$ denotes the $n$-times self-composition of $f$. So $X_0:=\lim_{n\to +\infty}f^{(n)}(0)$ is a well-defined element in $\calB$. Since $f$ is continuous, we know that 
      \[f(X_0) = \lim_{n\to+\infty}f(f^{(n)}(0)) = \lim_{n\to+\infty}f^{(n+1)}(0) = X_0.\]
      So $X_0$ is a fixed point of $f$ as desired.

      Now we are prepared to prove the lemma. By Corollary \ref{Cor-TechniqueLemma}, there exists an $R\in I+U\calB$ such that $R = \sum_{m\geq 0}Q_m(UR)^m$. However, by the definition of $f$, we know that 
      \[f(R-I) = \sum_{m\geq 1}Q_m(UR)^m = R-I.\]
      So $R-I$ is the unique fixed point of $f$ on $\calB$. This implies the uniqueness of $R$.
  \end{proof}

  Now, we give a typical example we have used in the previous subsections.
  \begin{exam}\label{Exam-Special case for R}
      Let $A$ be a $p$-complete $p$-torsion free $\calO_C$-algebra, $\rho\in\frakm_C$ such that $\nu_p(\rho)\geq \frac{1}{p-1}$ and $e = \zeta_p-1$. Let $\Theta$ be a topologically nilpotent matrix in $M_r(A)$. We define 
      \begin{equation*}
          \begin{split}
              &P=\exp(-e\Theta) = \sum_{l\geq 0}(-1)^l\Theta^le^{[l]},\\
              &\Theta_0 = \frac{\exp(-eX)-I}{eX}\bigg|_{X = \Theta} = -\sum_{l\geq 0}(-1)^l\Theta^l\frac{e^{[l+1]}}{e},~\rm{and}\\
              &Q_m = \sum_{l=1}^m\sum_{m_1,\dots,m_l\geq 1,m_1+\cdots+m_l=m}\frac{e^{[m_1+1]}\cdots e^{[m_l+1]}}{e^l}(-1)^lP^l,~\forall m\geq 1.
          \end{split}
      \end{equation*}
      Put $U = -\Theta\Theta_0$. Then we know that $\cap_{n\geq 0}U^nM_r(A) = 0$ (as $A$ is $p$-adically separated and $\Theta$ is topologically nilpotent). Let $R_0$ be the invertible matrix obtained by applying Lemma \ref{Lem-Technique-I} in this case. Then we prove that $R_0 = -\Theta_0^{-1}$ as follows:

      Thanks to Lemma \ref{Lem-Uniqueness of R}, this amounts to that 
      \[-\Theta_0^{-1} = \sum_{m\geq 0}Q_m(-\Theta_0^{-1}U)^m = \sum_{m\geq 0}Q_m\Theta^m.\]
      Put $G(X) = \frac{\exp(eX)-1-eX}{eX} = \sum_{m\geq 1}\frac{e^{[m+1]}}{e}X^m$ and then $G(\Theta)$ is well-defined in $\Theta M_r(A)$. Note that 
      \begin{equation*}
          \begin{split}
              \sum_{m\geq 0}Q_m\Theta^m = & I+\sum_{m\geq 1}\sum_{l=1}^m\sum_{m_1,\dots,m_l\geq 1,m_1+\cdots+m_l=m}\frac{e^{[m_1+1]}\cdots e^{[m_l+1]}}{e^l}(-1)^lP^l\Theta^m\\
              =&I+\sum_{l\geq 1}\sum_{m_1,\dots,m_l\geq 1}\frac{e^{[m_1+1]}\cdots e^{[m_l+1]}}{e^l}(-1)^lP^l\Theta^{m_1+\cdots+m_l}\\
              =&I+\sum_{l\geq 1}(\sum_{m\geq 1}\frac{e^{[m+1]}}{e}\Theta^m)^l(-1)^lP^l\\
              =&\sum_{l\geq 0}(-PG(\Theta))^l.
          \end{split}
      \end{equation*}
      Since $-PG(\Theta)\in\Theta M_r(A)$, we conclude that 
      \[ \sum_{m\geq 0}Q_m\Theta^m = (I+PG(\Theta))^{-1} = -\Theta_0^{-1}\]
      as desired, where the last equality follows from that $1+\exp(-eX)G(X) = -\frac{\exp(-eX)-1}{eX}$.
  \end{exam}

\section{The $p$-adic Simpson correspondence}\label{Sec-MainResult}
   Let $\frakX$ be a smooth liftable formal scheme of dimension $d$ over $\calO_C$ with rigid analytic generic fiber $X$ and fix an $\rA_2$-lifting $\widetilde \frakX$ of $\frakX$. Put $\rho = \rho_K$. We will establish an integral $p$-adic Simpson correspondence in this section.
   \begin{dfn}\label{Dfn-SmallRep}
       Assume $a\geq \frac{1}{p-1}$. By an {\bf $a$-small $\OXp$-representation of rank $r$} on $X_{\proet}$, we mean a $p$-complete $p$-torsion free $\OXp$-module $\calL$ satisfying the condition that there exist an \'etale covering $\{\frakX_i\to\frakX\}_{i\in I}$ and rationals $b_i>b>a$ such that for any $i$, there exists an isomorphism
       \[(\calL\big|_{X_{i}}/\rho p^{b_i})^{\rm al}\cong ((\OXp\big|_{X_i}/\rho p^{b_i})^r)^{\rm al}\]
       of $(\widehat \calO_X^{+{\rm al}}/\rho p^{b_i})\big|_{X_i}$-modules, where $\widehat \calO_X^{+{\rm al}}$ denotes the almost integral structure sheaf and $X_i$'s are rigid generic fibers of $\frakX_i$'s. Denote by $\Rep^{\geq a}(\OXp)$ the category of $a$-small $\OXp$-representations on $X_{\proet}$. 
   \end{dfn}
   \begin{rmk}
       The existence of $b$ sitting strictly between $a$ and $b_i$'s is necessary, comparing with Definition \ref{Dfn-LocalSmallRep}. This is due to the ``almost" issue at the integral level (cf. Example \ref{Exam-LocalSmallRep}).
   \end{rmk}
   
   \begin{dfn}\label{Dfn-SmallHiggs}
       Assume $a\geq \frac{1}{p-1}$. By an {\bf $a$-small Higgs bundle of rank $r$} on $\frakX_{\et}$, we mean a pair $(\calH,\theta_{\calH})$ of a locally finite free $\calO_{\frakX}$-module $\calH$ of rank $r$ and an $\calO_{\frakX}$-linear morphism $\theta_{\calH}:\calH\to\calH\otimes_{\calO_{\frakX}}\rho\widehat \Omega_{\frakX}^1(-1)$\footnote{It is not hard to see that there exists a canonical isomorphism of $\calO_C$-modules $\rho\calO_C(-1)\otimes_{\calO_C}\xi_K\rA_2\cong\calO_C$, where $\xi_K$ is the generator of the kernel of surjection $\rA_{\inf,K}\to\calO_C$ (cf. \cite[Cor. 2.3 (1)]{Wang}). Therefore, one can understand $\rho\calO_C(-1)$ as a ramified version of Breuil--Kisin twist of $\calO_C$. In particular, when $K$ is absolutely unramified, we may choose $\rho =\zeta_p-1$ and then $\rho\calO_C(-1) = \calO_C\{-1\}$ is the usual Breuil--Kisin twist of $\calO_C$.} such that 
       \[\Ima(\theta_{\calH})\subset p^{b-\frac{1}{p-1}}\calH\otimes_{\calO_{\frakX}}\rho \widehat \Omega_{\frakX}^1(-1)\]
       for some $b>a$. Denote by $\HIG(\calH,\theta_{\calH})$ the Higgs complex
       \[\calH\xrightarrow{\theta_{\calH}}\calH\otimes_{\calO_{\frakX}}\rho\widehat \Omega_{\frakX}^1(-1)\xrightarrow{\theta_{\calH}}\calH\otimes_{\calO_{\frakX}}\rho^2\widehat \Omega_{\frakX}^2(-2)\to\cdots\to\calH\otimes_{\calO_{\frakX}}\rho^d\widehat \Omega_{\frakX}^d(-d)\]
       induced by $(\calH,\theta_{\calH})$. Denote by $\HIG^{\geq a}(\frakX)$ the category of $a$-small Higgs bundles on $\frakX_{\et}$.
   \end{dfn}

   \begin{construction}
       Let $(\calO\widehat \bC_{\pd}^+,\Theta)$ be the period sheaf together with the Higgs field defined in Proposition \ref{Prop-PeriodSheaf}. Assume $a\geq \frac{1}{p-1}$.
       \begin{enumerate}
           \item[(1)] For an $a$-small $\OXp$-representation $\calL$ on $X_{\proet}$, define 
           \[\Theta_{\calL} = \id_{\calL}\otimes\Theta:\calL\otimes_{\OXp}\calO\widehat \bC_{\pd}^+\to\calL\otimes_{\OXp}\calO\widehat \bC_{\pd}^+\otimes_{\calO_{\frakX}}\rho\widehat \Omega^1_{\frakX}(-1).\]
           Then $\Theta_{\calL}\wedge\Theta_{\calL} = 0$ and thus $(\calL\otimes_{\OXp}\calO\widehat \bC_{\pd}^+,\Theta_{\calL})$ induces a Higgs complex which is denoted by $\HIG(\calL\otimes_{\OXp}\calO\widehat \bC_{\pd}^+,\Theta_{\calL})$.

           \item[(2)] For an $a$-small Higgs bundle $(\calH,\theta_{\calH})$ on $\frakX_{\et}$, define 
           \[\Theta_{\calH} = \theta_{\calH}\otimes\id_{\calO\widehat \bC_{\pd}^+}+\id_{\calH}\otimes\Theta:\calH\otimes_{\calO_{\frakX}}\calO\widehat \bC_{\pd}^+\to\calH\otimes_{\calO_{\frakX}}\calO\widehat \bC_{\pd}^+\otimes_{\calO_{\frakX}}\rho\widehat \Omega^1_{\frakX}(-1).\]
           Then $\Theta_{\calH}\wedge\Theta_{\calH} = 0$ and thus $(\calH\otimes_{\calO_{\frakX}}\calO\widehat \bC_{\pd}^+,\Theta_{\calH})$ induces a Higgs complex which is denoted by $\HIG(\calH\otimes_{\calO_{\frakX}}\calO\widehat \bC_{\pd}^+,\Theta_{\calH})$.
       \end{enumerate}
   \end{construction}

   Before moving on, let us recall the $\rL\eta$-functor introduced in\cite[\S 6]{BMS18}. We only state its construction in the ring case for simplicity and the readers are referred to \textit{loc.cit.} for more details.

   Let $A$ be a ring with an invertible ideal $I\subset A$. For any complex $(C^{\bullet},\rd_C)$ of flat $A$-modules, we define a new complex $\eta_IC^{\bullet}$ by letting 
   \[(\eta_IC)^n = \{x\in I^nC^n\mid \rd x\in I^{n+1}C^{n+1}\}\]
   for any $n\in \bZ$. As shown in \cite[\S 6]{BMS18} $\eta_I$  induces a functor $\rL\eta_I:D(A)\to D(A)$ such that for any $K\in D(A)$ and any $n\in\bZ$, 
   \[\rH^n(\rL\eta_IK)\cong \rH^n(K)/\rH^n(K)[I].\]
   We will apply $\rL\eta$-operator when the ring (resp. ring topos) is $R^+$ (resp. $(\frakX_{\et},\calO_{\frakX})$) and $I$ is principally generated by $\rho(\zeta_p-1)$.
   
   Now, we are able to state our main result.

   \begin{thm}\label{Thm-IntegralSimpson}
      Assume $a\geq \frac{1}{p-1}$. Let $\frakX$ be a liftable smooth formal scheme of dimension $d$ over $\calO_C$. Fix an $\rA_2$-lifting of $\frakX$ and let $(\calO\widehat \bC_{\pd}^+,\Theta)$ be the corresponding period sheaf together with Higgs field as above. Let $\nu:X_{\proet}\to\frakX_{\et}$ be the natural projection of sites.
      \begin{enumerate}
          \item[(1)] For any $a$-small $\OXp$-representation $\calL$ of rank $r$ on $X_{\proet}$, we have that $\rR\nu_*(\calL\otimes_{\OXp}\calO\widehat \bC_{\pd}^+)$ is concentrated in degree $[0,d]$, and that the complex  $\rL\eta_{\rho(\zeta_p-1)}\rR\nu_*(\calL\otimes_{\OXp}\calO\widehat \bC_{\pd}^+)$ is concentrated in degree $0$ and coincides with $\nu_*(\calL\otimes_{\OXp}\calO\widehat \bC_{\pd}^+)[0]$ such that $(\nu_*(\calL\otimes_{\OXp}\calO\widehat \bC_{\pd}^+),\nu_*(\Theta_{\calL}))$ is an $a$-small Higgs bundle of rank $r$ on $\frakX_{\et}$.

          \item[(2)] For any $a$-small Higgs bundle $(\calH,\theta_{\calH})$ of rank $r$ on $\frakX_{\et}$, put $\calL=(\calH\otimes_{\calO_{\frakX}}\calO\widehat \bC_{\pd}^+)^{\Theta_{\calH} = 0}$. Then $\calL$ is an $a$-small $\OXp$-representation of rank $r$ on $X_{\proet}$.

          \item[(3)] The functors in Item (1) and Item (2) are quasi-inverses of each other and hence define an equivalence of categories
          \[\Rep^{\geq a}(\OXp)\simeq \HIG^{\geq a}(\frakX),\]
          which preserves tensor products and dualities. Moreover, if $\calL$ and $(\calH,\theta_{\calH})$ are the corresponding $a$-small $\OXp$-representation and $a$-small Higgs bundle via the above equivalence, then there exists an isomorphism of Higgs morphisms
          \[(\calL\otimes_{\OXp}\calO\widehat \bC_{\pd}^+,\Theta_{\calL})\cong (\calH\otimes_{\calO_{\frakX}}\calO\widehat \bC_{\pd}^+,\Theta_{\calH}).\]
      \end{enumerate}
   \end{thm}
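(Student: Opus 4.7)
The approach is to reduce to the local, small-affine case and then match the global data with the explicit $\Gamma$-cohomology calculations of Section \ref{Sec-GammaCohomology}. Since every object in the statement is \'etale local on $\frakX$ and the formation of $\calO\widehat\bC_{\pd}^+$ together with its Higgs field commutes with \'etale base change, we may assume $\frakX = \Spf(R^+)$ is small affine with chart $\Box: \calO_C\za T_1^{\pm 1},\ldots,T_d^{\pm 1}\ya \to R^+$ as in Convention \ref{Convention-small}. Evaluating at $U_\infty$ and invoking Corollary \ref{Cor-LocalPeriodSheaf} identifies $\calO\widehat\bC_{\pd}^+(U_\infty)$ with $\widehat R^+_\infty[\rho Y_1,\ldots,\rho Y_d]^{\wedge}_{\pd}$ carrying the $\Gamma$-action of Notation \ref{Notation-LocalPeriodSheaf}; the decomposition $\widehat R^+_\infty = \widehat{\oplus}_{\underline\alpha\in(\bZ[1/p]\cap[0,1))^d}R^+\cdot T_1^{\alpha_1}\cdots T_d^{\alpha_d}$ then reduces the computation of $\rR\nu_*(\calL\otimes_{\OXp}\calO\widehat\bC_{\pd}^+)$ to the Koszul complex of $\Gamma\cong\Zp^d$ computed componentwise on the building blocks $M_{\underline\alpha}(V)$ of Section \ref{SSec-Gamma-cohomology of LogNilRep}.

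The key local input for (1) is to produce from the smallness of $\calL$ a canonical finite free $R^+$-submodule $V\subset L := \calL(U_\infty)$ of rank $r$ with $L = V\otimes_{R^+}\widehat R^+_\infty$, together with pairwise commuting matrices $\theta_1,\ldots,\theta_d\in\rho p^{b-1/(p-1)}\rM_r(R^+)$ such that $\gamma_i|_V = \exp(-(\zeta_p-1)\theta_i)$. This is an integral incarnation of Faltings' classical construction: the threshold $a\geq 1/(p-1)$ is precisely what makes $\log$ converge, and the descent of the cocycles from $\widehat R^+_\infty$ to $R^+$ is realised by the $\Gamma$-equivariant projector provided by the decomposition of $\widehat R^+_\infty$. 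The candidate Higgs bundle is $(H,\theta_\calH) := (V, \sum_i \theta_i\otimes \rho\rd\log T_i/t)$, which is manifestly $a$-small. The $\Gamma$-cohomology computation then splits along $\underline\alpha$. For $\underline\alpha\neq\underline 0$, Proposition \ref{Prop-CohomologySummary}(1) gives $\rH^0 = 0$ and $\rH^{\geq 1}$ killed by $\zeta^{\alpha_i}-1$, hence by $\rho(\zeta_p-1)$ since $\nu_p(\zeta^{\alpha_i}-1)\leq \nu_p(\rho(\zeta_p-1))$; these contributions therefore vanish after applying $\rL\eta_{\rho(\zeta_p-1)}$. For $\underline\alpha=\underline 0$, since each $\theta_i = \rho\theta_i'$ with $\theta_i'$ topologically nilpotent, Proposition \ref{Prop-CohomologySummary}(2) applies with $n=1$: iterating over the $d$ Koszul directions yields $\rH^0 = V = H$ via the exponential description of Corollary \ref{Cor-Explicit H^0}, and identifies the higher $\Gamma$-cohomology with $\HIG(H,\theta_\calH)$ up to $\rho(\zeta_p-1)$-torsion, so $\rL\eta_{\rho(\zeta_p-1)}\rR\nu_*(\calL\otimes_{\OXp}\calO\widehat\bC_{\pd}^+) = H[0]$, which proves (1).

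For (2) and (3) the crucial ingredient is the local exponential isomorphism $\Psi_V : \calH\otimes_{\calO_\frakX}\calO\widehat\bC_{\pd}^+\xrightarrow{\cong}\calL\otimes_{\OXp}\calO\widehat\bC_{\pd}^+$ defined on basis vectors by $v\mapsto \exp(\theta X)v$ with $X=(Y_1,\ldots,Y_d)$ the generators of the pd-polynomial ring; a direct computation on generators, exploiting $\theta = \rho\theta'$ so that $\exp(\theta X)$ makes sense in the pd-completion, shows that $\Psi_V$ intertwines the source Higgs field $\Theta_\calH$ with the target Higgs field $\id_\calL\otimes\Theta$. Taking the kernel of $\id_\calL\otimes\Theta$ on the target and using the resolution property of Proposition \ref{Prop-PeriodSheaf} yields $\calL\otimes_{\OXp}(\calO\widehat\bC_{\pd}^+)^{\Theta=0} = \calL$, so $(\calH\otimes_{\calO_\frakX}\calO\widehat\bC_{\pd}^+)^{\Theta_\calH=0}\cong\calL$, simultaneously establishing (2) and identifying the two functors of (1) and (2) as quasi-inverse. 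Compatibility with tensor products and dualities then follows from the multiplicativity of $\Psi_V$ in $V$. The main obstacle I anticipate is the careful descent step producing the unique $R^+$-lattice $V$ with its commuting log-matrices: this requires an integral $\Gamma$-equivariant projector whose convergence and uniqueness depend essentially on the smallness threshold $a\geq 1/(p-1)$, and the analogous issue reappears in the quasi-inverse construction, where one must verify that the kernel $(\calH\otimes_{\calO_\frakX}\calO\widehat\bC_{\pd}^+)^{\Theta_\calH=0}$ is genuinely a finite free $\OXp$-module of rank $r$ and that its smallness as an $\OXp$-representation is preserved under the equivalence.
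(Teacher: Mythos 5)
Your proposal follows essentially the same route as the paper: reduce to the small-affine case, descend the $\widehat R^+_\infty$-representation $\calL(U_\infty)$ to a finite free $R^+$-module $V$ with commuting log-nilpotent matrices (this is exactly Theorem~\ref{Thm-Decompletion}, imported from \cite{Wang}), split the $\Gamma$-cohomology along the $T^{\underline\alpha}$-decomposition, dispose of the $\underline\alpha\neq 0$ terms via Proposition~\ref{Prop-CohomologySummary}(1), compute the $\underline\alpha=0$ term via Proposition~\ref{Prop-CohomologySummary}(2) together with the exponential isomorphism $\exp(\sum_i\Theta_iY_i)$, and glue. The computations you sketch are correct.

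One technical step in your reduction is elided and is not automatic. You pass from $\rR\nu_*(\calL\otimes_{\OXp}\calO\widehat\bC_{\pd}^+)$ evaluated on $\frakU$ to the Koszul complex of $\Gamma$ acting on the evaluation at $U_\infty$; but by Cartan--Leray this comparison
\[
\rR\Gamma(\Gamma,(\calL\otimes_{\OXp}\calO\widehat\bC_{\pd}^+)(U_\infty))\longrightarrow\rR\Gamma(U_{\proet}/\frakU,\calL\otimes_{\OXp}\calO\widehat\bC_{\pd}^+)
\]
is a priori only an \emph{almost} quasi-isomorphism (Lemma~\ref{Lem-Key}, cited from \cite[Lem.~5.11]{Wang}). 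To make it a genuine quasi-isomorphism after applying $\rL\eta_{\rho(\zeta_p-1)}$ and to get the correct $\rH^0$, the paper needs the observation that the $\Gamma$-cohomology groups of $\calL\otimes\calO\widehat\bC_{\pd}^+$ have no $\frakm_C$-torsion: this comes from the K\"unneth description (Corollary~\ref{Cor-Kunnech}, itself resting on Lemma~\ref{Lem-Kunneth}) together with Lemma~\ref{Lem-coherent}, and is then fed into \cite[Lem.~8.11(2)]{BMS18} via Corollary~\ref{Cor-Key}. Without this, "$\rR\nu_*$ reduces to the $\Gamma$-Koszul complex" is not justified at the integral level, and the identification of $\nu_*(\calL\otimes\calO\widehat\bC_{\pd}^+)$ with $V$ (as an actual equality, not an almost one) would not follow. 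You correctly flag the decompletion of $V$ and the finiteness of the kernel $(\calH\otimes\calO\widehat\bC_{\pd}^+)^{\Theta_\calH=0}$ as the delicate points; add this no-$\frakm_C$-torsion/almost-upgrading step to that list, and the argument matches the paper's.
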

   We will prove this theorem in \S\ref{SSec-ProofMainTheorme}. Now, we give a consequence of Theorem \ref{Thm-IntegralSimpson}.
   \begin{cor}\label{Cor-IntegralSimpson}
       Keep notations as in Theorem \ref{Thm-IntegralSimpson}. For any $a$-small $\OXp$-representation $\calL$ on $X_{\proet}$ with induced Higgs bundle $(\calH,\theta_{\calH})$ via the equivalence in Theorem \ref{Thm-IntegralSimpson} (3), we have a natural morphism
       \[\HIG(\calH,\theta_{\calH})\to \rR\nu_*\calL\]
       with cofiber killed by $(\rho(\zeta_p-1))^{\max\{d+1,2(d-1)\}}$.
   \end{cor}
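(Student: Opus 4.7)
The map is built from the Higgs resolution of Proposition~\ref{Prop-PeriodSheaf}. The inclusion $\OXp = \ker\Theta \hookrightarrow \calO\widehat\bC_{\pd}^+$, combined with $\Theta(1)=0$, extends by tensoring with the locally free $\calH$ to an inclusion of complexes on $X_\proet$:
\[
\nu^{-1}\HIG(\calH,\theta_\calH)\hookrightarrow \HIG(\calH\otimes_{\calO_\frakX}\calO\widehat\bC_{\pd}^+,\Theta_\calH).
\]
Applying $\rR\nu_*$ and precomposing with the adjunction unit $\HIG(\calH,\theta_\calH)\to\rR\nu_*\nu^{-1}\HIG(\calH,\theta_\calH)$ produces a natural map $\HIG(\calH,\theta_\calH)\to\rR\nu_*\HIG(\calH\otimes\calO\widehat\bC_{\pd}^+,\Theta_\calH)$. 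By Theorem~\ref{Thm-IntegralSimpson}(3) we identify the interior with $(\calL\otimes_\OXp\calO\widehat\bC_{\pd}^+,\id_\calL\otimes\Theta)$, and tensoring the resolution of Proposition~\ref{Prop-PeriodSheaf} with $\calL$ (exactness uses the local freeness of $\calO\widehat\bC_{\pd}^+$ over $\OXp$ from Corollary~\ref{Cor-LocalPeriodSheaf}) gives $\HIG(\calL\otimes\calO\widehat\bC_{\pd}^+,\Theta_\calL)\simeq\calL$ on $X_\proet$. Thus the target is canonically $\rR\nu_*\calL$, yielding the desired natural morphism, which is visibly functorial in $\calL$.

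To bound the cofiber it suffices to work \'etale-locally: by Remark~\ref{Rmk-Small} assume $\frakX=\Spf R^+$ small affine as in Convention~\ref{Convention-small}, with Galois cover $U_\infty\to U$ of group $\Gamma\cong\bZ_p^d$. Then $\rR\nu_*$ becomes $\rR\Gamma(\Gamma,-)$ on $U_\infty$-sections, and Corollary~\ref{Cor-LocalPeriodSheaf} gives $\calO\widehat\bC_{\pd}^+(U_\infty)=\widehat R_\infty^+[\rho Y_1,\ldots,\rho Y_d]^\wedge_\pd$. The section of $\calL$ is a log-nilpotent $\Gamma$-representation $V$ over $R^+$ (Definition~\ref{Dfn-LogNilRep}) corresponding to $(V,\sum_i\Theta_i\otimes\frac{\rd\log T_i}{t})$ via $\gamma_i=\exp(-(\zeta_p-1)\Theta_i)$; the $a$-smallness with $a\geq\tfrac{1}{p-1}$ forces each $\Theta_i$ to be $\rho$-divisible by a topologically nilpotent matrix. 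The cofiber of the map at the complex level is the quotient Higgs complex whose $i$-th term is $V\otimes(\widehat R_\infty^+[\rho\underline Y]^\wedge_\pd/\widehat R_\infty^+)\otimes\rho^i\widehat\Omega^i_{R^+}(-i)$, and decomposing via the $\widehat R_\infty^+$-basis into weight summands indexed by $\underline\alpha\in(\bZ[\tfrac{1}{p}]\cap[0,1))^d$ reduces each piece to modules of the $M_{\underline\alpha}(V)$-type of Section~\ref{SSec-Gamma-cohomology of LogNilRep}.

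By Proposition~\ref{Prop-CohomologySummary}, each weight summand's $\Gamma$-cohomology is killed either by $(\zeta^{\underline\alpha}-1)\mid\rho(\zeta_p-1)$ (when $\underline\alpha\neq 0$) or, via Lemma~\ref{Lem-H^1-small case} together with Corollary~\ref{Cor-Explicit H^0} applied to the $\rho$-divisible $\Theta_i$'s, by $\rho(\zeta_p-1)$ (when $\underline\alpha=0$ but the $Y$-degree is positive). Aggregating these bounds across the $d$-fold Koszul iteration producing $\rR\Gamma(\bZ_p^d,-)$ from $\rR\Gamma(\bZ_p,-)$ and across the $(d+1)$-term Higgs complex yields the exponent $\max\{d+1,2(d-1)\}$: the $d+1$ accounts for the Higgs length (each of the $d+1$ graded pieces of the cofiber contributing one $\rho(\zeta_p-1)$ factor), while the $2(d-1)$ reflects the doubled torsion arising from chaining through the $d-1$ non-initial Koszul directions of the $\Gamma$-cohomology spectral sequence. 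The main technical obstacle is this combinatorial bookkeeping: a naive summation of per-summand bounds would overshoot the sharp exponent, so one must judiciously organize the mapping cones of the spectral-sequence filtrations (and exploit the precise vanishing furnished by Proposition~\ref{Prop-CohomologySummary}) to ensure the $\rho(\zeta_p-1)$-torsion does not compound beyond $\max\{d+1,2(d-1)\}$.
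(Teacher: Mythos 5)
Your construction of the morphism is essentially the paper's: the composite through the adjunction unit factors through $\nu_*\HIG(\calL\otimes_{\OXp}\calO\widehat\bC_{\pd}^+,\Theta_\calL)\cong\HIG(\calH,\theta_\calH)$ (by Theorem~\ref{Thm-IntegralSimpson}(1)), so the two constructions of $\HIG(\calH,\theta_\calH)\to\rR\nu_*\calL$ coincide. The gap is in the torsion bound. You explicitly defer the decisive step (``the main technical obstacle is this combinatorial bookkeeping\ldots one must judiciously organize the mapping cones\ldots''), but producing the exponent $\max\{d+1,2(d-1)\}$ \emph{is} that step; asserting that the aggregation ``yields'' it is not an argument, and the heuristic provenance you offer for the two numbers does not match the mechanism that actually works. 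There is also a concrete error in your local description of the cofiber: the sheaf-level quotient has $i$-th term $\calH\otimes_{\calO_{\frakX}}(\calO\widehat\bC_{\pd}^+/\nu^{-1}\calO_{\frakX})\otimes\rho^i\widehat\Omega^i_{\frakX}(-i)$, whose $U_\infty$-sections are $H\otimes_{R^+}\bigl(\widehat R_{\infty}^+[\rho Y_1,\dots,\rho Y_d]^{\wedge}_{\pd}/R^+\bigr)\otimes\rho^i\widehat\Omega^i_{R^+}(-i)$ --- quotient by $R^+$ and tensor over $R^+$ with $H$, not quotient by $\widehat R_{\infty}^+$ and tensor with $V$ as you wrote --- and even then this only captures one of the two triangles that build the cone of the composite you constructed.

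The argument that actually lands the bound is a short global spectral-sequence computation your sketch never enters. Consider $'E_1^{i,j}=\rR^j\nu_*(\calL\otimes_{\OXp}\calO\widehat\bC_{\pd}^+)\otimes_{\calO_{\frakX}}\rho^i\widehat\Omega^i_{\frakX}(-i)\Rightarrow\rR^{i+j}\nu_*\HIG(\calL\otimes_{\OXp}\calO\widehat\bC_{\pd}^+,\Theta_\calL)$. Theorem~\ref{Thm-IntegralSimpson}(1) forces every row with $j\geq 1$ to be killed by $\rho(\zeta_p-1)$. From this one bounds the kernel $K^i$ of the edge map $\rH^i(\HIG(\calH,\theta_\calH))= {}'E_2^{i,0}\to\rR^i\nu_*\HIG(\calL\otimes\calO\widehat\bC_{\pd}^+,\Theta_\calL)$ by $(\rho(\zeta_p-1))^{i-1}$ (each of the $i-1$ differentials into $'E_r^{i,0}$ for $2\leq r\leq i$ contributes one factor), and the cokernel $C^i$ by $(\rho(\zeta_p-1))^i$ for $1\leq i\leq d$ and $(\rho(\zeta_p-1))^{d+1}$ for $i>d$, via the abutment filtration. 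Since $\rH^i$ of the cone sits in $0\to C^i\to\rH^i(\mathrm{Cone})\to K^{i+1}\to 0$, the worst case over $i$ gives exactly $\max\{2(d-1),d+1\}$. Your weight decomposition of $\widehat S_{\pd}^+$ would at most re-prove, locally, that the $j\geq 1$ rows are $\rho(\zeta_p-1)$-torsion; to reach the stated exponent you still need to run this filtration argument, and that is missing from your proposal.
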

   \begin{proof}
       As $(\calO\widehat \bC_{\pd}^+,\Theta)$ is a resolution of $\OXp$, we get a quasi-isomorphism $\calL\simeq\HIG(\calL\otimes_{\OXp}\calO\widehat \bC_{\pd}^+,\Theta_{\calL})$ and a fortiori a quasi-isomorphism
       \[\rR\nu_*\calL\simeq\rR\nu_*\HIG(\calL\otimes_{\OXp}\calO\widehat \bC_{\pd}^+,\Theta_{\calL}).\]
       Using Theorem \ref{Thm-IntegralSimpson} (3), we see that 
       \[\HIG(\calH,\theta_{\calH})\simeq \nu_*\HIG(\calL\otimes_{\OXp}\calO\widehat \bC_{\pd}^+,\Theta_{\calL}),\]
    which induces a natural morphism 
       \[\HIG(\calH,\theta_{\calH})\to \rR\nu_*\HIG(\calL\otimes_{\OXp}\calO\widehat \bC_{\pd}^+,\Theta_{\calL})\simeq\rR\nu_*\calL\]
       as desired. It remains to show its cofiber is killed by $(\rho(\zeta_p-1))^{\max\{d+1,2(d-1)\}}$

       Note that by the following spectral sequence
\[E_1^{i,j}=R^j\nu_*(\calL\otimes_{\OXp}\calO\widehat \bC^+_{\pd})\otimes_{\calO_{\frakX}}\rho^i\widehat \Omega^i_{\frakX}(-i)\Rightarrow R^{i+j}\nu_*\HIG(\calL\otimes_{\OXp}\calO\widehat \bC_{\pd}^+,\Theta_{\calL}),\]
we know that $\rR\nu_*\HIG(\calL\otimes_{\OXp}\calO\widehat \bC_{\pd}^+,\Theta_{\calL})$ is concentrated in degree $[0,2d]$, which implies the cofiber of the morphism $\HIG(\calH,\theta_{\calH})\to \rR\nu_*\calL$ is also concentrated in degree $[0,2d]$. Moreover for  each $j\geq 1$, we have $E_1^{i,j} = \rR^j\nu_*(\calL\otimes_{\OXp}\calO\widehat \bC^+_{\pd})\otimes_{\calO_{\frakX}}\rho^i\widehat \Omega^i_{\frakX}(-i)$ is killed by $\rho(\zeta_p-1)$. 
Thus for any $r\geq 1$, we have $E_r^{i,j}$ is killed by $\rho(\zeta_p-1)$ for any $j\geq 1$ as well because it is a sub-quotient of $E_1^{i,j}$.
Then by chasing the spectral sequence, we can show that this cofiber is killed by $(\rho(\zeta_p-1))^{\max\{d+1,2(d-1)\}}$. 

To see this, we first claim the kernel $K^i$ of the canonical map
\[\can^i:~E_2^{i,0}=\rH^i(\HIG(\calH,\theta_{\calH}))\to \rR^i\nu_*(\HIG(\calL\otimes_{\OXp}\calO\widehat \bC^+_{\pd},\Theta_{\calL}))\]
is killed by $(\rho(\zeta_p-1))^{i-1}$ when $1\leq i\leq d$ and is $0$ for other $i>d$. The latter is trivial as $\HIG(\calH,\theta_{\calH})$ is concentrated in degree $[0,d]$. For the former, we note that $E_{\infty}^{i,0} = ~E_{i+1}^{i,0}$ and the map $\can^i$ factors through the injection $E_{\infty}^{i,0}\subset \rR^i\nu_*(\HIG(\calL\otimes_{\OXp}\calO\widehat \bC^+_{\pd},\Theta_{\calL}))$. So one can conclude the claim by noting that $\Ker(E_{r}^{i,0}\to~E_{r+1}^{i,0}) = \Ima(E_r^{i-r,r-1}\to E_{r}^{i,0})$ is killed by $\rho (\zeta_p-1)$ for any $2\leq r\leq i$ (as it is a quotient of $E_r^{i-r,r-1}$ and $r-1\geq 1$).

We claim the cokernel $C^i$ of the map $\can^i$ is killed by $(\rho(\zeta_p-1))^i$ when $1\leq i\leq d$ and by $(\rho(\zeta_p-1))^{d+1}$ when $i>d$: For the former, put $\rH^i:=\rR^i\nu_*(\HIG(\calL\otimes_{\OXp}\calO\widehat \bC^+_{\pd},\Theta_{\calL}))$ and then it is equipped with a filtration
\[0 = \Fil^{i+1}(\rH^i)\subset E_{\infty}^{i,0} = 
\Fil^i(\rH^i)\subset\cdots\subset\Fil^0(\rH^i) = \rH^i\]
such that $\Fil^{j}(\rH^i)/\Fil^{j+1}(\rH^i) = E_{\infty}^{i,i-j}$ for any $0\leq j\leq i$. Noting that $E_{\infty}^{i,i-j}$ is killed by $\rho(\zeta_p-1)$ for any $0\leq j<i$, we have $\rho(\zeta_p-1)\Fil^j(\rH^i)\subset \Fil^{j+1}(\rH^i)$ for any $0\leq j<i$. By iteration, we conclude that $(\rho(\zeta_p-1))^i\rH^0\subset E_{\infty}^{i,0}$ which is equivalent to that $C^i$ is killed by $(\rho(\zeta_p-1))^i$ (as $\can^i$ factors through the injection $E_{\infty}^{i,0}\to\rH^i$). For the latter, we can conclude from the same argument above by furthermore noting that $E_{1}^{i,j} = 0$ and a fortiori $E_{\infty}^{i,j} = 0$ for any $i>d$.

Putting $K^i$ and $C^i$ together, we finally conclude that the cofiber of the natural morphism in the statement is killed by $(\rho(\zeta_p-1))^{\max\{2(d-1),d+1\}}$.
   \end{proof}

   \begin{rmk}\label{Rmk-Warnning}
       As $\rL\eta$-operator is not exact, one can not use the usual spectral sequence argument to compare $\HIG(\calH,\theta_{\calH})$ with $\rL\eta_{\rho(\zeta_p-1)}\rR\nu_*\calL \simeq \rL\eta_{\rho(\zeta_p-1)}\rR\nu_*(\HIG(\calL\otimes_{\OXp}\calO\widehat \bC^+_{\pd},\Theta_{\calL}))$ directly. We will discuss the comparison between $\HIG(\calH,\theta_{\calH})$ and $\rL\eta_{\rho(\zeta_p-1)}\rR\nu_*\calL$ in the next section \S \ref{Sec-PerfectComplex}. 
   \end{rmk}
   \begin{rmk}
       Thanks to Remark \ref{Rmk-Small} and Zariski--\'etale comparison theorem, one can replace the \'etale site $\frakX_{\et}$ by the Zariski site $\frakX_{\rm Zar}$ and all results in \S\ref{Sec-MainResult} and \S\ref{Sec-PerfectComplex} are still true.
   \end{rmk}
   \begin{rmk}
     For a semi-stable formal scheme $\frakX$ over $\calO_C$ in the sense of \cite{CK19} which admits a lifting over $\rA_2$ as log schemes, in \cite{SW}, the second author together with Mao Sheng constructed an analogue of $\calE_{\rho}^+$. By applying our approach, one may see that all results in this paper also hold true in the case of \cite{SW}.
   \end{rmk}

 \subsection{Local $p$-adic Simpson correspondence}\label{SSec-Local Simpson}
   This subsection is devoted to giving a local version of Theorem \ref{Thm-IntegralSimpson}.
   We keep notations as in Convention \ref{Convention-small}.
   \begin{dfn}\label{Dfn-LocalSmallRep}
       Let $A\in\{R^+, \widehat R_{\infty}^+\}$ and $a\in \bQ_{>\frac{1}{p-1}}$. By an {\bf $a$-small $A$-representation of $\Gamma$ of rank $r$}, we mean a finite free $A$-module $V$ of rank $r$ together with a continuous semi-linear action of $\Gamma$ such that there exists a $\Gamma$-equivariant isomorphism of $A$-modules
       \[V/\rho p^aV\cong (A/\rho p^aA)^r.\]
       We denote by $\Rep^{\geq a}_A(\Gamma)$ the category of $a$-small $A$-representations of $\Gamma$.
   \end{dfn}
   \begin{rmk}\label{Rmk-LocalSmallRep}
       Assume $b\in\bQ_{>\frac{1}{p-1}}$. Let $V\in \Rep^{\geq b}_{R^+}(\Gamma)$ and  $v_1,\dots, v_r$ form an $R^+$-basis of $V$. Let $P_i$ be the matrix of $\gamma_i\in \Gamma$ on $V$ with respect to the given basis. Then we know that $P_i$'s commute with each other and belong to $I+\rho p^b\rM_r(R^+)$. In particular, there are commuting matrices $\Theta_i$'s such that for any $1\leq i\leq d$,
           \[P_i = \exp(-(\zeta_p-1)\Theta_i).\]
           As $P_i\in I+\rho p^bM_r(R^+)$, we know that $\Theta_i\in \rho p^{b-\frac{1}{p-1}}\rM_r(R^+)$.
    \end{rmk}
    \begin{exam}\label{Exam-LocalSmallRep}
        Let $\calL$ be a $p$-complete $p$-torison free $\widehat \calO_U^+$-module such that there exists an almost isomorphism
        \[(\calL/\rho p^b)^{\rm al}\cong((\widehat \calO_U^+/\rho p^b)^r)^{\rm al}\]
        for some integer $r\geq 0$ and rational $b\in\bQ_{>\frac{1}{p-1}}$.
        Then \cite[Lem. 5.9]{Wang} implies that for any $\frac{1}{p-1}<a<b$, $\calL(U_{\infty})$ is an $a$-small $\widehat R_{\infty}^+$-representation of $\Gamma$ of rank $r$. However, this is not necessarily true if $a=b$. This justifies Definition \ref{Dfn-SmallRep} in the global case.
    \end{exam}
      
   The following decompletion theorem plays an important role in our theory.
   \begin{thm}\label{Thm-Decompletion}
       For any $a\in\bQ_{>\frac{1}{p-1}}$, the base-change along $R^+\to\widehat R_{\infty}^+$ induces an equivalence of categories
       \[\Rep^{\geq a}_{R^+}(\Gamma)\to\Rep^{\geq a}_{\widehat R_{\infty}^+}(\Gamma),\]
       which preserves tensor products and dualities. Moreover, for any $M\in \Rep^{\geq a}_{R^+}(\Gamma)$ with corresponding $M_{\infty}\in \Rep^{\geq a}_{\widehat R_{\infty}^+}(\Gamma)$, the natural inclusion $M\hookrightarrow M_{\infty}$ identifies $\rR\Gamma(\Gamma,M)$ with a direct summand of $\rR\Gamma(\Gamma,M_{\infty})$ whose complement is concentrated in degree $\geq 1$ and killed by $\zeta_p-1$.
   \end{thm}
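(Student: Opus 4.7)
The plan is to combine the cohomological computations of Section 3 with a classical Sen-theoretic iteration argument. The starting point is the $\Gamma$-equivariant decomposition from Convention \ref{Convention-small},
\[\widehat R_{\infty}^+ = R^+ \oplus N, \qquad N := \widehat\bigoplus_{\underline{\alpha} \in (\bZ[1/p]\cap[0,1))^d \setminus \{0\}} R^+ \cdot T_1^{\alpha_1}\cdots T_d^{\alpha_d},\]
which, for any $M \in \Rep^{\geq a}(R^+)$, induces a $\Gamma$-equivariant splitting $M_\infty = M \oplus (M \otimes_{R^+} N)$ and hence a direct sum decomposition $\rR\Gamma(\Gamma, M_\infty) = \rR\Gamma(\Gamma, M) \oplus \rR\Gamma(\Gamma, M \otimes_{R^+} N)$. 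The cohomological claim of the theorem therefore reduces to showing that $\rR\Gamma(\Gamma, M \otimes_{R^+} N)$ vanishes in degree $0$ and is killed by $\zeta_p - 1$ in all higher degrees.

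The first main step is this cohomological vanishing on the complement. Decompose $M \otimes_{R^+} N = \widehat\bigoplus_{\underline{\alpha} \neq 0} M \cdot T^{\underline{\alpha}}$; for each summand, pick an index $i$ with $\alpha_i \neq 0$. On $M \cdot T^{\underline{\alpha}}$ the operator $\gamma_i$ acts as $\zeta^{\alpha_i} P_i$, where $P_i \in I + \rho p^a \rM_r(R^+)$ is the matrix of $\gamma_i|_M$. Since $\nu_p(\rho) + a > \frac{1}{p-1} \geq \nu_p(\zeta^{\alpha_i} - 1)$, one may factor
\[\gamma_i - 1 \;=\; (\zeta^{\alpha_i} - 1)\bigl(I + (\zeta^{\alpha_i} - 1)^{-1}\zeta^{\alpha_i}(P_i - I)\bigr),\]
with second factor an automorphism of $M \cdot T^{\underline{\alpha}}$. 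Hence $\rR\Gamma(\langle\gamma_i\rangle, M \cdot T^{\underline{\alpha}})$ has $\rH^0 = 0$ and $\rH^1$ killed by $\zeta^{\alpha_i} - 1$. The spectral sequence for $\langle\gamma_i\rangle \subset \Gamma$ then collapses (its $E_2^{p,0}$-row vanishes), yielding $\rR\Gamma(\Gamma, M \cdot T^{\underline{\alpha}})$ concentrated in degree $\geq 1$ and killed by $\zeta^{\alpha_i} - 1$, hence by $\zeta_p - 1$. Applied to the internal Hom $\Hom_{R^+}(M_1, M_2)$ (again $a$-small), the identity $\rH^0(\Gamma, -) = \rH^0(\Gamma, - \otimes_{R^+} \widehat R_{\infty}^+)$ proves full faithfulness of the base-change functor.

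For essential surjectivity, given $V \in \Rep^{\geq a}(\widehat R_{\infty}^+)$ with basis matrices $P_i \in I + \rho p^a \rM_r(\widehat R_{\infty}^+)$, one constructs a $\Gamma$-stable $R^+$-lattice $M \subset V$ via Sen iteration. Decomposing $P_i = P_i^{(0)} + P_i^{(+)}$ along $\widehat R_{\infty}^+ = R^+ \oplus N$, the commutativity $\gamma_i \gamma_j = \gamma_j \gamma_i$ yields an approximate cocycle condition $(\gamma_i - 1)(P_j^{(+)}) \equiv (\gamma_j - 1)(P_i^{(+)})$ on $\rM_r(N)$. Combined with the near-acyclicity of the Koszul complex on $\rM_r(N)$ established above, this furnishes a common perturbation $X \in \rM_r(N)$ with $(\gamma_i - 1)(X) \equiv -P_i^{(+)}$ for all $i$ and $\nu_p(X) \geq \nu_p(\rho) + a - \frac{1}{p-1} \geq a$. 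The basis change $U = I + X$ transforms $P_i$ into $U^{-1} P_i \gamma_i(U) = P_i^{(0)} + O((P_i - I) \cdot X)$, whose $N$-component has valuation at least $2\nu_p(\rho) + 2a - \frac{1}{p-1}$, a strict improvement on $\nu_p(\rho) + a$ since $a > \frac{1}{p-1}$. Iterating yields $p$-adic geometric convergence to the desired descent, and tensor products and dualities are preserved formally by uniqueness of descent applied to internal Homs.

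The main obstacle lies in this Sen iteration. Each step loses a factor of $|\zeta_p - 1|^{-1}$ when inverting $\gamma_i - 1$ on the complement, and the strict inequality $a > \frac{1}{p-1}$ combined with $\nu_p(\rho) \geq \frac{1}{p-1}$ is precisely what ensures a uniformly positive gain at every iteration. A secondary technical point is the need to handle the $d$ commuting operators $\gamma_i - 1$ simultaneously rather than serially; this is possible only because the cocycle condition from commutativity of $\Gamma$ ensures compatibility of the perturbations for different $i$, allowing a single common $X$ to be chosen at each stage.
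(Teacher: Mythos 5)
Your proposal is correct, but it is worth noting that the paper's own ``proof'' of this statement is a single citation: it observes that $a$-small $A$-representations are precisely the $(a+\nu_p(\rho))$-trivial $A$-representations in the sense of [Wang, Def.~3.3], and then invokes [Wang, Thm.~3.4 and Prop.~3.5]. Your self-contained argument is, in effect, a reconstruction of the proof of those cited results: you identify the $\Gamma$-equivariant splitting $\widehat R_\infty^+ = R^+ \oplus N$ and compute $\rR\Gamma(\Gamma, M\otimes_{R^+}N)$ summand-by-summand (vanishing in degree $0$, killed by $\zeta_p-1$ in higher degree via the factorization of $\gamma_i-1$ through $\zeta^{\alpha_i}-1$), which gives both the cohomological decomposition and full faithfulness via internal Hom; essential surjectivity then comes from the classical Sen iteration with an approximate cocycle from commutativity of $\Gamma$. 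This is the standard decompletion mechanism that appears throughout the literature (Liu--Zhu, Bhatt--Morrow--Scholze, [Wang]), so you have not found a genuinely new route, only made the cited proof explicit.

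Two small quantitative remarks. First, your claimed gain ``at least $2\nu_p(\rho)+2a-\frac{1}{p-1}$'' only accounts for the $Q_i \cdot X$ cross-terms; the quadratic error $X(\gamma_i(X)-X)$ and the cocycle error divided by $\zeta^{\alpha_i}-1$ give the slightly weaker bound $2(\nu_p(\rho)+a)-\frac{2}{p-1}$, which is still an improvement exactly because $\nu_p(\rho)+a>\frac{2}{p-1}$; so the iteration still converges geometrically. Second, passing from the summand-wise statement to the $p$-complete topological direct sum $M\otimes N = \widehat\bigoplus_{\underline\alpha\neq 0} M T^{\underline\alpha}$ requires the uniform bound $\zeta_p-1$ (independent of $\underline\alpha$), which you have; it is worth saying explicitly that the preimage under $\gamma_i-1$ of a convergent sequence is again convergent because $\nu_p\bigl((\zeta_p-1)/(\zeta^{\alpha_i}-1)\bigr)\geq 0$ uniformly in $\underline\alpha$. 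Neither point is a gap, merely places where the bookkeeping should be stated carefully.
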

   \begin{proof}
       Note that for $A\in\{R^+,\widehat R_{\infty}^+\}$, $a$-small $A$-representations of $\Gamma$ are exactly $(a+\nu_p(\rho))$-trivial $A$-representations of $\Gamma$ in the sense of \cite[Def. 3.3]{Wang}. Then \cite[Thm. 3.4 and Prop. 3.5]{Wang} apply.
   \end{proof}

   Before we state and prove our main theorem in this subsection, we need the following definition.
   \begin{dfn}\label{Dfn-LocalHiggs}
       Assume $a\in\bQ_{>\frac{1}{p-1}}$.
       By a {\bf Higgs module over $R^+$ of rank $r$}, we mean a finite free $R^+$-module $H$ of rank $r$ together with an $R^+$-linear morphism $\theta:H\to H\otimes_{R^+}\widehat \Omega_{R^+}^1(-1)$ such that $\theta\wedge\theta = 0$. We denote by $(H,\theta_H)$ the Higgs complex by $(H,\theta_H)$. A Higgs module $(H,\theta)$ is called {\bf $a$-small}, if $\theta$ is divided by $\rho p^{a-\frac{1}{p-1}}$; that is, 
       \[\Ima(\theta)\subset p^{a-\frac{1}{p-1}}H\otimes_{R^+}\rho \widehat \Omega^1_{R^+}(-1).\]
       Denote by ${\rm HIG}^{\geq a}(R^+)$ the category of $a$-small Higgs modules over $R^+$.
   \end{dfn}
   \begin{rmk}\label{Rmk-LocalHiggs}
       The definition of $a$-small Higgs modules is slightly different from that in \cite[Def. 4.2]{Wang}. Indeed, the functor $(H,\theta_H)\mapsto(H,(\zeta_p-1)\theta_H)$ induces an equivalence from the category ${\rm HIG}^{\geq a}(R^+)$ of $a$-small Higgs modules over $R^+$ defined as above to the category ${\rm HIG}^{\geq a}_W(R^+)$ of $a$-small Higgs modules over $R^+$ in \textit{loc.cit.}.
   \end{rmk}
   \begin{construction}\label{Construction-LocalSimpson}
       Let $\widehat S_{\pd}^+$ be as in Notation \ref{Notation-LocalPeriodSheaf} with the induced Higgs field $\Theta$.
       \begin{enumerate}
           \item[(1)] For any $\widehat R_{\infty}^+$-representation $V$ of $\Gamma$, put $\Theta_V = \id_V\otimes\Theta$ and then $\Theta_V\wedge\Theta_V=0$. Denote by $\HIG(V\otimes_{\widehat R_{\infty}^+}\widehat S_{\pd}^+,\Theta_V)$ the Higgs complex induced by $(V\otimes_{\widehat R_{\infty}^+}\widehat S_{\pd}^+,\Theta_V)$.

           \item[(2)] For any Higgs module $(H,\theta_H)$ over $R^+$, put $\Theta_H = \theta_H\otimes\id_{\widehat S_{\pd}^+}+\id_H\otimes\Theta$ and then $\Theta_H\wedge\Theta_H = 0$. Denote by $\HIG(H\otimes_{R^+}\widehat S_{\pd}^+,\Theta_H)$ the Higgs complex induced by $(H\otimes_{R^+}\widehat S_{\pd}^+,\Theta_H)$.
       \end{enumerate}
   \end{construction}
   
   \begin{thm}\label{Thm-LocalSimpson}
       Assume $a>\frac{1}{p-1}$. Then the functor $V\mapsto ((V\otimes_{\widehat R_{\infty}^+}\widehat S_{\pd}^+)^{\Gamma},\Theta_V\big|_{(V\otimes_{\widehat R_{\infty}^+}\widehat S_{\pd}^+)^{\Gamma}})$ induces an equivalence of categories
       \[\Rep^{\geq a}_{\widehat R_{\infty}^+}(\Gamma)\xrightarrow{\simeq}\HIG^{\geq a}(R^+)\]
       with a quasi-inverse given by the functor $(H,\theta_H)\mapsto (H\otimes_{R^+}\widehat S_{\pd}^+)^{\Theta_H = 0}$, which preserves tensor products and dualities. Moreover, if $V$ and $(H,\theta_H)$ correspond to each other under the above equivalence, then we have an isomorphism of Higgs complex,
       \[\HIG(V\otimes_{\widehat R_{\infty}^+}\widehat S_{\pd}^+,\Theta_V)\cong\HIG(H\otimes_{R^+}\widehat S_{\pd}^+,\Theta_H)\]
       and a quasi-isomorphism
       \begin{equation}\label{Equ-LocalSimpson-I}
           H=(V\otimes_{\widehat R_{\infty}^+}S_{\pd}^+)^{\Gamma}\simeq \rL\eta_{\rho(\zeta_p-1)}\rR\Gamma(\Gamma,V\otimes_{\widehat R_{\infty}^+}\widehat S_{\pd}^+).
       \end{equation}
   \end{thm}
   \begin{proof}
       We may argue as in the proof of \cite[Thm. 4.3]{Wang}. Let $V_0$ be the $a$-small $R^+$-representation corresponding to $V$ in the sense of Theorem \ref{Thm-Decompletion}. Then we have a $\Gamma$-equivariant isomorphism 
       \[V_0\otimes_{R^+}\widehat S_{\pd}^+ = V\otimes_{\widehat R_{\infty}^+}\widehat S_{\pd}^+.\]
       Fix an $R^+$-basis $v_1,\dots,v_r$ of $V_0$ and let $\Theta_i$'s be as given in Remark \ref{Rmk-LocalSmallRep} (for $V = V_0$ and $b=a$). Then $\Theta_i$'s belong to $\rho p^{a-\frac{1}{p-1}}\rM_r(R^+)$. For any $0\leq i\leq d$, let $A_i=R^+[\rho Y_1,\dots,\rho Y_i]^{\wedge}_{\pd}$. Then there exists a $\Gamma$-equivariant isomorphism which follows from Equation (\ref{R-decomp}) and Notation \ref{Notation-LocalPeriodSheaf}
       \begin{equation}\widehat S_{\pd}^+\cong \widehat {\bigoplus_{\alpha_1,\dots,\alpha_d\in\bN[1/p]\cap[0,1)}}A_dT_1^{\alpha_1}\cdots T_d^{\alpha_d}.\end{equation}
       
       We first claim that the natural inclusion $V_0\otimes_{R^+}A_d\to V_0\otimes_{R^+}\widehat S_{\pd}^+=V\otimes_{\widehat R_{\infty}^+}\widehat S_{\pd}^+$ identifies $\rR\Gamma(\Gamma,V_0\otimes_{R^+}A_d)$ with a direct summand of $\rR\Gamma(\Gamma,V\otimes_{\widehat R_{\infty}^+}\widehat S_{\pd}^+)$ whose complement is concentrated in positive degrees and killed by $(\zeta_p-1)$. Indeed, for any $\underline \alpha = (\alpha_1,\dots,\alpha_d)\in(\bN[1/p]\cap[0,1))^d$ with $\alpha_i\neq 0$ for some $i$, we deduce from Proposition \ref{Prop-CohomologySummary} (1)  that $\rR\Gamma(\Zp\gamma_i,V_0\otimes_{R^+}A_dT_1^{\alpha_1}\cdots T_d^{\alpha_d})$ is concentrated in positive degrees and killed by $(\zeta_p-1)$. By Hochschild--Serre spectral sequence, we see that $\rR\Gamma(\Gamma,V_0\otimes_{R^+}A_dT_1^{\alpha_1}\cdots T_d^{\alpha_d})$ is also concentrated in positive degrees and killed by $(\zeta_p-1)$. Now write $\widehat S_{\pd}^+=A_d\oplus \widehat S_{\pd}^{+,non-int}$, where 
       \[
       \widehat S_{\pd}^{+,not-int}=\widehat {\bigoplus_{\alpha_1,\dots,\alpha_d\in\bN[1/p]\cap[0,1)\atop\text{not all zero}}}A_dT_1^{\alpha_1}\cdots T_d^{\alpha_d}.\] 
       Then
       \[
       \rR\Gamma(\Gamma,V_0\otimes_{R^+}\widehat S_{\pd}^{+,not-int})=\widehat{\bigoplus_{\alpha_1,\dots,\alpha_d\in\bN[1/p]\cap[0,1)\atop\text{not all zero}}}\rR\Gamma(\Gamma,V_0\otimes_{R^+}A_dT_1^{\alpha_1}\cdots T_d^{\alpha_d}).
       \]
       As $\rR\Gamma(\Gamma,V\otimes_{R^+}A_dT_1^{\alpha_1}\cdots T_d^{\alpha_d})$ is killed by $(\zeta_p-1)$, which implies it is already derived $p$-complete, we see 
       \[
       \rR\Gamma(\Gamma,\widehat S_{\pd}^{+,not-int})=\bigoplus_{\alpha_1,\dots,\alpha_d\in\bN[1/p]\cap[0,1)\atop\text{not all zero}}\rR\Gamma(\Gamma,V_0\otimes_{R^+}A_dT_1^{\alpha_1}\cdots T_d^{\alpha_d}).\]

       Noting that $\rR\Gamma(\Gamma,V_0\otimes_{R^+}\widehat S_{\pd}^{+})\simeq \rR\Gamma(\Gamma,V_0\otimes_{R^+}\widehat S_{\pd}^{+,not-int})\oplus \rR\Gamma(\Gamma,V_0\otimes_{R^+}A_d)$, the claim then follows.

       Now, we are going to prove that
       \begin{equation}\label{Equ-ExplicitCorrespondence}
         (V_0\otimes_{R^+}A_d)^{\Gamma}= \exp(\Theta_1Y_1+\cdots+\Theta_dY_d)V_0,
       \end{equation}
       where the right hand side means the image of $V_0$ in $V_0\otimes_{R^+}A_d$ under the action of the operator 
       \[\exp(\Theta_1Y_1+\cdots+\Theta_dY_d):=\sum_{n_1,\cdots,n_d\geq 0}(\rho^{-1}\Theta_1)^{n_1}\cdots(\rho^{-1} \Theta_d)^{n_d}(\rho Y_1)^{[n_1]}\cdots (\rho Y_d)^{[n_d]}\in \GL_r(A_d)\]
       and that it is a finite free $R^+$-module of rank $r$ such that
       \[(V_0\otimes_{R^+}A_d)^{\Gamma}\otimes_{R^+}A_d\cong V_0\otimes_{R^+}A_d.\] 
       
       Indeed, note that $A_i = A_{i-1}[\rho Y_i]_{\pd}^{\wedge}$ for any $i\geq 1$. Then by Proposition \ref{Prop-CohomologySummary} (2), we have
       \[\begin{split}  
         (V_0\otimes_{R^+}A_d)^{\Gamma} & = (V_0\otimes_{R^+}A_d)^{\gamma_1=\cdots=\gamma_d=1}\\
         &= ((V_0\otimes_{R^+}A_{d-1}[\rho Y_d]^{\wedge}_{\pd})^{\gamma_d=1})^{\gamma_1=\cdots=\gamma_{d-1}=1}\\
         & =(\exp(\Theta_dY_d)V_0\otimes_{R^+}A_{d-1}))^{\gamma_1=\cdots=\gamma_{d-1}=1}\\
         & = \cdots\\
         & = \exp(\Theta_1Y_1+\cdots+\Theta_dY_d)V_0.
       \end{split}\]
       As $\exp(\Theta_1Y_1+\cdots+\Theta_dY_d)\in\GL_r(A_d)$, we have $(V_0\otimes_{R^+}A_d)^{\Gamma}\otimes_{R^+}A_d\cong V_0\otimes_{R^+}A_d$ as desired. 

       As a consequence, we know that 
       \[\rR\Gamma(\Gamma,V_0\otimes_{R^+}A_d)\simeq (V_0\otimes_{R^+}A_d)^{\Gamma}\otimes_{R^+}\rR\Gamma(\Gamma,A_d).\]
       By Lemma \ref{Lem-Kunneth}, we have 
       \[(V_0\otimes_{R^+}A_d)^{\Gamma}\simeq \rL\eta_{\rho(\zeta_p-1)}\rR\Gamma(\Gamma,V_0\otimes_{R^+}A_d).\]
       Combining this with the first claim above, we get the quasi-isomorphism (\ref{Equ-LocalSimpson-I})
       \begin{equation*}
           (V_0\otimes_{R^+}A_d)^{\Gamma}\simeq \rL\eta_{\rho(\zeta_p-1)}\rR\Gamma(\Gamma,V\otimes_{\widehat R_{\infty}^+}\widehat S_{\pd}^+).
       \end{equation*}
       Recall by Corollary \ref{Cor-LocalPeriodSheaf} together with Notation \ref{Notation-LocalPeriodSheaf}, the Higgs field $\Theta:\widehat S_{\pd}^+\to \widehat S_{\pd}^+\otimes_{R^+}\widehat \Omega^1_{R^+}\{-1\}$ is given by 
       \[\Theta = \sum_{i=1}^d\frac{\partial}{\partial Y_i}\otimes\frac{\rd\log T_i}{t}:\widehat R_{\infty}^+[\rho Y_1,\dots,\rho Y_d]^{\wedge}_{\pd}\to\oplus_{i=1}^d(\widehat R_{\infty}^+[\rho Y_1,\dots,\rho Y_d]^{\wedge}_{\pd}\cdot\frac{\rho \dlog T_i}{t})\] 
       via the isomorphisms 
       \[\widehat S_{\pd}^+\cong \widehat R_{\infty}^+[\rho Y_1,\dots,\rho Y_d]^{\wedge}_{\pd} \text{ and } \widehat \Omega^1_{R^+}\{-1\}\cong \oplus_{i=1}^d(R^+\cdot\frac{\rho \dlog T_i}{t}).\]
       Then we know that
       \[\Theta_V\big|_{(V\otimes_{\widehat R_{\infty}^+}\widehat S_{\pd}^+)^{\Gamma}} = \sum_{i=1}^d\Theta_i\otimes\frac{\rd\log T_i}{t},\]
       which is divided by $\rho p^{a-\frac{1}{p-1}}$. Therefore $(H,\theta_H):=((V\otimes_{\widehat R_{\infty}^+}\widehat S_{\pd}^+)^{\Gamma},\Theta_V\big|_{(V\otimes_{\widehat R_{\infty}^+}\widehat S_{\pd}^+)^{\Gamma}})$ is an $a$-small Higgs module of rank $r$ over $R^+$ as desired. Moreover, we have
       \[H\otimes_{R^+}\widehat S_{\pd}^+ \cong (V\otimes_{\widehat R_{\infty}^+}\widehat S_{\pd}^+)^{\Gamma}\otimes_{R^+}A_d\otimes_{A_d}\widehat S_{\pd}^+ \cong V_0\otimes_{R^+}A_d\otimes_{A_d}\widehat S_{\pd}^+\cong V\otimes_{\widehat R_{\infty}^+}\widehat S_{\pd}^+.\]
       Via the above isomorphism, for any $v\in V$ and any $f\in \widehat S_{\pd}^+$, we have
       \[\begin{split}
           &\Theta_H(\exp(\Theta_1Y_1+\cdots+\Theta_dY_d)v\otimes f) \\
           =&\sum_{i=1}^d(\Theta_i \exp(\Theta_1Y_1+\cdots+\Theta_dY_d)v\otimes f+\exp(\Theta_1Y_1+\cdots+\Theta_dY_d)v\otimes \frac{\partial f}{\partial Y_i})\otimes\frac{\rd\log T_i}{t}\\
           =&\sum_{i=1}^d\frac{\partial}{\partial Y_i}(\exp(\Theta_1Y_1+\cdots+\Theta_dY_d)f)v\otimes\frac{\rd\log T_i}{t}\\
           =&\Theta_V(v\otimes\exp(\Theta_1Y_1+\cdots+\Theta_dY_d)f),
       \end{split}\]
       which implies the isomorphism of Higgs complexes
       \[\HIG(V\otimes_{\widehat R_{\infty}^+}\widehat S_{\pd}^+,\Theta_V)\cong\HIG(H\otimes_{R^+}\widehat S_{\pd}^+,\Theta_H).\]
       
       Therefore, the functor $V\mapsto ((V\otimes_{\widehat R_{\infty}^+}\widehat S_{\pd}^+)^{\Gamma},\Theta_V\big|_{(V\otimes_{\widehat R_{\infty}^+}\widehat S_{\pd}^+)^{\Gamma}})$ satisfies all desired conditions. A standard argument shows that this functor preserves tensor products and dualities.

       It remains to prove this functor is an equivalence. For this, we claim that for any $a$-small Higgs module $(H,\theta_H)$ of rank $r$ over $R^+$ with an $R^+$-basis $h_1,\dots,h_r$, if we write 
       \[\theta_H = \sum_{i=1}^d\Theta_i\otimes\frac{\rd\log T_i}{t}\]
       with respect to the given basis, then $V:=(H\otimes_{R^+}\widehat S_{\pd}^+)^{\Theta_H = 0}$ is a finite free $\widehat R_{\infty}^+$-module of rank $r$ and the matrix of $\gamma_i$ is $\exp(-(\zeta_p-1)\Theta_i)$ under a certain basis of $V$.
       
       Granting this, as $\rho p^{a-\frac{1}{p-1}}$ divides $\Theta_i$'s, we know $V$ is an $a$-small $\widehat R_{\infty}^+$-representation. Then by the above discussion, we know that $(H,\theta_H)\cong ((V\otimes_{\widehat R_{\infty}^+}\widehat S_{\pd}^+)^{\Gamma},\Theta_V\big|_{(V\otimes_{\widehat R_{\infty}^+}\widehat S_{\pd}^+)^{\Gamma}})$ and hence the rule $(H,\theta_H)\mapsto (H\otimes_{R^+}\widehat S_{\infty}^+)^{\Theta_H = 0}$ induces the quasi-inverse of the functor constructed above.  
       
       Now, we prove the above claim. As $\rho p^{a-\frac{1}{p-1}}$ divides $\Theta_i$'s, we know that $\exp(-\Theta_1Y_1-\cdots-\Theta_dY_d)\in\GL_r(\widehat S_{\pd}^+)$. Therefore, we have
       \[\exp(-\Theta_1Y_1-\cdots-\Theta_dY_d)(H\otimes_{R^+}\widehat R_{\infty}^+)\otimes_{\widehat R_{\infty}^+}\widehat S_{\pd}^+\cong H\otimes_{R^+}\widehat S_{\pd}^+,\]
       where $\exp(-\Theta_1Y_1-\cdots-\Theta_dY_d)$ is similarly defined as $\exp(\Theta_1Y_1+\cdots+\Theta_dY_d)$ in (\ref{Equ-ExplicitCorrespondence}).
       Then for any $f\in (\widehat S_{\pd}^+)^r$, we have
       \begin{equation*}
           \begin{split}
               &\Theta_H(\underline h\otimes\exp(-\Theta_1Y_1-\cdots-\Theta_dY_d)f)\\
               =&\underline h\otimes\sum_{i=1}^d(\Theta_i\exp(-\Theta_1Y_1-\cdots-\Theta_dY_d)f+\frac{\partial }{\partial Y_i}(\exp(-\Theta_1Y_1-\cdots-\Theta_dY_d)f))\otimes\frac{\rd\log T_i}{t}\\
               =&\underline h\otimes\sum_{i=1}^d\exp(-\Theta_1Y_1-\cdots-\Theta_dY_d)\frac{\partial f}{\partial Y_i}\otimes\frac{\rd\log T_i}{t}.
           \end{split}
       \end{equation*}
       Therefore, $\Theta_H(\underline h\otimes\exp(-\Theta_1Y_1-\cdots-\Theta_dY_d)f) = 0$ amounts to that $\frac{\partial f}{\partial Y_i} = 0$ for all $i$. By Corollary \ref{Cor-LocalPeriodSheaf}, this is equivalent to that $f\in(\widehat R_{\infty}^+)^d$. In other words, we have
       \begin{equation}\label{Equ-LocalSimpson-II}
           (H\otimes_{R^+}\widehat S_{\infty}^+)^{\Theta_H = 0} = \exp(-\Theta_1Y_1-\cdots-\Theta_dY_d)(H\otimes_{R^+}\widehat R_{\infty}^+).
       \end{equation}
       As $\gamma_i(Y_j) = Y_j+\delta_{ij}(\zeta_p-1)$, we know that $\gamma_i$ acts on $V$ via $\exp(-(\zeta_p-1)\Theta_i)$ as desired.
   \end{proof}
   The following lemma was used above.
   \begin{lem}\label{Lem-Kunneth}
       Let $A_d$ be as in the proof of Theorem \ref{Thm-LocalSimpson}. Then for any $i\geq 0$, we have
       \[\rH^i(\Gamma,A_d) = \bigwedge_{R^+}^i((R^+[\rho Y]^{\wedge}_{\pd}/\rho(\zeta_p-1))^d).\]
       Similarly, for any $i\geq 0$, we have
       \[\rH^i(\Gamma,\widehat S_{\pd}^+) = \bigwedge_{R^+}^i((R^+[\rho Y]^{\wedge}_{\pd}/\rho(\zeta_p-1)\oplus\bigoplus_{\alpha\in \bN[1/p]\cap[0,1)}R^+[\rho Y]^{\wedge}_{\pd}/(\zeta^{\alpha}-1))^d).\]
   \end{lem}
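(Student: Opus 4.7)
The strategy is to reduce to the one-variable calculation already carried out in Proposition \ref{Prop-Gamma-cohomology of pd ring}, by factoring both the coefficient algebra and the group into their coordinate pieces. Observe that
\[
A_d \;\cong\; R^+[\rho Y_1]^{\wedge}_{\pd}\,\widehat{\otimes}_{R^+}\cdots\widehat{\otimes}_{R^+}\,R^+[\rho Y_d]^{\wedge}_{\pd},
\]
and that each $\gamma_i$ acts only on the $i$-th tensor factor. Consequently the Koszul complex $K(\gamma_1-1,\ldots,\gamma_d-1;\,A_d)$ computing $\rR\Gamma(\Gamma,A_d)$ decomposes as the $p$-completed tensor product $\widehat{\bigotimes}_{i=1}^d C_i$, where
\[
C_i \;=\; \bigl[\,R^+[\rho Y_i]^{\wedge}_{\pd} \xrightarrow{\gamma_i-1} R^+[\rho Y_i]^{\wedge}_{\pd}\,\bigr].
\]
Proposition \ref{Prop-Gamma-cohomology of pd ring} (2), applied with $A=R^+$, $X=Y_i$ and $e=\zeta_p-1$, gives $\rH^0(C_i)=R^+$ and $\rH^1(C_i) = R^+[\rho Y]^{\wedge}_{\pd}/\rho(\zeta_p-1)$, with higher cohomology vanishing trivially.

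A Künneth argument then yields
\[
\rH^i(\Gamma,A_d) \;=\; \bigoplus_{S\subset\{1,\ldots,d\},\,|S|=i}\; \bigotimes_{k\in S}\rH^1(C_k)\,\otimes\,\bigotimes_{k\notin S}\rH^0(C_k),
\]
which is precisely the right-hand side of the first claim, once the notation $\wedge^i_{R^+}((M)^d)$ is read via the natural $\wedge^{\bullet}\Zp^d$-grading coming from the Koszul algebra structure. For the second claim, one writes
\[
\widehat S_{\pd}^+ \;\cong\; \widehat{\bigotimes}_{i=1}^d\Bigl(\,\widehat{\bigoplus}_{\alpha\in\bN[1/p]\cap[0,1)} R^+[\rho Y_i]^{\wedge}_{\pd}\,T_i^{\alpha}\,\Bigr),
\]
on which $\Gamma$ still acts factor-by-factor. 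For each single-variable factor, Proposition \ref{Prop-Gamma-cohomology of pd ring} (1) shows that only the $\alpha=0$ piece contributes to $\rH^0$, while $\rH^1$ picks up a $R^+[\rho Y]^{\wedge}_{\pd}/(\zeta^{\alpha}-1)$ summand for each $\alpha\neq 0$. Assembling these via Künneth yields the second formula.

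The main obstacle is justifying Künneth in the $p$-adically completed setting: one must verify that the derived $p$-completion of $\bigotimes_i C_i$ really computes $\rR\Gamma(\Gamma,A_d)$ and that no Tor terms distort the cohomology. This is manageable because each $C_i$ is a two-term complex of classically $p$-complete, $R^+$-flat modules whose cohomology modules are themselves $p$-complete. An alternative that avoids explicit Künneth bookkeeping is to argue by induction on $d$ via the Hochschild--Serre spectral sequence for $0\to\Zp\gamma_d\to\Gamma\to\Zp^{d-1}\to 0$: the single-variable input shows $\rR\Gamma(\Zp\gamma_d,-)$ is concentrated in degrees $\leq 1$ on each factor, so the $E_2$-page has only two nonzero rows, and one identifies their $\Zp^{d-1}$-cohomology with the claimed exterior powers by the inductive hypothesis; the analogous step for $\widehat S_{\pd}^+$ is identical, merely replacing the single-variable coefficient by the direct sum over $\alpha$.
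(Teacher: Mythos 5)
Your strategy—factoring $A_d$ as a $p$-completed tensor product $\widehat{\bigotimes}_{i=1}^d R^+[\rho Y_i]^{\wedge}_{\pd}$ over $R^+$, writing the Koszul complex for $\Gamma$ as $\widehat{\bigotimes}_i C_i$ with each $\gamma_i$ acting only in the $i$-th factor, and then invoking Proposition \ref{Prop-Gamma-cohomology of pd ring} together with K\"unneth—is precisely what the paper does; its one-line proof cites exactly "K\"unneth formula and Proposition \ref{Prop-Gamma-cohomology of pd ring} (2)." Your Hochschild--Serre alternative is also a legitimate rephrasing of the same idea.

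One point deserves a flag, though. Your displayed formula
$\rH^i(\Gamma,A_d)=\bigoplus_{|S|=i}\bigotimes_{k\in S}\rH^1(C_k)\otimes\bigotimes_{k\notin S}\rH^0(C_k)$
asserts that the K\"unneth short exact sequence has no Tor contribution, and your justification ("no Tor terms distort the cohomology ... because each $C_i$ is a two-term complex of $R^+$-flat modules whose cohomology modules are $p$-complete") is not a valid reason for Tor vanishing. In fact $\rH^1(C_i)=R^+[\rho Y_i]^{\wedge}_{\pd}/\rho(\zeta_p-1)$ is killed by $\rho(\zeta_p-1)$, which is a regular element of $R^+$, so $\Tor_1^{R^+}(\rH^1(C_i),\rH^1(C_j))\neq 0$; these Tor terms genuinely appear in the K\"unneth sequence for $\rH^i(\Gamma,A_d)$ when $1\le i<d$. (The paper's own stated formula in the lemma is equally loose about this, so you have inherited an imprecision rather than introduced one.) What actually matters for the downstream uses (Theorem \ref{Thm-LocalSimpson} and Corollary \ref{Cor-Kunnech}) is that each $\rH^i(\Gamma,A_d)$ for $i\geq 1$ is killed by $\rho(\zeta_p-1)$ and is built from modules of the form $R^+[\rho Y]^{\wedge}_{\pd}/\lambda$, which have no $\frakm_C$-torsion; these facts do follow from the K\"unneth exact sequence even once the Tor pieces are included, since $\Tor_1$ of $\rho(\zeta_p-1)$-torsion modules is again $\rho(\zeta_p-1)$-torsion and is again a quotient of a topologically free $\calO_C$-module. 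It would strengthen the writeup to say this, rather than claiming Tor vanishing.
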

   \begin{proof}
       As $A_d$ is a (derived) $p$-complete tensor products of $d$-copies of $R^+[\rho Y]^{\wedge}_{\pd}$ over $R^+$, the first part follows from K\"unneth formula and Proposition \ref{Prop-Gamma-cohomology of pd ring} (2). 
       Similarly, as $\widehat S_{\pd}^+$ is a (derived) $p$-complete tensor products of $\widehat \bigoplus_{\alpha\in\bN[1/p]\cap[0,1)}R^+[\rho Y_i]^{\wedge}_{\pd}T_i^{\alpha}$ over $R^+$ for $1\leq i\leq d$, the second part follows from the same argument (together with Proposition \ref{Prop-Gamma-cohomology of pd ring} (1)) as above.
   \end{proof}
   \begin{cor}\label{Cor-Kunnech}
       Let $V$ be an $a$-small $\widehat R_{\infty}^+$-representation of $\Gamma$. Then for any $i\geq 0$, we have
       \[\rH^i(\Gamma,V\otimes_{\widehat R_{\infty}^+}\widehat S_{\pd}^+) = (V\otimes_{\widehat R_{\infty}^+}\widehat S_{\pd}^+)^{\Gamma}\otimes_{R^+}\bigwedge_{R^+}^i((R^+[\rho Y]^{\wedge}_{\pd}/\rho(\zeta_p-1)\oplus\bigoplus_{\alpha\in \bN[1/p]\cap[0,1)}R^+[\rho Y]^{\wedge}_{\pd}/(\zeta^{\alpha}-1))^d).\]
       In particular, $\rH^i(\Gamma,V\otimes_{\widehat R_{\infty}^+}\widehat S_{\pd}^+)$ has no $\frakm_C$-torsion.
   \end{cor}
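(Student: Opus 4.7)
The plan is to combine the local integral Simpson correspondence (Theorem \ref{Thm-LocalSimpson}) with the K\"unneth-type computation in Lemma \ref{Lem-Kunneth}. Set $H := (V\otimes_{\widehat R_{\infty}^+}\widehat S_{\pd}^+)^{\Gamma}$, which by Theorem \ref{Thm-LocalSimpson} is a finite free $R^+$-module of rank $r$. The explicit description in equation (\ref{Equ-LocalSimpson-II}), giving $V$ as $\exp(-\sum_j\Theta_jY_j)(H\otimes_{R^+}\widehat R_{\infty}^+)$ inside $H\otimes_{R^+}\widehat S_{\pd}^+$, extends to an isomorphism of $\widehat S_{\pd}^+$-modules
\[V\otimes_{\widehat R_{\infty}^+}\widehat S_{\pd}^+\xrightarrow{\simeq} H\otimes_{R^+}\widehat S_{\pd}^+,\qquad v\otimes f\mapsto \exp(-\textstyle\sum_j\Theta_jY_j)v\cdot f.\]
The first step is to verify that this isomorphism is $\Gamma$-equivariant when the right-hand side is endowed with the $\Gamma$-action coming only from the second factor (with $H$ carrying the trivial action). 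This is a direct computation: $\gamma_i$ acts on $V$ via $\exp(-(\zeta_p-1)\Theta_i)$, while $\gamma_i(\exp(-\sum_j\Theta_jY_j))=\exp(-\sum_j\Theta_jY_j)\exp(-(\zeta_p-1)\Theta_i)$ because $\gamma_i(Y_j)=Y_j+\delta_{ij}(\zeta_p-1)$.

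Next, since $H$ is finite free over $R^+$ with trivial $\Gamma$-action, the continuous Koszul complex computing $\rR\Gamma(\Gamma, H\otimes_{R^+}\widehat S_{\pd}^+)$ factors as $H\otimes_{R^+}(\text{Koszul complex of }\widehat S_{\pd}^+)$. Combined with the $R^+$-flatness of $H$, taking $i$-th cohomology yields
\[\rH^i(\Gamma,V\otimes_{\widehat R_{\infty}^+}\widehat S_{\pd}^+)\cong H\otimes_{R^+}\rH^i(\Gamma,\widehat S_{\pd}^+).\]
Now apply the second part of Lemma \ref{Lem-Kunneth} to identify $\rH^i(\Gamma,\widehat S_{\pd}^+)$ as the stated exterior power, and substitute back $H=(V\otimes_{\widehat R_{\infty}^+}\widehat S_{\pd}^+)^{\Gamma}$ to recover the formula.

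For the absence of $\frakm_C$-torsion, one unwinds the wedge power: over the valuation ring $\calO_C$, the $R^+$-tensor product of factors $R^+[\rho Y]^{\wedge}_{\pd}/c_1$ and $R^+[\rho Y]^{\wedge}_{\pd}/c_2$ equals $R^+[\rho Y]^{\wedge}_{\pd}/c$ with $\nu_p(c)=\max(\nu_p(c_1),\nu_p(c_2))$. Hence the exterior power decomposes into a direct sum of modules of the form $R^+[\rho Y]^{\wedge}_{\pd}/c$ for various nonzero $c\in\frakm_C$. Since $R^+[\rho Y]^{\wedge}_{\pd}$ is $\calO_C$-flat, each such quotient is $\frakm_C$-torsion free in the almost sense: for any nonzero class $\bar x$ one finds $c'\in\frakm_C$ with $\nu_p(c')<\nu_p(c)$ such that $c'x\notin c\cdot R^+[\rho Y]^{\wedge}_{\pd}$. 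Tensoring with the $R^+$-flat module $H$ preserves this property, yielding the claim.

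The main conceptual work is in the first paragraph: verifying the $\Gamma$-equivariance of the identification $V\otimes_{\widehat R_{\infty}^+}\widehat S_{\pd}^+\cong H\otimes_{R^+}\widehat S_{\pd}^+$. But this is just bookkeeping once the explicit formulas from the proof of Theorem \ref{Thm-LocalSimpson} are in hand, so no essentially new idea is required beyond what has already been developed.
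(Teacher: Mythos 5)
Your overall strategy is the same as the paper's: use the $\Gamma$-equivariant isomorphism $V\otimes_{\widehat R_\infty^+}\widehat S_\pd^+\cong H\otimes_{R^+}\widehat S_\pd^+$ (with $H:=(V\otimes\widehat S_\pd^+)^\Gamma$ carrying the trivial action) provided by Theorem \ref{Thm-LocalSimpson}, factor the finite free module $H$ out of the Koszul complex, and invoke Lemma \ref{Lem-Kunneth}. Your explicit check of $\Gamma$-equivariance is what the paper leaves implicit by citing Theorem \ref{Thm-LocalSimpson}, and the first two paragraphs are fine.

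The torsion-freeness paragraph, however, contains a concrete error. You assert that over $\calO_C$ the $R^+$-tensor product of $R^+[\rho Y]^{\wedge}_{\pd}/c_1$ and $R^+[\rho Y]^{\wedge}_{\pd}/c_2$ is $R^+[\rho Y]^{\wedge}_{\pd}/c$ with $\nu_p(c)=\max(\nu_p(c_1),\nu_p(c_2))$. This is wrong on both counts: the factors appearing in the K\"unneth decomposition carry distinct variables $Y_j$ (one per direction of $\Gamma$), so the tensor product is a module over $R^+[\rho Y_1,\rho Y_2]^\wedge_\pd$, not the one-variable ring; and since $\calO_C$ is a valuation ring, $\calO_C/c_1\otimes_{\calO_C}\calO_C/c_2 = \calO_C/(c_1,c_2)$, whose generator has valuation $\min(\nu_p(c_1),\nu_p(c_2))$, not $\max$. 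Neither slip destroys the conclusion, because all you really need is that every summand is of the form $M/\lambda M$ with $M$ a topologically free $\calO_C$-module and $\lambda\in\frakm_C\setminus\{0\}$, whence $M/\lambda M$ is a topological direct sum of copies of $\calO_C/\lambda$ and so has no $\frakm_C$-torsion. This is exactly the route the paper takes: it reduces to showing that $R^+[\rho Y]^{\wedge}_{\pd}/\lambda$ has no $\frakm_C$-torsion, using that $R^+[\rho Y]^{\wedge}_{\pd}$ is topologically free over $\calO_C$ by \cite[Lem. 8.10]{BMS18}, and avoids the tensor-product bookkeeping entirely. The phrase ``$\frakm_C$-torsion free in the almost sense'' should also be dropped; absence of $\frakm_C$-torsion is an honest, not almost, condition, and is what the statement requires.
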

   \begin{proof}
       By Theorem \ref{Thm-LocalSimpson}, we have a $\Gamma$-equivariant isomorphism
       \[V\otimes_{\widehat R_{\infty}^+}\widehat S_{\pd}^+\cong (V\otimes_{\widehat R_{\infty}^+}\widehat S_{\pd}^+)^{\Gamma}\otimes_{R^+}\widehat S_{\pd}^+.\]
       As $(V\otimes_{\widehat R_{\infty}^+}\widehat S_{\pd}^+)^{\Gamma}$ is finite free, the desired isomorphism follows from Lemma \ref{Lem-Kunneth} directly. For the last sentence, as $(V\otimes_{\widehat R_{\infty}^+}\widehat S_{\pd}^+)^{\Gamma}$ is a finite free $R^+$-module, it suffices to prove that for any $\lambda\in\frakm_C$, $R^+[\rho Y]^{\wedge}_{\pd}/\lambda$ has no $\frakm_C$-torsion. As $\calO_C/\lambda$ has no $\frakm_C$-torsion, we can conclude by noting that $R^+[\rho Y]^{\wedge}_{\pd}$ is a topologically free $\calO_C$-module, following from \cite[Lem. 8.10]{BMS18}.
   \end{proof}

   \begin{rmk}[Theorem \ref{Thm-LocalSimpson} vs \emph{\rm \cite[Thm. 4.3]{Wang}}]\label{Rmk-CompareLocalSimpson}
       Assume $a>\frac{1}{p-1}$. 
       Thanks to Remark \ref{Rmk-LocalHiggs} and Theorem \ref{Thm-LocalSimpson}, we get an equivalence 
       \[\Rep_{\widehat R_{\infty}^+}^{\geq a}(\Gamma)\to\HIG^{\geq a}(R^+)\xrightarrow{(H,\theta_H)\mapsto(H,(\zeta_p-1)\theta_H)}\HIG_W^{\geq a}(R^+)\]
       from the category of $a$-small $\widehat R_{\infty}^+$-representations of $\Gamma$ to the category of $a$-small Higgs modules in the sense of \cite[Def. 4.2]{Wang}. We claim this functor coincides with that given in \cite[Thm. 4.3]{Wang}.
       
       To see this, let $\widehat S_{\infty,\rho}^+$ and $S_{\infty}^{\dagger,+}$ be the evaluations of $\calO\widehat \bC^+_{\rho}$ and $\calO\bC^{\dagger,+}$ at $U_{\infty}$. Then we have $S_{\infty}^{\dagger,+}\subset \widehat S_{\infty,\rho}^+$ and the morphism $\iota:\widehat S_{\pd}^+\to \widehat S_{\infty,\rho}^+$ induced by $\iota_{PS}$ sends $\rho Y_i\in \widehat S_{\pd}^+$ to $\rho(\zeta_p-1) Y_i\in \widehat S_{\infty,\rho}^+$ (cf. Proposition \ref{Prop-ComparePeriodSheaf}). Let $V$ be an $a$-small $\widehat R_{\infty}^+$-representation of $\Gamma$ with corresponding $a$-small $ R ^+$-representation $V_0$ in the sense of Theorem \ref{Thm-Decompletion}. Let $(H_1,\theta_1)$ and $(H_2,\theta_2)$ be the Higgs modules in $\HIG^{\geq a}_W(R^+)$ induced by the functors defined above and in \cite[Thm. 4.3]{Wang}, respectively. Fix an $R^+$-basis of $V_0$ and assume $\gamma_i\in\Gamma$ acts on $V_0$ via the matrix $\exp(-(\zeta_p-1)\Theta_i)$. By (\ref{Equ-LocalSimpson-I}), we have 
       \[(H_1,\theta_1) = (\exp(\sum_{i=1}^d\Theta_iY_i)V_0,\sum_{i=1}^d(\zeta_p-1)\Theta_i\otimes\frac{\rd\log T_i}{t}),\]
       where $\rho Y_i$'s are contained in $ \widehat S_{\pd}^+$. Applying $\iota$, we finally get that
       \[(H_1,\theta_1) = (\exp(\sum_{i=1}^d(\zeta_p-1)\Theta_iY_i)V_0,\sum_{i=1}^d(\zeta_p-1)\Theta_i\otimes\frac{\rd\log T_i}{t}) = (\prod_{i=1}^d\gamma_i^{-Y_i}V_0,\sum_{i=1}^d-\log\gamma_i\otimes\frac{\rd\log T_i}{t}),\]
       where $\rho Y_i$'s are contained in $\widehat S_{\infty,\rho}^+$. On the other hand, by \cite[Rem. 4.10, Prop. 4.13]{Wang}, we have
       \[(H_2,\theta_2) = (\prod_{i=1}^d\gamma_i^{-Y_i}V_0,\sum_{i=1}^d-\log\gamma_i\otimes\frac{\rd\log T_i}{t}),\]
       where $\rho Y_i$'s are contained in $\widehat S_{\infty,\rho}^+$. Therefore, we have $(H_1,\theta_1) = (H_2,\theta_2)$ as desired.
   \end{rmk}
   
  In the local case, we also have the following generalisation of local calculations in \cite[\S 8]{BMS18}.
   \begin{lem}\label{Lem-Local L-eta}
       Assume $\lambda\in\calO_C$ satisfying $0\leq\nu_p(\lambda)\leq \nu_p(\rho)$ and $a\geq\frac{1}{p-1}$.
       
    For any $a$-small $\widehat R_{\infty}^+$-representation $V$ of $\Gamma$, the complex $\rL\eta_{(\zeta_p-1)\lambda}\rR\Gamma(\Gamma,V)$ of $R^+$-modules is perfect and concentrated in degree $[0,d]$ with $p$-torsion free $\rH^0$.
   \end{lem}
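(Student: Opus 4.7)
The plan is to reduce to an explicit Koszul-complex computation over $R^+$ and then apply décalage. First, by Theorem \ref{Thm-Decompletion}, there is a $V_0\in \Rep^{\geq a}(R^+)$ with $V_0\otimes_{R^+}\widehat R_{\infty}^+\cong V$ such that $\rR\Gamma(\Gamma,V)\simeq \rR\Gamma(\Gamma,V_0)\oplus C$ in $D(R^+)$, where $C$ is concentrated in positive degrees and killed by $\zeta_p-1$. In particular $C$ is killed by $f:=(\zeta_p-1)\lambda$, so $\rH^n(\rL\eta_f C)=\rH^n(C)/\rH^n(C)[f]=0$ and hence $\rL\eta_f C\simeq 0$. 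Since $\rL\eta_f$ commutes with finite direct sums (apply $\eta_f$ term-wise to K-flat representatives), this identifies $\rL\eta_f\rR\Gamma(\Gamma,V)\simeq \rL\eta_f\rR\Gamma(\Gamma,V_0)$.

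Next, since $\Gamma\cong\Zp^d$ acts $R^+$-linearly on the $p$-adically complete module $V_0$, the cohomology $\rR\Gamma(\Gamma,V_0)$ is computed by the Koszul complex $K^\bullet:=K^\bullet(V_0;\gamma_1-1,\ldots,\gamma_d-1)$. By Remark \ref{Rmk-LocalSmallRep}, I may fix an $R^+$-basis of $V_0$ in which each $\gamma_i$ acts via a matrix $P_i=\exp(-(\zeta_p-1)\Theta_i)$ with pairwise commuting $\Theta_i\in \rho p^{a-1/(p-1)}\rM_r(R^+)$. Expanding the exponential yields $\gamma_i-1=-(\zeta_p-1)\Theta_i\Theta_{0,i}$ with $\Theta_{0,i}\in I+\rho p^{a-1/(p-1)}\rM_r(R^+)\subset \GL_r(R^+)$. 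The hypotheses $\nu_p(\lambda)\leq\nu_p(\rho)$ and $a\geq 1/(p-1)$ force $\lambda\mid \rho p^{a-1/(p-1)}$, so $\Theta_i=\lambda\widetilde\Theta_i$ with $\widetilde\Theta_i\in \rM_r(R^+)$, and therefore $\gamma_i-1=f\cdot\Theta_i''$ for $\Theta_i'':=-\widetilde\Theta_i\Theta_{0,i}\in\rM_r(R^+)$.

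Since every differential of $K^\bullet$ is divisible by $f$ and $V_0$ is $f$-torsion free, the décalage satisfies $(\eta_f K^\bullet)^k=f^kK^k$, and the rescaling $y\mapsto f^k y$ identifies $\eta_f K^\bullet$ term-wise and as a complex with the Koszul complex $K^\bullet(V_0;\Theta_1'',\ldots,\Theta_d'')$ built from the $\Theta_i''$. As $K^\bullet$ is a bounded complex of finite free $R^+$-modules (hence K-flat), we conclude $\rL\eta_f\rR\Gamma(\Gamma,V_0)\simeq K^\bullet(V_0;\Theta_1'',\ldots,\Theta_d'')$. This is a bounded complex of finite free $R^+$-modules concentrated in degrees $[0,d]$, so it is perfect; its $\rH^0=\bigcap_i\ker(\Theta_i'':V_0\to V_0)$ is a submodule of the $p$-torsion free $R^+$-module $V_0$, and is therefore $p$-torsion free.

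The main technical point is the careful handling of $\rL\eta_f$: checking its compatibility with the direct-sum decomposition from decompletion and carrying out the chain-level rescaling of $K^\bullet$. The divisibility $\lambda\mid \rho p^{a-1/(p-1)}$, which enables the rescaling, is the one place where the hypothesis $\nu_p(\lambda)\leq\nu_p(\rho)$ is used; the rest of the argument is essentially formal.
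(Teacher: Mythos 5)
Your proof is correct and follows essentially the same route as the paper: decomplete to $V_0$ via Theorem \ref{Thm-Decompletion}, discard the $(\zeta_p-1)$-torsion complement under $\rL\eta_{(\zeta_p-1)\lambda}$, rewrite the Koszul complex with $\gamma_i-1$ as a unit times $(\zeta_p-1)\Theta_i$, and perform the d\'ecalage rescaling. The only cosmetic difference is that you re-derive the Koszul d\'ecalage identity $(\eta_f K)^k=f^kK^k$ by hand where the paper simply cites \cite[Lem.~7.9]{BMS18}.
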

   \begin{proof}
       Let $V_0$ be the corresponding $\Gamma$-representation over $R^+$ as above. Then the ``moreover'' part of Theorem \ref{Thm-Decompletion} induces a quasi-isomorphism
       \[\rL\eta_{\lambda}\rR\Gamma(\Gamma,V_0)\xrightarrow{\simeq}\rL\eta_{\lambda}\rR\Gamma(\Gamma,V).\]
       Assume $\gamma_i -1= \exp(-(\zeta_p-1)\Theta_i)-1= (\zeta_p-1)\Theta_iF(\Theta_i)$, where 
       \begin{equation}\label{Equ-F}
           F(X) := \frac{\exp(-(\zeta_p-1)X)-1}{(\zeta_p-1)X} = \sum_{n\geq 0}(-1)^{n+1}\frac{(\zeta_p-1)^n}{(n+1)!}X^n.
       \end{equation}
       As $\Theta_i$'s are topologically nilpotent, we know $F(\Theta_i)\in\GL_r(R^+)$. Therefore, we have quasi-isomorphisms
       \[\rR\Gamma(\Gamma,V_0)\simeq \rK(V_0;\gamma_1-1,\dots,\gamma_d-1)\simeq \rK(V_0; (\zeta_p-1)\Theta_1,\dots,(\zeta_p-1)\Theta_d).\]
       Now, it follows from \cite[Lem. 7.9]{BMS18} that
       \[\rL\eta_{(\zeta_p-1)\lambda}\rR\Gamma(\Gamma,V_0)\simeq \rK(V_0;\lambda^{-1}\Theta_1,\dots,\lambda^{-1}\Theta_d)\]
       is a perfect complex (note that $a$-smallness of $V$ implies that $\Theta_i$'s belong to $\rho\rM_d(R^+)$).
   \end{proof}
   \begin{rmk}
       The quasi-isomorphism in Lemma \ref{Lem-Local L-eta} depends on the local datum (more precisely, the choice of chart on $R^+$).
   \end{rmk}

 \subsection{Proof of Theorem \ref{Thm-IntegralSimpson}}\label{SSec-ProofMainTheorme}
  We focus on the proof of Theorem \ref{Thm-IntegralSimpson} in this section. The idea is similar to that in the proof of \cite[Thm. 4.3]{Wang}. We start with the following lemma:
  \begin{lem}\label{Lem-Key}
      Let $\frakU\in\frakX_{\et}$ be affine small with $U$ and $U_{\infty}$ as in Convention \ref{Convention-small}. Let $\calL$ be a $p$-complete $p$-torsion free $\OXp$-module such that 
      \[(\calL/p^c)^{\rm al}\big|_U \cong ((\OXp/p^c)^l)^{\rm al}\big|_U\]
      for some $c>0$ and $l\in \bN$.
      Then for any $\calP\in \{\calO\bC^+_{\pd}, \calO\widehat \bC^+_{\pd}\}$, there is a natural map from the group cohomology to the pro-\'etale cohomology
      \[\rR\Gamma(\Gamma,(\calL\otimes_{\OXp}\calP)(U_{\infty}))\to \rR\Gamma(U_{\proet},(\calL\otimes_{\OXp}\calP)\big|_{U})\]
      which is an almost quasi-isomorphism, and furthermore an isomorphism in degree $0$.
  \end{lem}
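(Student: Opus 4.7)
The plan is to attack the lemma by Cartan--Leray for the pro-\'etale Galois cover $U_\infty \to U$ with Galois group $\Gamma$. For any abelian sheaf $\calF$ on $U_{\proet}$, this cover yields a spectral sequence
\[ E_2^{i,j} = \rH^i(\Gamma,\rH^j(U_{\proet}/U_\infty,\calF)) \Longrightarrow \rH^{i+j}(U_{\proet}/\frakU,\calF). \]
Applied to $\calF=(\calL\otimes_{\OXp}\calP)|_U$, the statement of the lemma reduces to two claims: (i) the natural identification $(\calL\otimes_{\OXp}\calP)(U_\infty)^{\Gamma}\xrightarrow{\cong}(\calL\otimes_{\OXp}\calP)(U)$ holds (not just almost), and (ii) the higher pro-\'etale cohomology $\rH^{>0}(U_{\proet}/U_\infty,(\calL\otimes_{\OXp}\calP)|_{U_\infty})$ is almost zero. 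Claim (i) is immediate from the sheaf property applied to the Galois cover $U_\infty \to U$, which in particular gives the degree $0$ assertion exactly (not just in the almost sense).

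For claim (ii), the key input is the explicit local description from Corollary \ref{Cor-LocalPeriodSheaf}, namely that $\calO\bC_{\pd}^+|_{U_\infty}$ is isomorphic to $\OXp|_{U_\infty}[\rho Y_1,\ldots,\rho Y_d]_{\pd}$, which as an $\OXp|_{U_\infty}$-module splits as the direct sum $\bigoplus_{\underline n\in\bN^d}\OXp|_{U_\infty}\cdot\rho^{|\underline n|}Y^{[\underline n]}$; correspondingly $\calO\widehat\bC_{\pd}^+|_{U_\infty}$ is obtained by $p$-adic completion of this direct sum. Tensoring with $\calL$, the claim reduces to almost vanishing of $\rH^{>0}(U_{\proet}/U_\infty,\calL|_{U_\infty})$, provided we can commute higher cohomology with the (completed) direct sum; for the non-completed case this is formal, and for the completed case this will be handled as discussed below.

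To prove that $\rH^{>0}(U_{\proet}/U_\infty,\calL|_{U_\infty})$ is almost zero, I would invoke the short exact sequence $0\to\calL\xrightarrow{p^c}\calL\to\calL/p^c\to 0$ (valid since $\calL$ is $p$-torsion free). The hypothesis $(\calL/p^c)^{\rm al}\cong ((\OXp/p^c)^l)^{\rm al}$ combined with Scholze's almost acyclicity of $\OXp/p^c$ on the affinoid perfectoid $U_\infty$ (\cite{BMS18}, which ensures $\rH^{>0}(U_{\proet}/U_\infty,\OXp/p^c)$ is almost zero) shows that $\rH^{>0}(U_{\proet}/U_\infty,\calL/p^c)$ is almost zero. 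The associated long exact sequence then shows multiplication by $p^c$ is almost bijective on $\rH^{>0}(U_{\proet}/U_\infty,\calL|_{U_\infty})$, and derived $p$-completeness of $\calL$ (which propagates to its cohomology) forces this module to be almost zero.

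The main obstacle will be the $\calO\widehat\bC_{\pd}^+$ case: one must carefully commute the pro-\'etale cohomology functor with the $p$-adic completion of the direct sum $\bigoplus_{\underline n}\calL|_{U_\infty}\cdot\rho^{|\underline n|}Y^{[\underline n]}$, so that the almost vanishing established for each summand persists after completion. The standard remedy is to write $\calO\widehat\bC_{\pd}^+$ as an $\rR\varprojlim_m$ of the mod $p^m$ reductions and exploit the Milnor exact sequence, together with the almost vanishing modulo each $p^m$ coming from the previous paragraph; the absence of $\rR^1\varprojlim$ contributions in the almost category is what ultimately yields the claim. The degree $0$ assertion is unaffected by these completion issues and follows directly from (i).
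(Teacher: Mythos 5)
Your Cartan--Leray reduction to (i) exactness in degree $0$ via the sheaf axiom and (ii) almost vanishing of $\rH^{>0}(U_{\proet}/U_\infty,-)$ using the local description of $\calP|_{U_\infty}$ as a (completed) direct sum of copies of $\OXp|_{U_\infty}$, almost acyclicity of $\OXp/p^c$ on affinoid perfectoids, and derived $p$-completeness, is precisely the argument the paper is invoking by citing \cite[Lem.~5.11]{Wang}. The proposal is correct and takes essentially the same route as the paper.
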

  \begin{proof}
      We only construct the natural map 
      \[\rR\Gamma(\Gamma,(\calL\otimes_{\OXp}\calP)(U_{\infty}))\to \rR\Gamma(U_{\proet},(\calL\otimes_{\OXp}\calP)\big|_{U})\]
      here, and the rest can be deduced from the same argument in the proof of \cite[Lem. 5.11]{Wang}.

      First, we claim that for any affinoid perfectoid $V\in U_{\infty,\proet}$ and any $i\geq 1$, the 
      $\rH^i(V,\calL\otimes_{\OXp}\calP)$ is almost vanishing. Note that $(\calO\bC_{\pd}^+)\big|_{U_{\infty}}\cong (\OXp[\rho Y_1,\dots,\rho Y_d]_{\pd})\big|_{U_{\infty}}$ is free over $(\OXp)\big|_{U_{\infty}}$. Then one can conclude the claim from the same argument in the proof of \cite[Lem. 5.7]{Wang}.

      Now we follow the proof of \cite[Lem. 5.11]{Wang}. For any $m\geq 1$, let $U_{\infty}^{m/U}$ be the $m$-fold self-product of $U_{\infty}$ over $U$. As $U_{\infty}\to U$ is a pro-\'etale Galois covering with the Galois group $\Gamma$, we have $U_{\infty}^{m/U}\cong U_{\infty}\times\Gamma^{m-1}$ and a quasi-isomorphism
      \[\rR\Gamma(U_{\proet},(\calL\otimes_{\OXp}\calP)\big|_{U})\simeq \rR\Gamma(U_{\infty,\proet}^{\bullet/U},\calL\otimes_{\OXp}\calP).\]
      As argued in \emph{loc.cit.}, we then have an almost isomorphism
      \[\Hom_{\cts}(\Gamma^{m-1},\rH^i(U_{\infty},\calL\otimes_{\OXp}\calP))\to \rH^i(U_{\infty}^{m/U},\calL\otimes_{\OXp}\calP)\]
      for any $i\geq 0$ and any $m\geq 1$. By letting $i = 0$, we get a natural map of complexes
      \[\begin{split}
          \Hom_{\cts}(\Gamma^{\bullet-1},(U_{\infty},\calL\otimes_{\OXp}\calP)(U_{\infty}))&\to(\calL\otimes_{\OXp}\calP)(U_{\infty}^{\bullet/U})\\
          &\to \rR\Gamma(U_{\infty,\proet}^{\bullet/U},\calL\otimes_{\OXp}\calP)\simeq \rR\Gamma(U_{\proet},(\calL\otimes_{\OXp}\calP)\big|_{U}),
      \end{split}\]
      yielding the desired natural map via the quasi-isomorphism
      \[\rR\Gamma(\Gamma,(\calL\otimes_{\OXp}\calP)(U_{\infty}))\simeq\Hom_{\cts}(\Gamma^{\bullet-1},\calL\otimes_{\OXp}\calP)(U_{\infty}))\]
  \end{proof}
  \begin{cor}\label{Cor-Key}
      Keep notations as in Lemma \ref{Lem-Key}. If moreover $c>\nu_p(\rho)+\frac{1}{p-1}$, then the above natural map induces a quasi-isomorphism 
      \[\rL\eta_{\rho(\zeta_p-1)}\rR\Gamma(\Gamma,(\calL\otimes_{\OXp} \calO\widehat \bC^+_{\pd})(U_{\infty}))\simeq \rL\eta_{\rho(\zeta_p-1)}\rR\Gamma(U_{\proet},(\calL\otimes_{\OXp} \calO\widehat \bC^+_{\pd})\big|_{U}).\]
      Moreover, we have isomorphisms of $R^+$-modules
      \[\begin{split}\rH^0(U_{\proet},(\calL\otimes_{\OXp} \calO\widehat \bC^+_{\pd})\big|_{U}) &\cong\rH^0(\Gamma,(\calL\otimes_{\OXp} \calO\widehat \bC^+_{\pd})(U_{\infty}))\\
      &\cong \rH^0(\rL\eta_{\rho(\zeta_p-1)}\rR\Gamma(\Gamma,(\calL\otimes_{\OXp} \calO\widehat \bC^+_{\pd})(U_{\infty})))\\
      &\cong\rH^0(\rL\eta_{\rho(\zeta_p-1)}\rR\Gamma(U_{\proet},(\calL\otimes_{\OXp} \calO\widehat \bC^+_{\pd})\big|_{U})).
      \end{split}\]
  \end{cor}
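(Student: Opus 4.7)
The plan is to deduce the corollary from Lemma~\ref{Lem-Key} by exploiting the interaction of the decalage functor with almost isomorphisms. The underlying observation is the elementary formula $\rH^i(\rL\eta_f K)=\rH^i(K)/\rH^i(K)[f]$ (cf.\ \cite[Lem. 6.4]{BMS18}): if a complex has all cohomologies killed by $f$, then $\rL\eta_f$ sends it to $0$ in $D(R^+)$. Since an almost-zero complex is killed by every element of $\frakm_C$, and $\rho(\zeta_p-1)\in\frakm_C$, this is exactly the situation we want.

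I would first represent the natural map
\[
\alpha\colon \rR\Gamma(\Gamma,(\calL\otimes_{\OXp}\calO\widehat \bC^+_{\pd})(U_{\infty}))\longrightarrow \rR\Gamma(U_{\proet}/\frakU,(\calL\otimes_{\OXp}\calO\widehat \bC^+_{\pd})|_{U})
\]
by a genuine morphism of complexes of $p$-torsion free $R^+$-modules (for instance by choosing a \v{C}ech--Alexander model with respect to the pro-\'etale cover $U_\infty\to U$ on both sides). By Lemma~\ref{Lem-Key} the naive cofiber $C^{\bullet}$ has $\rH^0(C^\bullet)=0$ and $\rH^i(C^\bullet)$ killed by $\frakm_C$, hence by $\rho(\zeta_p-1)$, for $i\geq 1$. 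Applying the above vanishing principle to $\eta_{\rho(\zeta_p-1)}C^\bullet$ gives an acyclic cofiber at the chain level, and therefore $\rL\eta_{\rho(\zeta_p-1)}\alpha$ is a quasi-isomorphism. This yields the first part of the statement.

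For the chain of isomorphisms in the moreover part: the first equality is the degree-$0$ content of Lemma~\ref{Lem-Key}. The second follows from $\rH^0(\rL\eta_f K)\cong\rH^0(K)/\rH^0(K)[f]$ combined with the fact that $\rH^0(\Gamma,(\calL\otimes_{\OXp}\calO\widehat \bC^+_{\pd})(U_\infty))$ is $\rho(\zeta_p-1)$-torsion free; indeed, $(\calL\otimes_{\OXp}\calO\widehat \bC^+_{\pd})(U_\infty)$ is a $p$-torsion free $\widehat R_\infty^+$-module by construction, so $\rho(\zeta_p-1)$ acts injectively on it and hence on any submodule of $\Gamma$-invariants. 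The third isomorphism is then the degree-$0$ consequence of the quasi-isomorphism just established, combined with the first.

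The main obstacle will be the lack of exactness of $\rL\eta$: since it is not triangulated, the vanishing of $\rL\eta_{\rho(\zeta_p-1)}C^{\bullet}$ does not formally imply that $\rL\eta_{\rho(\zeta_p-1)}\alpha$ is a quasi-isomorphism at the level of derived categories. To make the argument rigorous one must stay at the cochain level with strictly flat models where the naive $\eta_f$ functor is well-behaved, and verify that the short exact sequence of cochain complexes survives after applying $\eta_{\rho(\zeta_p-1)}$. This is where the hypothesis $c>\nu_p(\rho)+\frac{1}{p-1}$ enters: it ensures that the valuation gap between $p^c$ and $\rho(\zeta_p-1)$ provides enough slack to refine the almost-quasi-isomorphism of Lemma~\ref{Lem-Key} to a cochain-level map whose cofiber is literally killed by $\rho(\zeta_p-1)$, so that the naive $\eta_{\rho(\zeta_p-1)}$ kills it identically. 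Once this technical point is settled, everything else is formal.
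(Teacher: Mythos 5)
The degree-zero chain of isomorphisms in your argument is fine, but the derivation of the quasi-isomorphism after applying $\rL\eta_{\rho(\zeta_p-1)}$ has a genuine gap. You correctly observe that $\rL\eta$ is not exact, so the almost-vanishing of the cofiber does not by itself make $\rL\eta_{\rho(\zeta_p-1)}\alpha$ a quasi-isomorphism. But your proposed repair --- refining the cochain model so the cofiber is literally killed by $\rho(\zeta_p-1)$ --- does not close this gap. Even if $C^\bullet$ is a flat complex with all cohomology killed by $g$ (so $\rL\eta_g C^\bullet\simeq 0$), the map $\rL\eta_g A^\bullet\to\rL\eta_g B^\bullet$ need not be a quasi-isomorphism, because $\eta_g$ does not carry short exact sequences of cochain complexes to short exact sequences; the already simplest example $\bZ\xrightarrow{g}\bZ$ shows the failure. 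More importantly, your reading of the hypothesis $c>\nu_p(\rho)+\tfrac{1}{p-1}$ as supplying a ``valuation gap'' for this chain-level refinement is not how it is used.

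What is actually needed, and what the paper uses, is the extra hypothesis on the source complex required by \cite[Lem. 8.11(2)]{BMS18}: the cohomology groups of $\rR\Gamma(\Gamma,(\calL\otimes_{\OXp}\calO\widehat\bC_{\pd}^+)(U_\infty))$ must have \emph{no $\frakm_C$-torsion} (no nonzero almost-zero elements). This is supplied by Corollary~\ref{Cor-Kunnech}, which exploits the explicit K\"unneth description of $\rR\Gamma(\Gamma,\widehat S_{\pd}^+)$ from Lemma~\ref{Lem-Kunneth}. To invoke Corollary~\ref{Cor-Kunnech} one needs $\calL(U_\infty)$ to be $a$-small for some $a>\tfrac{1}{p-1}$, and \emph{that} is where the hypothesis $c>\nu_p(\rho)+\tfrac{1}{p-1}$ enters, through Example~\ref{Exam-LocalSmallRep}. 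Without this no-almost-torsion input there is no way to upgrade the almost quasi-isomorphism from Lemma~\ref{Lem-Key} to an actual quasi-isomorphism after $\rL\eta_{\rho(\zeta_p-1)}$, so your proof as written does not establish the first (and main) assertion of the corollary.
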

  \begin{proof}
      By Example \ref{Exam-LocalSmallRep}, $\calL(U_{\infty})$ is an $a$-small $\widehat R_{\infty}^+$-representation of $\Gamma$ for some $a>\frac{1}{p-1}$. Thus, we deduce from Corollary \ref{Cor-Kunnech} (or Lemma \ref{Lem-coherent} below) that $\rR\Gamma(\Gamma,(\calL\otimes_{\OXp} \calO\widehat \bC^+_{\pd})(U_{\infty}))$ has no $\frakm_C$-torsion. Then the desired quasi-isomorphism comes from Lemma \ref{Lem-Key} combined with \cite[Lem. 8.11 (2)]{BMS18}. Finally, the ``moreover'' part follows from Lemma \ref{Lem-Key} together with the $p$-torsion freeness of $\rH^0(\Gamma,(\calL\otimes_{\OXp} \calO\widehat \bC^+_{\pd})(U_{\infty}))$, again by Corollary \ref{Cor-Kunnech}.
  \end{proof}

   Now, we are prepared to prove Theorem \ref{Thm-IntegralSimpson}.
   \begin{proof}[Proof of Theorem \ref{Thm-IntegralSimpson}]
       We prove Item (1) as follows: Fix an $a$-small $\OXp$-representation $\calL$ and let $\{\frakX_i\to\frakX\}_{i\in I}$, $b_i$ and $b$ be as in Definition \ref{Dfn-SmallRep}. 
       
       Note that $\rL\eta_{\rho(\zeta_p-1)}\rR\nu_*(\calL\otimes_{\OXp}\calO\widehat \bC_{\pd}^+)$ is the sheafification of the presheaf 
       \[\frakU\in\frakX_{\et}\mapsto \rL\eta_{\rho(\zeta_p-1)}\rR\Gamma(U_{\proet},\calL\otimes_{\OXp}\calO\widehat \bC_{\pd}^+).\]
       In fact, for any $x\in \frakX$, the stalk at $x$ of the sheaf $\rL\eta_{\rho(\zeta_p-1)}\rR\nu_*(\calL\otimes_{\OXp}\calO\widehat \bC_{\pd}^+)$ is given by 
       \[
       \rL\eta_{\rho(\zeta_p-1)}\varinjlim_{x\in \frakU}\rR\Gamma(\frakU_{\et},\rR\nu_*(\calL\otimes_{\OXp}\calO\widehat \bC_{\pd}^+)) =
       \rL\eta_{\rho(\zeta_p-1)}\varinjlim_{x\in \frakU}\rR\Gamma(U_{\proet},\calL\otimes_{\OXp}\calO\widehat \bC_{\pd}^+).
       \] 
       as the $\rL\eta$-functor commutes with taking stalks by \cite[Lemma 6.14]{BMS18}. Note the stalk at $x$ of the above presheaf is given by \[\varinjlim_{x\in \frakU}\rL\eta_{\rho(\zeta_p-1)}\rR\Gamma(U_{\proet},\calL\otimes_{\OXp}\calO\widehat \bC_{\pd}^+).\] Then the claim follows from that the $\rL\eta$-functor commutes with filtered colimit by \cite[Corollary 6.5]{BMS18} (see also the proof of \cite[Cor. 8.13(iv)]{BMS18}).

       Also, the sheaf $\nu_*(\calL\otimes_{\OXp}\calO\widehat \bC_{\pd}^+)$ is the sheafification of the presheaf
       \[\frakU\in\frakX_{\et}\mapsto \rH^0(U_{\proet},\calL\otimes_{\OXp}\calO\widehat \bC_{\pd}^+).\]
       We know that the problem is local on $\frakX_{\et}$ and hence we are reduced to showing that for a small affine $\frakU = \Spf(R^+)$ as in Convention \ref{Convention-small} which is \'etale over $\frakX_i$ for some $i\in I$, 
       \begin{enumerate}
           \item[(1)] $\rL\eta_{\rho(\zeta_p-1)}\rR\Gamma(U_{\proet},\calL\otimes_{\OXp}\calO\widehat \bC_{\pd}^+)\cong \rH^0(U_{\proet},\calL\otimes_{\OXp}\calO\widehat \bC_{\pd}^+)[0]$, and

           \item[(2)] $\rH^0(U_{\proet},\calL\otimes_{\OXp}\calO\widehat \bC_{\pd}^+)$ is a free $R^+$-module of rank $r$ such that the restriction of $\nu_*(\Theta_{\calL})$ is a $b$-small Higgs field.
       \end{enumerate}

       Indeed, in this case, we know that $\calL\big|_{U}\cong ((\OXp\big|_{U}/\rho p^{b_i})^r)^{\rm al}$. By Example \ref{Exam-LocalSmallRep}, $\calL(U_{\infty})$ is a $b$-small $\widehat R_{\infty}^+$-representation of $\Gamma$.
       So the condition (1) above can be confirmed by combining Corollary \ref{Cor-Key} together with (\ref{Equ-LocalSimpson-I}) while the condition (2) follows from Theorem \ref{Thm-LocalSimpson}.

       For Item (2), let us fix an $a$-small Higgs bundle $(\calH,\theta_{\calH})$ of rank $r$ on $\frakX_{\et}$ such that $\Ima(\theta_{\calH})\subset \rho p^{c-\frac{1}{p-1}}$ for some rational $c>a$. Define $\calL = (\calH\otimes_{\calO_{\frakX}}\calO\widehat \bC^+_{\pd})^{\Theta_{\calH} = 0}$. As $\calH\otimes_{\calO_{\frakX}}\calO\widehat \bC^+_{\pd}$ is $p$-complete $p$-torsion free, so is $\calL$. Now, let $\{\frakX_i\to\frakX\}_{i\in I}$ be an \'etale covering of small affines such that for any $i$, the restriction of $(\calH,\theta_{\calH})$ to $\frakX_i$ is induced by a $c$-small Higgs module over $\calO_{\frakX}(\frakX_i)$. Let $X_i$ be the rigid analytic generic fiber of $\frakX_i$. It remains to show that there exists an almost isomorphism
       \[\calL\big|_{X_i}\cong (\OXp\big|_{X_i}/\rho p^c)^r)^{\rm al}.\]
       (Then one can conclude that $\calL$ is an $a$-small $\OXp$-representation by letting $b_i = c$ and $b = \frac{a+c}{2}$.)

       Fix an $i\in I$ as above, let $\frakU = \frakX_i = \Spf(R^+)$ and keep notations as in Convention \ref{Convention-small} By Theorem \ref{Thm-LocalSimpson} (2), we know that $\calL(U_{\infty})$ is a $c$-small $\widehat R_{\infty}^+$-representation of $\Gamma$ of rank $r$ and hence get a $\Gamma$-equivariant isomorphism
       \[\calL(U_{\infty})/p^c\rho \cong (\widehat R_{\infty}^+/\rho p^c)^r.\]
       As $\Gamma$ is the Galois group of the pro-\'etale Galois covering $U_{\infty}\to U$, we deduce the desired almost isomorphism by the proof of \cite[Lem. 4.10 (i)]{Sch-Pi}. 

       We finally prove Item (3). Let $\calL\mapsto (\calH(\calL),\theta_{\calH(\calL)})$ and $(\calH,\theta_{\calH})\mapsto \calL(\calH,\theta_{\calH})$ be the functor defined in Items (1) and (2), respectively. By adjunction, there exists a natural morphism of Higgs fields
       \[(\calH(\calL)\otimes_{\calO_{\frakX}}\calO\widehat \bC_{\pd}^+,\Theta_{\calH(\calL)})\to(\calL\otimes_{\OXp}\calO\widehat \bC_{\pd}^+,\Theta_{\calL}).\]
       It is clear that this is an isomorphism. Namely, since the problem is local, we are reduced to Theorem \ref{Thm-LocalSimpson}. As a consequence, we get isomorphisms
       \[\calL(\calH(\calL),\theta_{\calH(\calL)}) = (\calH(\calL)\otimes_{\calO_{\frakX}}\calO\widehat \bC_{\pd}^+)^{\Theta_{\calH(\calL)} = 0}\cong (\calL\otimes_{\OXp}\calO\widehat \bC_{\pd}^+)^{\Theta_{\calL} = 0} = \calL,\]
       where the last equality is due to that $\HIG(\calO\widehat \bC_{\pd}^+,\Theta)$ is a resolution of $\OXp$. On the other hand, by the definition of $\calL(\calH,\theta_{\calH})$ and by linear extension, there is a natural morphism of Higgs fields
       \[(\calL(\calH,\theta_{\calH})\otimes_{\OXp}\calO\widehat \bC_{\pd}^+,\Theta_{\calL(\calH,\theta_{\calH})})\to (\calH\otimes_{\calO_{\frakX}}\calO\widehat \bC_{\pd}^+,\Theta_{\calH}).\]
       By working in the local case and applying Theorem \ref{Thm-LocalSimpson}, we know this is also an isomorphism. Applying $\nu_*$ to both sides, we get an isomorphism of Higgs bundles
       \[(\calH(\calL(\calH,\theta_{\calH})),\theta_{\calH(\calL(\calH,\theta_{\calH}))})\to (\calH,\theta_{\calH}).\]
       Therefore, the functors in Items (1) and (2) are equivalences and are quasi-inverses of each other. The above argument also implies that the isomorphism stated in Item (3) is true. To finish the proof, we have to show these equivalences preserve tensor products and dualities. But this is a local problem and hence follows from Theorem \ref{Thm-LocalSimpson}.
   \end{proof}

     Finally, we compare Theorem \ref{Thm-IntegralSimpson} with \cite[Thm. 5.3]{Wang}. Let us recall some definitions in \textit{loc.cit.}. Assume $a\geq \frac{1}{p-1}$. An {\bf $a$-small generalised representation of rank $r$} in the sense of \cite[Def. 5.1]{Wang} is a locally finite free $\OX$-module of the form $\calL[\frac{1}{p}]$ with $\calL$ an $a$-small $\OXp$-representation of rank $r$ on $X_{\proet}$. In \cite[Def. 5.2]{Wang}, an {\bf $a$-small Higgs bundle of rank $r$} denotes a Higgs bundle with coefficients in $\calO_{\frakX}[\frac{1}{p}]$ of the form $(\calH[\frac{1}{p}],(\zeta_p-1)\theta_{\calH})$ with $(\calH,\theta_{\calH})$ an $a$-small Higgs bundle in the sense of Definition \ref{Dfn-SmallHiggs}. To distinguish notions, we define rational $a$-small Higgs bundles of rank $r$ as $a$-small Higgs bundles of rank $r$ in the sense of \cite[Def. 5.2]{Wang}. Let $\Vect^{\geq a}(\OX)$ and $\HIG^{\geq a}_W(\calO_{\frakX}[\frac{1}{p}])$ be the category of $a$-small generalised representations and the category of rational $a$-small Higgs bundles respectively.
   \begin{cor}\label{Cor-CompareWithWang}
       Keep assumptions as in Theorem \ref{Thm-IntegralSimpson}. Then the functors
       \[\calL\in\Vect^{\geq a}(\OX)\mapsto (\calH(\calL),\theta_{\calH(\calL)}):=(\nu_*(\calL\otimes_{\OX}\calO\widehat \bC_{\pd}),(\zeta_p-1)\nu_*(\Theta_{\calL}))\]
       defines an equivalence of categories $ \Vect^{\geq a}(\OX)\xrightarrow{\simeq}\HIG^{\geq a}(\calO_{\frakX}[\frac{1}{p}])$, which preserves tensor product and dualities. Moreover, the complex $\rR\nu_*(\calL\otimes_{\OX}\calO\widehat \bC_{\pd})$ is concentrated in degree $0$ and there exists a quasi-isomorphism
       \[\rR\nu_*(\calL)\simeq \HIG(\calH(\calL),\theta_{\calH(\calL)}).\]
       This equivalence coincides with that in \cite[Thm. 5.3]{Wang}
   \end{cor}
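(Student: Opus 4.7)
The plan is to deduce this corollary directly from Theorem \ref{Thm-IntegralSimpson} by inverting $p$, using Proposition \ref{Prop-ComparePeriodSheaf} to bridge our period sheaf $\calO\widehat\bC_{\pd}$ with the period sheaf $\calO\bC^{\dagger}$ used in \cite{Wang}. First, by the definitions recalled just before the corollary, every object in $\Vect^{\geq a}(\OX)$ is (non-canonically) of the form $\calL[\tfrac{1}{p}]$ with $\calL\in\Rep^{\geq a}(\OXp)$, and every object in $\HIG^{\geq a}(\calO_{\frakX}[\tfrac{1}{p}])$ is of the form $(\calH[\tfrac{1}{p}],(\zeta_p-1)\theta_{\calH})$ with $(\calH,\theta_{\calH})\in\HIG^{\geq a}(\frakX)$. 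Thus once I identify
\[
\nu_*(\calL\otimes_{\OX}\calO\widehat\bC_{\pd}) \;=\; \nu_*(\calL\otimes_{\OXp}\calO\widehat\bC^+_{\pd})[\tfrac{1}{p}]
\]
and similarly for the Higgs field, Theorem \ref{Thm-IntegralSimpson}(3) transports the equivalence, including preservation of tensor products and dualities, down to the rational level. Well-definedness of a quasi-inverse at the rational level amounts to choosing an integral lattice $\calH$ inside a given rational Higgs bundle with the Higgs field divisible by $(\zeta_p-1)$; this can be done locally and then glued, using the integral equivalence.

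Next, for the vanishing $\rR\nu_*(\calL\otimes_{\OX}\calO\widehat\bC_{\pd})$ in positive degrees: Theorem \ref{Thm-IntegralSimpson}(1) says $\rL\eta_{\rho(\zeta_p-1)}\rR\nu_*(\calL\otimes_{\OXp}\calO\widehat\bC^+_{\pd})$ is concentrated in degree zero, and since for any complex $C$ one has $\rH^i(\rL\eta_f C)=\rH^i(C)/\rH^i(C)[f]$, the cohomology sheaves $\rR^i\nu_*(\calL\otimes_{\OXp}\calO\widehat\bC^+_{\pd})$ for $i\geq 1$ are annihilated by $\rho(\zeta_p-1)$, hence vanish after inverting $p$. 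Combining this with the exactness of the Higgs complex $\HIG(\calO\widehat\bC_{\pd},\Theta)$ as a resolution of $\OX$ (Proposition \ref{Prop-PeriodSheaf} rationally), we obtain the quasi-isomorphism
\[
\rR\nu_*(\calL)\simeq \rR\nu_*\HIG(\calL\otimes_{\OX}\calO\widehat\bC_{\pd},\Theta_{\calL}) \simeq \HIG(\calH(\calL),\theta_{\calH(\calL)}),
\]
where the second quasi-isomorphism uses the degree-zero concentration to exchange $\rR\nu_*$ with the totalization of the Higgs complex, and where the factor $(\zeta_p-1)$ built into $\theta_{\calH(\calL)}$ is absorbed by rescaling each degree of the complex by the appropriate power of $(\zeta_p-1)$, a quasi-isomorphism rationally.

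Finally, to show that this equivalence coincides with the one in \cite[Thm. 5.3]{Wang}: the latter is constructed via the overconvergent period sheaf $(\calO\bC^{\dagger},\widetilde\Theta)$. By Proposition \ref{Prop-ComparePeriodSheaf}(1)-(2), there is a canonical inclusion $\calO\widehat\bC^+_{\pd}\hookrightarrow\calO\widehat\bC^+_{\rho}$ satisfying $\widetilde\Theta=(\zeta_p-1)\Theta$, and $\calO\bC^{\dagger}$ sits inside $\calO\widehat\bC_{\pd}$ after inverting $p$. Therefore $\nu_*(\calL\otimes_{\OX}\calO\widehat\bC_{\pd})$ with Higgs operator $(\zeta_p-1)\nu_*\Theta_{\calL}=\nu_*\widetilde\Theta_{\calL}$ agrees, as a Higgs bundle, with the pushforward obtained by first enlarging $\calO\bC^{\dagger}$ to $\calO\widehat\bC_{\pd}$ (both giving the same $\Gamma$-invariants locally, by the proof of Theorem \ref{Thm-LocalSimpson} and the corresponding local calculation in \cite[Thm. 4.3]{Wang} via Remark \ref{Rmk-CompareLocalSimpson}). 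The main subtle step is this last compatibility: one must check that enlarging from $\calO\bC^{\dagger}$ to $\calO\widehat\bC_{\pd}$ does not change the degree-zero pushforward; this is guaranteed locally by the explicit formula $(V_0\otimes_{R^+}A_d)^{\Gamma}=\exp(\Theta_1Y_1+\cdots+\Theta_dY_d)V_0$ established in (\ref{Equ-ExplicitCorrespondence}), which under the identification of Proposition \ref{Prop-ComparePeriodSheaf} matches the formula $\prod_i\gamma_i^{-Y_i}V_0$ of \cite[Rem. 4.10]{Wang}.
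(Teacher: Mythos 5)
Your proof proposal is correct and takes essentially the same route as the paper's. The paper's own proof for the first part simply states that the assertions follow "by the same arguments for the proof of Theorem \ref{Thm-IntegralSimpson}"; your elaboration (invert $p$, deduce degree-zero concentration from the fact that $\rH^{\geq 1}$ of $\rR\nu_*(\calL\otimes\calO\widehat\bC_{\pd}^+)$ is killed by $\rho(\zeta_p-1)$, then commute $\rR\nu_*$ with the Higgs totalization) is a valid and slightly more explicit unpacking of what that phrase means. For the last sentence (coincidence with the equivalence of \cite[Thm.~5.3]{Wang}), both you and the paper proceed identically: use Proposition \ref{Prop-ComparePeriodSheaf} to construct the natural comparison map of Higgs sheaves $(\calO\bC^{\dagger},\widetilde\Theta)\to(\calO\widehat\bC_{\pd},(\zeta_p-1)\Theta)$, tensor with $\calL$ and push forward, and then reduce the check that this is an isomorphism to the local statement of Remark \ref{Rmk-CompareLocalSimpson}. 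The one point worth tightening is your remark on well-definedness of the quasi-inverse: rather than phrasing it as "choose an integral lattice and glue," note that both the forward functor and its putative quasi-inverse $(\calH,\theta)\mapsto(\calH\otimes_{\calO_{\frakX}[1/p]}\calO\widehat\bC_{\pd})^{\Theta_{\calH}=0}$ are given by direct global formulas on rational objects, so no global lattice choice is needed; one only chooses lattices locally to verify they are mutually inverse, which is exactly the reduction to Theorem \ref{Thm-LocalSimpson}.
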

   \begin{proof}
       All assertions can be concluded by the same arguments for the proof of Theorem \ref{Thm-IntegralSimpson} except the last sentence. 
       Let $(\calO\bC^{\dagger},\widetilde \Theta)$ be the period sheaf appearing in \cite[thm. 5.3]{Wang}. By Proposition \ref{Prop-ComparePeriodSheaf}, there exists a natural morphism of Higgs fields
       \[(\calO\bC^{\dagger},\widetilde \Theta) \to (\calO\widehat \bC_{\pd},(\zeta_p-1)\Theta).\]
       Therefore, for any $a$-small generalised representation $\calL$, we have a natural morphism
       \[(\calL\otimes_{\OX}\calO\bC^{\dagger},\id_{\calL}\otimes\widetilde \Theta) \to (\calL\otimes_{\OX}\calO\widehat \bC_{\pd},(\zeta_p-1)\id_{\calL}\otimes\Theta)\]
       and a fortiori a morphism
       \[(\nu_*(\calL\otimes_{\OX}\calO\bC^{\dagger}),\nu_*(\id_{\calL}\otimes\widetilde \Theta))=:(\calH_W(\calL),\theta_{\calH_W(\calL)}) \to (\calH(\calL),\theta_{\calH}).\]
       By \cite[Thm. 5.3]{Wang}, $(\calH_W(\calL),\theta_{\calH_W(\calL)})$ is exactly the rational $a$-small Higgs bundles corresponding to $\calL$ in the sense of \cite[Thm. 5.3]{Wang}. So it suffices that the above map $(\calH_W(\calL),\theta_{\calH_W(\calL)}) \to (\calH(\calL),\theta_{\calH})$ is an isomorphism. Since the problem is local, we are reduced to Remark \ref{Rmk-CompareLocalSimpson}.
   \end{proof}

\section{Cohomological comparison and a conjectural analogue of Deligne--Illusie decomposition}\label{Sec-PerfectComplex}
  Throughout this section, we always assume $\lambda\in\calO_C$ such that $\nu_p(\lambda)\leq \nu_p(\rho)$.

  Recall that in Theorem \ref{Thm-IntegralSimpson}, it is hard to compare $\HIG(\calH,\theta_{\calH})$ with $\rL\eta_{\rho(\zeta_p-1)}\rR\nu_*\calL$
  for corresponding $\calL$ and $(\calH,\theta_{\calH})$ directly (cf. Remark \ref{Rmk-Warnning}). So the following questions appear naturally.
  \begin{ques}\label{Ques-PerfectComplex}
      \begin{enumerate}
          \item[(1)] Is $\rL\eta_{\rho(\zeta_p-1)}\rR\nu_*\calL$ a perfect complex in $D(\frakX)$?

          \item[(2)] If the answer of (1) is ``yes'', does the natural morphism in Corollary \ref{Cor-IntegralSimpson} upgrade to a quasi-isomorphism
          $\HIG(\calH,\theta_{\calH})\simeq\rL\eta_{\rho(\zeta_p-1)}\rR\nu_*\calL$?
      \end{enumerate}
  \end{ques}
  Note that if the answer to (1) is negative, $\HIG(\calH,\theta_{\calH})$ will never be isomorphic to $\rL\eta_{\rho(\zeta_p-1)}\rR\nu_*\calL$ as the former is a perfect complex while the latter is not. We try to investigate these questions in this section.

\subsection{The $\rL\eta_{(\zeta_p-1)\lambda}\rR\nu_*\calL$ is a perfect complex}\label{SSec-PerfectComplex}
  In this subsection, we do {\bf Not} assume $\frakX$ is liftable and are going to show the following result:
  \begin{thm}\label{Thm-PerfectComplex}
      Assume $a\geq \frac{1}{p-1}$.
      For any $a$-small $\OXp$-representation $\calL$, $\rL\eta_{(\zeta_p-1)\lambda}\rR\nu_*\calL$ is a perfect complex in $D(\frakX)$ concentrated in degree $[0,d]$ with $p$-torsion free $\rH^0$. As a consequence, by applying \cite[Lem. 6.10]{BMS18}, we obtain a canonical map $\rL\eta_{(\zeta_p-1)\lambda}\rR\nu_*\calL\to\rR\nu_*\calL$.
  \end{thm}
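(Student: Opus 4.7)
The statement is local on $\frakX_{\et}$: perfectness, the bound on the amplitude, and the $p$-torsion freeness of $\rH^0$ can all be checked after pullback along an étale cover. Therefore I would reduce to the case where $\frakX = \frakU = \Spf(R^+)$ is affine and small in the sense of Convention \ref{Convention-small}, equipped with a fixed chart, so that one has the Galois pro-\'etale perfectoid cover $U_\infty \to U$ with Galois group $\Gamma \cong \Zp^d$ and the affinoid perfectoid $(\widehat R_\infty, \widehat R_\infty^+)$ at the top. By Example \ref{Exam-LocalSmallRep}, the evaluation $V:=\calL(U_\infty)$ is then a $b$-small $\widehat R_\infty^+$-representation of $\Gamma$ for some rational $b>a\geq\frac{1}{p-1}$.

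Next I would establish the local comparison
\[
\rL\eta_{(\zeta_p-1)\lambda}\rR\Gamma(\Gamma, V)\ \simeq\ \rL\eta_{(\zeta_p-1)\lambda}\rR\Gamma(U_{\proet}/\frakU,\calL).
\]
The argument parallels Lemma \ref{Lem-Key} and Corollary \ref{Cor-Key}, but applied to the trivial period sheaf $\calP = \OXp$ instead of $\calO\widehat\bC^+_{\pd}$: the natural map from the continuous $\Gamma$-cochain complex to the pro-\'etale cohomology of $\calL|_{U}$ is an almost quasi-isomorphism by the usual almost purity argument (here one uses the smallness hypothesis to trivialize $\calL$ modulo $\rho p^{b_i}$ and compare via the Galois cover $U_\infty \to U$). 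Since $\nu_p((\zeta_p-1)\lambda)>0$, the operator $\rL\eta_{(\zeta_p-1)\lambda}$ turns almost quasi-isomorphisms into genuine quasi-isomorphisms by \cite[Lem.~8.11(2)]{BMS18}, and the $\frakm_C$-torsion freeness of the $\Gamma$-cohomology groups needed to invoke this result follows from (the analogue for $\calP=\OXp$ of) Corollary \ref{Cor-Kunnech}, i.e. from the Koszul description of $\rR\Gamma(\Gamma,V)$ combined with the decompletion Theorem \ref{Thm-Decompletion}.

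Having identified the target locally with $\rL\eta_{(\zeta_p-1)\lambda}\rR\Gamma(\Gamma,V)$, the local assertion is exactly Lemma \ref{Lem-Local L-eta}: this complex is perfect over $R^+$, concentrated in degrees $[0,d]$, and its $\rH^0$ is $p$-torsion free (since it is identified by the Koszul computation with a submodule of the finite free module $V_0$). Globalizing, the cohomology sheaves $\calH^i(\rL\eta_{(\zeta_p-1)\lambda}\rR\nu_*\calL)$ vanish outside $[0,d]$ and the complex is \'etale-locally quasi-isomorphic to a bounded complex of finite free $\calO_\frakX$-modules, hence perfect in $D(\frakX)$; moreover $\rH^0$ is $p$-torsion free (checked locally). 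Finally, since $\calO_C$ is a domain, $p$-torsion freeness of $\rH^0$ implies $(\zeta_p-1)\lambda$-torsion freeness, so \cite[Lem.~6.10]{BMS18} produces the canonical map $\rL\eta_{(\zeta_p-1)\lambda}\rR\nu_*\calL \to \rR\nu_*\calL$.

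The main obstacle I anticipate is the integral almost comparison in the second paragraph. The rational version is standard, but at the integral level one must verify that the cofiber of $\rR\Gamma(\Gamma,\calL(U_\infty)) \to \rR\Gamma(U_{\proet}/\frakU,\calL)$ is indeed almost zero, not merely after inverting $p$; this is where the smallness hypothesis and the explicit decomposition of $\widehat R_\infty^+$ as a sum of $R^+$-lines indexed by $\alpha\in\bZ[1/p]\cap[0,1)^d$ play an essential role, and where one has to check that $(\zeta_p-1)\lambda$ lies deep enough in $\frakm_C$ to annihilate the almost-zero cofiber after applying $\rL\eta$. Once this comparison is in place, the rest of the argument is a straightforward application of Lemma \ref{Lem-Local L-eta} plus standard descent of perfectness along the \'etale topology.
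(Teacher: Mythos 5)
Your overall strategy -- localize to small affine $\frakX = \Spf(R^+)$, compare $\rR\Gamma(\Gamma, V)$ with $\rR\Gamma(U_{\proet}/\frakU,\calL)$ via an almost quasi-isomorphism, upgrade to a genuine quasi-isomorphism after $\rL\eta_{(\zeta_p-1)\lambda}$, and apply Lemma \ref{Lem-Local L-eta} -- is exactly the paper's route. However, there is a genuine gap in how you verify the hypotheses of \cite[Lem. 8.11(2)]{BMS18}.

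You write that the required ``$\frakm_C$-torsion freeness of the $\Gamma$-cohomology groups'' follows from ``the analogue for $\calP=\OXp$ of Corollary \ref{Cor-Kunnech}''. But that analogue does not exist, and the mechanism of Corollary \ref{Cor-Kunnech} does not transfer. Corollary \ref{Cor-Kunnech} works because $V\otimes\widehat S_{\pd}^+\cong (V\otimes\widehat S_{\pd}^+)^{\Gamma}\otimes_{R^+}\widehat S_{\pd}^+$ and $\widehat S_{\pd}^+$ is a topologically free $\calO_C$-module, so each $\rH^i$ is a free module tensored with an explicit $\calO_C$-module that has no almost zero elements. For $\calP = \OXp$, no such factorization holds: after decompleting $V\cong V_0\otimes_{R^+}\widehat R_\infty^+$ and decomposing $\widehat R_\infty^+$ into $R^+$-lines $R^+\underline T^{\underline\alpha}$, the summands $\rH^n(\Gamma, V_0\underline T^{\underline\alpha})$ for $\underline\alpha\neq 0$ are \emph{genuinely torsion} $R^+$-modules (quotients of Koszul complexes killed by $\zeta_p-1$). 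One cannot argue as you do; instead, the paper observes that each such $\rH^n(\Gamma, V_0\underline T^{\underline\alpha})$ is a \emph{coherent} $R^+$-module (because $R^+$ is coherent, \cite[\S 7.3 Cor. 6]{Bos}, and the Koszul complex is perfect), and then invokes a new general lemma (Lemma \ref{Lem-coherent}) asserting that any coherent module over small smooth $R^+$ has no non-trivial almost zero elements. That lemma is proved by a geometric argument (lifting a closed point to $\Spf(\calO_C)\to\Spf(R^+)$ and using that $\frakm_C$ is not finitely generated), and it is the crux of the proof; your proposal has no substitute for it.

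Relatedly, you misidentify the ``main obstacle'': the integral almost quasi-isomorphism $\rR\Gamma(\Gamma,\calL(U_\infty))\to\rR\Gamma(U_{\proet}/\frakU,\calL)$ is already established in \cite[Lem. 5.11]{Wang} (which the paper cites directly, rather than re-deriving Lemma \ref{Lem-Key} for $\calP=\OXp$) and needs no re-proof. The genuinely hard step is precisely the no-almost-zero-elements check that you outsource to a non-existent analogue of Corollary \ref{Cor-Kunnech}. Also, your phrasing that one must check ``$(\zeta_p-1)\lambda$ lies deep enough in $\frakm_C$ to annihilate the almost-zero cofiber'' misstates what \cite[Lem. 8.11(2)]{BMS18} requires: the condition is on the cohomology modules $\rH^n$ and $\rH^n/(\zeta_p-1)\lambda$ having no almost zero elements, not on the size of $(\zeta_p-1)\lambda$. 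The rest of your argument (application of Lemma \ref{Lem-Local L-eta}, concentration in $[0,d]$, $p$-torsion-freeness of $\rH^0$, descent of perfectness, and the construction of the map via \cite[Lem. 6.10]{BMS18}) is correct and matches the paper.
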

  \begin{rmk}
      By Theorem \ref{Thm-PerfectComplex}, the answer to Question \ref{Ques-PerfectComplex}(1) is ``yes'' (by letting $\lambda = \rho$) even when $\frakX$ is not liftable.
      When $\lambda = 1$ and $\calL = \OXp$, the result was proved in \cite[\S 8]{BMS18}.
  \end{rmk}
  \begin{proof}[Proof of Theorem \ref{Thm-PerfectComplex}:]
      Similar to the proof of Theorem \ref{Thm-IntegralSimpson} (1), it suffices to show that for small affine $\frakX = \Spf(R^+)$\footnote{In this case, it follows from the smoothness that $\frakX$ admits a unique lifting over $\rA_{\inf}$ up to isomorphisms.}, as a complex of $R^+$-modules, $\rL\eta_{\zeta_p-1}\rR\Gamma(X_{\proet},\calL)$ is perfect and concentrated in degree $[0,d]$. Since $\calL$ is $a$-small, by \cite[Lem. 5.11]{Wang}, we get an almost quasi-isomorphism
      \[\rR\Gamma(\Gamma,V)\rightarrow\rR\Gamma(X_{\proet},\calL),\]
      where $V = \calL(X_{\infty})$ is a $b$-small $\widehat R^+_{\infty}$-representation of $\Gamma$ for some $b>a$.

      By Lemma \ref{Lem-Local L-eta}, it is enough to show that after applying $\rL\eta_{(\zeta_p-1)\lambda}$, we have a quasi-isomorphism
      \[\rL\eta_{(\zeta_p-1)\lambda}\rR\Gamma(\Gamma,V)\simeq\rL\eta_{(\zeta_p-1)\lambda}\rR\Gamma(X_{\proet},\calL).\]
      Thanks to \cite[Lem. 8.11(2)]{BMS18}, it suffices to show that for any $n\geq 0$, $\rH^n(\Gamma,V)$ and $\rH^n(\Gamma,V)/\lambda(\zeta_p-1)$ have no almost zero elements.

      Let $V_0$ be the $b$-small $R^+$-representation of $\Gamma$ corresponding to $V$ in the sense of Theorem \ref{Thm-Decompletion}. In particular, we have a $\Gamma$-equivariant isomorphism $V\cong V_0\otimes_{R^+}\widehat R_{\infty}^+$. Now, write
      \[\widehat R_{\infty}^+ = \widehat {\bigoplus_{\underline \alpha = (\alpha_1,\dots,\alpha_d)\in([0,1)\cap\bZ[1/p])^d}}R^+T_1^{\alpha_1}\cdots T_d^{\alpha_d}.\]
      The ``moreover'' part of Theorem \ref{Thm-Decompletion} implies that 
      \[\rR\Gamma(\Gamma,V) = \bigoplus_{\underline \alpha\in([0,1)\cap\bZ[1/p])^d}\rR\Gamma(\Gamma,V_0\underline T^{\underline \alpha})\]
      such that $\rR\Gamma(\Gamma,V_0\underline T^{\underline \alpha})$ is killed by $\zeta_p-1$ as long as $\underline \alpha\neq 0$. Since $\rR\Gamma(\Gamma,V_0\underline T^{\underline \alpha})$ is represented by the Koszul complex
      \[V_0\underline T^{\underline \alpha}\xrightarrow{\gamma_1-1,\dots,\gamma_d-1}(V_0\underline T^{\underline \alpha})^d\to\cdots,\]
      which is a perfect complex of $R^+$-modules. By the coherence of $R^+$ (cf. \cite[\S 7.3 Cor. 6]{Bos}), we know for any $n\geq 0$, $\rH^n(\Gamma,V_0\underline T^{\underline \alpha})$ is a coherent $R^+$-module. Therefore, $\rH^n(\Gamma,V_0\underline T^{\underline \alpha})/(\zeta_p-1)\lambda$ is also coherent. Now we can conclude by applying Lemma \ref{Lem-coherent} below.
  \end{proof}

  \begin{lem}\label{Lem-coherent}
      Let $\frakX = \Spf(R^+)$ be small smooth. Then every coherent $R^+$-module $M$ has no non-trivial almost zero elements.
  \end{lem}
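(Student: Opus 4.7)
The plan is to reduce to a claim about finitely generated ideals of $R^+$ and then derive a contradiction by lifting a closed point of the ``special fibre'' $R^+/\frakm_C R^+$ to an $\calO_C$-valued point of $R^+$.

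First, given $m\in M$ with $\frakm_C m=0$, the cyclic submodule $R^+m\subset M$ is finitely generated, hence finitely presented by coherence of $M$; consequently the annihilator $I:=\mathrm{Ann}_{R^+}(m)$ is a finitely generated ideal of $R^+$ containing $\frakm_C$, and $m=0$ if and only if $I=R^+$. The lemma therefore reduces to showing that \emph{every finitely generated ideal $I\subset R^+$ with $\frakm_C\subset I$ equals $R^+$}. Setting $\bar R:=R^+/\frakm_C R^+$, the étale chart $\Box$ of Convention~\ref{Convention-small} base-changes along the reduction $\calO_C\za T_1^{\pm 1},\dots,T_d^{\pm 1}\ya\twoheadrightarrow\bar{\bF}_p[T_1^{\pm 1},\dots,T_d^{\pm 1}]$ to exhibit $\bar R$ as an étale (hence smooth, finite-type, Noetherian) $\bar{\bF}_p$-algebra. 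A straightforward lifting argument (using $\frakm_C R^+\subset I$) shows that $I=R^+$ if and only if the image $\bar I\subset\bar R$ of $I$ equals $\bar R$.

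Now suppose for contradiction that $\bar I\subsetneq\bar R$ and pick a closed point $x\in V(\bar I)\subset\Spec\bar R$, whose residue field is $\bar{\bF}_p$ since $C$ is algebraically closed. The key step is to lift $x$ to a ring homomorphism $\tilde x:R^+\to\calO_C$ with $\tilde x|_{\calO_C}=\id$ whose composition with $\calO_C\twoheadrightarrow\bar{\bF}_p$ recovers the point $R^+\to\bar R\xrightarrow{x}\bar{\bF}_p$. The image of $x$ in $\Spec\bar{\bF}_p[T_i^{\pm 1}]$ has coordinates $t_i\in\bar{\bF}_p^\times$; picking units $\tilde t_i\in\calO_C^\times$ lifting each $t_i$ defines $\tilde y:\calO_C\za T_i^{\pm 1}\ya\to\calO_C$. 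Since $R^+$ is topologically finitely presented étale over $\calO_C\za T_i^{\pm 1}\ya$, the base change $R^+\hat\otimes_{\calO_C\za T_i^{\pm 1}\ya}\calO_C$ along $\tilde y$ is finite étale over $\calO_C$, hence a finite product of copies of $\calO_C$ (as $C$ is algebraically closed). The desired $\tilde x$ is then obtained by projecting onto the factor through which the $\bar{\bF}_p$-point of $R^+$ factors, with uniqueness afforded by the Henselian property of $\calO_C$.

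Finally, writing $I=(g_1,\dots,g_k)$ and applying $\tilde x$ to the relation $p^\epsilon=\sum_ia_{i,\epsilon}g_i$ (with $a_{i,\epsilon}\in R^+$) arising from $p^\epsilon\in\frakm_C\subset I$, one obtains $p^\epsilon=\sum_i\tilde x(a_{i,\epsilon})\tilde x(g_i)$ in $\calO_C$. Since $\bar I$ lies in the maximal ideal of $\bar R$ at $x$, each $\tilde x(g_i)$ lies in $\frakm_C$; if all of them vanish then already $p^\epsilon=0$, which is absurd, and otherwise setting $\alpha:=\min\{\nu_p(\tilde x(g_i)):\tilde x(g_i)\neq 0\}>0$ one obtains $\epsilon=\nu_p(p^\epsilon)\geq\alpha$ for every $\epsilon>0$, a contradiction. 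The main obstacle will be rigorously carrying out the lift $\tilde x$ in the previous paragraph: this requires both the topological finite presentation of $R^+$ as étale over the base Tate algebra (built into the smallness hypothesis) and the Henselian property of $\calO_C$, ensuring that the $\bar{\bF}_p$-point of the étale $\calO_C$-algebra $R^+\hat\otimes\calO_C$ lifts uniquely to an $\calO_C$-point.
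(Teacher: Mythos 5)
Your proposal is correct and follows essentially the same argument as the paper: coherence yields a finitely generated annihilator ideal $I\supset\frakm_C$, and the étale chart together with the (strictly Henselian) structure of $\calO_C$ lets you lift a closed point of the special fibre to an $\calO_C$-point $\tilde x:R^+\to\calO_C$. The only difference is in how the final contradiction is phrased — the paper observes that $A\otimes_{\calO_C\langle\underline T^{\pm1}\rangle,t}\calO_C\cong\bar\kappa^s$ cannot be finitely presented because $\frakm_C$ is not finitely generated, while you run a direct valuation estimate on $\tilde x(g_i)$; both express the same underlying fact that $\frakm_C$ is not finitely generated.
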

  \begin{proof}
      Otherwise, let $x\in M\setminus\{0\}$ be a non-trivial almost zero element and $A$ be the sub-$R^+$-module of $M$ generated by $x$. As $M$ is coherent, $A$ is of finite presentation. Therefore, there exists a finite generated ideal $I\subset R^+$ fitting into the exact sequence
      \begin{equation}\label{SES-Coherent-I}
          0\to I\to R^+\to A\to 0
      \end{equation}
      Since $x$ is almost zero, we have $\frakm_CR^+\subset I$. So $A$ is a non-zero $\overline \kappa$-algebra, where $\overline \kappa = \calO_C/\frakm_C$. 
      
      Let $\overline t = (\overline t_1,\dots,\overline t_d): A\to \overline \kappa$ be a closed point in $\Spec (A)$, where $\overline t_i$ denotes the image of $T_i$ in $\overline \kappa$. Let $t_i\in\calO_C$ be a fixed lifting of $\overline t_i$. Then we know that $t_i$'s are invertible. So we get a surjective morphism $t:\calO_C\za\underline T^{\pm 1}\ya\to\calO_C$ sending each $T_i$ to $t_i$.

      Let $\Box:\calO_C\za\underline T^{\pm 1}\ya\to R^+$ be the chosen frame on $R^+$. Applying $-\otimes_{\calO_C\za\underline T^{\pm 1}\ya,t}\calO_C$ to the short exact sequence (\ref{SES-Coherent-I}), we get an exact sequence
      \begin{equation}\label{SES-Coherent-II}
          I\otimes_{\calO_C\za\underline T^{\pm 1}\ya,t}\calO_C\to R^+\otimes_{\calO_C\za\underline T^{\pm 1}\ya,t}\calO_C\to A\otimes_{\calO_C\za\underline T^{\pm 1}\ya,t}\calO_C\to 0.
      \end{equation}
      Since $I$ is a finite generated ideal of $R^+$, we know that $A\otimes_{\calO_C\za\underline T^{\pm 1}\ya,t}\calO_C$ is a finite presented $(R^+\otimes_{\calO_C\za\underline T^{\pm 1}\ya,t}\calO_C)$-module and is killed by $\frakm_C$. By the construction of $t$, the $A\otimes_{\calO_C\za\underline T^{\pm 1}\ya,t}\calO_C$ is non-zero. As $\Box$ is an \'etale morphism, we know that 
      \[R^+\widehat \otimes_{\calO_C\za\underline T^{\pm 1}\ya,t}\calO_C \cong \calO_C^r\] 
      is \'etale over $\calO_C$ and hence a finite product of $\calO_C$ for some $r\geq 1$. Therefore, 
      \[A\otimes_{\calO_C\za\underline T^{\pm 1}\ya,t}\calO_C\cong \overline \kappa^s\]
      for some $1\leq s\leq r$, which is not of finite presentation (as $\frakm_C$ is not finitely generated). So we get a contradiction and complete the proof.
  \end{proof}
  \begin{rmk}\label{Rmk-Coherent}
    Let $R^+$ be an $\calO_C$-algebra of topologically finite presentation. If it satisfies the condition that any closed point $\overline t:\Spec(\overline \kappa)\to\Spf(R^+)$ lifts to an $\calO_C$-point $t:\Spf(\calO_C)\to \Spf(R^+)$\footnote{Indeed, the above condition on lifting points holds for any $R^+$ which is of topologically finite presentation. To see this, let $\frakX = \Spf(R^+)$ with the adic generic fiber $X$, and $\overline s\in\frakX(\overline \kappa)$ be the given point. Then $\overline s$ lifts to a point $\widetilde s\in X(C)$, as the specialisation map $X(C)\to \frakX(\overline \kappa)$ is always surjective (\cite[\S 8.3 Prop. 8]{Bos}). Now we can conclude by noticing that $\widetilde s$ extends to a point $s:\Spf(\calO_C)\to \frakX$ as the set of power-bounded elements of $C$ is exactly $\calO_C$. We learn the above argument from Fei Liu and thank him for allowing us to include it here.}, then by the same argument for the proof of Lemma \ref{Lem-coherent}, one can show that any coherent $R^+$-module has no non-trivial almost zero elements.
  \end{rmk}

\subsection{Cohomological comparison: Truncation one case}\label{SSec-TruncationOne}
  In this subsection, we always assume $\frakX$ is liftable and fix a lifting.
  We give a partial answer to Question \ref{Ques-PerfectComplex}(2). 

\begin{thm}\label{Thm-TruncationOne}
    Assume $a\geq\frac{1}{p-1}$. Then for any $a$-small $\OXp$-representation $\calL$ with induced Higgs bundle $(\calH,\theta_{\calH})$, the composite
    $\tau^{\leq 1}\HIG(\calH,\theta_{\calH})\to\HIG(\calH,\theta_{\calH})\xrightarrow{{\rm Cor.} \ref{Cor-IntegralSimpson}}\rR\nu_*\calL$ uniquely factors through $\rL\eta_{\rho(\zeta_p-1)}\rR\nu_*\calL\xrightarrow{{\rm Thm.} \ref{Thm-PerfectComplex}}\rR\nu_*\calL$ and induces a quasi-isomorphism 
    \[\tau^{\leq 1}\HIG(\calH,\theta_{\calH})\xrightarrow{\simeq} \tau^{\leq 1}\rL\eta_{\rho(\zeta_p-1)}\rR\nu_*\calL.\]
\end{thm}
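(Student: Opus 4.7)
I would prove this theorem by a local-to-global reduction, leveraging the Koszul-theoretic computations of \S\ref{SSec-Local Simpson} together with the fact that $\rL\eta_{\rho(\zeta_p-1)}$ kills any complex whose cohomology is annihilated by $\zeta_p - 1$.

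First, I would construct the factorization $\tau^{\leq 1}\HIG(\calH,\theta_\calH) \to \rL\eta_{\rho(\zeta_p-1)}\rR\nu_*\calL$. The Higgs resolution $\calL \simeq \HIG(\calL \otimes_{\OXp} \calO\widehat\bC_{\pd}^+, \Theta_\calL)$ yields a first-quadrant spectral sequence
\[E_1^{p,q} = \rR^q\nu_*(\calL \otimes_{\OXp} \calO\widehat\bC_{\pd}^+) \otimes_{\calO_\frakX} \rho^p\widehat\Omega^p_\frakX(-p) \Rightarrow \rR^{p+q}\nu_*\calL,\]
where the row $E_1^{\bullet, 0}$ recovers $\HIG(\calH,\theta_\calH)$ and, by Theorem~\ref{Thm-IntegralSimpson}(1), each $E_1^{p,q}$ with $q \geq 1$ is killed by $f := \rho(\zeta_p-1)$. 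The edge map of the spectral sequence recovers the map of Corollary~\ref{Cor-IntegralSimpson}, and a careful analysis of the induced filtration on $\rR\nu_*\calL$ shows that, after applying $\rL\eta_f$ and restricting to $\tau^{\leq 1}$, the contributions of rows $q \geq 1$ disappear, producing the desired factorization. Uniqueness follows since any two such factorizations differ by a map landing in the $f$-torsion part of $\rH^{\leq 1}(\rR\nu_*\calL)$, which is killed after $\rL\eta_f$.

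Second, since quasi-isomorphism may be checked \'etale-locally on $\frakX$, I would reduce to the small affine case $\frakX = \frakU = \Spf(R^+)$ equipped with a chart as in Convention~\ref{Convention-small}. Let $V = \calL(U_\infty)$ and let $V_0 \cong H := \calH(\frakU)$ be its decompletion via Theorem~\ref{Thm-Decompletion}, with $\gamma_i$ acting on $V_0$ through $\exp(-(\zeta_p-1)\Theta_i)$ and $\theta_H = \sum_i \Theta_i \otimes \frac{d\log T_i}{t}$. An analogue of Corollary~\ref{Cor-Key} for $\calL$ itself (obtained by applying Lemma~\ref{Lem-Key} to the Higgs resolution of $\calL$) identifies $\rL\eta_f\rR\Gamma(U_\proet/\frakU, \calL)$ with $\rL\eta_f\rR\Gamma(\Gamma, V)$. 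Using Theorem~\ref{Thm-Decompletion}, $\rR\Gamma(\Gamma, V) \simeq \rR\Gamma(\Gamma, V_0) \oplus C$ with $C$ concentrated in positive degrees and killed by $\zeta_p - 1$; since $\zeta_p - 1 \mid f$, the cohomology of $C$ is killed by $f$, so $\rL\eta_f C = 0$. Finally Lemma~\ref{Lem-Local L-eta} identifies $\rL\eta_f\rR\Gamma(\Gamma, V_0)$ with the Koszul complex $\rK(V_0; \rho^{-1}\Theta_1, \ldots, \rho^{-1}\Theta_d)$, which is exactly $\HIG(H,\theta_H)$ written in the basis $\{\rho\cdot d\log T_i/t\}$ of $\rho\widehat\Omega^1_{R^+}(-1)$.

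The main obstacle will be checking the compatibility of this local Koszul identification with the factorization constructed in the first step. One needs to thread naturality through the Higgs resolution of $\calL$, the period sheaf $\calO\widehat\bC_{\pd}^+$, the decompletion equivalence of Theorem~\ref{Thm-Decompletion}, and the identification of $\rL\eta_f$ with a Koszul complex via \cite[Lem. 7.9]{BMS18}. The restriction to $\tau^{\leq 1}$ in the statement reflects that the map of Corollary~\ref{Cor-IntegralSimpson} has cofiber killed only by a power of $f$ rather than $f$ itself, so the factorization through $\rL\eta_f$ matches the local Higgs complex only up to degree one -- beyond degree one, the filtration differentials of the spectral sequence produce corrections that are not canonically controlled by $\HIG(\calH,\theta_\calH)$, except in low-dimensional cases (as in Theorem~\ref{Intro-Curve}) where there are no such higher-degree obstructions.
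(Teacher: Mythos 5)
Your scaffolding — local-to-global reduction, decompletion via Theorem~\ref{Thm-Decompletion}, identification of $\rL\eta_f\rR\Gamma(\Gamma,V_0)$ with a Koszul complex via Lemma~\ref{Lem-Local L-eta} — is the right framework, and it is essentially the setting the paper works in. But you have left out exactly the step that carries the real content of the proof, and you even flag it yourself: ``checking the compatibility of this local Koszul identification with the factorization constructed in the first step.'' That is not a routine verification; it is the heart of the argument.

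The point is that the quasi-isomorphism $\rL\eta_f\rR\Gamma(\Gamma,V_0)\simeq \rK(V_0;\rho^{-1}\Theta_1,\dots,\rho^{-1}\Theta_d)$ coming from \cite[Lem.~7.9]{BMS18} (via Lemma~\ref{Lem-Local L-eta}) is \emph{chart-dependent} — see the remark directly following that lemma. What you actually need to show is that the \emph{canonical} map, namely the one arising from $\HIG(\calH,\theta_\calH)\simeq \nu_*\HIG(\calL\otimes\calO\widehat\bC_{\pd}^+,\Theta_\calL)\to\rR\nu_*\HIG(\calL\otimes\calO\widehat\bC_{\pd}^+,\Theta_\calL)\simeq\rR\nu_*\calL$, factors on $\rH^1$ through $\rho(\zeta_p-1)\rH^1(\rR\nu_*\calL)$ and induces an isomorphism onto that submodule. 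Your spectral-sequence plan for the factorization runs into precisely the obstruction of Remark~\ref{Rmk-Warnning}: $\rL\eta_f$ is not exact, so you cannot ``apply $\rL\eta_f$ to the filtration and watch the rows $q\geq 1$ drop out.'' The paper circumvents this by a direct cochain-level computation (Proposition~\ref{Prop-TruncationOne}): it writes the double complex of $\HIG(M,\Theta_M)$ against the Koszul complex for $\Gamma$, takes a Higgs cocycle $\omega=\sum x_i\otimes\tfrac{\rho\,\mathrm{d}\log T_i}{t}$, explicitly solves $\Theta_M(m)=\omega$, computes $(\gamma_i-1)(m(\omega))=-\rho(\zeta_p-1)F(\Theta_i)\exp(-\sum\Theta_kY_k)x_i$, and reads off that the image class is $\rho(\zeta_p-1)\cdot v'(\omega)$ with $v'(\omega)$ itself a cocycle. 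Both the factorization (via \cite[Lem.~8.16]{BMS18}, which already supplies uniqueness) and the $\tau^{\leq 1}$ quasi-isomorphism then fall out from this single computation. Without this calculation your argument produces a chart-dependent identification in each local piece that need not glue and, more importantly, need not agree with the globally defined map of Corollary~\ref{Cor-IntegralSimpson}. Your closing diagnosis about why the truncation stops at $1$ is in the right spirit, but the precise bottleneck is that beyond degree $1$ the argument would need $\rH^i(\HIG(H,\theta_H))\cong f^i\rH^i(\Gamma,V_0)$ via the canonical map, which the spectral-sequence abutment estimates from Corollary~\ref{Cor-IntegralSimpson} do not supply.
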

  We can then obtain the following corollary.
\begin{cor}\label{Cor-TruncationOne}
    Assume $\frakX$ is a smooth curve (i.e. $d=1$) and $a\geq \frac{1}{p-1}$. Then for any $a$-small $\OXp$-representation with induced Higgs bundle $(\calH,\theta_{\calH})$ in the sense of Theorem \ref{Thm-IntegralSimpson}, the canonical map in Corollary \ref{Cor-IntegralSimpson} induces a quasi-isomorphism 
    \[\HIG(\calH,\theta_{\calH})\xrightarrow{\simeq}\rL\eta_{\rho(\zeta_p-1)}\rR\nu_*\calL.\]
\end{cor}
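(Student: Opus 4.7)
The plan is to deduce Corollary \ref{Cor-TruncationOne} almost immediately from Theorem \ref{Thm-TruncationOne} by a dimension count: when $\frakX$ is a smooth curve, both sides of the conjectured quasi-isomorphism are already concentrated in degrees $[0,1]$, so the truncation $\tau^{\leq 1}$ appearing in Theorem \ref{Thm-TruncationOne} is invisible.

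More precisely, first I would observe that for a Higgs bundle $(\calH,\theta_{\calH})$ on a smooth curve $\frakX$ (so $d=1$), the Higgs complex
\[
\HIG(\calH,\theta_{\calH}) = \bigl[\calH\xrightarrow{\theta_{\calH}}\calH\otimes_{\calO_{\frakX}}\rho\widehat{\Omega}^1_{\frakX}(-1)\bigr]
\]
has only two nonzero terms, sitting in cohomological degrees $0$ and $1$. In particular the canonical map $\tau^{\leq 1}\HIG(\calH,\theta_{\calH})\to \HIG(\calH,\theta_{\calH})$ is an isomorphism in $D(\frakX)$.

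Next I would apply Theorem \ref{Thm-PerfectComplex} with $\lambda = \rho$ (which is allowed since $\nu_p(\rho)\leq \nu_p(\rho)$) to conclude that $\rL\eta_{\rho(\zeta_p-1)}\rR\nu_*\calL$ is a perfect complex in $D(\frakX)$ concentrated in degree $[0,d]=[0,1]$. Consequently the natural map
\[
\tau^{\leq 1}\rL\eta_{\rho(\zeta_p-1)}\rR\nu_*\calL\longrightarrow \rL\eta_{\rho(\zeta_p-1)}\rR\nu_*\calL
\]
is also an isomorphism in $D(\frakX)$.

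Finally I would invoke Theorem \ref{Thm-TruncationOne}, which provides a quasi-isomorphism $\tau^{\leq 1}\HIG(\calH,\theta_{\calH})\xrightarrow{\simeq}\tau^{\leq 1}\rL\eta_{\rho(\zeta_p-1)}\rR\nu_*\calL$ compatible with the canonical map of Corollary \ref{Cor-IntegralSimpson}. Combining the three steps yields the desired quasi-isomorphism $\HIG(\calH,\theta_{\calH})\xrightarrow{\simeq}\rL\eta_{\rho(\zeta_p-1)}\rR\nu_*\calL$. There is no real obstacle here; all the substantive work has already been done in Theorems \ref{Thm-PerfectComplex} and \ref{Thm-TruncationOne}, and the curve case is precisely the fortunate situation in which the truncation degree $1$ matches the geometric dimension $d$, so no higher-degree information is lost.
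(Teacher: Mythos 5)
Your proposal is correct and matches the paper's proof exactly: both argue that for $d=1$ the complex $\rL\eta_{\rho(\zeta_p-1)}\rR\nu_*\calL$ (by Theorem \ref{Thm-PerfectComplex}) and the Higgs complex are concentrated in degrees $[0,1]$, so the $\tau^{\leq 1}$ truncations in Theorem \ref{Thm-TruncationOne} are identities and the conclusion follows. Your write-up is just a slightly more explicit version of the paper's two-line argument.
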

\begin{proof}
    In this case, $\rL\eta_{\rho(\zeta_p-1)}\rR\nu_*\calL$ is concentrated in degree $[0,1]$. So the result follows from Theorem \ref{Thm-TruncationOne} immediately.
\end{proof}
  Note that Corollary \ref{Cor-TruncationOne} gives a positive answer to Question \ref{Ques-PerfectComplex} when $\frakX$ is a smooth curve. The rest of \S\ref{SSec-TruncationOne} is devoted to proving Theorem \ref{Thm-TruncationOne}. Similar to the proof of Theorem \ref{Thm-IntegralSimpson}, we start with the local case. 
 
 From now on, we keep and freely use the notations in the proof of Theorem \ref{Thm-LocalSimpson}. For the convenience of the reader, let us recall what we have proved in (the proof of) Theorem \ref{Thm-LocalSimpson}:
 
 Let $V$ be an $a$-small $\widehat R_{\infty}^+$-representation of $\Gamma$ of rank $r$, $V_0$ the induced $R^+$-representation of $\Gamma$ in the sense of Theorem \ref{Thm-Decompletion}, and $(H,\theta_H = \sum_{i=1}^d\Theta_i\otimes\frac{\dlog T_i}{t})$ the induced Higgs module over $R^+$ in the sense of Theorem \ref{Thm-LocalSimpson}. Note that the $a$-smallness of $V$ implies that $\rho^{-1}\Theta_i$'s are well-defined topologically nilpotent matrices in $\rM_r(R^+)$. Let $S^+ = R^+[\rho Y_1,\dots,\rho Y_d]^{\wedge}_{\pd}$ denote the $A_d$ in the proof of Theorem \ref{Thm-LocalSimpson} with induced $\Gamma$-action and Higgs field $\Theta = \sum_{i=1}^d\frac{\partial}{\partial Y_i}\otimes\frac{\dlog T_i}{t}$. Put $M:=H\otimes_{R^+}S^+$ and then $M= V_0\otimes_{R^+}S^+$ such that
 \[\Theta_M = \id_{V_0}\otimes\Theta = \theta_H\otimes\id_{S^+}+\id_H\otimes\Theta.\]
Moreover, we have that $M^{\Gamma} = H$, that
\[V_0 = \exp(-\sum_{k=1}^d\Theta_kY_k)H = \{\sum_{n_1,\dots,n_d\geq 0}(-\frac{\Theta_1}{\rho})^{n_1}\cdots(-\frac{\Theta_d}{\rho})^{n_d}(\rho Y_1)^{[n_1]}\cdots(\rho Y_d)^{[n_d]}h\mid h\in H\}\] which follows from (\ref{Equ-ExplicitCorrespondence}), and that a quasi-isomorphism $V_0\simeq\HIG(M,\Theta_M)$ such that the canonical map 
\[\HIG(H,\theta_H)\to \rR\Gamma(\Gamma,\HIG(V\otimes_{\widehat R^+_{\infty}}\widehat S_{\pd}^+,\Theta_V))\simeq \rR\Gamma(\Gamma,V)\]
factors through 
\[\HIG(H,\theta_H)\to\rR\Gamma(\Gamma,\HIG(M,\Theta_M))\simeq\rR\Gamma(\Gamma,V_0).\]

 \begin{prop}\label{Prop-TruncationOne}
     \begin{enumerate}
         \item[(1)] The canonical morphism 
         \[\HIG(H,\theta_H)\to\rR\Gamma(\Gamma,\HIG(V_0\otimes_{R^+}\widehat S_{\pd}^+,\Theta_V))\simeq\rR\Gamma(\Gamma,V)\]
         induces an isomorphism $\rH^1(\HIG(H,\theta_H))\simeq\rho(\zeta_p-1)\rH^1(\Gamma,V)$.

         \item[(2)] The canonical morphism $\HIG(H,\theta_H)\to\rR\Gamma(\Gamma,\HIG(V_0\otimes_{R^+}\widehat S_{\pd}^+,\Theta_V))\simeq\rR\Gamma(\Gamma,V)$ induces a quasi-isomorphism 
         \[\tau^{\leq 1}\HIG(H,\theta_H)\simeq\tau^{\leq 1}\rL\eta_{\rho(\zeta_p-1)}\rR\Gamma(\Gamma,V).\]
     \end{enumerate}
 \end{prop}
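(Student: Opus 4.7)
The plan is to reduce to a double-complex computation and identify the image of the canonical map on $\rH^1$ explicitly. For part (1), Theorem~\ref{Thm-Decompletion} gives a decomposition $\rR\Gamma(\Gamma,V)\simeq\rR\Gamma(\Gamma,V_0)\oplus C$ with $C$ concentrated in positive degrees and killed by $\zeta_p-1$, so $\rho(\zeta_p-1)\rH^1(\Gamma,V)=\rho(\zeta_p-1)\rH^1(\Gamma,V_0)$, and it suffices to identify the image in $\rH^1(\Gamma,V_0)$. Using the resolution $V_0\simeq\HIG(M,\Theta_M)$ and the identification $\HIG(M,\Theta_M)^\Gamma=\HIG(H,\theta_H)$, the canonical map becomes the edge map
\[
\HIG(H,\theta_H)=\HIG(M,\Theta_M)^\Gamma\to \rR\Gamma(\Gamma,\HIG(M,\Theta_M))\simeq\rR\Gamma(\Gamma,V_0),
\]
which I compute via the double complex $D^{p,q}=\wedge^q(R^+)^d\otimes M\otimes\rho^p\widehat\Omega_{R^+}^p(-p)$ with horizontal differential $\Theta_M$ and vertical Koszul differential $\underline\gamma-1$.

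The structural input I exploit is that the identification $V_0\otimes_{R^+}S^+\cong H\otimes_{R^+}S^+$ via $e_j\mapsto h_j=\exp(\sum_k\Theta_k Y_k)e_j$ is $\Gamma$-equivariant when $\Gamma$ is given the trivial action on $H$ and the usual shift action on $S^+$. Consequently $\rH^j(\Gamma,M)=H\otimes_{R^+}\rH^j(\Gamma,S^+)$, and by Lemma~\ref{Lem-Kunneth} the latter is killed by $\rho(\zeta_p-1)$ for $j\ge 1$. The first-quadrant spectral sequence of the double complex (vertical first) has $E_1^{p,0}=H\otimes\rho^p\widehat\Omega^p(-p)$ with $d_1=\theta_H$, hence $E_2^{p,0}=\rH^p(\HIG(H,\theta_H))$; and $E_2^{p,q}$ is $\rho(\zeta_p-1)$-killed for $q\ge 1$. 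Since there is no incoming differential into $E_r^{1,0}$, one has $E_\infty^{1,0}=E_2^{1,0}$, giving an injection $\rH^1(\HIG(H,\theta_H))\hookrightarrow \rH^1(\Gamma,V_0)$ with cokernel killed by $\rho(\zeta_p-1)$; in particular $\rho(\zeta_p-1)\rH^1(\Gamma,V_0)\subset \mathrm{image}$.

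For the reverse inclusion, I trace the edge map explicitly in the case $d=1$, the general case being analogous variable-by-variable. A class $[h]\in\rH^1(\HIG(H,\theta_H))$ lifts to a cocycle $h\otimes\rho\omega_1\in D^{1,0}$, and writing $h=\exp(\Theta_1 Y_1)v_0$ for some $v_0\in V_0$, I produce a primitive $m=\sum_{k\ge 1}(\Theta'_1)^{k-1}v_0(\rho Y_1)^{[k]}$ (with $\Theta'_1=\rho^{-1}\Theta_1$, which is topologically nilpotent by $a$-smallness) satisfying $\Theta_M m=h\otimes\rho\omega_1$. The identity $\sum_{m\ge 0}X^m/(m+1)!=(e^X-1)/X$ combined with $\exp(-(\zeta_p-1)\Theta_1)=\gamma_1|_{V_0}$ forces all $(\rho Y_1)^{[j]}$-components of $(\gamma_1-1)m$ with $j\ge 1$ to cancel, leaving
\[
v:=-(\gamma_1-1)m=-\rho(\zeta_p-1)\,G'_1(\Theta_1)\,v_0,
\]
where $G'_1(X)=\frac{1-\exp(-(\zeta_p-1)X)}{(\zeta_p-1)X}\in \GL_r(R^+)$ is a unit. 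Hence the image of the edge map equals $\rho(\zeta_p-1)\rH^1(\Gamma,V_0)$, completing part~(1).

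Part (2) is then a formal consequence. By \cite[Lem.~6.4]{BMS18}, one has $\rH^i(\rL\eta_{\rho(\zeta_p-1)}\rR\Gamma(\Gamma,V))\cong\rH^i(\Gamma,V)/\rH^i(\Gamma,V)[\rho(\zeta_p-1)]$, which identifies with $\rho(\zeta_p-1)\rH^i(\Gamma,V)$ via multiplication. In degree $0$, $\rH^0(\Gamma,V)=V^\Gamma$ is $\rho(\zeta_p-1)$-torsion-free (as a submodule of the free $R^+$-module $V_0$) and matches $\ker(\theta_H)=\rH^0(\HIG(H,\theta_H))$ via the identification $V^\Gamma=\ker(\Theta_1)|_{V_0}\cong\ker(\Theta_1)|_H$; in degree $1$, part~(1) supplies the isomorphism. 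The unique factorisation of $\tau^{\le 1}\HIG(H,\theta_H)\to\rR\Gamma(\Gamma,V)$ through $\rL\eta_{\rho(\zeta_p-1)}\rR\Gamma(\Gamma,V)$ follows from the universal property of décalage (the image on $\rH^1$ is $\rho(\zeta_p-1)$-divisible by part (1)), and the resulting map is a quasi-isomorphism on $\tau^{\le 1}$ since cohomologies agree. The main technical obstacle is the explicit cocycle computation in the third step, where the cancellation of the higher-order components rests on recognising the series $\exp(-(\zeta_p-1)\Theta_1)$ inside the $\gamma_1$-action; the extension to $d>1$ requires careful bookkeeping of Koszul signs and a multivariate primitive built from an iterated antiderivative, but introduces no conceptually new ingredient.
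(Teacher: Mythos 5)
Your proposal follows essentially the same strategy as the paper's proof: pass to $V_0$ via the decompletion theorem, set up the double complex combining the Koszul differential with the Higgs differential, and identify the image of the edge map on $\rH^1$ by an explicit zigzag computation. The one genuine improvement is your spectral-sequence argument for the inclusion $\rho(\zeta_p-1)\rH^1(\Gamma,V_0)\subset\Ima$. The paper instead proves surjectivity onto the multiple directly: it takes an arbitrary class $v'=(F(\Theta_1)\exp(-\sum_k\Theta_kY_k)y_1,\dots)$, shows $v'$ is a Koszul cocycle iff the $y_i$ satisfy $\Theta_i(y_j)=\Theta_j(y_i)$, and then produces a preimage. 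Your observation that $E_\infty^{0,1}$ is a subquotient of $\rH^1(\Gamma,M)\cong H\otimes_{R^+}\rH^1(\Gamma,S^+)$, which Lemma \ref{Lem-Kunneth} shows is $\rho(\zeta_p-1)$-torsion, dispenses with this half of the computation cleanly.

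The weak point is the reverse inclusion. You carry out the explicit primitive construction only for $d=1$ and dismiss the general case as "careful bookkeeping of Koszul signs." This undersells the issue: for $d>1$ the recursion $h_{\underline n+\underline 1_i}=-\rho^{-1}\Theta_i(h_{\underline n})$ defining the primitive $m(\omega)$ is only consistent because the Higgs cocycle condition $\Theta_i(x_j)=\Theta_j(x_i)$ holds, and the identity $(\gamma_i-1)m(\omega)=-\rho(\zeta_p-1)F(\Theta_i)\exp(-\sum_k\Theta_kY_k)x_i$ requires tracking all $d$ variables simultaneously in the pd-expansion even though $\gamma_i$ shifts only $Y_i$. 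The cocycle condition enters essentially, not just as a sign convention, so this step deserves a full treatment as in the paper rather than a one-variable sketch. Aside from this omission, your argument is correct and matches the paper's.
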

 \begin{proof}
     As $\rR\Gamma(\Gamma,V_0)$ is a direct summand of $\rR\Gamma(\Gamma,V)$ with complement being killed by $\zeta_p-1$ and concentrated in degree $\geq 1$, we have $\rho(\zeta_p-1)\rH^1(\Gamma,V_0) = \rho(\zeta_p-1)\rH^1(\Gamma,V)$ and $\rL\eta_{\rho(\zeta_p-1)}\rR\Gamma(\Gamma,V_0)\simeq\rL\eta_{\rho(\zeta_p-1)}\rR\Gamma(\Gamma,V)$. So it suffices to show that
     \begin{enumerate}
         \item[(i)] the induced map $\HIG(H,\theta_H)\to\rR\Gamma(\Gamma,\HIG(M,\Theta_M))\simeq\rR\Gamma(\Gamma,V_0)$ induces an isomorphism
         \[\rH^1(\HIG(H,\theta_H))\cong\rho(\zeta_p-1)\rH^1(\Gamma,V_0)\]
         and that

         \item[(ii)] the induced map $\HIG(H,\theta_H)\to\rR\Gamma(\Gamma,\HIG(M,\Theta_M))\simeq\rR\Gamma(\Gamma,V_0)$ induces a quasi-isomorphism 
         \[\tau^{\leq 1}\HIG(H,\theta_H)\simeq\tau^{\leq 1}\rL\eta_{\rho(\zeta_p-1)}\rR\Gamma(\Gamma,V_0).\]
     \end{enumerate}
     In what follows, to simplify the notation, for any $\underline n = (n_1,\dots,n_d)\in\bN^d$, we put
     \[\rho^{|\underline n|}\underline Y^{[\underline n]} = (\rho Y_1)^{[n_1]}\cdots(\rho Y_d)^{[n_d]}.\]

     We first prove (i). Consider the following commutative diagram:
\begin{equation}\label{Diag-DoubleComplex}
    \xymatrix@C=0.5cm{
      && H\ar[rr] \ar[d]&& H\otimes\rho\widehat\Omega^1(-1)\ar[rr]\ar[d] && H\otimes\rho^2\widehat\Omega^2(-2)\ar[r]\ar[d] &\dots\\
      V_0\ar[rr]\ar[d]&& M\ar[rr]\ar[d]&& M\otimes\rho\widehat\Omega^1(-1)\ar[rr]\ar[d] && M\otimes\rho^2\widehat\Omega^2(-2)\ar[r]\ar[d] &\dots\\
      \wedge^1V_0^d\ar[d]\ar[rr]&& \wedge^1M^d\ar[rr]\ar[d]&& \wedge^1M^d\otimes\rho\widehat\Omega^1(-1)\ar[rr]\ar[d] && \wedge^1M^d\otimes\rho^2\widehat\Omega^2(-2)\ar[r]\ar[d] &\dots\\
      \wedge^2V_0^d\ar[d]\ar[rr]&& \wedge^2M^d\ar[rr]\ar[d]&& \wedge^2M^d\otimes\rho\widehat\Omega^1(-1)\ar[rr]\ar[d] && \wedge^2M^d\otimes\rho^2\widehat\Omega^2(-2)\ar[r]\ar[d] &\dots\\
      \vdots&& \vdots&& \vdots &&\vdots &
    }
\end{equation}
where the horizontal arrows are induced by Higgs fields and the vertical arrows are induced by Koszul complexes associated to $\Gamma$-actions. Then we have the following commutative diagram
\begin{equation}\label{Diag-TotalComplex}
    \xymatrix@C=0.5cm{
      H\ar[r] \ar[d]& H\otimes\rho\widehat \Omega^1(-1)\ar[rr]\ar[d]&&H\otimes\rho^2\widehat \Omega^2(-2)\ar[r]\ar[d]&\dots\\
      M\ar[r]&M\otimes\rho\widehat \Omega^1(-1)\oplus\wedge^1M^d\ar[rr]&&M\otimes\rho^2\widehat \Omega^2(-2)\oplus\wedge^1M^d\otimes\rho\widehat \Omega^1(-1)\oplus\wedge^2M^d\ar[r]&\dots\\
      V_0\ar[r]\ar[u]&\wedge^1V_0^d\ar[rr]\ar[u]&&\wedge^2V_0^d\ar[u]\ar[r]&\dots
    }
\end{equation}
such that the arrows from the bottom to the middle induce the quasi-isomorphism $\rR\Gamma(\Gamma,V_0)\simeq\rR\Gamma(\Gamma,\HIG(M,\Theta_M))$. We have to deduce the relation between $\rH^1(\HIG(H,\theta_H))$ and $\rH^1(\Gamma,V_0)$ from the diagram (\ref{Diag-TotalComplex}).

Let $x_1,\dots,x_d\in H$ such that $\omega = \sum_{i=1}^dx_i\otimes\frac{\rho\dlog T_i}{t}$ represents an element in $\rH^1(\HIG(H,\theta_H))$. Equivalently, we have that for any $1\leq i,j\leq d$, $\Theta_i(x_j) = \Theta_j(x_i)$. We want to determine the element in $\rH^1(\Gamma,V_0)$ induced by $\omega$. To do so, we have to solve the equation 
\begin{equation}\label{Equ-Bridge}
    \Theta_M(\sum_{\underline n}h_{\underline n}\rho^{|\underline n|}\underline Y^{[\underline n]})=\omega,
\end{equation}
where $\sum_{\underline n}h_{\underline n}\rho^{|\underline n|}\underline Y^{[\underline n]}=:m\in M$. Note that
\begin{equation*}
    \begin{split}
        \Theta_M(\sum_{\underline n}h_{\underline n}\rho^{|\underline n|}\underline Y^{[\underline n]})
        =&\sum_{i=1}^d(\sum_{\underline n}\Theta_i(h_{\underline n})\rho^{|\underline n|}\underline Y^{[\underline n]}+\sum_{\underline n}h_{\underline n}\rho^{|\underline n|}\underline Y^{[\underline n-\underline 1_i]})\otimes\frac{\dlog T_i}{t}\\
        =&\sum_{i=1}^d\sum_{\underline n}(\rho^{-1}\Theta_i(h_{\underline n})+h_{\underline n+\underline 1_i})\rho^{|\underline n|}\underline Y^{[\underline n]}\otimes\frac{\rho\dlog T_i}{t}.
    \end{split}
\end{equation*}
So (\ref{Equ-Bridge}) holds true if and only if for any $1\leq i\leq d$ and $\underline n$ satisfying $|\underline n| \geq 1$,
\begin{equation}\label{Equ-Iteration-I}
    h_{\underline n+\underline 1_i} = -\rho^{-1}\Theta_i( h_{\underline n} )
\end{equation}
and 
\begin{equation}\label{Equ-Iteration-II}
    h_{\underline 1_i} = -\rho^{-1}\Theta_i(h_0)+x_i.
\end{equation}
As $\Theta_i(x_j) = \Theta_j(x_i)$ for any $1\leq i,j\leq d$, it is easy to see that for any $h\in H$, one can put $h=h_0$ and use (\ref{Equ-Iteration-I}) and (\ref{Equ-Iteration-II}) to achieve an element $m(h)\in M$ satisfying $\Theta_M(m(h)) = \omega$. Moreover, if we put $m(\omega):=m(0)$, then 
  \[m(h) = m(\omega)+\exp(-\sum_{k=1}^d\Theta_kY_k)h.\]
As a consequence, the image of $\omega$ in $\rH^1(\Gamma,V_0) = \rH^1(\rK(V_0;\gamma_1-1,\dots,\gamma_d-1))$ is represented by 
  \[v(\omega):=(\gamma_1(m(\omega))-m(\omega),\dots,\gamma_d(m(\omega))-m(\omega))\in \wedge^1 V_0^d.\]

  On the other hand, as $\gamma_1$ acts on $Y_2,\dots,Y_d$ trivially (cf. Notation \ref{Notation-LocalPeriodSheaf}), we have 
  \begin{equation*}
      \begin{split}
          &\gamma_1(m(\omega))-m(\omega)\\
          =&\gamma_1\left(\sum_{n_1\geq 1,n_2,\dots,n_d\geq 0}(-\frac{\Theta_1}{\rho})^{n_1-1}(-\frac{\Theta_2}{\rho})^{n_2}\cdots(-\frac{\Theta_d}{\rho})^{n_d}x_1(\rho Y_1)^{[n_1]}\cdots(\rho Y_d)^{[n_d]}\right)\\
          &-\sum_{n_1\geq 1,n_2,\dots,n_d\geq 0}(-\frac{\Theta_1}{\rho})^{n_1-1}(-\frac{\Theta_2}{\rho})^{n_2}\cdots(-\frac{\Theta_d}{\rho})^{n_d}x_1(\rho Y_1)^{[n_1]}\cdots(\rho Y_d)^{[n_d]}\\
          =&\sum_{n_1\geq 1,n_2,\dots,n_d\geq 0}(-\frac{\Theta_1}{\rho})^{n_1-1}(-\frac{\Theta_2}{\rho})^{n_2}\cdots(-\frac{\Theta_d}{\rho})^{n_d}x_1(\rho Y_1+\rho(\zeta_p-1))^{[n_1]}\cdots(\rho Y_d)^{[n_d]}\\
          &-\sum_{n_1\geq 1,n_2,\dots,n_d\geq 0}(-\frac{\Theta_1}{\rho})^{n_1-1}(-\frac{\Theta_2}{\rho})^{n_2}\cdots(-\frac{\Theta_d}{\rho})^{n_d}x_1(\rho Y_1)^{[n_1]}\cdots(\rho Y_d)^{[n_d]}\\
          =&\sum_{n_1\geq 1,n_2,\dots,n_d\geq 0,0\leq l\leq n_1}(-\frac{\Theta_1}{\rho})^{n_1-1}(-\frac{\Theta_2}{\rho})^{n_2}\cdots(-\frac{\Theta_d}{\rho})^{n_d}x_1\rho^{n_1-l}(\zeta_p-1)^{[n_1-l]}(\rho Y_1)^{[l]}(\rho Y_2)^{[n_2]}\cdots(\rho Y_d)^{[n_d]}\\
          &-\sum_{n_1\geq 1,n_2,\dots,n_d\geq 0}(-\frac{\Theta_1}{\rho})^{n_1-1}(-\frac{\Theta_2}{\rho})^{n_2}\cdots(-\frac{\Theta_d}{\rho})^{n_d}x_1(\rho Y_1)^{[n_1]}\cdots(\rho Y_d)^{[n_d]}\\
          =&\sum_{n_1\geq 1,n_2,\dots,n_d\geq 0,0\leq l\leq  n_1-1}(-\frac{\Theta_1}{\rho})^{n_1-1}(-\frac{\Theta_2}{\rho})^{n_2}\cdots(-\frac{\Theta_d}{\rho})^{n_d}x_1\rho^{n_1-l}(\zeta_p-1)^{[n_1-l]}(\rho Y_1)^{[l]}(\rho Y_2)^{[n_2]}\cdots(\rho Y_d)^{[n_d]}\\
          =&\sum_{n_1,n_2,\dots,n_d\geq 0,l\geq 0}(-\frac{\Theta_1}{\rho})^{n_1+l}(-\frac{\Theta_2}{\rho})^{n_2}\cdots(-\frac{\Theta_d}{\rho})^{n_d}x_1\rho^{l+1}(\zeta_p-1)^{[l+1]}(\rho Y_1)^{[n_1]}\cdots(\rho Y_d)^{[n_d]}\\
          =&\sum_{l\geq 0}\rho(-\Theta_1)^{l}(\zeta_p-1)^{[l+1]}\exp(-\sum_{k=1}^d\Theta_kY_k)x_1\\
          =&-(\zeta_p-1)\rho F(\Theta_1)\exp(-\sum_{k=1}^d\Theta_kY_k)x_1,
      \end{split}
  \end{equation*}
  where $F(X) = \frac{\exp(-(\zeta_p-1)X)-1}{(\zeta_p-1)X}$ was defined in (\ref{Equ-F}). Similarly, for any $1\leq i\leq d$, we have
  \[(\gamma_i-1)(m(\omega)) = -\rho(\zeta_p-1)F(\Theta_i)\exp(-\sum_{k=1}^d\Theta_kY_k)x_i.\]
  As a consequence, the image of $\omega$ in $\rH^1(\Gamma,V_0)$ is represented by
  \begin{equation}\label{Equ-v(omega)}
      v(\omega)=-(\zeta_p-1)\rho(F(\Theta_1)\exp(-\sum_{k=1}^d\Theta_kY_k)x_1,\dots,F(\Theta_d)\exp(-\sum_{k=1}^d\Theta_kY_k)x_d).
  \end{equation}
  Since for any $1\leq i,j\leq d$,
  \begin{equation}\label{Equ-Bridge-II}
      \begin{split}
          (\gamma_j-1)(F(\Theta_i)\exp(-\sum_{k=1}^d\Theta_kY_k)x_i)=&F(\Theta_i)\exp(-\sum_{k=1}^d\Theta_kY_k)(\exp(-(\zeta_p-1)\Theta_j)-1)x_i\\
          =&(\zeta_p-1)F(\Theta_i)F(\Theta_j)\exp(-\sum_{k=1}^d\Theta_kY_k)\Theta_j(x_i),
      \end{split}
  \end{equation}
  and $\Theta_j(x_i) = \Theta_i(x_j)$, we deduce that
  \[(\gamma_j-1)(F(\Theta_i)\exp(-\sum_{k=1}^d\Theta_kY_k)x_i) = (\gamma_i-1)(F(\Theta_i)\exp(-\sum_{k=1}^d\Theta_kY_k)x_j)\]
  and hence that
  \[v'(\omega):=(F(\Theta_1)\exp(-\sum_{k=1}^d\Theta_kY_k)x_1,\dots,F(\Theta_d)\exp(-\sum_{k=1}^d\Theta_kY_k)x_d)\in\wedge^1V_0^d\]
  represents an element in $\rH^1(\Gamma,V_0)$. 
  
  Therefore, as a cohomological class, we have 
  \[v(\omega) = \rho(\zeta_p-1)v'(\omega)\in\rho(\zeta_p-1)\rH^1(\Gamma,V_0).\]
  In other words, the map $\HIG(H,\theta_H)\to\rR\Gamma(\Gamma,V_0)$ carries $\rH^1(\HIG(H,\theta_H))$ into $\rho(\zeta_p-1)\rH^1(\Gamma,V_0)$. We have to show it induces an isomorphism $\rH^1(\HIG(H,\theta_H))\simeq\rho(\zeta_p-1)\rH^1(\Gamma,V_0)$.

  The injectivity is obvious. Indeed, let $T$ be the total complex of the double complex in (\ref{Diag-DoubleComplex}) representing $\rR\Gamma(\Gamma,\HIG(M,\Theta_M))$. By the spectral sequence argument, we have
  \[\rH^1(\HIG(H,\Theta_H)) = E_2^{1,0} = E_{\infty}^{1,0}\subset \rH^1(T) \cong \rH^1(\Gamma,V_0)\]
  and the desired injectivity follows.

  It remains to prove the surjectivity. For this, let $y_1,\dots,y_d\in H$ such that 
  \[v' = (F(\Theta_1)\exp(-\sum_{j=1}^d\Theta_jY_j)y_1,\dots,F(\Theta_d)\exp(-\sum_{j=1}^d\Theta_jY_j)y_d)\in \wedge^1V_0^d\]
  represents an element in $\rH^1(\rK(V_0;\gamma_1-1,\dots,\gamma_d-1))\cong\rH^1(\Gamma,V_0)$. Equivalently, we have that for any $1\leq i,j\leq d$,
  \begin{equation*}
      \begin{split}
          (\gamma_j-1)(F(\Theta_i)\exp(-\sum_{k=1}^d\Theta_kY_k)y_i)=(\gamma_i-1)(F(\Theta_j)\exp(-\sum_{k=1}^d\Theta_kY_k)y_j).
      \end{split}
  \end{equation*}
  By (\ref{Equ-Bridge-II}), this amounts to that
  \begin{equation*}
      \begin{split}
          (\zeta_p-1)F(\Theta_i)F(\Theta_j)\exp(-\sum_{k=1}^d\Theta_kY_k)\Theta_j(y_i)=(\zeta_p-1)F(\Theta_i)F(\Theta_j)\exp(-\sum_{k=1}^d\Theta_kY_k)\Theta_i(y_j).
      \end{split}
  \end{equation*}
  By noting that $F(\Theta_i)$'s are invertible (as $\Theta_i$'s are topologically nilpotent and $F(X)\equiv 1\mod X$), we conclude that $v'$ represents an element in $\rH^1(\Gamma,V_0)$ if and only if $\Theta_i(y_j) = \Theta_j(y_i)$ for any $1\leq i,j\leq d$. As a consequence, $\omega' = \sum_{i=1}^dy_i\otimes\frac{\rho\dlog T_i}{t}$ represents an element in $\rH^1(\HIG(H,\theta_H))$.

  Now for any $v'\in \rH^1(\Gamma,V_0)$, by (\ref{Equ-v(omega)}), we know that $v(\omega') = \rho(\zeta_p-1)v'$. That is, $\rho(\zeta_p-1)v'$ is contained in the image of 
  \[\rH^1(\HIG(H,\theta_H))\to\rho(\zeta_p-1)\rH^1(\Gamma,V_0).\]
  As $v'$ is arbitrary in $\rH^1(\Gamma,V_0)$, this proves the desired surjectivity and we complete the proof of (i).

  Now, we are going to prove (ii). By (i) and \cite[Lem. 8.16]{BMS18}, the composite
  \[\tau^{\leq 1}\HIG(H,\theta_H)\to \HIG(H,\theta_H)\to \rR\Gamma(\Gamma,V_0)\]
  uniquely factors through 
  \[\tau^{\leq 1}\rL\eta_{\rho(\zeta_p-1)}\rR\Gamma(\Gamma,V_0)\to\rL\eta_{\rho(\zeta_p-1)}\rR\Gamma(\Gamma,V_0)\to\rR\Gamma(\Gamma,V_0).\]
  So it is enough to show that $\tau^{\leq 1}\HIG(H,\theta_H)\simeq\tau^{\leq 1}\rL\eta_{\rho(\zeta_p-1)}\rR\Gamma(\Gamma,V_0)$.

  However, by the construction in \cite[Lem. 6.10]{BMS18}, the map on $\rH^1$ induced by $\rL\eta_{\rho(\zeta_p-1)}\rR\Gamma(\Gamma,V_0)\to\rR\Gamma(\Gamma,V_0)$ is given by
  \[\rH^1(\rL\eta_{\rho(\zeta_p-1)}\rR\Gamma(\Gamma,V_0))\cong\rH^1(\Gamma,V_0)/\rH^1(\Gamma,V_0)[\rho(\zeta_p-1)]\xrightarrow{\times(\zeta_p-1)\rho}\rH^1(\Gamma,V_0).\]
  In particular, it is an injection and identifies $\rH^1(\rL\eta_{\rho(\zeta_p-1)}\rR\Gamma(\Gamma,V_0))$ with $\rho(\zeta_p-1)\rH^1(\Gamma,V_0)$. Then the desired quasi-isomorphism follows from (i).
 \end{proof}

\begin{proof}[Proof of Theorem \ref{Thm-TruncationOne}:]
  We first show that the composite
    $\tau^{\leq 1}\HIG(\calH,\theta_{\calH})\to\HIG(\calH,\theta_{\calH})\xrightarrow{{\rm Cor.} \ref{Cor-IntegralSimpson}}\rR\nu_*\calL$ uniquely factors through $\rL\eta_{\rho(\zeta_p-1)}\rR\nu_*\calL\xrightarrow{{\rm Thm.} \ref{Thm-PerfectComplex}}\rR\nu_*\calL$. 
    By \cite[Lem. 8.16]{BMS18}, it suffices to show that the induced map
    \[\rH^1(\HIG(\calH,\theta_{\calH}))\to\rH^1(\rR\nu_*\calL)\]
    factors through $\rho(\zeta_p-1)\rH^1(\rR\nu_*\calL)$. Since this is a local problem, we may assume $\frakX = \Spf(R^+)$ is small affine such that $(\calH,\theta_{\calH})$ is induced by a $b$-small Higgs module over $R^+$ for some $b>a$. Then we are reduced to Proposition \ref{Prop-TruncationOne}(1).

    It remains to prove $\tau^{\leq 1}\HIG(\calH,\theta_{\calH})\to\tau^{\leq 1}\rL\eta_{\rho(\zeta_p-1)}\rR\nu_*\calL$ is a quasi-isomorphism. As it is still a local problem, we are reduced to Proposition \ref{Prop-TruncationOne}(2).
\end{proof}

\subsection{A conjectural analogue of Deligne--Illusie decomposition}

  In \cite{Min21}, the first named author proved the following decomposition of Deligne--Illusie type:
   \begin{thm}[\emph{\cite[Thm. 4.1]{Min21}}]\label{Thm-Min}
        Let $\frakX$ be the base change of a smooth $p$-adic formal scheme $\frakX_0$ over $\calO_K=W(\kappa)$ (in particular, $\rho=\zeta_p-1)$ along the inclusion $W(\kappa)\to \calO_C$ so that $\frakX$ has a natural lifting induced by the base change of $\frakX_0$ along $W(\kappa)\to \rA_2$. Then there exists a quasi-isomorphism
       \begin{equation}\label{Equ-Min}
           \gamma:\tau^{\leq p-1}\HIG(\calO_{\frakX},0)\to \tau^{\leq p-1}\rL\eta_{\zeta_p-1}\rR\nu_*\OXp.
       \end{equation}
   \end{thm}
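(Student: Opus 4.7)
The plan is to combine the $\rL\eta_{\zeta_p-1}$-version of the Hodge--Tate comparison from \cite[\S 8]{BMS18} with the classical strategy of Deligne--Illusie \cite{DI}. Under the given hypotheses one has $\rho=\zeta_p-1$, and $\frakX$ inherits a canonical lift to $\rA_2$ (indeed, all the way to $W(\kappa)$), which plays the role of the $W_2$-lift in the classical argument. The Hodge--Tate comparison identifies $\rH^i(\rL\eta_{\zeta_p-1}\rR\nu_*\OXp)\cong \widehat\Omega^i_\frakX(-i)$ for each $i\geq 0$, matching the cohomology of $\HIG(\calO_\frakX,0)=\bigoplus_i \widehat\Omega^i_\frakX(-i)[-i]$ term by term; the goal is to upgrade this abstract cohomological match to a chain-level quasi-isomorphism in the range $\tau^{\leq p-1}$.

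I would first construct the degree-one piece $\gamma_1:\widehat\Omega^1_\frakX(-1)[-1]\to \tau^{\leq 1}\rL\eta_{\zeta_p-1}\rR\nu_*\OXp$ globally. On a small affine $\Spf(R^+)$ with chart $\Box$, Theorem \ref{Thm-Decompletion} applied to the trivial representation $\OXp$ shows that the target is locally computed by the Koszul complex on $R^+$ with differentials $\tfrac{\gamma_i-1}{\zeta_p-1}$, so one defines $\gamma_1^\Box$ by sending $\dlog T_i$ to the class of the $i$-th Koszul generator (tensored with $\tfrac{\rd\log T_i}{t}$). Following the Deligne--Illusie argument, the difference $\gamma_1^\Box-\gamma_1^{\Box'}$ for two charts is a coboundary whose primitive is read off from the $\rA_2$-lift of the chart change; the absence of a Frobenius twist here is compensated precisely by the fact that $\rL\eta_{\zeta_p-1}$ already encodes a ``$p$-th power/divided-power'' operation. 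This lets us glue the local maps into a global $\gamma_1$.

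Next, I would extend $\gamma_1$ multiplicatively. Since $\rR\nu_*\OXp$ carries a commutative algebra structure via cup product and $\rL\eta_{\zeta_p-1}$ is lax symmetric monoidal on flat complexes, one obtains an induced multiplication on $\rL\eta_{\zeta_p-1}\rR\nu_*\OXp$. Wedge powers of $\gamma_1$ then produce maps
\[
\gamma_i:\widehat\Omega^i_\frakX(-i)[-i]\longrightarrow \tau^{\leq i}\rL\eta_{\zeta_p-1}\rR\nu_*\OXp,\qquad \omega_1\wedge\cdots\wedge\omega_i\longmapsto \tfrac{1}{i!}\gamma_1(\omega_1)\smile\cdots\smile\gamma_1(\omega_i).
\]
The factor $i!$ is the standard obstruction: it is invertible in $\calO_C/p$ exactly when $i\leq p-1$, which explains the truncation. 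Setting $\gamma=\bigoplus_{i\leq p-1}\gamma_i$, the induced map on $\rH^i$ is compatible with the Hodge--Tate identification by construction.

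The main obstacle is verifying that $\gamma$ is a quasi-isomorphism rather than merely an isomorphism on each $\rH^i$ in isolation. This reduces to two technical points. First, one must show that the local Koszul algebra structure on $\rL\eta_{\zeta_p-1}\rR\Gamma(\Gamma,\widehat R^+_\infty)$ agrees with the global cup product on $\rR\nu_*\OXp$ after applying $\rL\eta_{\zeta_p-1}$; this is a subtlety since $\rL\eta$ is not exact, so compatibility with cup products must be checked on a flat representative (cf.\ \cite[\S 6]{BMS18}). Second, one must run the Deligne--Illusie gluing cocycle computation to confirm that the discrepancy between different chart choices is an exact cocycle in the relevant truncation, again only up to degree $p-1$. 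Once both are established, the Hodge--Tate identification on each $\rH^i$ for $i\leq p-1$ implies $\gamma$ is the desired quasi-isomorphism.
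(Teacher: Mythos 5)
Your outline has the right endgame but takes a different, and incompletely justified, route in the crucial degree-one step. The paper (sketching \cite[Thm. 4.1]{Min21}) obtains $\gamma_1$ by citing \cite[Prop. 8.15]{BMS18}, which provides a \emph{canonical, lift-independent} quasi-isomorphism
\[
\widehat\rL_{\frakX/\calO_K}\{-1\}[-1]\xrightarrow{\ \simeq\ }\tau^{\leq 1}\rL\eta_{\zeta_p-1}\rR\nu_*\OXp,
\]
and then uses the $W(\kappa)$-model of $\frakX$ only to \emph{split} the truncated cotangent complex as $\calO_\frakX\{1\}[1]\oplus\widehat\Omega^1_\frakX$; composing gives $\gamma_1$ globally, with no local-to-global argument. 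You instead propose to build $\gamma_1$ by hand: define a chart-dependent map via Koszul generators and then glue \`a la Deligne--Illusie, invoking the $\rA_2$-lift to kill the gluing cocycle. This amounts to re-deriving a version of \cite[Prop. 8.15]{BMS18} without the cotangent complex, and you leave the crux --- vanishing of the gluing obstruction --- unverified. Your remark that "the absence of a Frobenius twist is compensated by $\rL\eta$ encoding a divided-power operation" is not an argument: the obstruction lives in $\Ext^1_{\calO_\frakX}(\widehat\Omega^1_\frakX(-1),\calO_\frakX)$, and what kills it is exactly the splitting of $\widehat\rL_{\frakX/\calO_K}$ induced by the lift, which is what the paper's route makes explicit and your route hides inside the cocycle computation. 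Once $\gamma_1$ is in hand, both you and the paper extend by anti-symmetrization/cup powers, with the same $1/i!$ justification for the $\tau^{\leq p-1}$ bound, and both use the Hodge--Tate identification of $\rH^i$ to conclude.

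So: the approach is close in spirit and would likely succeed once completed, but the degree-one construction as written has a genuine gap (the unchecked gluing), and the cleaner move --- the one the paper takes --- is to route through the cotangent complex via \cite[Prop. 8.15]{BMS18} so that only the splitting, not a cocycle, depends on the lift.
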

   \begin{proof}
       Let us sketch the argument in the proof of \cite[Thm. 4.1]{Min21} here. The key point is \cite[Prop. 8.15]{BMS18}, which claims that there exists a canonical quasi-isomorphism
       \[\widehat \rL_{\frakX/\calO_K}\{-1\}[-1]\xrightarrow{\simeq}\tau^{\leq 1}\rL\eta_{\zeta_p-1}\rR\nu_*\OXp,\]
       where $\widehat \rL_{\frakX/\calO_K}$ denotes the $p$-adic completion of the cotangent complex $\rL_{\frakX/\calO_K}$. 

       The natural lifting of $\frakX$ then induces a quasi-isomorphism 
       \[\widehat \rL_{\frakX/\calO_K}\simeq \calO_{\frakX}\{1\}[1]\oplus\widehat \Omega^1_{\frakX}.\]
       So we get a quasi-isomorphism
       \[\gamma_1:\calO_{\frakX}[0]\oplus(\zeta_p-1)\widehat \Omega_{\frakX}^1(-1)[-1]\simeq\tau^{\leq 1}\HIG(\calO_{\frakX},0)\to\tau^{\leq 1}\rL\eta_{\zeta_p-1}\rR\nu_*\OXp.\]
       Then one can construct $\gamma$ by taking anti-symmetrization of $\gamma_1$, which is a standard argument of Deligne--Illusie (cf. \cite[Thm. 2.1]{DI}).
   \end{proof}
   Applying Theorem \ref{Thm-TruncationOne} to $\calL = \OXp$, we get another quasi-isomorphism 
   \begin{equation}\label{equ:gamma_1^prime}
       \gamma_1':\calO_{\frakX}[0]\oplus\rho\widehat \Omega_{\frakX}^1(-1)[-1]\simeq\tau^{\leq 1}\HIG(\calO_{\frakX},0)\to\tau^{\leq 1}\rL\eta_{\rho(\zeta_p-1)}\rR\nu_*\OXp.
   \end{equation}
   Using this, we have the following analogue of the Deligne--Illusie decomposition:
   \begin{thm}\label{Thm-DI}
       Let $\frakX$ be a smooth formal scheme over $\calO_C$ with a fixed lifting $\calO_{\widetilde \frakX}$ over $\rA_2$. Then there exists a quasi-isomorphism
       \[\gamma^{\prime}:\tau^{\leq p-1}\HIG(\calO_{\frakX},0)\to \tau^{\leq p-1}\rL\eta_{(\zeta_p-1)\rho}\rR\nu_*\OXp.\]
   \end{thm}
   \begin{proof}
       Apply Theorem \ref{Thm-TruncationOne} (with respect to the given lifting $\calO_{\widetilde \frakX}$), we have a quasi-isomorphism (\ref{equ:gamma_1^prime}). Then the desired $\gamma^{\prime}$ can be deduced from the same argument as in \cite[Th. 4.1]{Min21}.
   \end{proof}

   It is a natural question to compare $\gamma^{\prime}$ with $\gamma$ in Theorem \ref{Thm-Min} . Note that by \cite[Lem. 6.10 and Lem. 6.11]{BMS18}, we have a canonical map
   \[\alpha:\rL\eta_{(\zeta_p-1)\rho}\rR\nu_*\OXp\to \rL\eta_{\zeta_p-1}\rR\nu_*\OXp\]
   making the following diagram commute
   \[\xymatrix@C=0.5cm{
     \rL\eta_{(\zeta_p-1)\rho}\rR\nu_*\OXp\ar[rr]^{\alpha}\ar[rd]&&\ar[ld]\rL\eta_{\zeta_p-1}\rR\nu_*\OXp\\
     &\rR\nu_*\OXp.
   }\]

   \begin{thm}\label{Thm-compare DI with Min}
       Let $\frakX$ be the base change of a smooth $p$-adic formal scheme over $\calO_K = \rW(\kappa)$ (and thus $\rho = \zeta_p-1$) and the fixed lifting is the natural one induced by $W(\kappa)\to A_2 = \Ainf/(\xi^2)$. Then the above $\gamma'$ is compatible with $\gamma$ in Theorem \ref{Thm-Min} in the following sense: There is a commutative diagram
          \begin{equation}\label{Diag-DI}
              \xymatrix@C=0.5cm{
              \bigoplus_{i=0}^{p-1}(\zeta_p-1)^i\widehat \Omega_{\frakX}^i(-i)[-i]\ar[d]^{\iota_{\zeta_p-1}}\ar[rr]^{\gamma'}&&\tau^{\leq p-1}\rL\eta_{(\zeta_p-1)^2}\rR\nu_*\OXp\ar[r]\ar[d]^{\alpha}&\rR\nu_*\OXp\ar@{=}[d]\\
              \bigoplus_{i=0}^{p-1}(\zeta_p-1)^i\widehat \Omega_{\frakX}^i(-i)[-i]\ar[rr]^{\gamma}&&\tau^{\leq p-1}\rL\eta_{\zeta_p-1}\rR\nu_*\OXp\ar[r]&\rR\nu_*\OXp,
              }
          \end{equation}
          where $\iota_{\zeta_p-1}$ is induced by multiplication $(\zeta_p-1)^i$ at each degree $0\leq i\leq p-1$.
   \end{thm}
   \begin{proof}
       To compare $\gamma'$ with $\gamma$, it is enough to show the following diagram is commutative
       \begin{equation}\label{Diag-DI-II}
              \xymatrix@C=0.5cm{
              (\zeta_p-1)\widehat \Omega_{\frakX}^1(-1)[-1]\ar[d]^{\times(\zeta_p-1)}\ar[rr]^{\gamma^{\prime}}&&\tau^{\leq 1}\rL\eta_{(\zeta_p-1)^2}\rR\nu_*\OXp\ar[r]&\rR\nu_*\OXp\ar@{=}[d]\\
              (\zeta_p-1)\widehat \Omega_{\frakX}^1(-1)[-1]\ar[rr]^{\gamma}&&\tau^{\leq 1}\rL\eta_{\zeta_p-1}\rR\nu_*\OXp\ar[r]&\rR\nu_*\OXp.
              }
          \end{equation}
Unraveling the definition of $\gamma$ and $\gamma'$, we need to show the composite
\[
F: \widehat \Omega_{\frakX}^1\{-1\}[-1]\to \widehat \rL_{\frakX/\bZ_p}\{-1\}[-1]\xrightarrow{\cite{BMS18}} \rL\eta_{\zeta_p-1}\rR\nu_*(\OXp)\to\rR\nu_*(\OXp)\xrightarrow{\times (\zeta_p-1)}\rR\nu_*(\OXp).
\]
and the composite 
\[
F^{\prime}:\widehat \Omega^1_{\frakX}\{-1\}[-1]\to\HIG(\calO_{\frakX},0)\xrightarrow{\simeq}\nu_*(\HIG(\calO\widehat \bC_{\pd}^+,\Theta))\to\rR\nu_*(\HIG(\calO\widehat \bC_{\pd}^+,\Theta))\xleftarrow{\simeq}\rR\nu_*(\OXp)
\]
coincide in the derived category $D(\calO_{\frakX})$, where $\widehat \Omega^1_{\frakX}\{-1\}=(\zeta_p-1)\Omega^1_{\frakX}(-1)$.

Since the problem is local, we may assume $\frakX = \Spf(R^+)$ is small affine. In this case, $\widehat \Omega^1_{\frakX}\{-1\}$ is the finite free $R^+$-module $\bigoplus_{i=1}^dR^+\cdot\frac{\zeta_p-1}{t}\dlog T_i$. 


We first claim that to compare $F,F'$ in $D(R^+)$, we only need to compare their cohomology groups. Indeed, we have 
\begin{equation*}
 \begin{split}  
\Hom_{D(R^+)}(\widehat \Omega^1_{R^+}\{-1\}[-1],\rR\Gamma(\Spf(R^+),\rR\nu_*(\OXp))&=\rH^0(\rR\Hom(\widehat \Omega^1_{R^+}\{-1\}[-1],\rR\Gamma(\Spf(R^+),\rR\nu_*(\OXp))\\
& =\rH^1((\widehat \Omega^1_{R^+}\{-1\})^{\vee}\otimes \rR\Gamma(\Spf(R^+),\rR\nu_*(\OXp))\\
&=(\widehat \Omega^1_{R^+}\{-1\})^{\vee}\otimes \rH^1(\rR\Gamma(\Spf(R^+),\rR\nu_*(\OXp))\\
&=\Hom(\widehat \Omega^1_{R^+}\{-1\},\rH^1(\rR\Gamma(\Spf(R^+),\rR\nu_*(\OXp)).
\end{split}
\end{equation*}
So it suffices to show $F,F':\widehat \Omega^1_{R^+}\{-1\}\to \rH^1(\Spf(R^+),\rR\nu_*(\OXp))$ are the same. By construction, both $F$ and $F'$ factor through $\rH^1(\Gamma,\widehat R_{\infty}^+)\to \rH^1(\Spf(R^+),\rR\nu_*(\OXp))$ (for the map $F$, see \cite[Section 8.3]{BMS18}). So we only need to show the induced two maps $F,F':\widehat \Omega^1_{R^+}\{-1\}\to H^1(\Gamma,\widehat R_{\infty}^+)$ are the same.

For any $\omega = \sum_{i=1}^dx_i\otimes\frac{\zeta_p-1}{t}\dlog T_i\in \widehat \Omega^1_{R^+}\{-1\}$, by the proof of Proposition \ref{Prop-TruncationOne} (recall that $\rho = \zeta_p-1$ as $\calO_K=\rW(\kappa)$), we have
\[F^{\prime}(\omega) = (\zeta_p-1)^2v(\omega) = ((\zeta_p-1)^2x_1,\dots,(\zeta_p-1)^2x_d)\in\rH^1(\Gamma,R^+)\subset\rH^1(\Gamma,\widehat R_{\infty}^+).\]
On the other hand, as $\gamma_i(T_j^{1/p^n}) = \zeta_{p^n}^{\delta_{ij}}T_j^{1/p^n}$ (and thus $\gamma_i$ plays the same role as $g$ in the proof of \cite[Prop. 8.17]{BMS18}), it follows from \cite[Prop. 8.17]{BMS18} that 
\[F(\omega)=(\zeta_p-1)\cdot((\zeta_p-1)x_1,\dots,(\zeta_p-1)x_d) = (\zeta_p-1)^2x_1,\dots,(\zeta_p-1)^2x_d).\]

So we conclude $F,F':\widehat \Omega_{\frakX}^1\{-1\}[-1]\to \rR\nu_*(\OXp)$ coincide in $D(\calO_{\frakX})$. As a consequence, we obtain the following commutative diagram
\begin{equation*}
    \xymatrix@C=0.5cm{
      \widehat\Omega^1_{\frakX}\{-1\}[-1]\ar[r]\ar@{=}[d]& \rL\eta_{\zeta_p-1}\rR\nu_*(\OXp)\ar[r]&\rR\nu_*(\OXp)\ar[d]^{\times(\zeta_p-1)}\\
    \widehat\Omega^1_{\frakX}\{-1\}[-1]\ar[r]&\rL\eta_{(\zeta_p-1)^2}\rR\nu_*(\OXp)\ar[r]&\rR\nu_*(\OXp).
    }
\end{equation*}
and equivalently the following commutative diagram
\begin{equation*}
    \xymatrix@C=0.5cm{
      \Omega^1_{\frakX}\{-1\}[-1]\ar[r]^-{\gamma^{\prime}}\ar[d]^{\times(\zeta_p-1)}& \rL\eta_{(\zeta_p-1)^2}\rR\nu_*(\OXp)\ar[r]&\rR\nu_*(\OXp)\ar@{=}[d]\\
    \Omega^1_{\frakX}\{-1\}[-1]\ar[r]^-{\gamma}&\rL\eta_{\zeta_p-1}\rR\nu_*(\OXp)\ar[r]&\rR\nu_*(\OXp)
    }
\end{equation*}
    as desired. This completes the proof.
\end{proof}

   \begin{rmk}\label{Rmk-Min}
       In Theorem \ref{Thm-Min}, the range of truncation $p-1$ is optimal. Namely, the quasi-isomorphism $\gamma$ can not extend to a quasi-isomorphism between $\HIG(\calO_{\frakX},0)$ and $\widetilde\Omega_{\frakX}:=\rL\eta_{\zeta_p-1}\rR\nu_*\OXp$. Indeed, if $\rL\eta_{\zeta_p-1}\rR\nu_*\OXp$ is formal, i.e. quasi-isomorphic to the trivial Higgs complex $\HIG(\calO_{\frakX},0)$, then \cite[Theorem 1.1]{Pet23} shows that there will be a contradiction. More precisely, \cite[Theorem 1.1]{Pet23} proves that there exists a smooth projective variety $\overline \frakX_0$ over a finite field $k$ of dimension $p+1$ that lifts to $\rW(\kappa)$, whose de Rham cohomology is not formal. Let $\frakX$ be the base change along $\rW(\kappa)\to \calO_C$ of this lift. Then by the base change property of the prismatic cohomology as well as the Hodge-Tate and de Rham comparisons, the formality of the complex $\tilde\Omega_{\frakX}$ will imply the formality of the de Rham cohomology of $\overline \frakX_0$, which is contradictory to \cite[Theorem 1.1]{Pet23}.
   \end{rmk}

   Inspired by Theorem \ref{Thm-DI}, we make the following conjecture, which can be viewed as an analogue of Deligne--Illusie decomposition with coefficients in small $\OXp$-representations.
  \begin{conj}\label{Conj-CompareCohomology}
     Keep assumptions in Theorem \ref{Thm-IntegralSimpson}. Then for any $a$-small $\OXp$-representation $\calL$ with induced Higgs bundle $(\calH,\theta_{\calH}:\calH\to \calH\otimes\rho\widehat \Omega^1_{\frakX}(-1))$, denote by $r$ the nilpotent length of $(\zeta_p-1)\theta_{\calH}$ modulo $p$, and then the natural morphism in Corollary \ref{Cor-IntegralSimpson} induces a quasi-isomorphism
     \[\tau^{\leq p-r}\HIG(\calH,\theta_{\calH})\simeq \tau^{\leq p-r}\rL\eta_{\rho(\zeta_p-1)}\rR\nu_*\calL.\]
  \end{conj}
  Note that the smallness condition implies that $r\leq p-1$ as $\frac{(\zeta_p-1)^{p-1}}{p}\in\calO_C^{\times}$, so Conjecture \ref{Conj-CompareCohomology} will not violate Theorem \ref{Thm-TruncationOne}. We remark that Theorem \ref{Thm-TruncationOne} and Theorem \ref{Thm-DI} show that Conjecture \ref{Conj-CompareCohomology} holds true for curves as well as when $p=2$. 

 It is worth pointing out that a similar phenomenon already appears in the non-abelian Hodge theory in characteristic $p$ \cite{OV} which the authors are informed by Mao Sheng.
     
 At the end of this paper, we ask the following question inspired by \cite{DI},\cite[Thm. 4.1]{Min21},:
  
  \begin{ques}\label{Ques-Upgrade}
     Let $\frakX$ be liftable and fix such a lifting. Let $a\geq\frac{1}{p-1}$ and $\calL$ be $a$-small $\OXp$-representation with associated Higgs bundle $(\calH,\theta_{\calH})$. Can we upgrade $\rL\eta_{\rho(\zeta_p-1)}$ to $\rL\eta_{\zeta_p-1}$ in Conjecture \ref{Conj-CompareCohomology}? Namely, do we have a quasi-isomorphism
     \[\tau^{\leq p-r}\HIG(\calH,\theta_{\calH})\simeq \tau^{\leq p-r}\rL\eta_{\zeta_p-1}\rR\nu_*\calL?\]
  \end{ques}
  \begin{rmk}\label{Rmk-Tsuji}
     Recall that in \cite{AGT}, Tsuji constructed an equivalence between the category of crystals on his Higgs site and the category of Higgs bundles satisfying certain convergence conditions. Both of the categories can be embedded full-faithfully into the category of $\OXp$-vector bundles. In \cite{Tsu21}, Tsuji gave a canonical quasi-isomorphism between the $(p-1-r)$-th truncations of Higgs complex and Hodge--Tate complex associated with a crystal on his Higgs site when the corresponding Higgs field is nilpotent of length $r$ modulo $p$. This is also one of our evidences to ask Question \ref{Ques-Upgrade}.
  \end{rmk}


\end{document}